\documentclass[a4paper,10pt, twoside]{article}
\usepackage{amsmath,amsthm}
\usepackage{amssymb,latexsym}
\usepackage{mathrsfs}
\usepackage{enumerate}
\usepackage[colorlinks,citecolor=blue]{hyperref}
\usepackage[numbers,sort&compress]{natbib}
\usepackage{indentfirst}
\usepackage{color}
\usepackage{esint}

\DeclareMathAlphabet{\mathpzc}{OT1}{pzc}{m}{it}

\newtheorem{theorem}{Theorem}[section]
\newtheorem{corollary}{Corollary}[section]
\newtheorem{lemma}{Lemma}[section]

\newtheorem{definition}{Definition}[section]
\newtheorem{remark}{Remark}[section]

\newtheorem{example}{Example}[section]
\newtheorem{assumption}{Assumption}[section]
\theoremstyle{definition}
\theoremstyle{remark}

\numberwithin{equation}{section}
\allowdisplaybreaks

\date{}

\begin{document}

\markboth{ }{}

\date{}
\baselineskip 0.15in

\title{Stochastic maximal $L^p$-regularity for non-autonomous evolution equations  with fractional derivative in UMD spaces}
\author{Lu Lu Tao$^{1}$, Jia Wei He$^{1,2,}\footnote{E-mail: jwhe@gxu.edu.cn} $
\\[1.8mm]
\footnotesize  {$^1$ School of Mathematics, Guangxi University, Nanning 530004,  China}\\[1.5mm]
\footnotesize  {$^2$ Guangxi Mathematical Research Center, Guangxi University, Nanning 530004, China}	
}
\date{}
\maketitle

\begin{abstract}
This paper is concerned with the maximal regularity theory for non-autonomous stochastic evolution equations with a generalized fractional derivative in UMD spaces. The generalized time-fractional derivative provides a unified framework covering both the classical Riemann-Liouville and Caputo fractional derivatives, which accommodates a wider class of anomalous diffusion processes with intermediate memory effects.  Based on the singularities of the initial term and the stochastic convolution kernel, a time-weighted space and a regular-singular decomposition are used to obtain the well-posedness, space-time regularity, and stochastic maximal $L^p$-regularity results. Our results are applied to non-autonomous stochastic diffusion equation and stochastic fractional reaction-diffusion SIR model.
\\
\smallskip\noindent
{\bf Keywords:} Non-autonomous evolution equations, fractional derivatives, stochastic  maximal regularity, UMD spaces.\\[2mm]
{\bf 2020 MSC:} 34G20, 35R11,  35B65, 60H15.
\end{abstract}

\baselineskip 0.2in
\section{Introduction}\label{sec1}

Let $X_0$ be a Banach space. In this paper, we are concerned with non-autonomous abstract stochastic fractional evolution equations
\begin{equation}\label{e1.1}
 \left\{
\begin{aligned}
&D_{0+}^{\lambda,\mu}u(t)+A(t)u(t)=F(t, \omega, u(t))+ G(t, \omega, u(t))dW_H(t), && t\in(0,T],  \\
&I_{0+}^{(1-\lambda)(1-\mu)}u(0)=u_0,
\end{aligned}\right.
\end{equation}
where $D_{0+}^{\lambda,\mu}$ is the
generalized fractional derivative of Hilfer type with order $0< \mu<1$
and type $0\leq\lambda\leq1$.
$A(t)$ is a time-dependent operator in $X_0$. The nonlinear drift $F$ and diffusion coefficient $G$ satisfy the assumptions specified below. The initial value $u_0$ is strongly $\mathcal F_0$-measurable. Let $(\Omega,\mathcal F,(\mathcal F_t)_{t\in[0,T]},\mathbb P)$ be a
complete filtered probability space, and let $W_H$ be an $H$-cylindrical Brownian motion adapted to $(\mathcal F_t)_{t\in[0,T]}$,
where $H$ is a separable Hilbert space.


Fractional calculus extends the classical concepts of differentiation and integration to non-integer orders. The weakly singular kernels of fractional derivatives provide a natural way to describe memory and hereditary effects and have led to the development of fractional partial differential equations with applications in engineering, physics, finance, and biology, see \cite{Caponetto, Delbosco, Diethelm, Dong, Hilfer, Kilbas, Pablo, Zhou2014} and the references therein.
Time-dependent material parameters and diffusion coefficients give rise to non-autonomous fractional partial differential equations in many applications. A  typical example (see e.g. \cite{Dong,Kim}) is
\begin{equation*}
\partial_t^{\alpha}u(t,x)
+D_j\bigl(a^{ij}(t,x)D_i u(t,x)\bigr)
=f(t,x,u(t,x)),  \quad t>0,
\end{equation*}
where the fractional derivative $\partial_t^\alpha$ describes the anomalous diffusion and memory effects, and the coefficient $a^{ij}(t,x)$ reflects the time-varying nature of the physical environment. From an abstract functional analysis perspective, fractional evolution equations have garnered increasing attention in recent years. One important reason is that many fractional partial differential equations, including the aforementioned non-autonomous fractional diffusion equation, as well as fractional wave equation, Navier-Stokes equation and Schr\"{o}dinger equation can be abstracted as fractional evolution equations. For more details, we refer to \cite{Alvarez, Hernandez, He2022, He2024, Li, Lizama, Kim, Sousa}  and the references therein.

On the other hand, non-autonomous fractional evolution equations have been studied less  than their autonomous counterparts because of the high complexity of time-dependent operators.
El-Borai \cite{El-Borai} previously proved the existence of fundamental solutions for linear non-autonomous fractional evolution equations. Based on this framework, Chen et al. \cite{Chen2017, Chen2019} proved the existence of mild solutions for fractional non-autonomous equations by employing the theory of fractional calculus and the fixed point theorem with respect to $k$-set contractive operators. Furthermore, He and Zhou \cite{He2022} introduced a novel representation for the solutions by the Mittag-Leffler function, the Wright-type function, and the theory of analytic semigroups. Based on this expression, they proved the existence, uniqueness  and maximal H\"{o}lder regularity of classical solutions for non-autonomous evolution equations with Caputo fractional derivatives of order $\alpha \in (0,1)$. The solvability result of non-autonomous fractional evolution equations was also extended to the cases of Caputo fractional derivatives of order $\alpha \in (1,2)$ in \cite{He20222} and Riemann-Liouville fractional derivatives of order $\alpha \in (0,1)$ in \cite{Zhou2023}. For superlinear growth nonlinearities, Feng and Chen \cite{Feng} showed the existence of mild solutions for a class of nonlinear fractional partial differential equations with nonlocal initial conditions by the fractional power theory and the Banach fixed point theorem in the interpolation space.

We next observe the following two examples before considering the generalized fractional derivatives.
\begin{example}\label{exam1.1}
Consider the Riemann-Liouville fractional equation
$${}^{RL}\!D_{0+}^{\alpha}u(t)+\rho t^\sigma u(t)=0,
\quad t>0,$$
where $0<\alpha<1$, $\sigma>-\alpha$ and $\rho\in\mathbb R$. For $\sigma\neq0$, the coefficient $t^\sigma$ depends on time, then it is a simple non-autonomous fractional equation. By \cite[Theorem 4.7]{Kilbas}, we have the solution
$$u(t)=t^{\alpha-1}
E_{\alpha,\,1+\sigma/\alpha,\,1+(\sigma-1)/\alpha}
\bigl(-\rho t^{\alpha+\sigma}\bigr),$$
where $E_{\alpha,m,l}$ denotes the Mittag-Leffler function. Then $$t^{1-\alpha}u(t)
=E_{\alpha,\,1+\sigma/\alpha,\,1+(\sigma-1)/\alpha}
\bigl(-\rho t^{\alpha+\sigma}\bigr)\to 1,~~{\rm as}\ t\to0+.$$
\end{example}

\begin{example}\label{exam1.2}
Let $(\Omega,\mathcal F,(\mathcal F_t)_{t\in[0,T]},\mathbb P)$ be a filtered probability space, let
$\mathcal O\subset\mathbb R^N$ be a bounded domain with $C^2$
boundary and let $W_H$ be an $H$-cylindrical Brownian motion adapted to $(\mathcal F_t)_{t\in[0,T]}$. Consider the following fractional non-autonomous stochastic partial differential equations
\begin{equation}\label{e1.2}
\left\{
\begin{aligned}
&{}^C\!D_{0+}^{\alpha}u(t)+A(t)u(t)
=F(t,\omega,u(t))+G(t,\omega,u(t))\,dW_H(t),
&&t\in(0,T],\\
&u(t)|_{\partial\mathcal O}=0,
&&t\in(0,T],\\
& u(0)=u_0,
\end{aligned}
\right.
\end{equation}
where ${}^C\!D_{0+}^{\alpha}$ is the Caputo fractional derivative of order $0<\alpha<1$,
$u_0$ is a strong $\mathcal{F}_0$-measurable $X_0$-valued random variable. Coefficients $a,c:[0,T]\to(0,\infty)$
are time-dependent. For $q\in(2,\infty)$, set $X_0=L^q(\mathcal O)$, $X_1=W^{2,q}(\mathcal O)\cap W_0^{1,q}(\mathcal O)$ and for $v\in X_1$, define
$$A(t)v:=-a(t)\Delta v+c(t)v.$$
The nonlinearities $F$ and $G$ act between suitable fractional power spaces and satisfy some assumptions. Then \eqref{e1.2} can be written as the abstract problem \eqref{e1.1} in $X_0$.
\end{example}

In classical evolution equations, initial data are usually prescribed by pointwise values. Example \ref{exam1.1} shows that a solution to a fractional evolution equation may be singular at $t=0$, while its weighted trace $t^{1-\alpha}u(t)$ still has a finite limit. By contrast, the Caputo fractional equation in Example \ref{exam1.2} admits the usual pointwise initial condition $u(0)=u_0$ without introducing a time weight. Therefore, these examples motivate a unified framework that accommodates both weighted fractional traces and classical pointwise initial values. The generalized fractional derivative introduced by Hilfer \cite{Hilfer}, also called the Hilfer fractional derivative, provides such a framework. It is denoted by
$D_{t}^{\lambda,\mu}$ with order $\mu\in(0,1)$ and type $\lambda\in[0,1]$ and is defined by
$$D_{t}^{\lambda,\mu}u(t)
=I_{t}^{\lambda(1-\mu)}\frac{d}{dt}
 I_{t}^{(1-\lambda)(1-\mu)}u(t).$$
This derivative interpolates between two classical fractional derivatives. In particular, it reduces to the Riemann-Liouville fractional derivative for $\lambda = 0$ and to the Caputo derivative for $\lambda=1$. Hence, the generalized fractional derivatives provide flexibility in the choice of function spaces and initial conditions \cite{Fu,Gu}.

In recent years, the stochastic evolution equations driven by Wiener processes have become one of the core research topics in the field of infinite dimensional dynamical systems. For more details on stochastic evolution equations, we refer to the books \cite{Gr, Da, Sobczyk, Liu} and the papers \cite{Taniguchi, Luo, Ren, Chen1, Veraar, Br, Van, Veraar1, Dou}.
For the results of  Banach spaces,
Brze\'{z}niak \cite{Br} first established the theoretical framework of stochastic evolution equations in type 2 Banach spaces. van Neerven, Veraar, and Weis \cite{Van} established stochastic integral theory and maximal regularity in UMD spaces with type 2. This theoretical system was subsequently extended to non-autonomous systems by Veraar et al. \cite{Veraar, Veraar1}, which effectively solved the problem of existence of mild solutions for operators changing with time. For the non-autonomous stochastic evolution equations, Zhang and Feng \cite{Zhang} further investigated non-autonomous stochastic evolution equations with additive noise and obtained the well-posedness and asymptotic stability of mild solutions. Recently, Chen, Li, and Zhang \cite{Chen2}  used the non-compactness measure and fixed point techniques to solve stochastic non-autonomous evolution equations governed by noncompact evolution families.

Based on the theory of well-posedness, many researchers have focused on maximal regularity for non-autonomous stochastic evolution equations. Since the theory of maximal regularity includes operator terms, stochastic convolution, and the regularity estimates for solutions, it provides significant analytical tools for the study of nonlinear stochastic partial differential equations. van Neerven, Veraar, and Weis \cite{Van2} first established a theory of stochastic maximal $L^p$-regularity for autonomous operators in UMD spaces. Then they extended the results to non-autonomous operators with piecewise relative continuity in time and obtained the existence, uniqueness, and maximal regularity of strong solutions. T\d{a}, Yagi, and Yamamoto \cite{Ta} further investigated non-autonomous linear stochastic evolution equations in UMD spaces with type 2. Under suitable assumptions on the time dependence of the operators, they established a unique strict solution and maximal regularity. For stochastic parabolic equations with rough time-dependent coefficients, Portal and Veraar \cite{Portal} employed semigroup methods and partial differential equation methods to investigate semilinear stochastic parabolic problems driven by cylindrical Brownian motion. Under the condition that the stochastic coefficients are only progressively measurable with respect to time, they established $L^p(L^q)$ estimates and optimal space–time regularity. Subsequently, Agresti and Veraar \cite{Agresti} applied deterministic and stochastic maximal regularity methods to nonlinear parabolic stochastic evolution equations in critical spaces. They established a well-posedness theory for semilinear and quasi-linear problems, which can solve rough initial conditions and instantaneous regularization of solutions.

However, to the best of our knowledge,  the results on non-autonomous stochastic fractional evolution equations with generalized fractional derivative of Hilfer type remain limited. There are many difficulties to be solved. Firstly, classical Laplace transform methods and $C_0$-semigroup theory fail for the time dependence of the non-autonomous operator $A(t)$.  Secondly, the initial component associated with the generalized fractional derivative is singular as $t \to 0^+$, while the stochastic convolution also has singularity as $s\to t$. Hence, both the fixed point estimate and the stochastic convolution estimate are not directly applicable in the general continuous function space. Thirdly, the factorization method of the classical stochastic convolution relies on the properties of semigroups and convolution identities, which are unavailable for non-autonomous operators. Finally, the existing theory of stochastic $L^p$-regularity is mainly based on stochastic convolutions of autonomous systems or classical non-autonomous parabolic equations, and cannot be applied directly to the singular component generated by the generalized initial condition and the
non-autonomous fractional kernels.

In this paper, we establish the well-posedness and regularity theory for mild solutions of non-autonomous stochastic evolution equations \eqref{e1.1} with generalized fractional derivative in UMD spaces with type 2. The main contributions of this paper can be summarized in the following four aspects. First, we obtain a representation formula for solutions and operator theory for non-autonomous systems by Mittag-Leffler functions and Volterra integral equations. And we establish the existence and uniqueness of classical solutions for non-autonomous generalized fractional evolution equations. Second, to handle the singularities of the initial term and the stochastic convolution kernel, we introduce the space of weighted continuous functions. Then we prove the existence and uniqueness of mild solutions in the weighted space by using the Burkholder-Davis-Gundy (BDG) inequality and the Banach fixed point theorem. By separating the initial component from the regular component, we further obtain spatial regularity in fractional power spaces and temporal H\"{o}lder continuity of the regular component. By a localization argument, we further obtain a unique progressively measurable mild solution when the initial datum is merely strongly $\mathcal F_0$-measurable. Third, under deterministic maximal regularity assumptions, we establish stochastic maximal $L^p$-regularity for the linear non-autonomous fractional equation. To the best of our knowledge, this result is new in the stochastic fractional setting. Finally, a fixed point argument extends the maximal
regularity result to the semilinear problem.

The rest of this paper is organized as follows. In Section \ref{sec2}, we introduce some preliminary concepts, including fractional calculus,  sectorial operators, fractional power spaces, and stochastic integration in UMD spaces. In Section \ref{sec3}, we construct the solution operators for the non-autonomous linear equation and establish the existence and uniqueness of classical solutions. We prove the existence, uniqueness, and time-space regularity of mild solutions in the weighted continuous function space in Section~\ref{sec4}. Section~\ref{sec5} develops stochastic maximal $L^p$-regularity for linear and semilinear non-autonomous fractional equations. Finally, we apply our main results to a stochastic diffusion equation and a fractional reaction-diffusion SIR model in Section~\ref{sec6}.

\section{Preliminaries}\label{sec2}
In this section, we recall some basic notions concerning fractional integrals and derivatives, special functions, sectorial operators, and the function spaces. Let $X_0$ be a Banach space with norm $\|\cdot\|_{X_0}$. The symbol $L(X_0, Y)$ is the space of all bounded linear operators from $X_0$ to another Banach space $Y$ with norm $\|\cdot\|_{L(X_0, Y)}$ and let $L(X_0) := L(X_0,X_0)$, for short $\|\cdot\|_{L(X_0)}$. The domain and range of an operator $A$ are denoted by $D(A)$ and $R(A)$, respectively. We denote the resolvent set of $A$ by $\rho(A)$. For each $z\in\rho(A)$, the resolvent operator of $A$ is defined by $R(z;A):=(zI-A)^{-1}$.

\subsection{Probability Space, UMD Spaces, and   \texorpdfstring{$\gamma$}{gamma}-Radonifying Operators}
Let $(\Omega, \mathcal{F}, \{\mathcal{F}_t\}_{t \in [0,T]}, \mathbb{P})$ be a complete filtered probability space. For $p \ge 1$, we denote by $L^p(\Omega; X_0)$ the Banach space of all strongly $\mathcal{F}_T$-measurable, $p$-integrable $X_0$-valued random variables, equipped with the norm $\|u\|_{L^p(\Omega;X_0)} = (\mathbb{E}\|u\|_{X_0}^p)^{1/p}$.

A Banach space $X_0$ is said to have type 2 if there exists a constant $C_{T} > 0$ such that for all finite sequences $(x_n)_{n=1}^N \subset X_0$, it holds that$$\Big(\mathbb{E} \Big\| \sum_{n=1}^N k_n x_n \Big\|_{X_0}^2 \Big)^{1/2} \le C_{T} \Big( \sum_{n=1}^N \|x_n\|_{X_0}^2 \Big)^{1/2},$$
where $(k_n)_{n=1}^N$ is a Rademacher sequence  taking values in $\{-1, 1\}$. Furthermore, $X_0$ is called a UMD (unconditional martingale differences) space if for every $p \in (1, \infty)$, there exists a constant $C_p > 0$ such that for all $X_0$-valued martingale difference sequences $(d_n)_{n=1}^N$ and all signs $\varepsilon_n \in \{-1, 1\}$,$$\mathbb{E} \Big\| \sum_{n=1}^N \varepsilon_n d_n \Big\|_{X_0}^p \le C_p^p \, \mathbb{E} \Big\| \sum_{n=1}^N d_n \Big\|_{X_0}^p.$$

Let $H$ be a separable Hilbert space. We introduce the space of $\gamma$-radonifying operators to define the stochastic convolution driven by a cylindrical Brownian motion $W_H$.  A bounded linear operator $R \in L(H, X_0)$ is called a $\gamma$-radonifying operator if for some orthonormal basis $(h_n)_{n=1}^\infty$ of $H$, $\sum_{n=1}^\infty \gamma_n R h_n$ converges in $L^2(\Omega; X_0)$, where $(\gamma_n)_{n=1}^\infty$ is a sequence of independent Gaussian random variables on a probability space $(\Omega, \mathcal{F}, \mathbb{P})$. The space of all $\gamma$-radonifying operators from $H$ into $X_0$ is denoted by $\gamma(H, X_0)$. It is a Banach space equipped with the norm:
$$\|R\|_{\gamma(H, X_0)} := \Big( \mathbb{E}\Big\| \sum_{n=1}^\infty \gamma_n R h_n \Big\|_{X_0}^2 \Big)^{1/2}.$$

The following estimate will be used repeatedly for stochastic convolutions.
\begin{lemma}[{\cite[Proposition 2.8]{Veraar}}]\label{lem2.1}
Let $Y$ be a UMD space of type $2$, let $p\in(1,\infty)$, and let $W_H$ be an $H$-cylindrical Brownian motion. Suppose that
$\Phi:[0,T]\times\Omega\to\gamma(H,Y)$ is progressively measurable and $\Phi\in L^p(\Omega;L^2(0,T;\gamma(H,Y)))$. Then $\Phi$ is stochastically integrable with respect to $W_H$, its
stochastic integral has a continuous adapted version, and
\begin{equation*}
\left(\mathbb E\sup_{0\leq t\leq T}
\left\|\int_0^t\Phi(s)\,dW_H(s)\right\|_Y^p\right)^{1/p}
\leq C_{p,Y}
\|\Phi\|_{L^p(\Omega;L^2(0,T;\gamma(H,Y)))}.
\end{equation*}
\end{lemma}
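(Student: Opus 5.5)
The plan is the standard route: stochastic integrability via the $\gamma$-radonifying isometry in type 2 spaces, then the maximal estimate via Burkholder--Davis--Gundy inequalities in UMD spaces (square-function form).

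\emph{Step 1: square-function estimate.} Since $Y$ is UMD and $\Phi$ is progressively measurable, I would invoke the van Neerven--Veraar--Weis theory of stochastic integration in UMD spaces. It gives that $\Phi$ is stochastically integrable as soon as $\Phi\in L^p(\Omega;\gamma(L^2(0,T;H),Y))$, together with the two-sided estimate
\[
\Big(\mathbb E\sup_{0\le t\le T}\Big\|\int_0^t\Phi\,dW_H\Big\|_Y^p\Big)^{1/p}\eqsim_{p,Y}\|\Phi\|_{L^p(\Omega;\gamma(L^2(0,T;H),Y))}.
\]
In the same theory, the continuous adapted version is obtained from Doob's maximal inequality applied to the martingale $t\mapsto\int_0^t\Phi\,dW_H$.

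\emph{Step 2: reduction to type 2.} Type 2 of $Y$ yields the continuous embedding
\[
L^2(0,T;\gamma(H,Y))\hookrightarrow\gamma(L^2(0,T;H),Y),
\]
with norm bounded by the type 2 constant. I would prove this first for step functions with finite-rank values, using the Gaussian randomization in the definition of the $\gamma$-norm and the type 2 inequality (Gaussian type 2 is equivalent to Rademacher type 2). The extension to general elements then follows by density.

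\emph{Step 3: conclusion.} Composing the estimates of Steps 1 and 2 gives the claimed bound with $C_{p,Y}$ equal to the product of the BDG constant and the type 2 constant.

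The main obstacle is the density/approximation step. One has to approximate progressively measurable $\Phi\in L^p(\Omega;L^2(0,T;\gamma(H,Y)))$ by adapted elementary processes in that norm, so that the stochastic integral defined for elementary processes extends by the estimate and the continuous versions converge uniformly in $L^p(\Omega)$. I would handle this by the usual convolution-in-time regularization, which preserves adaptedness, followed by finite-rank truncation. Since this is precisely \cite[Proposition 2.8]{Veraar}, in the paper it suffices to cite it.
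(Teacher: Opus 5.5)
Your argument is correct and is the standard derivation of this estimate: the paper gives no proof and simply cites \cite[Proposition 2.8]{Veraar}, and the result there rests on the same two ingredients, namely the van Neerven--Veraar--Weis two-sided estimate in terms of $\gamma(L^2(0,T;H),Y)$ for UMD spaces and the type~$2$ embedding $L^2(0,T;\gamma(H,Y))\hookrightarrow\gamma(L^2(0,T;H),Y)$. Once you apply Step~2 pathwise, the hypotheses of the UMD theorem in Step~1 are met directly, so the approximation by adapted elementary processes that you call the main obstacle is already contained in that theorem and needs no separate treatment.
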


\subsection{Fractional Calculus and Special Functions}
\begin{definition}[\cite{Podl}]\label{def1}
The Riemann-Liouville fractional integral of order $\lambda>0$ with the lower limit $a$ for a function $f :[a,+\infty)\to X_0$ is defined as
$$
I^\lambda_{a+} f(t)=\frac{1}{\Gamma(\lambda)}\int_a^t(t-s)^{\lambda-1}f(s)ds \quad t>a,
$$
provided the right side is pointwise defined on $[a,+\infty)$ and $\Gamma(\cdot)$ is the Gamma function.
\end{definition}

\begin{definition}[\cite{Podl}]
The Riemann-Liouville fractional derivative of order $\lambda>0$  for a function $f :[a,+\infty)\to X_0$ is defined as
$$
{}^{RL}\!D_{a+}^{\lambda} f(t)=\frac{1}{\Gamma(n-\lambda)} \frac{d^{n}}{d t^{n}} \int_{a}^{t} (t-s)^{n-\lambda-1}f(s) ds,
\quad t>a,\ n-1<\lambda<n.
$$
\end{definition}

\begin{definition}[\cite{Podl}]
The Caputo fractional derivative of order $\lambda>0$  for a function $f :[a,+\infty)\to X_0$ is defined as
$$
{}^C\!D^\lambda_{a+} f(t)={}^{RL}\!D_{a+}^\lambda\Big( f (t) - \sum_{k=0}^{n-1} \frac{f^{(k)}(a)}{k!}(t-a)^k\Big),\quad t>a,\ n-1<\lambda<n.
$$
\end{definition}

\begin{definition}[\cite{Hilfer}]
The generalized fractional derivative of Hilfer type with order $\mu\in(0,1)$ and type $\lambda\in[0,1]$ for a function $f :[a,+\infty)\to E$ is defined as
$$D^{\lambda,\mu}_{a+} f(t)=\left(I_{a+}^{\lambda(1-\mu)}\frac{d}{dt}I_{a+}^{(1-\lambda)(1-\mu)}f\right)(t),\quad t>a.
$$
\end{definition}

\begin{remark}\label{rem2.1}
\leavevmode
\begin{itemize}
\item [{\rm (i)}] When $\lambda=0$ and $0 <\mu < 1$, the generalized fractional derivative corresponds to the classical Riemann-Liouville fractional derivative
$$
D^{0,\mu}_{a+} f(t)=:{}^{RL}\!D^{\mu}_{a+} f(t).
$$
\item [{\rm (ii)}] When $\lambda=1$ and $0 <\mu < 1$, the generalized fractional derivative corresponds to the classical Caputo fractional derivative
$$
D^{1,\mu}_{a+} f(t)=:\!^CD^{\mu}_{a+} f(t).
$$
\end{itemize}
\end{remark}

The Wright-type function \cite{Mainardi} $M_\mu(z)$ is defined by
$$M_\mu(z)=\sum_{n=1}^\infty\frac{(-z)^{n-1}}{(n-1)!\Gamma(1-\mu n)},\quad0<\mu<1,\quad z\in\mathbb{C}.$$

The Mittag-Leffler function $E_{\alpha,\beta}(\cdot):\mathbb C\to\mathbb C$ for $\alpha>0$,  $\beta\in\mathbb R$ is defined by
$$
E_{\alpha,\beta}(z) =\sum_{k=0}^\infty \frac{z^k}{\Gamma(\alpha k +\beta)}= \frac{1}{2\pi i}\int_{\mathcal C}e^\lambda\frac{ \lambda ^{\alpha -\beta}}{\lambda^\alpha-z}d\lambda,\quad z \in\mathbb C,
$$
where $\mathcal C$ is a counterclockwise contour enclosing the disk of radius $|z|^{1/\alpha}$ and extending to $-\infty$ at both ends.

According to \cite[Theorem 1.3-1.4]{Podl}, the asymptotic expansion of $E_{\alpha,\beta}(z)$ as $z \to \infty$ for $0 < \alpha < 1$, $\beta > 0$, $N \ge 1$ is given by
$$E_{\alpha,\beta}(z) =  \begin{cases}  \alpha^{-1}z^{(1-\beta)/\alpha}e^{z^{1/\alpha}} + \epsilon_{\alpha,\beta}(z), & |\arg(z)| \le \frac{1}{2}\alpha\pi, \\  \epsilon_{\alpha,\beta}(z), & |\arg(-z)| \le \left(1 - \frac{\alpha}{2}\right)\pi,  \end{cases}$$
where$$\epsilon_{\alpha,\beta}(z) = -\sum_{k=1}^N \frac{z^{-k}}{\Gamma(\beta - \alpha k)} + O(|z|^{-N-1}), \quad \text{as } z \to \infty.$$

\begin{lemma}[{\rm\cite{Podl}}]\label{lem2.2}
If $0 <\alpha < 2$ and $\beta\in\mathbb R$, $\pi\alpha/2 < \omega < \min(\pi,\pi\alpha)$, then there is a constant $C>0$ such that
$$\left|E_{\alpha,\beta}(z)\right|\leq \frac{C}{1 + |z|},\quad z\in\mathbb{C},~~\omega\leq|{\rm arg}z|\leq\pi.
$$
\end{lemma}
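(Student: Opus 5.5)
The bound for Lemma \ref{lem2.2} comes from the contour-integral representation of $E_{\alpha,\beta}$ recalled above, split into large and bounded $|z|$. Fix $\omega$ with $\pi\alpha/2<\omega<\min(\pi,\pi\alpha)$ and choose an auxiliary angle $\theta$ with $\pi\alpha/2<\theta<\omega$. Deform the Hankel contour $\mathcal C$ into $\gamma(\varepsilon,\theta/\alpha)$. This path consists of the two rays $\arg\zeta=\pm\theta/\alpha$, $|\zeta|\ge\varepsilon$, joined by the arc $|\zeta|=\varepsilon$, $|\arg\zeta|\le\theta/\alpha$. Since $\theta/\alpha>\pi/2$, the factor $e^{\zeta}$ decays exponentially along the rays, so the integral converges for every real $\beta$.

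\textbf{Large $|z|$.} For $z$ with $\omega\le|\arg z|\le\pi$, the points $\zeta^\alpha$ with $\zeta\in\gamma(\varepsilon,\theta/\alpha)$ satisfy $|\arg\zeta^\alpha|\le\theta<\omega\le|\arg z|$. Hence the roots $\zeta=z^{1/\alpha}$ of $\zeta^\alpha=z$ lie to the left of the contour, and no residue is picked up when deforming. This gives
$$E_{\alpha,\beta}(z)=\frac{1}{2\pi i\alpha}\int_{\gamma(\varepsilon,\theta)}\frac{e^{\zeta^{1/\alpha}}\zeta^{(1-\beta)/\alpha}}{\zeta-z}\,d\zeta$$
after the substitution $\zeta\mapsto\zeta^{\alpha}$. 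We then expand
$$\frac{1}{\zeta-z}=-\frac{1}{z}+\frac{\zeta}{z(\zeta-z)}.$$
The first term gives $-z^{-1}/\Gamma(\beta-\alpha)$ by Hankel's formula for $1/\Gamma$. In the remainder, $|\zeta-z|\ge|z|\sin(\omega-\theta)$ holds uniformly, because $\zeta$ and $z$ are separated in argument by at least $\omega-\theta>0$. The remaining integral $\int|e^{\zeta^{1/\alpha}}||\zeta|^{(1-\beta)/\alpha+1}\,|d\zeta|$ converges, so the remainder is $O(|z|^{-2})$. Altogether $|E_{\alpha,\beta}(z)|\le C|z|^{-1}$ for $|z|\ge 1$ in the sector.

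\textbf{Bounded $|z|$.} For $|z|\le1$, $E_{\alpha,\beta}$ is entire, so it is bounded by some $C_0$ on the closed unit disk. Combining the two ranges gives $|E_{\alpha,\beta}(z)|\le C/(1+|z|)$ with $C=2\max(C_0,C_1)$, where $C_1$ is the constant from the large-$|z|$ case.

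The main obstacle is keeping the contour uniformly away from $z$ over the entire closed sector $\omega\le|\arg z|\le\pi$. This is exactly why $\theta$ is chosen strictly between $\pi\alpha/2$ and $\omega$. The restriction $\omega<\pi\alpha$ (needed when $\alpha<1$) ensures $\theta/\alpha<\pi$, so the rays remain in the cut plane where $\zeta^{1/\alpha}$ is single-valued. For $1\le\alpha<2$ the same argument applies, with $\theta<\pi$ automatic.
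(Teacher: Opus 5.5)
Your argument is correct and is essentially the standard proof in the cited source. The paper itself gives no proof and simply defers to Podlubny (his asymptotic expansion with one term, then the bound on the sector). Both arguments rest on the same steps: the representation over $\gamma(\varepsilon,\theta)$ with $\pi\alpha/2<\theta<\omega$, the splitting $\frac{1}{\zeta-z}=-\frac{1}{z}+\frac{\zeta}{z(\zeta-z)}$ with Hankel's formula producing $-z^{-1}/\Gamma(\beta-\alpha)$, the uniform bound $|\zeta-z|\ge|z|\sin(\omega-\theta)$ giving an $O(|z|^{-2})$ remainder, and boundedness of the entire function $E_{\alpha,\beta}$ on $|z|\le 1$.
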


\begin{lemma}[\cite{He2024}]\label{lem2.3}
For $\alpha\in(0,1)$, the following hold.
\begin{itemize}
  \item [{\rm(i)}] $M_\alpha(\upsilon)\geq0$, where~$\upsilon\in[0,\infty).$
  \item [{\rm(ii)}]
      $\displaystyle\int_0^\infty M_\alpha(\upsilon)e^{-z\upsilon}d\upsilon=E_{\alpha,1}(-z),~~z\in \mathbb{C}.$
  \item [{\rm(iii)}]$\displaystyle\int_0^\infty\alpha\upsilon M_\alpha(\upsilon)e^{-z\upsilon}d\upsilon=E_{\alpha,\alpha}(-z),~~z\in \mathbb{C}.$
  \item [{\rm(iv)}] For~$-1<\delta<\infty$, we have $\int^{\infty}_{0}\upsilon^{\delta} M_\alpha(\upsilon)d\upsilon=\frac{\Gamma
(1+\delta)}{\Gamma (1+\alpha\delta)}$.


\end{itemize}
\end{lemma}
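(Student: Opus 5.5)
Each of the four properties in Lemma \ref{lem2.3} follows from the series definition of $M_\alpha$ together with its integral (Hankel-type) representation, so the plan is to establish a few basic facts about $M_\alpha$ first and then derive (i)--(iv) in turn. The starting point is the representation
$$M_\alpha(\upsilon)=\frac{1}{2\pi i}\int_{Ha}e^{\sigma-\upsilon\sigma^{\alpha}}\frac{d\sigma}{\sigma^{1-\alpha}},$$
where $Ha$ is a Hankel contour. This is obtained by inserting Hankel's formula $\frac{1}{\Gamma(1-\alpha n)}=\frac{1}{2\pi i}\int_{Ha}e^{\sigma}\sigma^{\alpha n-1}d\sigma$ into the series and summing the exponential series in $-\upsilon\sigma^\alpha$. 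The interchange of sum and integral is justified because the series for $M_\alpha$ is entire: since $\alpha<1$, the coefficients decay like $1/((n-1)!\,\Gamma(1-\alpha n))$, which is superexponentially small by Stirling's formula.

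Item (iv) comes next. For $\delta>-1$, compute $\int_0^\infty \upsilon^\delta M_\alpha(\upsilon)\,d\upsilon$ by integrating the contour representation in $\upsilon$ first, using $\int_0^\infty\upsilon^\delta e^{-\upsilon\sigma^\alpha}d\upsilon=\Gamma(1+\delta)\sigma^{-\alpha(1+\delta)}$. This is valid on the part of the contour where $\mathrm{Re}\,\sigma^\alpha>0$, and one deforms the contour so that $|\arg\sigma|<\pi/(2\alpha)$ along it. The result is $\Gamma(1+\delta)\frac{1}{2\pi i}\int_{Ha}e^\sigma\sigma^{-\alpha\delta-1}d\sigma=\Gamma(1+\delta)/\Gamma(1+\alpha\delta)$, again by Hankel's formula.

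Items (ii) and (iii) follow from the moment formula. Expand $e^{-z\upsilon}=\sum_k(-z)^k\upsilon^k/k!$ and integrate term by term using (iv) with $\delta=k$, which gives $\sum_k(-z)^k/\Gamma(1+\alpha k)=E_{\alpha,1}(-z)$. For (iii), use $\delta=k+1$ to get $\alpha\sum_k(-z)^k\Gamma(k+2)/(k!\,\Gamma(1+\alpha+\alpha k))$. Since $\alpha(k+1)\Gamma(1+\alpha(k+1))^{-1}=\Gamma(\alpha(k+1))^{-1}\cdot\frac{1}{1}$ after simplifying, this reduces to $\sum_k(-z)^k/\Gamma(\alpha k+\alpha)=E_{\alpha,\alpha}(-z)$. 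Term-by-term integration for all complex $z$ is justified by Tonelli applied to $\sum|z|^k\upsilon^k|M_\alpha|/k!$. This needs absolute integrability of $e^{|z|\upsilon}M_\alpha(\upsilon)$, which holds because $M_\alpha$ decays like $\exp(-c\upsilon^{1/(1-\alpha)})$ by a saddle point estimate, and $1/(1-\alpha)>1$.

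The main obstacle is (i), nonnegativity. I would first try the subordination identity: $M_\alpha(\upsilon)=\alpha^{-1}\upsilon^{-1-1/\alpha}\,g_\alpha(\upsilon^{-1/\alpha})$, where $g_\alpha$ is the one-sided $\alpha$-stable density, i.e.\ the density whose Laplace transform is $e^{-s^\alpha}$. This identity can be verified by comparing Laplace transforms, or equivalently Mellin moments via (iv). Positivity then follows from Bernstein's theorem, since $e^{-s^\alpha}$ is completely monotone as the composition of the completely monotone function $e^{-x}$ with the Bernstein function $s^\alpha$. As a fallback, positivity can be taken from Mainardi's treatment cited as \cite{He2024}.
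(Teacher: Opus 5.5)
The paper gives no proof of this lemma; it simply imports it from \cite{He2024}, and these are classical properties of the Mainardi ($M$-Wright) function. Your proposal is a correct self-contained reconstruction along the standard lines:
\begin{itemize}
\item the Hankel representation of $M_\alpha$;
\item Mellin moments for (iv);
\item term-by-term Laplace transforms for (ii)--(iii);
\item for (i), the subordination identity $M_\alpha(\upsilon)=\alpha^{-1}\upsilon^{-1-1/\alpha}g_\alpha(\upsilon^{-1/\alpha})$ together with Bernstein's theorem for $e^{-s^\alpha}$.
\end{itemize}
The citation is economical. Your route makes explicit where $\alpha<1$ enters. First, $e^{|\upsilon||\sigma|^\alpha}$ is subexponential against $e^{\sigma}$ on the Hankel rays. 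Second, rays with $\pi/2<|\arg\sigma|<\min\{\pi,\pi/(2\alpha)\}$ keep $\mathrm{Re}\,\sigma^\alpha>0$, which is what makes the $\upsilon$-integration in (iv) legitimate. Your simplification in (iii), $\alpha(k+1)/\Gamma(1+\alpha(k+1))=1/\Gamma(\alpha k+\alpha)$, is correct despite the stray factor in the write-up.

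Three steps should be tightened.

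\textbf{The sum--integral interchange.} Swapping the series with the Hankel integral is not justified by the series being entire. What you need is $\int_{Ha}|e^{\sigma}||\sigma|^{\alpha-1}e^{|\upsilon||\sigma|^{\alpha}}|d\sigma|<\infty$, which holds because $\alpha<1$.

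\textbf{The super-exponential decay.} You do not need a saddle-point estimate. Take the contour to be the arc $|\sigma|=r$, $|\arg\sigma|\le\phi$, together with the rays $\arg\sigma=\pm\phi$, $|\sigma|\ge r$, where $\pi/2<\phi<\min\{\pi,\pi/(2\alpha)\}$. On this contour $\mathrm{Re}\,\sigma^\alpha\ge r^\alpha\cos(\alpha\phi)$, so $|M_\alpha(\upsilon)|\le C_r e^{-\upsilon r^\alpha\cos(\alpha\phi)}$ for every $r>0$. Alternatively, your proof of (i) uses only (iv), so you can prove (i) before (ii)--(iii). Tonelli with (iv) then gives $\int_0^\infty e^{|z|\upsilon}M_\alpha(\upsilon)\,d\upsilon=E_{\alpha,1}(|z|)<\infty$ directly.

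\textbf{The identification in (i) from moments.} Make the uniqueness step explicit.
\begin{itemize}
\item Both Mellin transforms converge absolutely for $\mathrm{Re}\,\delta>0$. For the $M_\alpha$ side, use your Fubini bound from (iv) with $|\upsilon^\delta|=\upsilon^{\mathrm{Re}\,\delta}$. For the stable side, use $g_\alpha\ge 0$ and $\int_0^\infty x^{-s}g_\alpha(x)\,dx=\Gamma(1+s/\alpha)/\Gamma(1+s)$ for $s>0$, which follows from $x^{-s}=\Gamma(s)^{-1}\int_0^\infty t^{s-1}e^{-tx}\,dt$.
\item The two transforms therefore agree on a vertical line. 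Mellin uniqueness (Fourier uniqueness after $y=e^u$) gives equality a.e., hence everywhere by continuity.
\item Note also that the Bernstein measure of $e^{-s^\alpha}$ has a continuous nonnegative density, since its characteristic function $e^{-(-i\xi)^\alpha}$ is integrable for $\alpha<1$.
\end{itemize}
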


\subsection{Sectorial Operators}
\begin{definition}\label{def2}
Let $0<\omega<\frac{\pi}{2}$.
For every  $t \geq 0$, let $A(t)$  be a closed densely defined linear operator in the Banach space $X_0$, we call $A(t)$ is a sectorial operator if the spectral set
$\sigma(A(t))$ is contained in an open sectorial domain
$S_\omega:=\{z \in \mathbb{C} \backslash\{0\}:|\arg z|<\omega\}$ and
its resolvent satisfies the estimate
\begin{equation}\label{e2.1}
\|(z-A(t))^{-1}\|_{L(X_0)} \leq \frac{C}{1+|z|}, \quad z \notin S_\omega,
\end{equation}
where the constant $C\geq1$ is independent of $t\in[0,T]$. Moreover,
$$\sup_{t\in[0,T]}\|A(t)^{-1}\|_{L(X_0)}\leq C.$$
\end{definition}

Since  $A(\cdot)$  is a sectorial operator, it is known that the resolvent set  $\rho(A(\cdot))$  of  $A(\cdot)$  contains  $\Sigma^{\prime}= \{z \in \mathbb{C}: \theta \leq|\arg z| \leq \pi\} \cup\{0\}, 0<\theta<\pi / 2$  and one can define the linear bounded operators  $\left\{T_{A(\tau)}(t)\right\}_{t \geq 0}$  by Dunford integral:
\begin{equation*}
T_{A(\tau)}(t)=\frac{1}{2 \pi i} \int_{\mathcal{C}} e^{-t z}(z-A(\tau))^{-1} d z
\end{equation*}
where  $\mathcal{C}$ is a smooth contour running in  $\Sigma^{\prime}$  from  $\infty e^{i \theta}$  to  $\infty e^{-i \theta}$  oriented counter-clockwise. Then  $T_{A(\tau)}(t)$  is an analytic semigroup for  $t \geq 0$  generated by  $-A(\tau)$. Moreover, for $t>0$, there is a constant $C> 0$ such that
\begin{equation}\label{e2.2}
\|T_{A(\tau)}(t)\|_{L(X_0)}\leq C,\quad \|A(\tau)T_{A(\tau)}(t)\|_{L(X_0)}\leq \frac{C}{t}.
\end{equation}

Furthermore, for $0 < \alpha < 1$, we introduce fractional powers $A(t)^{-\alpha} \in {L}(X_0)$ of operator $A(t)$ as$$A(t)^{-\alpha} := \frac{\sin \pi \alpha}{\pi} \int_0^\infty s^{-\alpha}(sI + A(t))^{-1}ds, \quad 0 \leq t \leq T. $$
We set $A(t)^\alpha$ as the inverse of $A(t)^{-\alpha}$ with the domain $D(A(t)^\alpha)$.

For every $s>0$, there exists a
constant $C>0$, independent of $t\in[0,T]$, such that
\begin{align}\label{e2.3}
\|A(t)^\alpha T_{A(t)}(s)\|_{{L}(X_0)} \leq C s^{-\alpha}.
\end{align}

For a fixed terminal time $T > 0$ and $1 \le p \le \infty$, we denote by $L^p(0, T; X_0)$ the Lebesgue space of all strongly measurable functions $f: [0, T] \to X_0$ such that $\|f(\cdot)\|_{X_0} \in L^p(0, T; \mathbb{R})$. $C([0, T]; X_0)$ denotes the space of all continuous functions from $[0, T]$ to $X_0$, equipped with the norm $$\|f\|_{C([0, T]; X_0)} := \sup_{t \in [0, T]} \|f(t)\|_{X_0}.$$

For any $\rho \in (0, 1]$, we introduce the H\"{o}lder seminorm for a function $f: [0, T] \to X_0$ defined by
$$[f]_{C^\rho([0, T]; X_0)} := \sup_{t, r \in [0, T], t \neq r} \frac{\|f(t) - f(r)\|_{X_0}}{|t - r|^\rho}.$$
The corresponding H\"{o}lder space $C^\rho([0, T]; X_0)$ is the Banach space of all continuous functions $f$ for which this seminorm is finite, equipped with the norm
$$\|f\|_{C^\rho([0, T]; X_0)} := \|f\|_{C([0, T]; X_0)} + [f]_{C^\rho([0, T]; X_0)}.$$
We say $f$ is locally $\rho$-H\"{o}lder continuous on $(0, T]$ if $f \in C^\rho([\varepsilon, T]; X_0)$ for $\varepsilon > 0$.

In order to obtain the results in this paper, we use the following hypotheses of $A(t)$.
\begin{assumption}\label{assump2.1}
\leavevmode
\begin{itemize}
\item [$(A1)$] Operator $A(t)$ is sectorial, and its domain $D(A(t))=X_1$ is independent of $t$ for $t\in[0,T]$.
Moreover, the graph norms are uniformly equivalent, that is, there exist $C_1,C_2>0$ such that
$$C_1\|x\|_{X_1}\leq \|x\|_{X_0}+\|A(t)x\|_{X_0}
\leq C_2\|x\|_{X_1},
\quad x\in X_1,\,\, t\in[0,T].$$
\item [$(A2)$] Operator $A(t)$ is H\"{o}lder continuous, i.e., there exists $C>0$ and $\vartheta\in(0,1)$ such that
\begin{equation}\label{e2.4}
\|(A(t)-A(s)) A(\tau)^{-1}\|_{L(X_0)}\leq C |t-s|^\vartheta,\ \ \ \forall t,s,\tau\in[0,T].
\end{equation}
\end{itemize}
\end{assumption}

\subsection{Function Spaces}
For $\alpha\in(0,1)$ and $p\in[1,\infty]$, let
$$X_\alpha=[X_0,X_1]_\alpha,
\quad X_{\alpha,p}=(X_0,X_1)_{\alpha,p}$$
denote the complex and real interpolation spaces, respectively. By convention, $[X_0,X_1]_0=X_0$, $[X_0,X_1]_1=X_1$.

\begin{assumption}\label{assump2.2}
Assume the space $X_0$ is a UMD space with type $2$.
Suppose that each $A(t)$ admits a bounded
$H^\infty$-calculus of angle $\phi\in(0,\pi/2)$ and that the corresponding $H^\infty$-calculus bounds are uniform in $t\in[0,T]$.
\end{assumption}

\begin{lemma}\label{lem2.4}
Let Assumptions~\ref{assump2.1} and~\ref{assump2.2} hold. Then, for every $\alpha\in[0,1]$,
\begin{equation}\label{e2.5}
D(A(t)^\alpha)=X_\alpha,
\end{equation}
for $t\in[0,T]$, that is, there exists a constant $C_\alpha>0$ independent of $t\in[0,T]$ such that
$$C_\alpha^{-1}\|x\|_{X_\alpha}
\leq \|A(t)^\alpha x\|_{X_0}
\leq C_\alpha\|x\|_{X_\alpha},
\quad .$$
for $x\in X_\alpha$ and $t\in[0,T]$.
Moreover, $X_\alpha$ is a UMD space with type $2$, and
$X_\beta\hookrightarrow X_\alpha$ for $0\leq\alpha<\beta\leq1$.
\end{lemma}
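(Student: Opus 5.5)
The proof reduces everything to the bounded $H^\infty$-calculus through the theorem that identifies domains of fractional powers with complex interpolation spaces. We keep track of constants so that they do not depend on $t$.

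\textbf{Step 1: the identification for fixed $t$.} Fix $t\in[0,T]$. By Assumption~\ref{assump2.2}, $A(t)$ has a bounded $H^\infty$-calculus of angle $\phi<\pi/2$, so it has bounded imaginary powers. By Definition~\ref{def2}, $0\in\rho(A(t))$. The classical result of Triebel (see also Kunstmann--Weis and Hyt\"onen et al.) then gives, for $\alpha\in(0,1)$,
$$D(A(t)^\alpha)=[X_0,D(A(t))]_\alpha$$
with equivalent norms. Here $D(A(t))$ carries the graph norm, which by invertibility is equivalent to $\|A(t)\cdot\|_{X_0}$. By (A1), $D(A(t))=X_1$ and the graph norms are uniformly equivalent to $\|\cdot\|_{X_1}$. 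Complex interpolation of couples with equivalent norms gives equivalent interpolation norms, with constants depending only on the equivalence constants. Hence $[X_0,D(A(t))]_\alpha=[X_0,X_1]_\alpha=X_\alpha$. The endpoint cases $\alpha=0$ and $\alpha=1$ follow directly from the convention $[X_0,X_1]_0=X_0$, $[X_0,X_1]_1=X_1$, together with (A1) and $\sup_t\|A(t)^{-1}\|\le C$.

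\textbf{Step 2: uniformity of the constants.} This is the main point and the expected obstacle. The constant in $D(A^\alpha)=[X_0,D(A)]_\alpha$ depends only on $\alpha$, on the sectoriality constant, on $\|A^{-1}\|$, and on a bound for the imaginary powers $\sup_{|s|\le 1}\|A^{is}\|$. The last quantity is controlled by the $H^\infty$-bound, since $z^{is}$ is a bounded holomorphic function on the sector with norm at most $e^{\phi|s|}$.

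I would check this by recalling the proof. For the embedding $[X_0,X_1]_\alpha\hookrightarrow D(A^\alpha)$, one applies the analytic family $z\mapsto A^{z}$ on the strip. This uses the three-lines lemma with bounds $\|A^{is}\|\le Ce^{\phi|s|}$ on $\operatorname{Re}z=0$, and $\|A^{1+is}x\|\le Ce^{\phi|s|}\|Ax\|$ on $\operatorname{Re}z=1$. For the reverse embedding, given $x\in D(A^\alpha)$, one builds the function
$$f(z)=e^{(z-\alpha)^2}A^{\alpha-z}x,$$
which lies in the Calder\'on space, and estimates its boundary values using the same imaginary-power bounds. All these quantities are uniform in $t$, by Assumption~\ref{assump2.2}, by Definition~\ref{def2}, and by the uniform equivalence in (A1). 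Hence $C_\alpha$ is independent of $t$.

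\textbf{Step 3: UMD, type 2, and embeddings.} The UMD property and type $2$ both pass to complex interpolation spaces. For UMD, the space $[X_0,X_1]_\alpha$ inherits it from $X_0$ and $X_1$. Moreover $X_1$ is isomorphic to $X_0$ via $A(0)$ (by (A1) and invertibility), so $X_1$ is UMD with type $2$. Type $2$ interpolates because Rademacher-type inequalities are bounded operator inequalities between $\ell^2(X_i)$ and $L^2(\Omega;X_i)$, and $[L^2(X_0),L^2(X_1)]_\alpha=L^2([X_0,X_1]_\alpha)$.

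For the embedding $X_\beta\hookrightarrow X_\alpha$ when $\alpha<\beta$, I would use the reiteration theorem: $X_\alpha=[X_0,X_\beta]_{\alpha/\beta}$, and $X_\beta\hookrightarrow X_0$ since $X_1\hookrightarrow X_0$. Alternatively, one can use Step 1 together with the identity
$$A(0)^\alpha x=A(0)^{\alpha-\beta}A(0)^\beta x,$$
where $A(0)^{\alpha-\beta}$ is bounded.
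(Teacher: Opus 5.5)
Your proposal is correct and follows the paper's route. The bounded $H^\infty$-calculus gives $D(A(t)^\alpha)=[X_0,D(A(t))]_\alpha$, and (A1) identifies this with $X_\alpha$; UMD and type $2$ pass to $X_\alpha$ by complex interpolation, using $X_1\cong X_0$ via $A(0)$; the embedding comes from boundedness of the negative power $A(0)^{\alpha-\beta}$. Your Step 2, which traces the $t$-independence of $C_\alpha$ to uniform sectoriality, uniform bounds on $\|A(t)^{-1}\|$ and uniform imaginary-power bounds, supplies a justification that the paper only asserts.
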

\begin{proof}
We first consider $0<\alpha<1$. By Assumption \ref{assump2.2}, we have
$D(A(t)^\alpha)=
[X_0,D(A(t))]_\alpha$.
Combining with Assumption \ref{assump2.1}, we obtain $D(A(t)^\alpha)
=[X_0,X_1]_\alpha
=X_\alpha$
and for $x\in X_\alpha$, $t\in[0,T]$,
$$C_\alpha^{-1}\|x\|_{X_\alpha}
\leq
\|A(t)^\alpha x\|_{X_0}
\leq
C_\alpha\|x\|_{X_\alpha}.$$
For $\alpha=0$, $D(A(t)^0)=X_0$. For $\alpha=1$, $D(A(t))=X_1$. Then the results also hold.

Since $A(0):X_1\to X_0$ is an isomorphism, we deduce that $X_1$ is a UMD space with type $2$. Since both the UMD property and type 2 are preserved under complex interpolation, $X_\alpha=[X_0,X_1]_\alpha$ is a UMD space with type $2$. Finally, for $0\le\alpha<\beta\le1$ and $x\in X_\beta$, it follows from the boundedness of the negative fractional power $A(t)^{-(\beta-\alpha)}$ that
$$\|x\|_{X_\alpha}
\le C_{\alpha,\beta}\|x\|_{X_\beta}.$$
Then we deduce $X_\beta\hookrightarrow X_\alpha$. The proof is complete.
\end{proof}

\begin{corollary}\label{cor2.1}
Let Assumptions~\ref{assump2.1} and~\ref{assump2.2} hold. For
$\alpha\in[0,1]$ and $t\in[0,T]$, the map
\[
\gamma(H,X_\alpha)\to\gamma(H,X_0),
\quad R\mapsto A(t)^\alpha R,
\]
is an isomorphism. Moreover, there exists a constant $C_\alpha>0$ independent of $t$, such that
\begin{equation}\label{e2.6}
C_\alpha^{-1}\|R\|_{\gamma(H,X_\alpha)}
\leq \|A(t)^\alpha R\|_{\gamma(H,X_0)}
\leq C_\alpha\|R\|_{\gamma(H,X_\alpha)}
\end{equation}
for $R\in\gamma(H,X_\alpha)$. Throughout the paper, $A(t)^\alpha R$ denotes the composition
$A(t)^\alpha\circ R$.
\end{corollary}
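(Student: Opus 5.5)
The plan is to deduce Corollary~\ref{cor2.1} from Lemma~\ref{lem2.4} via the ideal property of $\gamma$-radonifying operators. This property states: if $R\in\gamma(H,Y_1)$ and $S\in L(Y_1,Y_2)$, then $SR\in\gamma(H,Y_2)$ with $\|SR\|_{\gamma(H,Y_2)}\le\|S\|_{L(Y_1,Y_2)}\|R\|_{\gamma(H,Y_1)}$. It follows directly from the definition of the $\gamma$-norm, since for an orthonormal basis $(h_n)$,
\[
\mathbb E\Big\|\sum_{n=1}^N\gamma_n SRh_n\Big\|_{Y_2}^2\le\|S\|^2\,\mathbb E\Big\|\sum_{n=1}^N\gamma_n Rh_n\Big\|_{Y_1}^2 .
\]
Hence convergence in $L^2(\Omega;Y_1)$ of the partial sums transfers to convergence in $L^2(\Omega;Y_2)$, and the limits are related by $S$.

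First I fix $t\in[0,T]$ and view $A(t)^\alpha$ as a bounded operator from $X_\alpha$ into $X_0$. By Lemma~\ref{lem2.4}, $D(A(t)^\alpha)=X_\alpha$ and $\|A(t)^\alpha x\|_{X_0}\le C_\alpha\|x\|_{X_\alpha}$, so $\|A(t)^\alpha\|_{L(X_\alpha,X_0)}\le C_\alpha$ uniformly in $t$. The ideal property then gives that $R\mapsto A(t)^\alpha R$ maps $\gamma(H,X_\alpha)$ into $\gamma(H,X_0)$, together with the upper bound in \eqref{e2.6}.

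Next I treat the inverse. The lower bound $C_\alpha^{-1}\|x\|_{X_\alpha}\le\|A(t)^\alpha x\|_{X_0}$ in Lemma~\ref{lem2.4} shows that $A(t)^\alpha$ is injective. Moreover, $A(t)^{-\alpha}\in L(X_0)$ has range $D(A(t)^\alpha)=X_\alpha$, so $A(t)^\alpha:X_\alpha\to X_0$ is surjective. Its inverse $A(t)^{-\alpha}$ is therefore bounded from $X_0$ into $X_\alpha$ with norm at most $C_\alpha$, again uniformly in $t$. Applying the ideal property to $S=A(t)^{-\alpha}\in L(X_0,X_\alpha)$ shows that every $Q\in\gamma(H,X_0)$ is of the form $A(t)^\alpha R$ with $R=A(t)^{-\alpha}Q\in\gamma(H,X_\alpha)$. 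It also yields $\|R\|_{\gamma(H,X_\alpha)}\le C_\alpha\|A(t)^\alpha R\|_{\gamma(H,X_0)}$, which is the lower bound in \eqref{e2.6}. Combining the two directions, the map is a bijective bounded linear map with bounded inverse, that is, an isomorphism.

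The endpoint cases $\alpha=0$ (identity) and $\alpha=1$ (the isomorphism $A(t):X_1\to X_0$ with uniformly equivalent norms from (A1) and $\sup_t\|A(t)^{-1}\|\le C$) are covered by the same argument. I expect no serious obstacle. The only point needing care is making sure the constants are uniform in $t$, and this is inherited directly from Lemma~\ref{lem2.4}.
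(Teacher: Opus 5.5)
Your proposal is correct and follows essentially the same route as the paper: it uses the $t$-uniform bounds on $A(t)^\alpha\in L(X_\alpha,X_0)$ and $A(t)^{-\alpha}\in L(X_0,X_\alpha)$ from Lemma~\ref{lem2.4}, and applies the ideal property of $\gamma$-radonifying operators in both directions. As in the paper, the lower bound comes from writing $R=A(t)^{-\alpha}(A(t)^\alpha R)$.
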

\begin{proof}
By Lemma~\ref{lem2.4}, $A(t)^\alpha:X_\alpha\to X_0$ is an
isomorphism, and the norms of $A(t)^\alpha$ in $L(X_\alpha,X_0)$ and
of $A(t)^{-\alpha}$ in $L(X_0,X_\alpha)$ are bounded uniformly in $t$.
For $R\in\gamma(H,X_\alpha)$, it follows from the ideal property of
$\gamma$-radonifying operators that $A(t)^\alpha R\in\gamma(H,X_0)$ and
$$\|A(t)^\alpha R\|_{\gamma(H,X_0)}
\leq\|A(t)^\alpha\|_{L(X_\alpha,X_0)}
\|R\|_{\gamma(H,X_\alpha)}
\leq C_\alpha\|R\|_{\gamma(H,X_\alpha)}.$$
Moreover, we deduce $R=A(t)^{-\alpha}(A(t)^\alpha R)$. It yields
\begin{align*}
\|R\|_{\gamma(H,X_\alpha)}\leq
\|A(t)^{-\alpha}\|_{L(X_0,X_\alpha)}
\|A(t)^\alpha R\|_{\gamma(H,X_0)}\leq C_\alpha
\|A(t)^\alpha R\|_{\gamma(H,X_0)}.
\end{align*}
Hence, we deduce \eqref{e2.6}. The proof is complete.
\end{proof}

Let $X$ be a UMD space, $1<p<\infty$, and $s\in\mathbb R$. The $X$-valued Bessel potential space on $\mathbb R$ is defined by
$$
H^{s,p}(\mathbb R;X)
=
\left\{
u\in\mathcal S'(\mathbb R;X):
\mathcal F^{-1}
\left[
(1+|\xi|^2)^{s/2}\widehat u(\xi)
\right]
\in L^p(\mathbb R;X)
\right\},
$$
where $\mathcal F$ denotes the Fourier transform and $\mathcal S'(\mathbb R;X)$ is the space of $X$-valued tempered distributions. It is equipped with the norm
$$
\|u\|_{H^{s,p}(\mathbb R;X)}
=
\left\|
\mathcal F^{-1}
\left[
(1+|\xi|^2)^{s/2}\widehat u
\right]
\right\|_{L^p(\mathbb R;X)}.
$$For $I=(0,T)$, the restriction space is defined by $$H^{s,p}(I;X)=\left\{
u|_I:u\in H^{s,p}(\mathbb R;X)
\right\}$$ with the norm
$\|u\|_{H^{s,p}(I;X)}=\inf
\left\{\|\widetilde u\|_{H^{s,p}(\mathbb R;X)}:
\widetilde u|_I=u
\right\}$.
We use the convention
$H^{0,p}(I;X)=L^p(I;X)$.
If $s>1/p$, the trace operator
$\text{tr}_0:H^{s,p}(I;X)\to X,
\text{tr}_0u=u(0)$ is well defined and bounded. In this case, we set
$${}_0H^{s,p}(I;X)=
\left\{u\in H^{s,p}(I;X):
\text{tr}_0u=0
\right\}.
$$

\begin{lemma}\label{lem2.5}
Let $X_0$ be a UMD space, $1<p<\infty$, $0<\mu<1$, and $\mu>\frac1p$.
Then the Riemann-Liouville fractional integral $(I_{0+}^{\mu}f)(t)
=
\frac{1}{\Gamma(\mu)}
\int_0^t(t-s)^{\mu-1}f(s)\,ds$
defines a bounded operator
$$
I_{0+}^{\mu}:
L^p(0,T;X_0)
\to
{}_0H^{\mu,p}(0,T;X_0),
$$
that is, there exists $C=C(\mu,p,T,X_0)>0$ such that
$$
\|I_{0+}^{\mu}f\|_{H^{\mu,p}(0,T;X_0)}
\leq
C\|f\|_{L^p(0,T;X_0)}.
$$\
\end{lemma}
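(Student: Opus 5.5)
The plan is to extend $f$ by zero to all of $\mathbb R$ and write $I_{0+}^{\mu}f$ as a convolution with the truncated kernel $k_\mu(t)=\Gamma(\mu)^{-1}t_+^{\mu-1}$. The $H^{\mu,p}$ estimate then reduces to an operator-valued Fourier multiplier bound, for which the UMD property of $X_0$ is exactly what is needed.

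Let $\tilde f:=f\mathbf 1_{(0,T)}\in L^p(\mathbb R;X_0)$. For $t\in(0,T)$ we have $I_{0+}^\mu f(t)=(k_\mu*\tilde f)(t)$. Since $k_\mu$ is not integrable at infinity, I would use $\tilde k(t)=k_\mu(t)e^{-t}$ and define
\[
v:=\tilde k*\tilde f .
\]
The discrepancy on $(0,T)$ is $\int_0^t (t-s)^{\mu-1}(1-e^{-(t-s)})\tilde f(s)\,ds$. Its kernel is $(t-s)^{\mu-1}(1-e^{-(t-s)})$, which is bounded by $C(t-s)^{\mu}$ and is $C^1$ away from the origin with an integrable derivative. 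Hence this difference lies in $W^{1,p}(0,T;X_0)\hookrightarrow H^{\mu,p}(0,T;X_0)$, with norm at most $C_T\|f\|_{L^p}$. This part is routine, using Young's inequality and the fact that $\frac{d}{dt}\bigl[t^{\mu-1}(1-e^{-t})\bigr]\in L^1(0,T)$.

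For the main term, observe that $\widehat{\tilde k}(\xi)=(1+i\xi)^{-\mu}$. Therefore
\[
(1+|\xi|^2)^{\mu/2}\,\widehat{v}(\xi)=m(\xi)\,\widehat{\tilde f}(\xi),\qquad m(\xi)=\frac{(1+\xi^2)^{\mu/2}}{(1+i\xi)^{\mu}} .
\]
The symbol $m$ is scalar, smooth and bounded, and it satisfies $|\xi m'(\xi)|\le C$. By Mikhlin's multiplier theorem on UMD spaces (scalar symbols suffice, since $X_0$ is UMD), $m$ defines a bounded operator on $L^p(\mathbb R;X_0)$. It follows that $\|v\|_{H^{\mu,p}(\mathbb R;X_0)}\le C\|f\|_{L^p(0,T;X_0)}$, and restricting $v$ to $I$ gives the bound for $I_{0+}^{\mu}f$.

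For the trace, $\mu>1/p$ means $\operatorname{tr}_0$ is defined on $H^{\mu,p}(I;X_0)$. By Hölder's inequality,
\[
\|I^\mu_{0+}f(t)\|\le C t^{\mu-1/p}\|f\|_{L^p},
\]
which tends to $0$ as $t\to0^+$. Hence $\operatorname{tr}_0 I^\mu_{0+}f=0$, and so $I^\mu_{0+}f\in{}_0H^{\mu,p}$.

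The main obstacle is the multiplier step: checking the Mikhlin condition for $m$ and justifying that the UMD Mikhlin theorem applies. The rest is elementary kernel estimates.
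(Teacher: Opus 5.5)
Your proof is correct, but it takes a different route from the paper. The paper stays on the interval. It lets $B=\frac{d}{dt}$ act on $L^p(0,T;X_0)$ with domain ${}_0W^{1,p}(0,T;X_0)$. Since $-B$ generates the nilpotent right-translation semigroup, one gets $B^{-\mu}=\frac{1}{\Gamma(\mu)}\int_0^\infty r^{\mu-1}e^{-rB}\,dr=I_{0+}^{\mu}$. The paper then imports Lindemulder's identification $D(B^\mu)={}_0H^{\mu,p}(0,T;X_0)$ for $\mu>1/p$, which is stated for the half-line and transferred to $(0,T)$ by restriction and extension. Because $I_{0+}^\mu f\in D(B^\mu)$ with $B^\mu I_{0+}^\mu f=f$, the bound and the zero trace follow at once.

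You instead work as follows:
\begin{itemize}
\item extend $f$ by zero and damp the kernel so that its symbol becomes $(1+i\xi)^{-\mu}$;
\item reduce the main term to the unimodular scalar symbol $m(\xi)=e^{-i\mu\arctan\xi}$, which satisfies $|\xi m'(\xi)|\le\mu/2$, and apply the one-dimensional Mikhlin theorem on UMD spaces;
\item put the correction kernel $r^{\mu-1}(1-e^{-r})$ into $W^{1,p}$;
\item verify the trace with the H\"older bound $Ct^{\mu-1/p}$.
\end{itemize}
For the last step, note explicitly that $H^{\mu,p}(0,T;X_0)\hookrightarrow C([0,T];X_0)$ when $\mu>1/p$. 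This identifies $\text{tr}_0$ with the value at $0$ of the continuous representative.

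Your version is more self-contained. It needs only the scalar Mikhlin theorem and avoids the fractional-domain characterization with boundary conditions, including the half-line-to-interval transfer that the paper only sketches. It also makes clear that $\mu>1/p$ is needed only for the trace, not for the $H^{\mu,p}$ estimate. The paper's route gives more, however. Since $B^\mu$ maps ${}_0H^{\mu,p}$ isomorphically onto $L^p$, it shows that $I_{0+}^\mu$ is an isomorphism $L^p(0,T;X_0)\to{}_0H^{\mu,p}(0,T;X_0)$, with the reverse estimate and surjectivity. Your multiplier argument does not yield that.
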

\begin{proof}
Consider the operator $B$ on $L^p(0,T;X_0)$, defined by
$Bu:=\partial_tu=u'$ with domain
$$D(B):={}_0W^{1,p}(0,T;X_0)=
\left\{
u\in W^{1,p}(0,T;X_0):u(0)=0
\right\}.$$
The operator $-B$ generates the semigroup $(S(r))_{r\geq0}$ and
$$(S(r)f)(t)=
\begin{cases}
f(t-r),&r<t,\\
0,&r\geq t.
\end{cases}
$$
Let $S(r)=e^{-rB}$. Since $B$ is invertible and sectorial, its negative fractional power is
$$
B^{-\mu}f=\frac{1}{\Gamma(\mu)}
\int_0^\infty r^{\mu-1}e^{-rB}f\,dr.
$$
For $t\in(0,T)$, we obtain
$$
(B^{-\mu}f)(t)=
\frac{1}{\Gamma(\mu)}
\int_0^t r^{\mu-1}f(t-r)\,dr=
\frac{1}{\Gamma(\mu)}
\int_0^t(t-s)^{\mu-1}f(s)\,ds=
(I_{0+}^{\mu}f)(t).
$$
Therefore, we deduce $B^{-\mu}=I_{0+}^{\mu}$.
For $\mu>1/p$, by \cite[Theorem~6.8]{Lindemulder} and its finite interval counterpart obtained by restriction and bounded extension, we obtain $D(B^\mu)={}_0H^{\mu,p}(0,T;X_0)$. Moreover, the graph norm of $B^\mu$ is equivalent to the norm of
${}_0H^{\mu,p}(0,T;X_0)$ and
$$
\|u\|_{{}_0H^{\mu,p}(0,T;X_0)}
\leq C\left(\|u\|_{L^p(0,T;X_0)}+
\|B^\mu u\|_{L^p(0,T;X_0)}
\right).$$
Set $u=I_{0+}^{\mu}f=B^{-\mu}f$. Then $u\in D(B^\mu)$ and $B^\mu u=B^\mu B^{-\mu}f=f$.

Let $g_\mu(t)=t^{\mu-1}/\Gamma(\mu)$. Since $g_\mu\in L^1(0,T)$, it follows from Young's convolution inequality that
\begin{align*}
\|I_{0+}^{\mu}f\|_{L^p(0,T;X_0)}
\leq
\|g_\mu\|_{L^1(0,T)}
\|f\|_{L^p(0,T;X_0)}
=\frac{T^\mu}{\Gamma(\mu+1)}
\|f\|_{L^p(0,T;X_0)}.
\end{align*}
Hence, we obtain
\begin{align*}
\|I_{0+}^{\mu}f\|_{{}_0H^{\mu,p}(0,T;X_0)}\leq
C\left(
\|I_{0+}^{\mu}f\|_{L^p(0,T;X_0)}+
\|B^\mu I_{0+}^{\mu}f\|_{L^p(0,T;X_0)}
\right)\leq C_{\mu,p,T,X_0}
\|f\|_{L^p(0,T;X_0)}.
\end{align*}
The proof is complete.
\end{proof}

\section{ A representation of solutions  to linear problems}\label{sec3}
In this section, we consider the following non-autonomous problem with
the generalized fractional derivative
\begin{equation}\label{e3.1}
 \left\{
\begin{aligned}
&D_{0+}^{\lambda,\mu}u(t)+A(t)u(t)=f(t),\\
&I_{0+}^{(1-\lambda)(1-\mu)}u(0)=u_0,
\end{aligned}\right.
\end{equation}
where $0<\mu<1$, $0\leq\lambda\leq1$. Let $\nu=\mu+\lambda(1-\mu)$, then $1-\nu=(1-\lambda)(1-\mu)$.
For $s\in[0,T]$ and $t>0$, denote operators by
$$\psi_{\mu,A(s)}(t) = t^{\mu-1} \frac{1}{2\pi i} \int_{\mathcal C} E_{\mu, \mu}(-z t^\mu) (z - A(s))^{-1} dz,$$
$$\psi_{\nu,A(s)}(t) = t^{\nu-1} \frac{1}{2\pi i} \int_{\mathcal C} E_{\mu, \nu}(-z t^\mu) (z - A(s))^{-1} dz,$$
and
$$S_{\mu,A(s)}(\tau)= \frac{1}{2\pi i} \int_{\mathcal{C}} E_{\mu, 1}(-z \tau^\mu) (z - A(s))^{-1} dz.$$

\begin{lemma}[\cite{He2022}]\label{lem3.1}
Let \eqref{e2.2}-\eqref{e2.4} be satisfied. For $t, t_1, t_2,\tau,\tau_1, \tau_2\in(0,T]$, the following results hold.
\begin{itemize}
\item [{\rm (i)}]
$\psi_{\mu,A(\tau)}(t)= t^{\mu-1}\int_0^\infty \mu\upsilon M_\mu(\upsilon)T_{A(\tau)}(t^\mu\upsilon)d\upsilon$.
\item [{\rm (ii)}] $\|\psi_{\mu,A(\tau)}(t)\|_{L(X_0)} \le C t^{\mu-1}$.
\item [{\rm (iii)}] $\|A(\tau)\psi_{\mu,A(\tau)}(t)\|_{L(X_0)} \le C t^{-1}$.
\item [{\rm (iv)}]
$\|(A(t_1)-A(t_2)) \psi_{\mu,A(\tau)}(t)  \|_{L(X_0)}\le C |t_1-t_2|^\vartheta t^{-1}$.
\item [{\rm (v)}]
$\|A(\tau)(\psi_{\mu,A(\tau_1)}(t)-\psi_{\mu,A(\tau_2)}(t))\|_{L(X_0) }\le C |\tau_2-\tau_1|^\vartheta t^{-1}$.
\item [{\rm (vi)}]
$\| A(\tau)(\psi_{\mu,A(\tau)}(t_2) -\psi_{\mu,A(\tau)}(t_1))\|_{L(X_0)}\le C |t_1^{-1}-t_2^{-1}|$.
\item [{\rm (vii)}] $\|S_{\mu,A(\tau)}(t)\|_{L(X_0)} \le C $ and $S_{\mu,A(\tau)}(0)=I$.
\item [{\rm (viii)}] $\frac{d}{dt} S_{\mu,A(\tau)}(t) = -A(\tau)\psi_{\mu,A(\tau)}(t)$.
\item [{\rm (ix)}] $\|S_{\mu,A(\tau_1)}(t)-S_{\mu,A(\tau_2)}(t)\|_{L(X_0)} \le C|\tau_1-\tau_2|^{\vartheta}.$
\end{itemize}
\end{lemma}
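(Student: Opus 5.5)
Lemma~\ref{lem3.1} collects nine properties of the resolvent families; the plan is to reduce each to known facts about the analytic semigroups $T_{A(\tau)}$ and the Mittag-Leffler and Wright functions. The starting point is the subordination formula (i). Inserting $(z-A(\tau))^{-1}$ into the definition of $\psi_{\mu,A(\tau)}(t)$ and using Lemma~\ref{lem2.3}(iii) in the form $E_{\mu,\mu}(-zt^\mu)=\int_0^\infty\mu\upsilon M_\mu(\upsilon)e^{-zt^\mu\upsilon}d\upsilon$, we interchange the $d\upsilon$- and $dz$-integrals by Fubini. The interchange is justified by the decay of $e^{-zt^\mu\upsilon}$ along the contour $\mathcal C$ (whose rays make angle $\theta<\pi/2$ with the positive axis) together with $M_\mu\ge0$ and the finite moments from Lemma~\ref{lem2.3}(iv). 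The inner integral is then the Dunford integral defining $T_{A(\tau)}(t^\mu\upsilon)$, which gives (i). The same computation with Lemma~\ref{lem2.3}(ii) gives
$$S_{\mu,A(\tau)}(t)=\int_0^\infty M_\mu(\upsilon)T_{A(\tau)}(t^\mu\upsilon)\,d\upsilon .$$

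Given these representations, the norm bounds reduce to \eqref{e2.2}, \eqref{e2.1} and Lemma~\ref{lem2.3}(iv).
For (ii), bound $\|T_{A(\tau)}\|\le C$ and use $\int_0^\infty\mu\upsilon M_\mu\,d\upsilon=\mu\Gamma(2)/\Gamma(1+\mu)$.
For (iii), use $\|A(\tau)T_{A(\tau)}(t^\mu\upsilon)\|\le C t^{-\mu}\upsilon^{-1}$; the factor $\upsilon^{-1}$ cancels the weight $\upsilon$, and $\int_0^\infty M_\mu=1$ leaves $Ct^{\mu-1}t^{-\mu}=Ct^{-1}$.
For (iv), write $(A(t_1)-A(t_2))\psi_{\mu,A(\tau)}(t)=\big[(A(t_1)-A(t_2))A(\tau)^{-1}\big]A(\tau)\psi_{\mu,A(\tau)}(t)$ and combine \eqref{e2.4} with (iii).
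For (vii), boundedness is the same argument with $\int_0^\infty M_\mu=1$. The identity $S_{\mu,A(\tau)}(0)=I$ follows from strong continuity of $T_{A(\tau)}$ at $0$ and dominated convergence, applied first on the dense set $X_1$ and then extended by uniform boundedness.
For (viii), differentiate $E_{\mu,1}(-zt^\mu)$ using $\frac{d}{dt}E_{\mu,1}(-zt^\mu)=-zt^{\mu-1}E_{\mu,\mu}(-zt^\mu)$. Then write $z(z-A)^{-1}=I+A(z-A)^{-1}$; the contribution of the identity term vanishes by Cauchy's theorem, since the scalar integrand $E_{\mu,\mu}(-zt^\mu)$ is analytic and decays like $|z|^{-2}$ on $\mathcal C$ by the asymptotic expansion. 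This yields $-A(\tau)\psi_{\mu,A(\tau)}(t)$.

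For the $\tau$-differences (v) and (ix), use the resolvent identity
$$(z-A(\tau_1))^{-1}-(z-A(\tau_2))^{-1}=(z-A(\tau_1))^{-1}\big(A(\tau_1)-A(\tau_2)\big)A(\tau_2)^{-1}\,A(\tau_2)(z-A(\tau_2))^{-1}.$$
By \eqref{e2.4} and \eqref{e2.1}, its norm is $\le C|\tau_1-\tau_2|^\vartheta(1+|z|)^{-1}$. Multiplying by $A(\tau)$ and using the uniform graph-norm equivalence (A1), we get $\|A(\tau)(z-A(\tau_1))^{-1}\|\le C$. Then insert this into the contour integral and bound $|E_{\mu,\mu}(-zt^\mu)|\le C/(1+|z|t^\mu)$ by Lemma~\ref{lem2.2}, because $-zt^\mu$ lies in the admissible sector when $z\in\mathcal C$.

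The main obstacle is that naively $\int_{\mathcal C}(1+|z|t^\mu)^{-1}|dz|$ diverges logarithmically. The remedy is to deform $\mathcal C$ to $\{|z|=t^{-\mu}\}\cup\{re^{\pm i\theta}:r\ge t^{-\mu}\}$ and use the decay of $E_{\mu,\mu}$ given by the asymptotic expansion. Because $\Gamma(\mu-\mu)=\infty$, the $k=1$ term vanishes, so $E_{\mu,\mu}(w)=O(|w|^{-2})$. This gives $\int|E_{\mu,\mu}(-zt^\mu)|\,|dz|\le Ct^{-\mu}$, and with the prefactor $t^{\mu-1}$ we obtain the bound $C|\tau_1-\tau_2|^\vartheta t^{-1}$ in (v). For (ix), the factor $(1+|z|)^{-1}$ and $|E_{\mu,1}|\le C$ make the integral converge, after the same deformation to keep the constants uniform in $t$.

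Finally, for (vi), use (viii) applied to $\psi$. Differentiating (i) in $t$ and using $\|A^2T(s)\|\le Cs^{-2}$, the same moment computation gives $\|A(\tau)\frac{d}{dt}\psi_{\mu,A(\tau)}(t)\|\le Ct^{-2}$. Integrating from $t_1$ to $t_2$ then gives $C|t_1^{-1}-t_2^{-1}|$.
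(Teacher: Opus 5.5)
The paper gives no proof of this lemma; it imports it from \cite{He2022}, so there is no in-paper argument to compare against. Your proposal is a sound self-contained proof. It uses the same two tools the paper relies on nearby: the subordination formula together with the moments of $M_\mu$ (as in Lemmas~\ref{lem4.2} and~\ref{lem5.3}), and Dunford integrals combined with Mittag-Leffler asymptotics (as in Lemmas~\ref{lem3.3} and~\ref{lem3.8}). Items (i)--(iv), (vi) and (vii) work as written. Your treatment of (v) is correct, and you identified the key point: once $A(\tau)$ is applied, the resolvent difference is only bounded by $C|\tau_1-\tau_2|^\vartheta$ with no decay in $z$. It is therefore the vanishing $k=1$ coefficient ($1/\Gamma(0)=0$), giving $E_{\mu,\mu}(w)=O(|w|^{-2})$, that yields $t^{\mu-1}\int|E_{\mu,\mu}(-zt^\mu)|\,|dz|\le Ct^{-1}$.

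Three small points need fixing.
\begin{enumerate}
\item In (ix), boundedness of $E_{\mu,1}$ is not enough, because $\int^\infty (1+r)^{-1}\,dr$ diverges on the rays. You need the decay $|E_{\mu,1}(w)|\le C(1+|w|)^{-1}$ from Lemma~\ref{lem2.2}. On the deformed contour (the left arc $|z|=t^{-\mu}$, $\theta\le|\arg z|\le\pi$, plus the rays $r\ge t^{-\mu}$), the arc then contributes $O(t^{-\mu}\cdot t^{\mu})=O(1)$. The rays contribute at most $\int_{t^{-\mu}}^\infty C(rt^\mu)^{-1}r^{-1}\,dr=C$. This gives a bound uniform in $t$. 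Without the deformation you would only get $C|\tau_1-\tau_2|^\vartheta(1+|\log t|)$, so the deformation is genuinely needed there.
\item In (viii), the Cauchy argument showing $\int_{\mathcal C}E_{\mu,\mu}(-zt^\mu)\,dz=0$ must close the contour through the sector $|\arg z|\le\theta$ around the positive real axis. There $E_{\mu,\mu}(-zt^\mu)=O(|z|^{-2})$ uniformly. Toward the negative real axis, $E_{\mu,\mu}(-zt^\mu)$ grows exponentially. Alternatively, (viii) follows in one line by differentiating the subordination formula for $S_{\mu,A(\tau)}$, using $\frac{d}{dt}T_{A(\tau)}(t^\mu\upsilon)=-\mu t^{\mu-1}\upsilon A(\tau)T_{A(\tau)}(t^\mu\upsilon)$ together with (i).
\item In (vi), the phrase ``use (viii) applied to $\psi$'' is confusing and not needed. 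The argument you actually give, differentiating (i) and bounding $\|A(\tau)\partial_t\psi_{\mu,A(\tau)}(t)\|\le Ct^{-2}$, is complete on its own.
\end{enumerate}
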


\begin{lemma}\label{lem3.2}
Let \eqref{e2.2}-\eqref{e2.4} be satisfied. For $s,t, r_1, r_2\in(0,T]$, the following assertions hold.
\begin{itemize}
\item [{\rm (i)}] $\|\psi_{\nu,A(s)}(t)\|_{L(X_0)}\leq Ct^{\nu-1}$.
\item [{\rm (ii)}]
$\|A(r_1)\psi_{\mu,A(r_1)}(t)-
A(r_2)\psi_{\mu,A(r_2)}(t)
\|_{ L(X_0)}\leq
C|r_1-r_2|^{\vartheta}t^{-1}$.
\item [{\rm (iii)}]
For every $x\in X_0$, we have
$$\lim_{t\to0}
t^{1-\mu}\psi_{\mu,A(s)}(t)x
=\frac{x}{\Gamma(\mu)},\quad \lim_{t\to0}
t^{1-\nu}\psi_{\nu,A(s)}(t)x
=\frac{x}{\Gamma(\nu)} \,\,\text{and}\,\,\lim_{t\to0}
I_{0+}^{1-\nu}
\bigl(\psi_{\nu,A(s)}(\cdot)x\bigr)(t)
=x.$$
\end{itemize}
\end{lemma}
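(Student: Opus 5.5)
The plan is to reduce all three assertions to the subordination representations and the uniform sectorial bounds \eqref{e2.1}--\eqref{e2.3}, arguing as in Lemma \ref{lem3.1}.

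\emph{Part (i).} Since $\nu\geq\mu>0$, Lemma \ref{lem2.2} gives $|E_{\mu,\nu}(-zt^\mu)|\leq C/(1+|z|t^\mu)$ for $z\in\mathcal C$, because $-z$ then lies in the sector where that lemma applies. Together with $\|(z-A(s))^{-1}\|\leq C/(1+|z|)$, we deform $\mathcal C$ to a $t$-dependent contour: the two rays $\arg z=\pm\theta$ with $|z|\geq t^{-\mu}$, joined by the arc $|z|=t^{-\mu}$. The integral along the arc is $O(1)$. Along the rays, the substitution $r=|z|t^\mu$ bounds the contribution by
$$\int_1^\infty \frac{C}{(1+r)\,r}\,dr\leq C.$$
The integral is therefore uniformly bounded in $t$ and $s$, and multiplying by $t^{\nu-1}$ gives (i). If the contour cannot be deformed this way, we use the fallback $E_{\mu,\nu}(-z)=\int_0^\infty \phi(\upsilon)e^{-z\upsilon}d\upsilon$, where $\phi$ is a nonnegative Wright-type density when $\nu\ge\mu$. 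This gives $\psi_{\nu,A(s)}(t)=t^{\nu-1}\int_0^\infty\phi(\upsilon)T_{A(s)}(t^\mu\upsilon)\,d\upsilon$, and (i) follows from $\|T\|\leq C$ since $\int\phi=1/\Gamma(\nu)$.

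\emph{Part (ii).} We use Lemma \ref{lem3.1}(i) to write
$$A(r)\psi_{\mu,A(r)}(t)=t^{\mu-1}\int_0^\infty\mu\upsilon M_\mu(\upsilon)A(r)T_{A(r)}(t^\mu\upsilon)\,d\upsilon.$$
The difference $A(r_1)T_{A(r_1)}(\tau)-A(r_2)T_{A(r_2)}(\tau)$ is the Dunford integral of $e^{-\tau z}z[(z-A(r_1))^{-1}-(z-A(r_2))^{-1}]$. The resolvent identity rewrites the bracket as
$$(z-A(r_1))^{-1}(A(r_1)-A(r_2))A(r_2)^{-1}\,A(r_2)(z-A(r_2))^{-1}.$$
By \eqref{e2.4} and \eqref{e2.1}, and since $\|A(r_2)(z-A(r_2))^{-1}\|\le C$, the integrand is bounded by $C|r_1-r_2|^\vartheta e^{-\tau\mathrm{Re}z}$. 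This yields the operator bound $C|r_1-r_2|^\vartheta\tau^{-1}$. Inserting $\tau=t^\mu\upsilon$, the factor $\upsilon$ cancels, and Lemma \ref{lem2.3}(iv) with $\delta=0$ leaves $C|r_1-r_2|^\vartheta t^{\mu-1}t^{-\mu}=C|r_1-r_2|^\vartheta t^{-1}$.

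\emph{Part (iii).} For the first limit, $t^{1-\mu}\psi_{\mu,A(s)}(t)x=\int_0^\infty\mu\upsilon M_\mu(\upsilon)T_{A(s)}(t^\mu\upsilon)x\,d\upsilon$. Strong continuity of the analytic semigroup at $0$ holds because $A(s)$ is densely defined, and $T$ is uniformly bounded. Dominated convergence then gives $x\int_0^\infty\mu\upsilon M_\mu=\mu x\,\Gamma(2)/\Gamma(1+\mu)=x/\Gamma(\mu)$. The second limit follows in the same way from the density representation of part (i), with $\int\phi=1/\Gamma(\nu)$. Alternatively, on the rescaled contour above, as $t\to0$ the integral $\frac{1}{2\pi i}\int E_{\mu,\nu}(-zt^\mu)(z-A(s))^{-1}x\,dz$ tends to $x/\Gamma(\nu)$; this follows first for $x\in D(A(s))$ via $(z-A)^{-1}x=z^{-1}x+z^{-1}(z-A)^{-1}Ax$, and then for all $x$ by density and the uniform bound (i).

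For the third limit, we compute termwise. The identity $I^{1-\nu}\bigl(t^{\nu-1}E_{\mu,\nu}(-zt^\mu)\bigr)=E_{\mu,1}(-zt^\mu)$ follows from the series definition, using $I^{1-\nu}t^{\mu k+\nu-1}=\frac{\Gamma(\mu k+\nu)}{\Gamma(\mu k+1)}t^{\mu k}$. Fubini is justified by the bounds above. Hence $I^{1-\nu}_{0+}(\psi_{\nu,A(s)}(\cdot)x)(t)=S_{\mu,A(s)}(t)x$, which tends to $x$ by Lemma \ref{lem3.1}(vii) together with strong continuity, obtained as in the first limit via Lemma \ref{lem2.3}(ii).

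The main obstacle is justifying the representation or uniform contour estimate for $E_{\mu,\nu}$ when $\nu\neq\mu,1$, together with the Fubini interchange in the last limit. The contour-rescaling estimate is the first route to try.
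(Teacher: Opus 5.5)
Your proposal is correct, but for (i), (ii) and the second limit in (iii) it argues differently from the paper.

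For (i) and the second limit in (iii), the paper never estimates the contour integral for $\psi_{\nu,A(s)}$ directly. For $\lambda>0$ it uses the identity $\psi_{\nu,A(s)}=I_{0+}^{\nu-\mu}\psi_{\mu,A(s)}$. Then (i) follows from Lemma~\ref{lem3.1}(ii) and a Beta integral. The second limit follows by substituting $\tau=tr$, applying dominated convergence with the first limit, and evaluating $B(b,\mu)/(\Gamma(b)\Gamma(\mu))=1/\Gamma(\nu)$ with $b=\nu-\mu$.

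Your rescaled contour uses the rays $\arg z=\pm\theta$, $|z|\ge t^{-\mu}$, closed by the arc of radius $t^{-\mu}$. That arc has to be the one in $\{\theta\le|\arg z|\le\pi\}$, where \eqref{e2.1} holds; the other arc would cross the spectrum. With that choice the contour avoids the Mittag-Leffler/Riemann--Liouville identity, which the paper uses without proof. It also gives uniformity in $s$ directly from \eqref{e2.1}, and by Lemma~\ref{lem2.2} it does not even need $\nu\ge\mu$.

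Your contour proof of the second limit is also sound. For $x\in D(A(s))$, the scalar part equals $E_{\mu,\nu}(0)=1/\Gamma(\nu)$ exactly, by the residue at $z=0$ enclosed by that contour. The remainder is $O(t^\mu)\|A(s)x\|_{X_0}$, and density together with (i) finishes the argument. Your nonnegative-density fallback, by contrast, rests on complete monotonicity of $E_{\mu,\nu}(-\cdot)$, which is not available in the paper.

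For (ii), the paper splits the difference as $(A(r_1)-A(r_2))\psi_{\mu,A(r_1)}(t)+A(r_2)\bigl(\psi_{\mu,A(r_1)}(t)-\psi_{\mu,A(r_2)}(t)\bigr)$ and quotes Lemma~\ref{lem3.1}(iv),(v). You instead rederive the bound from \eqref{e2.1} and \eqref{e2.4}, using the resolvent identity inside the subordination formula. This is longer but independent of the cited estimates.

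Your first and third limits agree with the paper. Your justification of $S_{\mu,A(s)}(t)x\to x$ via the subordination formula and Lemma~\ref{lem2.3}(ii) is more careful than the paper's appeal to $E_{\mu,1}(0)=I$.
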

\begin{proof}
If $\lambda=0$, then $\nu=\mu$ and $\psi_{\nu,A(s)}(t)=\psi_{\mu,A(s)}(t)$. Therefore, it follows from Lemma \ref{lem3.1}(ii) that (i) holds. For $0<\lambda \leq 1$, we have $\nu-\mu>0$. Then
$$\psi_{\nu,A(s)}(t) = I_{0+}^{\nu-\mu} \psi_{\mu,A(s)}(t) = \frac{1}{\Gamma(\nu-\mu)} \int_0^t (t-\tau)^{\nu-\mu-1} \psi_{\mu,A(s)}(\tau) d\tau.$$
By Lemma \ref{lem3.1} (ii), we deduce
\begin{align*}
\|\psi_{\nu,A(s)}(t)\|_{L(X_0)}
\le \frac{C}{\Gamma(\nu-\mu)} \int_0^t (t-\tau)^{\nu-\mu-1} \tau^{\mu-1} d\tau\le \frac{C \cdot B(\nu-\mu, \mu)}{\Gamma(\nu-\mu)} t^{\nu-1}\le  t^{\nu-1}.
\end{align*}
Therefore, we have (i). Next, we check (ii). Note that
\begin{align*}
&A(r_1)\psi_{\mu,A(r_1)}(t) - A(r_2)\psi_{\mu,A(r_2)}(t) \\
=& A(r_1)\psi_{\mu,A(r_1)}(t) - A(r_2)\psi_{\mu,A(r_1)}(t) + A(r_2)\psi_{\mu,A(r_1)}(t) - A(r_2)\psi_{\mu,A(r_2)}(t) \\
=& (A(r_1) - A(r_2))\psi_{\mu,A(r_1)}(t) + A(r_2)(\psi_{\mu,A(r_1)}(t) - \psi_{\mu,A(r_2)}(t)).
\end{align*}
Then we have
\begin{align*}
&\|A(r_1)\psi_{\mu,A(r_1)}(t) - A(r_2)\psi_{\mu,A(r_2)}(t)\|_{L(X_0)} \\
\leq &\ \|(A(r_1) - A(r_2))\psi_{\mu,A(r_1)}(t)\|_{L(X_0)} + \|A(r_2)(\psi_{\mu,A(r_1)}(t) - \psi_{\mu,A(r_2)}(t))\|_{L(X_0)}. \end{align*}
It follows from the Lemma \ref{lem3.1} (iv) and (v) that
$$\|A(r_1)\psi_{\mu,A(r_1)}(t)- A(r_2)\psi_{\mu,A(r_2)}(t)\|_{L(X_0)} \leq 2C|r_1-r_2|^{\vartheta}t^{-1}.$$

Note that
$t^{1-\mu}\psi_{\mu,A(s)}(t)x = \int_0^\infty \mu v M_\mu(v) T_{A(s)}(t^\mu v)x dv$. Then, by Lemma \ref{lem2.3}, we obtain
$$\lim_{t \to 0} t^{1-\mu}\psi_{\mu,A(s)}(t)x = \int_0^\infty \mu v M_\mu(v) x dv= x \left( \mu \cdot \frac{1}{\mu\Gamma(\mu)} \right) = \frac{x}{\Gamma(\mu)}.$$
If $\lambda=0$, then $\nu=\mu$ and
$\lim_{t\to0}t^{1-\nu}\psi_{\nu,A(s)}(t)x=\frac{x}{\Gamma(\nu)}$.
For $\lambda>0$, and set $b:=\nu-\mu=\lambda(1-\mu)>0$.
Since $\psi_{\nu,A(s)}=
I_{0+}^{b}\psi_{\mu,A(s)}$, we have
$$\psi_{\nu,A(s)}(t)x
=\frac1{\Gamma(b)}\int_0^t
(t-\tau)^{b-1}\psi_{\mu,A(s)}(\tau)x\,d\tau.$$
Let $\tau=tr$, for every fixed $r\in(0,1]$, we obtain
$$t^{1-\nu}\psi_{\nu,A(s)}(t)x=
\frac1{\Gamma(b)}
\int_0^1
(1-r)^{b-1}r^{\mu-1}
\left[
(tr)^{1-\mu}
\psi_{\mu,A(s)}(tr)x
\right]dr,$$
and $\lim_{t\to0}
(tr)^{1-\mu}
\psi_{\mu,A(s)}(tr)x
=\frac{x}{\Gamma(\mu)}$. By Lemma \ref{lem3.1}(ii), we have
$$\left\|
(tr)^{1-\mu}
\psi_{\mu,A(s)}(tr)x
\right\|_{X_0}\le(tr)^{1-\mu}
\|\psi_{\mu,A(s)}(tr)\|_{L(X_0)}
\|x\|_{X_0}\le C\|x\|_{X_0}.$$
It follows from Lebesgue dominated convergence theorem that
\begin{align*}
\lim_{t\to0}
t^{1-\nu}\psi_{\nu,A(s)}(t)x=
\frac{x}{\Gamma(b)\Gamma(\mu)}
\int_0^1
(1-r)^{b-1}r^{\mu-1}\,dr=
\frac{x}{\Gamma(\nu)}.
\end{align*}

If $\lambda=0$, then $\nu=\mu$, $I_{0+}^{1-\nu}\psi_{\nu,A(s)} = I_{0+}^{1-\mu}\psi_{\mu,A(s)} = S_{\mu,A(s)}$ with $S_{\mu,A(s)}(\tau)=E_{\mu,1}\bigl(-A(s)\tau^\mu\bigr).$

For $0<\lambda < 1$, since
\begin{align*}
I_{0+}^{1-\nu} (\psi_{\nu,A(s)}(\cdot)x)(t) &= \frac{1}{2\pi i} \int_{\mathcal C} \left[ I_{0+}^{1-\nu} \left( \tau^{\nu-1} E_{\mu,\nu}(-z \tau^\mu) \right) \right](t) (z - A(s))^{-1} x dz\\
& = \frac{1}{2\pi i} \int_{\mathcal C} E_{\mu,1}(-z t^\mu) (z - A(s))^{-1} xdz=E_{\mu,1}\bigl(-A(s)t^\mu\bigr).
\end{align*}
It follows from $E_{\mu,1}(0) = I$ that
$\lim_{t \to 0} E_{\mu,1}(-A(s)t^\mu)x = I x = x$. For $\lambda=1$,  $\nu=1$ and
\begin{align*}
\lim_{t \to 0} I_{0+}^{1-\nu}(\psi_{\nu,A(s)}(\cdot)x)(t) = \lim_{t \to 0} E_{\mu,1}(-A(s)t^\mu)x = x.
\end{align*}
Hence, we have (iii). The proof is complete.
\end{proof}

\begin{lemma}\label{lem3.3}
Let \eqref{e2.2}-\eqref{e2.4} be satisfied. For $s,t\in(0,T]$, we have
\begin{align}\label{e3.2}
 \|A(s)\psi_{\nu,A(s)}(t)\|_{L(X_0)}
\leq Ct^{\lambda(1-\mu)-1}.
\end{align}
Furthermore, for $0<t_1<t_2\leq T$, we deduce
$$\|A(s)\left(\psi_{\nu,A(s)}(t_2) - \psi_{\nu,A(s)}(t_1)\right)\|_{L(X_0)} \le C t_1^{\nu-\mu-2}(t_2 - t_1).$$
\end{lemma}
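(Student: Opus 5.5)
Both estimates will come from the representation $\psi_{\nu,A(s)}=I_{0+}^{b}\psi_{\mu,A(s)}$ with $b:=\nu-\mu=\lambda(1-\mu)$, as in the proof of Lemma \ref{lem3.2}. The case $\lambda=0$ is immediate: then $b=0$, $\psi_{\nu,A(s)}=\psi_{\mu,A(s)}$, and \eqref{e3.2} is Lemma \ref{lem3.1}(iii). For the difference estimate in that case I would use Lemma \ref{lem3.1}(vi), which gives $C(t_1^{-1}-t_2^{-1})=C(t_2-t_1)/(t_1t_2)\le Ct_1^{-2}(t_2-t_1)$, and this matches $t_1^{\nu-\mu-2}$. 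So assume $b>0$.

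\textbf{First estimate.} Since $A(s)$ is closed and the integral converges absolutely in operator norm after applying $A(s)$, I would move $A(s)$ inside:
$$A(s)\psi_{\nu,A(s)}(t)=\frac1{\Gamma(b)}\int_0^t(t-\tau)^{b-1}A(s)\psi_{\mu,A(s)}(\tau)\,d\tau.$$
However, Lemma \ref{lem3.1}(iii) only gives $\tau^{-1}$, which is not integrable at $0$, so this is the main obstacle. Instead I would work with the Mittag-Leffler representation directly:
$$A(s)\psi_{\nu,A(s)}(t)=t^{\nu-1}\frac1{2\pi i}\int_{\mathcal C}E_{\mu,\nu}(-zt^\mu)\,A(s)(z-A(s))^{-1}\,dz .$$
Write $A(s)(z-A(s))^{-1}=z(z-A(s))^{-1}-I$. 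The $I$-term vanishes by Cauchy's theorem, because $E_{\mu,\nu}(-zt^\mu)$ decays like $|z|^{-1}$ on $\mathcal C$ and the contour can be closed to the left. Then \eqref{e2.1} and Lemma \ref{lem2.2} give
$$\|A(s)\psi_{\nu,A(s)}(t)\|\le Ct^{\nu-1}\int_{\mathcal C}\frac{|dz|}{1+t^\mu|z|}.$$
This integral diverges logarithmically, so I would refine with the asymptotic expansion $E_{\mu,\nu}(w)=-w^{-1}/\Gamma(\nu-\mu)+O(|w|^{-2})$. The leading term produces $\frac{1}{2\pi i}\int_{\mathcal C}z^{-1}\,dz$-type contributions that are handled by the same Cauchy argument. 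Alternatively, I would split the contour into $|z|\le t^{-\mu}$ and $|z|>t^{-\mu}$. On the inner part the integral is bounded by $Ct^{-\mu}$. On the outer part the remainder is $O(t^{-2\mu}|z|^{-2})$, contributing $Ct^{-\mu}$, and the explicit $w^{-1}$ term gives an $A(s)^{-1}$-type bounded piece. The result is $Ct^{\nu-1-\mu}=Ct^{\lambda(1-\mu)-1}$.

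\textbf{Second estimate.} I would differentiate in $t$ with the same contour representation. Using $\frac{d}{dt}\bigl(t^{\nu-1}E_{\mu,\nu}(-zt^\mu)\bigr)=t^{\nu-2}E_{\mu,\nu-1}(-zt^\mu)$ and repeating the argument with $\nu-1$ in place of $\nu$ gives
$$\Bigl\|\tfrac{d}{dt}A(s)\psi_{\nu,A(s)}(t)\Bigr\|\le Ct^{\nu-\mu-2}.$$
Integrating from $t_1$ to $t_2$ and using that $t^{\nu-\mu-2}$ is decreasing then gives the bound $Ct_1^{\nu-\mu-2}(t_2-t_1)$. The delicate point in both estimates is controlling the non-integrable leading asymptotic term of the Mittag-Leffler function. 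If that route fails, I would fall back on the subordination formula of Lemma \ref{lem3.1}(i) combined with \eqref{e2.2}.
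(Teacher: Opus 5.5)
Your $\lambda=0$ case is correct, and Lemma \ref{lem3.1}(vi) is the right citation there. For $b>0$, your second estimate follows the paper's route: differentiate via $\frac{d}{dt}[t^{\nu-1}E_{\mu,\nu}(-zt^\mu)]=t^{\nu-2}E_{\mu,\nu-1}(-zt^\mu)$, bound $A(s)\psi_{\nu-1,A(s)}$ by $Ct^{\nu-\mu-2}$, and integrate.

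\textbf{The gap is in the first estimate, on which the second also rests.} The obstruction is that $zE_{\mu,\nu}(-zt^\mu)\to c_bt^{-\mu}$ along $\mathcal C$, with $c_b=1/\Gamma(b)$. Hence $E_{\mu,\nu}(-zt^\mu)A(s)(z-A(s))^{-1}$ is only $O(|z|^{-1})$ in operator norm, and none of your proposed ways around this works as stated.
\begin{itemize}
\item \emph{The $I$-term does not vanish.} The contour cannot be closed to the left: for $z$ near the negative real axis, $w=-zt^\mu$ lies in $|\arg w|<\mu\pi/2$, where $E_{\mu,\nu}(w)$ grows like $e^{w^{1/\mu}}$. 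Closing to the right, decay of exact order $|z|^{-1}$ is borderline. The arc of radius $R$ contributes $\frac{c_b}{t^\mu}\int_{|z|=R,\,|\arg z|\le\theta}z^{-1}\,dz+O(R^{-1})=\pm2i\theta c_bt^{-\mu}+O(R^{-1})$. So the principal-value $I$-term is a nonzero, $\theta$-dependent multiple of $t^{-\mu}I$: of the right size, but not zero. The same objection applies to your ``$z^{-1}$-type contributions''.
\item \emph{The split version leaves the key term unproved.} The inner part and the $O(|w|^{-2})$ remainder are fine. But the leading piece $\frac{c_b}{t^\mu}\frac{1}{2\pi i}\int_{\mathcal C\cap\{|z|>t^{-\mu}\}}z^{-1}A(s)(z-A(s))^{-1}\,dz$ is not an ``$A(s)^{-1}$-type'' operator. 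It sits on a truncated contour, and its integrand still has norm of order $|z|^{-1}$. Its boundedness uniformly in $t$ is exactly what needs proof, and you give no argument for it.
\item \emph{The subordination fallback does not help.} Combined with \eqref{e2.2}, it only reproduces the non-integrable bound $\|A(s)\psi_{\mu,A(s)}(\tau)\|\le C\tau^{-1}$.
\end{itemize}

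\textbf{The missing idea is to subtract $c_b/(1+zt^\mu)$ rather than $c_b/(zt^\mu)$.} This symbol is holomorphic on the closed sector, and its functional calculus is exactly $(I+t^\mu A(s))^{-1}$. It therefore contributes $c_bt^{\nu-\mu-1}\,t^\mu A(s)(I+t^\mu A(s))^{-1}$, of norm at most $Ct^{\nu-\mu-1}$. The remainder $R_b(y)=E_{\mu,\nu}(-y)-c_b/(1+y)$ satisfies $|R_b(y)|\le C(1+|y|^2)^{-1}$ on the whole sector, with no singularity at $y=0$. Hence $\int_{\mathcal C}|R_b(zt^\mu)|\,\|A(s)(z-A(s))^{-1}\|\,|dz|\le C(1+t^{-\mu})$ converges absolutely, and no contour closing is needed.

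Applied to $E_{\mu,\nu-1}$, whose leading coefficient $1/\Gamma(b-1)$ is finite since $b-1\in(-1,0)$, the same device gives $\|A(s)\psi_{\nu-1,A(s)}(t)\|\le Ct^{\nu-\mu-2}$. Your second step then goes through.

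Alternatively, you can rescue the first estimate from $\psi_{\nu,A(s)}=I_{0+}^b\psi_{\mu,A(s)}$ by exploiting cancellation rather than norm bounds. Write $A(s)\psi_{\mu,A(s)}=\frac{d}{d\tau}(I-S_{\mu,A(s)})$ (Lemma \ref{lem3.1}(viii)). Integrate by parts on $(0,t/2)$ using $\|I-S_{\mu,A(s)}(\tau)\|\le C$, and use the crude bound $C\tau^{-1}$ only on $(t/2,t)$. Each piece is then $O(t^{b-1})$.
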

\begin{proof}
We first consider the case of $\lambda>0$. Define the integration contour $\mathcal{C}_{\theta,\delta_0}$. Let $\omega$ be the spectral angle of the sectorial operator $A(s)$, and choose a fixed angle $\theta \in (\omega, \pi/2)$ along with a sufficiently small, fixed radius $\delta_0 > 0$. The contour $\mathcal{C}_{\theta,\delta_0}$, oriented counter-clockwise and running entirely within the resolvent set $\rho(A(s))$, consists of three parts
$$\mathcal{C}_{\theta,\delta_0} = \mathcal{C}_1 \cup \mathcal{C}_2 \cup \mathcal{C}_3,$$
where $\mathcal{C}_1 = \{ r e^{-i\theta} : r \in [\delta_0, \infty) \}$, $\mathcal{C}_2 = \{ \delta_0 e^{i\varphi} : \varphi \in [-\theta, \theta] \}$, and $\mathcal{C}_3 = \{ r e^{i\theta} : r \in [\delta_0, \infty) \}$.

Let $b = \nu - \mu = \lambda(1 - \mu) > 0$. Fix $\theta \in (0, \pi/2)$ and denote the closed sector
$$ \Sigma_\theta = \{y \in \mathbb{C} \setminus \{0\} : |\arg y| < \theta\}. $$
For any $y \in \overline{\Sigma_\theta}$, we have $|\arg(-y)| \ge \pi - \theta > \mu\pi/2$. Set $\varphi_0\in (\frac{\mu\pi}{2},\mu\pi)$, then  $\varphi_0 \le |\arg z| \le \pi$. By the asymptotic expansion of Mittag-Leffler function, we have
$$E_{\mu,\nu}(z) = -\sum_{k=1}^N \frac{z^{-k}}{\Gamma(\nu-\mu k)} + O(|z|^{-N-1}).$$
Taking $z = -y$ and  $N = 1$, we obtain
$$E_{\mu,\nu}(-y) = -\frac{(-y)^{-1}}{\Gamma(\nu-\mu)} + O(|y|^{-2}) = \frac{1}{\Gamma(\nu-\mu)}\frac{1}{y} + O(|y|^{-2})$$
as $|y| \to \infty$. Let $c_b = 1/\Gamma(b)$, then for $|y| \to \infty$ and $y \in \overline{\Sigma_\theta}$, we get $E_{\mu,\nu}(-y) = \frac{c_b}{y} + O(|y|^{-2})$.

Define
$$ R_b(y) = E_{\mu,\nu}(-y) - \frac{c_b}{1+y}, $$
then we have
$$ R_b(y) = \left( E_{\mu,\nu}(-y) - \frac{c_b}{y} \right) + \frac{c_b}{y(1+y)}. $$
Moreover, since $|\arg y| \le \theta < \pi/2$, set $y = |y|e^{i\phi}$ with $\phi = \arg y$,
$|1+y| \ge \cos\theta \, (1+|y|)$. It follows from  $|y| \to \infty$ that
$$\frac{c_b}{|y| \cdot |1+y|} \le \frac{c_b}{|y| \cdot \cos\theta(1+|y|)}\le \frac{c_b}{\cos\theta} \cdot \frac{1}{|y|^2}.$$
Combining with the $O(|y|^{-2})$, we deduce
$|R_b(y)| \le C|y|^{-2}$ for all sufficiently large $y \in \overline{\Sigma_\theta}$. On every bounded subset of $\overline{\Sigma_\theta}$, the function $R_b$ is bounded because $E_{\mu,\nu}$ is entire and $-1 \notin \overline{\Sigma_\theta}$. Hence, for $y \in \overline{\Sigma_\theta}$, we get
$$ |R_b(y)| \le \frac{C}{1+|y|^2}. $$
Let $y = zt^\mu$, we deduce
$$\psi_{\nu,A(s)}(t) = t^{\nu-1} \frac{1}{2\pi i} \int_{\mathcal{C}_{\theta,\delta_0}} \left[ \frac{c_b}{1+zt^\mu} + R_b(zt^\mu) \right] (z - A(s))^{-1} dz.$$
It follows from Dunford integral that
$$\frac{1}{2\pi i} \int_{\mathcal{C}_{\theta,\delta_0}} \frac{1}{1+zt^\mu} (z - A(s))^{-1} dz= (I + t^\mu A(s))^{-1}.$$
Then we have
\begin{align*}
\psi_{\nu,A(s)}(t) =
c_b t^{\nu-1} (I + t^\mu A(s))^{-1} + t^{\nu-1} \mathcal{R}_b(t,s),
\end{align*}
where the integral operator is defined as
$\mathcal{R}_b(t,s) = \frac{1}{2\pi i} \int_{\mathcal{C}_{\theta,\delta_0}} R_b(zt^\mu) (z - A(s))^{-1} dz$.
By \eqref{e2.1}, we have $\|A(s)(z-A(s))^{-1}\|_{L(X_0)} <C$. From
the estimate of $R_b$, we deduce
$$\int_{\mathcal{C}_{\theta, \delta_0}} |R_b(zt^\mu)| \|A(s)(z - A(s))^{-1}\|_{L(X_0)} dz < \infty.$$
Then we have
\begin{align*}
&A(s)\psi_{\nu,A(s)}(t) \\
=& c_b t^{\nu-1} A(s)(I + t^\mu A(s))^{-1} + t^{\nu-1} \frac{1}{2\pi i} \int_{\mathcal{C}_{\theta,\delta_0}} R_b(zt^\mu) A(s)(z - A(s))^{-1} dz= I_1+I_2.
\end{align*}
Together with $\|t^\mu A(s)(I + t^\mu A(s))^{-1}\|_{L(X_0)} \le C$, we get
$$\|I_1 \|_{L(X_0)} = c_b t^{\nu-\mu-1} \|t^\mu A(s)(I + t^\mu A(s))^{-1}\|_{L(X_0)} \le C t^{\nu-\mu-1}.$$
For $\mathcal{C}_2$, $z = \delta_0 e^{i\varphi}$ with $\varphi \in [-\theta, \theta]$ and $|dz| = \delta_0 d\varphi$, the variable $|z| = \delta_0$ is constant. Then
$$\left\|t^{\nu-1} \frac{1}{2\pi i} \int_{\mathcal{C}_2} R_b(zt^\mu) A(s)(z - A(s))^{-1} dz \right\|_{L(X_0)} \le t^{\nu-1} \frac{1}{2\pi} \int_{-\theta}^{\theta} C \cdot C \cdot \delta_0 d\varphi \le C' t^{\nu-1}.$$
For $\mathcal{C}_1$ and $\mathcal{C}_3$,  $z = r e^{\pm i\theta}$ for $r \in [\delta_0, \infty)$ and $|dz| = dr$, then we have
\begin{align*}
&\left\|t^{\nu-1} \frac{1}{2\pi i} \int_{\mathcal{C}_1 \cup \mathcal{C}_3} R_b(zt^\mu) A(s)(z - A(s))^{-1} dz \right\|_{L(X_0)}\\
\le& C t^{\nu-1} \int_{\delta_0}^\infty \frac{dr}{1 + t^{2\mu}r^2}
\le C t^{\nu-1}t^{-\mu} \int_{\delta_0 t^\mu}^\infty \frac{d\rho}{1 + \rho^2}
\le C t^{\nu-\mu-1}\int_{0}^\infty \frac{d\rho}{1 + \rho^2}
\le C t^{\nu-\mu-1},
\end{align*}
where $\int_{0}^\infty \frac{d\rho}{1 + \rho^2} = \frac{\pi}{2}$.
Hence, we deduce $
\|I_2 \|_{L(X_0)}\leq  C t^{\nu-\mu-1}= C t^{\lambda(1-\mu)-1}.  $
Finally, we deduce \eqref{e3.2}.
For $\lambda>0$, $b-1=\nu-\mu-1\in(-1,0)$ and $\Gamma(b-1)$ is finite.
By the proof of \eqref{e3.2}, we deduce
$$\|A(s)\psi_{\nu-1,A(s)}(t)\|_{L(X_0)}
\le Ct^{\nu-\mu-2}.$$
It follows from the property of Mittag-Leffler function that
$$\frac{d}{d\tau}
\left[
\tau^{\nu-1}
E_{\mu,\nu}(-z\tau^\mu)
\right]
=
\tau^{\nu-2}
E_{\mu,\nu-1}(-z\tau^\mu).$$
Then we have $\frac{d}{d\tau}\psi_{\nu,A(s)}(\tau)=
\psi_{\nu-1,A(s)}(\tau)$. Hence,
\begin{align*}
A(s)\left(\psi_{\nu,A(s)}(t_2) - \psi_{\nu,A(s)}(t_1)\right) = \int_{t_1}^{t_2} \frac{d}{d\tau} \left[ A(s)\psi_{\nu,A(s)}(\tau) \right] d\tau=\int_{t_1}^{t_2}A(s)\psi_{\nu-1,A(s)}(\tau)d\tau.
\end{align*}
Then, we obtain
$$\|A(s)\left(\psi_{\nu,A(s)}(t_2) - \psi_{\nu,A(s)}(t_1)\right)\|_{L(X_0)}  \le C \int_{t_1}^{t_2} \tau^{\nu-\mu-2} d\tau\le C t_1^{\nu-\mu-2}(t_2 - t_1).$$

Next, we prove the case of $\lambda=0$. Then we have $\nu=\mu$ and $\psi_{\nu,A(s)}=\psi_{\mu,A(s)}$. It follows from Lemma \ref{lem3.1} (iii) that $$\|A(s)\psi_{\nu,A(s)}(t)\|_{L(X_0)}
=\|A(s)\psi_{\mu,A(s)}(t)\|_{L(X_0)}
\le Ct^{-1}.$$ Hence, \eqref{e3.2} holds.
By  Lemma \ref{lem3.1} (iv), we get
$$\|A(s)[\psi_{\nu,A(s)}(t_2)-\psi_{\nu,A(s)}(t_1)]\|_{L(X_0)}
\le Ct_1^{\nu-\mu-2}(t_2-t_1).$$
The proof is complete.
\end{proof}

Define operator families
\begin{align}\label{e3.3}
\tilde{\mathcal{Q}}_1(t,s) = -(A(t) - A(s)) \psi_{\nu,A(s)}(t-s),
\end{align}
\begin{align}\label{e3.4}
\tilde{\mathcal{Q}}_2(t,s) = -(A(t) - A(s)) \psi_{\mu,A(s)}(t-s).
\end{align}

For $i \in\{1,2\}$, we consider the operator valued Volterra type integral equation
\begin{align}\label{e3.5}
\mathcal Q_i(t,s)=
\widetilde{\mathcal Q}_i(t,s)
+\int_s^t
\widetilde{\mathcal Q}_2(t,r)
\mathcal Q_i(r,s)\,dr.
\end{align}
We solve the integral equation \eqref{e3.5} for $\mathcal Q_i(t,s)$ by successive approximations. For $m\geq1$, set $\widetilde{\mathcal Q}_{i,1}(t,s)=\widetilde{\mathcal Q}_{i}(t,s)$, we define
$$\widetilde{\mathcal Q}_{i,m+1}(t,s)=\int_s^t \widetilde{\mathcal Q}_2(t,r)\widetilde{\mathcal Q}_{i,m}(r,s)dr,\quad \mathcal Q_i(t,s) =\sum_{m=1}^\infty \widetilde{\mathcal Q}_{i,m}(t,s). $$

\begin{lemma}[\cite{He2022}]\label{lem3.4}
Let Assumption \ref{assump2.1} hold. The operator $\tilde{\mathcal{Q}}_2 (t, s)$ and $\mathcal Q_2 (t, s)$ are continuous in the uniform operator topology for $0\leq s\leq t \leq T$.  $\mathcal Q_2 (t, s)$ is the unique solutions to integral equation \eqref{e3.5}
and
$$
\| \tilde{\mathcal{Q}}_2 (t, s)\|_{L(X_0) }\leq C (t-s)^{\vartheta-1},\quad \| \mathcal Q_2(t, s)\|_{L(X_0) }\leq C (t-s)^{\vartheta-1}.
$$
Moreover, we have
\begin{align*}
&\|\tilde{\mathcal{Q}}_2(t,s) -  \tilde{\mathcal{Q}}_2(\sigma,s)   \|_{L(X_0)}\leq C (t-\sigma)^{\beta}(\sigma-s)^{\vartheta-\beta-1},\\
&\|\mathcal Q_2(t,s) -  \mathcal Q_2(\sigma,s)\|_{L(X_0)}\leq C (t-\sigma)^{\beta}
(\sigma-s)^{\vartheta-\beta-1}
\end{align*}
for all $0\leq s<\sigma\leq t\leq T$ with  $0<\beta<\vartheta \leq1$.
\end{lemma}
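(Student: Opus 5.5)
The plan is to proceed in three stages: bound the kernel $\tilde{\mathcal Q}_2$ and its time increments directly from Lemma \ref{lem3.1}; control the Neumann series $\sum_m\tilde{\mathcal Q}_{2,m}$ by iterated Beta integrals; and then transfer the increment estimate from $\tilde{\mathcal Q}_2$ to $\mathcal Q_2$ through the Volterra equation \eqref{e3.5}.

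\textbf{Bounds and iterates.} First, Lemma \ref{lem3.1}(iv) with $t_1=t$, $t_2=s$, $\tau=s$ and time argument $t-s$ gives
$$\|\tilde{\mathcal Q}_2(t,s)\|_{L(X_0)}\le C(t-s)^{\vartheta}(t-s)^{-1}=C(t-s)^{\vartheta-1}.$$
Next I would prove by induction that
$$\|\tilde{\mathcal Q}_{2,m}(t,s)\|_{L(X_0)}\le \frac{(C\Gamma(\vartheta))^m}{\Gamma(m\vartheta)}(t-s)^{m\vartheta-1}.$$
The induction step uses $\int_s^t(t-r)^{\vartheta-1}(r-s)^{m\vartheta-1}dr=B(\vartheta,m\vartheta)(t-s)^{(m+1)\vartheta-1}$. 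Since $\sum_m (C\Gamma(\vartheta))^m T^{(m-1)\vartheta}/\Gamma(m\vartheta)<\infty$, the series defining $\mathcal Q_2$ converges absolutely. It converges uniformly on $\{t-s\ge\varepsilon\}$, and it satisfies $\|\mathcal Q_2(t,s)\|\le C(t-s)^{\vartheta-1}$.

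\textbf{Equation, uniqueness and continuity.} Summing the recursion $\tilde{\mathcal Q}_{2,m+1}=\int\tilde{\mathcal Q}_2\tilde{\mathcal Q}_{2,m}$ term by term, which is allowed by dominated convergence, shows that $\mathcal Q_2$ solves \eqref{e3.5}. For uniqueness, let $D$ be the difference of two solutions with $\|D(t,s)\|\le K(t-s)^{\vartheta-1}$. Then $D=\int_s^t\tilde{\mathcal Q}_2(t,r)D(r,s)\,dr$. Iterating this identity $m$ times gives the same Gamma-quotient bound times $K$, which tends to $0$, so $D=0$.

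Continuity of $\tilde{\mathcal Q}_2$ in the uniform operator topology for $t>s$ would follow from three facts:
\begin{itemize}
\item[(a)] the Hölder continuity \eqref{e2.4}, written as $(A(t)-A(s))\psi=[(A(t)-A(s))A(s)^{-1}]\,A(s)\psi$;
\item[(b)] the estimate $A(s)\psi_{\mu,A(s)}(\cdot)$ is norm-continuous in the time argument, by Lemma \ref{lem3.1}(vi);
\item[(c)] the corresponding continuity in the operator index $s$, by Lemma \ref{lem3.2}(ii).
\end{itemize}
Continuity of $\mathcal Q_2$ then follows from uniform convergence of continuous terms on $\{t-s\ge\varepsilon\}$. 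Each iterate is continuous because its integrand has an integrable dominating function, so dominated convergence applies.

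\textbf{Hölder increments.} This is the main step. For $s<\sigma\le t$, split the increment as
$$\tilde{\mathcal Q}_2(t,s)-\tilde{\mathcal Q}_2(\sigma,s)=-(A(t)-A(\sigma))\psi_{\mu,A(s)}(t-s)-(A(\sigma)-A(s))\bigl[\psi_{\mu,A(s)}(t-s)-\psi_{\mu,A(s)}(\sigma-s)\bigr].$$
For the first term, Lemma \ref{lem3.1}(iv) gives the bound $C(t-\sigma)^\vartheta(t-s)^{-1}$. Since $t-\sigma\le t-s$ and $\sigma-s\le t-s$, this is at most
$$C(t-\sigma)^\beta(t-s)^{\vartheta-\beta-1}\le C(t-\sigma)^\beta(\sigma-s)^{\vartheta-\beta-1}.$$
For the second term, factor through $A(s)^{-1}$ using \eqref{e2.4}, which contributes $(\sigma-s)^\vartheta$. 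Lemma \ref{lem3.1}(vi) then gives
$$C\Bigl|\tfrac1{\sigma-s}-\tfrac1{t-s}\Bigr|=\frac{C(t-\sigma)}{(\sigma-s)(t-s)}\le C(\sigma-s)^{-1}\min\Bigl\{1,\tfrac{t-\sigma}{\sigma-s}\Bigr\}\le C(t-\sigma)^\beta(\sigma-s)^{-1-\beta},$$
which yields the claimed bound.

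For $\mathcal Q_2$, subtract \eqref{e3.5} at $t$ and at $\sigma$:
$$\mathcal Q_2(t,s)-\mathcal Q_2(\sigma,s)=[\tilde{\mathcal Q}_2(t,s)-\tilde{\mathcal Q}_2(\sigma,s)]+\int_\sigma^t\tilde{\mathcal Q}_2(t,r)\mathcal Q_2(r,s)\,dr+\int_s^\sigma[\tilde{\mathcal Q}_2(t,r)-\tilde{\mathcal Q}_2(\sigma,r)]\mathcal Q_2(r,s)\,dr.$$
The first term is handled by the kernel estimate just proved. For the middle term, bound $(r-s)^{\vartheta-1}\le(\sigma-s)^{\vartheta-1}$ on $r\in(\sigma,t)$; this gives $C(t-\sigma)^\vartheta(\sigma-s)^{\vartheta-1}$, which is dominated as in the first-term estimate. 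For the last term, apply the kernel estimate with base point $r$. This gives
$$C(t-\sigma)^\beta\int_s^\sigma(\sigma-r)^{\vartheta-\beta-1}(r-s)^{\vartheta-1}dr=C(t-\sigma)^\beta(\sigma-s)^{2\vartheta-\beta-1},$$
and the integral is finite precisely because $\beta<\vartheta$.

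The delicate point I expect is the second-term interpolation in the kernel estimate, together with the integrability near $r=\sigma$ in the last integral. Both rely on the strict inequality $\beta<\vartheta$.
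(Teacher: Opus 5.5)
Your proposal is correct and takes essentially the same approach as the paper. The paper only cites this lemma from the literature, but it proves the parallel statements for $\mathcal Q_1$ (Lemmas~\ref{lem3.5} and~\ref{lem3.6}) by your scheme: kernel bound via \eqref{e2.4}, Gamma-quotient induction on the iterates with iterated uniqueness, a two-term split of the kernel increment with interpolation, and a three-term split through \eqref{e3.5}; two minor points are that the second-kernel-term interpolation needs only $\beta\le 1$ (strictness of $\beta<\vartheta$ matters only for the Beta integral near $r=\sigma$), and your bound $(r-s)^{\vartheta-1}\le(\sigma-s)^{\vartheta-1}$ on $(\sigma,t)$ validly replaces the paper's case split since here $\vartheta-1\le 0$.
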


\begin{lemma}\label{lem3.5}
Let Assumption \ref{assump2.1} hold. $\mathcal Q_1 (t, s)$ is the unique solution to integral equation \eqref{e3.5}. The following estimates hold for all $0 \le s < t \le T$
\begin{align}\label{e3.6}
\|\widetilde{\mathcal{Q}}_1(t, s)\|_{L(X_0)} \le C(t - s)^{\vartheta + \nu - \mu - 1},
\end{align}
\begin{align}\label{e3.7}
\|\mathcal{Q}_1(t,s)\|_{L(X_0)} \le C(t-s)^{\vartheta + \nu - \mu - 1}.
\end{align}
\end{lemma}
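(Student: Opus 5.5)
Our plan is to follow the standard successive-approximation argument used for $\mathcal Q_2$ in Lemma~\ref{lem3.4}, now with the first kernel $\widetilde{\mathcal Q}_1$ replaced by a less singular or more singular one depending on $\lambda$.

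\emph{Estimate \eqref{e3.6}.} Write $A(t)-A(s)=(A(t)-A(s))A(s)^{-1}\cdot A(s)$. Assumption~(A2), i.e.\ \eqref{e2.4}, gives
\[
\|(A(t)-A(s))A(s)^{-1}\|_{L(X_0)}\le C(t-s)^{\vartheta}.
\]
Lemma~\ref{lem3.3}, inequality \eqref{e3.2}, gives
\[
\|A(s)\psi_{\nu,A(s)}(t-s)\|_{L(X_0)}\le C(t-s)^{\nu-\mu-1}.
\]
Multiplying the two bounds yields $\|\widetilde{\mathcal Q}_1(t,s)\|\le C(t-s)^{\vartheta+\nu-\mu-1}$. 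This exponent exceeds $-1$ because $\vartheta>0$ and $\nu-\mu\ge 0$, so the kernel is integrable.

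\emph{Series bound.} We prove by induction on $m$ that
\[
\|\widetilde{\mathcal Q}_{1,m}(t,s)\|_{L(X_0)}\le \frac{C^m\,\Gamma(\vartheta)^{m-1}\Gamma(\vartheta+\nu-\mu)}{\Gamma(m\vartheta+\nu-\mu)}\,(t-s)^{m\vartheta+\nu-\mu-1}.
\]
The case $m=1$ is \eqref{e3.6}. For the inductive step, combine Lemma~\ref{lem3.4}, which gives $\|\widetilde{\mathcal Q}_2(t,r)\|\le C(t-r)^{\vartheta-1}$, with the inductive hypothesis. This reduces the step to the Beta integral
\[
\int_s^t (t-r)^{\vartheta-1}(r-s)^{m\vartheta+\nu-\mu-1}\,dr=B(\vartheta,m\vartheta+\nu-\mu)\,(t-s)^{(m+1)\vartheta+\nu-\mu-1}.
\]
The Gamma factors telescope, closing the induction. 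Since $(t-s)^{m\vartheta}\le T^{(m-1)\vartheta}(t-s)^{\vartheta}$, summing over $m$ gives
\[
\sum_m \frac{(CT^\vartheta\Gamma(\vartheta))^{m}}{\Gamma(m\vartheta+\nu-\mu)},
\]
which is a convergent Mittag-Leffler-type series. Hence $\mathcal Q_1=\sum_m\widetilde{\mathcal Q}_{1,m}$ converges absolutely in $L(X_0)$ for $s<t$, uniformly on sets $\{t-s\ge\varepsilon\}$, and \eqref{e3.7} follows. Termwise integration of the series shows that $\mathcal Q_1$ satisfies \eqref{e3.5}. This is legitimate because the dominating series of integrands is integrable in $r$.

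\emph{Uniqueness.} Suppose $\mathcal P$ is the difference of two solutions obeying a bound $\|\mathcal P(t,s)\|\le K(t-s)^{\vartheta+\nu-\mu-1}$, which is the natural solution class. Then
\[
\mathcal P(t,s)=\int_s^t\widetilde{\mathcal Q}_2(t,r)\mathcal P(r,s)\,dr.
\]
Iterating this identity $m$ times with the same Beta-function computation gives
\[
\|\mathcal P(t,s)\|\le K\,\frac{(C\Gamma(\vartheta))^m\,\Gamma(\vartheta+\nu-\mu)}{\Gamma((m+1)\vartheta+\nu-\mu)}\,(t-s)^{(m+1)\vartheta+\nu-\mu-1},
\]
and the right-hand side tends to $0$ as $m\to\infty$, so $\mathcal P=0$.

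The main obstacle is really only \eqref{e3.6}. It requires the sharp bound \eqref{e3.2} on $A(s)\psi_{\nu,A(s)}$, where the singularity improves from $t^{-1}$ to $t^{\lambda(1-\mu)-1}$; that bound is already in hand from Lemma~\ref{lem3.3}. The remaining work is bookkeeping of Gamma constants, with the case $\lambda=0$ reducing directly to Lemma~\ref{lem3.4}.
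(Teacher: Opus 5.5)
Your proposal is correct and follows the paper's proof essentially step for step. Both get \eqref{e3.6} by factoring through $(A(t)-A(s))A(s)^{-1}$ and using \eqref{e3.2}. Both then run an induction with Beta-function bookkeeping, bound the sum by a Mittag-Leffler-type series, and prove uniqueness by iterating the homogeneous equation within the class $\|\mathcal P(t,s)\|\le K(t-s)^{\vartheta+\nu-\mu-1}$. The only cosmetic difference is that you verify the equation by termwise integration of the dominated series, while the paper passes to the limit in the partial-sum recursion $S_{1,N}=\widetilde{\mathcal Q}_1+\int_s^t\widetilde{\mathcal Q}_2 S_{1,N-1}$ by dominated convergence; these amount to the same argument.
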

\begin{proof}
According to \eqref{e2.4} and \eqref{e3.2}, we have
\begin{align*} \|\widetilde{\mathcal{Q}}_1(t,s)\| &= \|(A(t) - A(s))\psi_{\nu,A(s)}(t - s)\| \\ &\le \|(A(t) - A(s))A(s)^{-1}\| \, \|A(s)\psi_{\nu,A(s)}(t - s)\| \\ &\le C(t - s)^{\vartheta+\nu-\mu-1} .
\end{align*}
Then we prove \eqref{e3.6}. Let $\gamma_1 := \vartheta + \nu - \mu$. Since $\vartheta > 0$ and $\nu - \mu = \lambda(1-\mu) \ge 0$, we have $\gamma_1 > 0$.
Then there exists a constant $M>0$ such that
$$\|\widetilde{\mathcal{Q}}_1(t,s)\| \le \frac{M}{\Gamma(\gamma_1)}(t - s)^{\gamma_1-1},\quad\|\widetilde{\mathcal{Q}}_2(t,s)\|_{L(X_0)} \le \frac{M}{\Gamma(\vartheta)}(t-s)^{\vartheta - 1}. $$
Since
$$\widetilde{\mathcal{Q}}_{1,1}(t,s) = \widetilde{\mathcal{Q}}_1(t,s),\quad\widetilde{\mathcal{Q}}_{1,m+1}(t,s) = \int_s^t \widetilde{\mathcal{Q}}_2(t,r) \widetilde{\mathcal{Q}}_{1,m}(r,s) dr.$$
By induction, for $m \ge 1$, we have
$$\|\widetilde{\mathcal{Q}}_{1,m}(t,s)\| \le \frac{M^m}{\Gamma((m-1)\vartheta+\gamma_1)}(t - s)^{(m-1)\vartheta+\gamma_1-1}. $$
By $\mathcal{Q}_1(t,s) = \sum_{m=1}^{\infty} \widetilde{\mathcal{Q}}_{1,m}(t,s)$, we deduce
\begin{align*}
\|\mathcal{Q}_1(t,s)\|_{L(X_0)} &\le \sum_{m=1}^{\infty} \frac{M^m (t-s)^{(m-1)\vartheta + \gamma_1 - 1}}{\Gamma((m-1)\vartheta + \gamma_1)}= M(t-s)^{\gamma_1 - 1} \sum_{k=0}^{\infty} \frac{(M(t-s)^{\vartheta})^k}{\Gamma(k\vartheta + \gamma_1)}\\
&= M(t-s)^{\gamma_1 - 1} E_{\vartheta, \gamma_1}(M(t-s)^{\vartheta}).
\end{align*}
Since $0 < t-s \le T$, there is a constant $C > 0$ such that
$$\|\mathcal{Q}_1(t,s)\|_{L(X_0)} \le C (t-s)^{\gamma_1 - 1}.$$
Hence, \eqref{e3.7} holds.
By the iterative definition, for $N \ge 2$,
we obtain
\begin{align*}
S_{1,N}(t,s) &= \sum_{m=1}^N \widetilde{\mathcal{Q}}_{1,m}(t,s)= \widetilde{\mathcal{Q}}_{1,1}(t,s) + \sum_{m=2}^N \widetilde{\mathcal{Q}}_{1,m}(t,s)\\
&= \widetilde{\mathcal{Q}}_1(t,s) + \sum_{m=1}^{N-1} \int_s^t \widetilde{\mathcal{Q}}_2(t,r) \widetilde{\mathcal{Q}}_{1,m}(r,s) dr\\
&= \widetilde{\mathcal{Q}}_1(t,s) + \int_s^t \widetilde{\mathcal{Q}}_2(t,r) S_{1,N-1}(r,s) dr.
\end{align*}
Then
$$\|S_{1,N}(r,s)\|_{L(X_0)} \le \sum_{m=1}^{\infty} \frac{M^m (r-s)^{(m-1)\vartheta + \gamma_1 - 1}}{\Gamma((m-1)\vartheta + \gamma_1)} \le C_T(r-s)^{\gamma_1 - 1}.$$
It follows from $$\int_s^t (t-r)^{\vartheta - 1}(r-s)^{\gamma_1 - 1} dr = B(\vartheta, \gamma_1)(t-s)^{\vartheta+\gamma_1-1} < \infty,$$
that
$$\|\widetilde{\mathcal{Q}}_2(t,r) S_{1,N-1}(r,s)\|_{L(X_0)} \le C_T C(t-r)^{\vartheta - 1}(r-s)^{\gamma_1 - 1},$$
which is independent of $N$. As $N \to \infty$, $S_{1,N}(t,s) \longrightarrow \mathcal{Q}_1(t,s)$ holds in $L(X_0)$. By the Lebesgue dominated convergence theorem, we obtain
$$\lim_{N \to \infty} \int_s^t \widetilde{\mathcal{Q}}_2(t,r) S_{1,N-1}(r,s) dr = \int_s^t \widetilde{\mathcal{Q}}_2(t,r) \mathcal{Q}_1(r,s) dr.$$
Therefore, it yields
$$\mathcal{Q}_1(t,s) = \widetilde{\mathcal{Q}}_1(t,s) + \int_s^t \widetilde{\mathcal{Q}}_2(t,r) \mathcal{Q}_1(r,s) dr.$$
Then we prove that $\mathcal{Q}_1(t,s)$  is a solution to the integral equation \eqref{e3.5}.

Next, we show the uniqueness. Let $\mathcal{Q}_1(t,s)$ and $\widehat{\mathcal{Q}}_1(t,s)$ be two solutions for the integral equation. Then for $C_D>0$, their difference
$D(t,s) = \mathcal{Q}_1(t,s) - \widehat{\mathcal{Q}}_1(t,s)$ satisfies
$$
\|D(t,s)\|_{L(X_0)}\leq C_D(t-s)^{\gamma_1-1}
$$
and the homogeneous Volterra equation
$D(t,s) = \int_s^t \widetilde{\mathcal{Q}}_2(t,r) D(r,s) dr$. Hence, we have
$$\|D(t,s)\|_{L(X_0)} \le \frac{C M^n}{\Gamma(n\vartheta + \gamma_1)} (t-s)^{n\vartheta + \gamma_1 - 1} \to 0 \quad \text{as} \,\, n \to \infty.$$
Then $D(t,s) \equiv 0$. Hence, we have $\mathcal{Q}_1(t,s) \equiv \widehat{\mathcal{Q}}_1(t,s)$. The proof is complete.
\end{proof}

\begin{lemma}\label{lem3.6}
Let Assumption \ref{assump2.1} hold and set
$\gamma_1 = \vartheta + \nu - \mu$. For $$\max\{0, \gamma_1 - 1\} < \beta < \vartheta,$$
there exists a constant $C_\beta > 0$ such that
\begin{align}\label{e3.8}
\|\widetilde{\mathcal{Q}}_1(t,s) - \widetilde{\mathcal{Q}}_1(\sigma,s)\|_{L(X_0)} \le C_\beta (t-\sigma)^\beta (\sigma-s)^{\gamma_1-\beta-1}
\end{align}
\begin{align}\label{e3.9}
\|\mathcal{Q}_1(t,s) - \mathcal{Q}_1(\sigma,s)\|_{L(X_0)} \le C_\beta (t-\sigma)^\beta (\sigma-s)^{\gamma_1-\beta-1}
\end{align}
for all $0 \le s < \sigma < t \le T$. If $\gamma_1 \le 1$, the above estimates hold for $0 < \beta < \vartheta$.
\end{lemma}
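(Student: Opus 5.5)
We prove \eqref{e3.8} first, by splitting the difference of $\widetilde{\mathcal Q}_1$ into a coefficient part and a kernel part, and then obtain \eqref{e3.9} from the Volterra equation \eqref{e3.5} together with Lemma \ref{lem3.4}. Write
\begin{align*}
\widetilde{\mathcal Q}_1(t,s)-\widetilde{\mathcal Q}_1(\sigma,s)
=&-(A(t)-A(\sigma))\psi_{\nu,A(s)}(t-s)\\
&-(A(\sigma)-A(s))\bigl(\psi_{\nu,A(s)}(t-s)-\psi_{\nu,A(s)}(\sigma-s)\bigr)=:J_1+J_2 .
\end{align*}

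\textbf{The term $J_1$.} By \eqref{e2.4} and \eqref{e3.2},
\[
\|J_1\|\le C(t-\sigma)^\vartheta (t-s)^{\nu-\mu-1}.
\]
We distinguish two regimes.

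If $t-\sigma\le \sigma-s$, we use $(t-\sigma)^\vartheta\le (t-\sigma)^\beta(\sigma-s)^{\vartheta-\beta}$. Since $\nu-\mu-1\le 0$, we have $(t-s)^{\nu-\mu-1}\le(\sigma-s)^{\nu-\mu-1}$. Together these give the bound $(t-\sigma)^\beta(\sigma-s)^{\gamma_1-\beta-1}$.

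If $t-\sigma>\sigma-s$, then $t-s\le 2(t-\sigma)$. We instead bound $\|J_1\|$ by the sum of $\|\widetilde{\mathcal Q}_1(t,s)\|$ and $\|\widetilde{\mathcal Q}_1(\sigma,s)\|$ via \eqref{e3.6}, which gives
\[
C(t-s)^{\gamma_1-1}+C(\sigma-s)^{\gamma_1-1}.
\]
When $\gamma_1\le1$ both terms are at most $C(\sigma-s)^{\gamma_1-1}$. Multiplying by $\bigl((t-\sigma)/(\sigma-s)\bigr)^\beta\ge1$ then yields the claim.

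When $\gamma_1>1$, the term $(t-s)^{\gamma_1-1}$ is at most $C(t-\sigma)^{\gamma_1-1}$. We use $\gamma_1-1<\beta$ to write
\[
(t-\sigma)^{\gamma_1-1}=(t-\sigma)^\beta(t-\sigma)^{\gamma_1-1-\beta}\le (t-\sigma)^\beta(\sigma-s)^{\gamma_1-1-\beta}.
\]
The last inequality holds because $\gamma_1-1-\beta<0$ and $t-\sigma>\sigma-s$. This is exactly where the restriction $\beta>\gamma_1-1$ is needed.

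In this regime it is cleanest to apply the same case split to the whole difference at once. For $t-\sigma>\sigma-s$, estimate both $\widetilde{\mathcal Q}_1$ terms separately by \eqref{e3.6} as above. For $t-\sigma\le\sigma-s$, use the splitting into $J_1+J_2$.

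\textbf{The term $J_2$ (main obstacle).} Here $t-\sigma\le \sigma-s$. By \eqref{e2.4} and Lemma \ref{lem3.3} (difference estimate, with $t_1=\sigma-s$, $t_2=t-s$),
\[
\|J_2\|\le C(\sigma-s)^\vartheta(\sigma-s)^{\nu-\mu-2}(t-\sigma).
\]
Since $t-\sigma\le\sigma-s$, we have $(t-\sigma)=(t-\sigma)^\beta(t-\sigma)^{1-\beta}\le (t-\sigma)^\beta(\sigma-s)^{1-\beta}$. This gives $C(t-\sigma)^\beta(\sigma-s)^{\gamma_1-\beta-1}$, which proves \eqref{e3.8}.

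\textbf{Passing to $\mathcal Q_1$.} From \eqref{e3.5},
\begin{align*}
\mathcal Q_1(t,s)-\mathcal Q_1(\sigma,s)
=&\,[\widetilde{\mathcal Q}_1(t,s)-\widetilde{\mathcal Q}_1(\sigma,s)]
+\int_\sigma^t\widetilde{\mathcal Q}_2(t,r)\mathcal Q_1(r,s)\,dr\\
&+\int_s^\sigma[\widetilde{\mathcal Q}_2(t,r)-\widetilde{\mathcal Q}_2(\sigma,r)]\mathcal Q_1(r,s)\,dr .
\end{align*}
We bound the three terms as follows.

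The first term is handled by \eqref{e3.8}.

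For the second, Lemma \ref{lem3.4} and \eqref{e3.7} bound it by $\int_\sigma^t (t-r)^{\vartheta-1}(r-s)^{\gamma_1-1}\,dr$, estimated separately in the two regimes.
\begin{itemize}
\item If $\gamma_1\le1$, use $(r-s)^{\gamma_1-1}\le(\sigma-s)^{\gamma_1-1}$. The integral is then at most $C(t-\sigma)^\vartheta(\sigma-s)^{\gamma_1-1}$. This is at most $C(t-\sigma)^\beta(\sigma-s)^{\gamma_1-\beta-1}$ when $t-\sigma\le\sigma-s$. In the opposite regime, bound the whole difference crudely by \eqref{e3.7}, as in the $J_1$ step.
\item If $\gamma_1>1$, use $(r-s)^{\gamma_1-1}\le (t-s)^{\gamma_1-1}$ and the same two-regime argument.
\end{itemize}

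For the third, Lemma \ref{lem3.4} gives the integrand bound $(t-\sigma)^{\beta}(\sigma-r)^{\vartheta-\beta-1}(r-s)^{\gamma_1-1}$. The resulting integral is a Beta function $B(\vartheta-\beta,\gamma_1)(\sigma-s)^{\vartheta+\gamma_1-\beta-1}$, and since $(\sigma-s)^\vartheta\le T^\vartheta$ this is at most $C(t-\sigma)^\beta(\sigma-s)^{\gamma_1-\beta-1}$. It is finite because $\beta<\vartheta$.

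Collecting the three terms gives \eqref{e3.9}. When $\gamma_1\le1$ the condition $\beta>\gamma_1-1$ is vacuous, which explains the final sentence of the statement.

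The delicate point is the bookkeeping when $\gamma_1>1$. The positive exponent $\gamma_1-1$ then cannot be dominated by $(\sigma-s)^{\gamma_1-1}$, so the two-regime argument with $\beta>\gamma_1-1$ is what makes the estimates close.
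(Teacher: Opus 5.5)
Your proof is correct and follows the paper's route. It uses the same splitting of $\widetilde{\mathcal Q}_1(t,s)-\widetilde{\mathcal Q}_1(\sigma,s)$ via \eqref{e2.4}, \eqref{e3.2} and Lemma \ref{lem3.3}, and the same three-term Volterra decomposition handled with Lemma \ref{lem3.4}, \eqref{e3.7} and a Beta integral. The only difference is organizational: in the regime $t-\sigma>\sigma-s$ you bound the whole difference crudely by \eqref{e3.6}/\eqref{e3.7}, which is legitimate (unlike your first phrasing, which bounds $\|J_1\|$ alone that way), whereas the paper bounds $I_1$ directly by $C(t-\sigma)^{\gamma_1-1}$ and treats $I_2$ for all $\sigma<t$ by interpolating the Lipschitz estimate of Lemma \ref{lem3.3} against $C(\sigma-s)^{\nu-\mu-1}$.
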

\begin{proof}
According to the definition $\widetilde{\mathcal{Q}}_1(t,s)$, we obtain
\begin{align*}
&\widetilde{\mathcal{Q}}_1(t,s) - \widetilde{\mathcal{Q}}_1(\sigma,s) \\
=& -[(A(t) - A(s))\psi_{\nu,A(s)}(t-s) - (A(\sigma) - A(s))\psi_{\nu,A(s)}(\sigma-s)] \\
=&-(A(t) - A(\sigma))\psi_{\nu,A(s)}(t-s)-(A(\sigma) - A(s))[\psi_{\nu,A(s)}(t-s) - \psi_{\nu,A(s)}(\sigma-s)]\\
=:&I_1+I_2
\end{align*}
Combining \eqref{e2.4} and \eqref{e3.2}, we deduce
\begin{align*}
\|I_1\|_{L(X_0)} = \|(A(t) - A(\sigma))A(s)^{-1} A(s)\psi_{\nu,A(s)}(t-s)\|_{L(X_0)}\le C(t-\sigma)^\vartheta (t-s)^{\nu-\mu-1}.
\end{align*}
If $t-\sigma \le \sigma-s$, then
$(t-\sigma)^\vartheta (t-s)^{\nu-\mu-1} \le C (t-\sigma)^\beta (\sigma-s)^{\gamma_1-\beta-1}.$
If $t-\sigma > \sigma-s$, then
$(t-\sigma)^\vartheta (t-s)^{\nu-\mu-1} \le C (t-\sigma)^{\gamma_1-1}.$
Since $\beta > \gamma_1 - 1$, we have $\gamma_1 - \beta - 1 < 0$. Thus, for $t-\sigma > \sigma-s$, we obtain
$(t-\sigma)^{\gamma_1-\beta-1} \le (\sigma-s)^{\gamma_1-\beta-1}$.
It follows that
\begin{align}\label{e3.10}
\|I_1\|_{L(X_0)}\leq C(t-\sigma)^\beta (\sigma-s)^{\gamma_1-\beta-1}.
\end{align}
By Lemma \ref{lem3.3}, we have
$$\|A(s) \left[ \psi_{\nu,A(s)}(t-s) - \psi_{\nu,A(s)}(\sigma-s) \right] \|_{L(X_0)} \le C (t-\sigma) (\sigma-s)^{\nu-\mu-2}.$$
On the other hand, we get
\begin{align*}
&\|A(s) \left[ \psi_{\nu,A(s)}(t-s) - \psi_{\nu,A(s)}(\sigma-s) \right] \|_{L(X_0)}\\
\le& \|A(s)\psi_{\nu,A(s)}(t-s)\|_{L(X_0)} + \|A(s)\psi_{\nu,A(s)}(\sigma-s)\|_{L(X_0)}
\le  C (\sigma-s)^{\nu-\mu-1}.
\end{align*}
Interpolating the two estimates above with parameter $\beta \in (0,1)$, we obtain
\begin{align*}
\nonumber&\|A(s) \left[ \psi_{\nu,A(s)}(t-s) - \psi_{\nu,A(s)}(\sigma-s) \right] \|_{L(X_0)}\\
\nonumber\leq&\left[ C (t-\sigma)(\sigma-s)^{\nu-\mu-2} \right]^\beta \left[ C (\sigma-s)^{\nu-\mu-1} \right]^{1-\beta}
\leq C_\beta (t-\sigma)^\beta (\sigma-s)^{\nu-\mu-\beta-1}.
\end{align*}
Together with \eqref{e2.4}, we deduce
\begin{align}\label{e3.11}
\nonumber\|I_2\|_{L(X_0)}&\leq \|(A(\sigma) - A(s))A(s)^{-1}\|_{L(X_0)} \|A(s) \left[ \psi_{\nu,A(s)}(t-s) - \psi_{\nu,A(s)}(\sigma-s) \right] \|_{L(X_0)} \\
&\leq C (\sigma-s)^\vartheta (t-\sigma)^\beta (\sigma-s)^{\nu-\mu-\beta-1}=C (t-\sigma)^\beta (\sigma-s)^{\gamma_1-\beta-1}.
\end{align}
It follows from \eqref{e3.10} and \eqref{e3.11} that \eqref{e3.8} holds.

From the Volterra equation
$$\mathcal{Q}_1(t,s) = \widetilde{\mathcal{Q}}_1(t,s) + \int_s^t \widetilde{\mathcal{Q}}_2(t,r) \mathcal{Q}_1(r,s) dr,$$
we obtain
\begin{align*}
\mathcal{Q}_1(t,s) -\mathcal{Q}_1(\sigma,s)
&= \left(\widetilde{\mathcal{Q}}_1(t,s) - \widetilde{\mathcal{Q}}_1(\sigma,s)\right)+ \int_\sigma^t \widetilde{\mathcal{Q}}_2(t,r) \mathcal{Q}_1(r,s) dr\\
&\quad+ \int_s^\sigma \left[ \widetilde{\mathcal{Q}}_2(t,r) - \widetilde{\mathcal{Q}}_2(\sigma,r) \right] \mathcal{Q}_1(r,s) dr=: J_1 + J_2 + J_3.
\end{align*}
Since $\beta > \gamma_1 - 1$, it follows from Lemma \ref{lem3.4} and \ref{lem3.5} that
\begin{align*}
\nonumber\|J_2\|_{L(X_0)} \le C \int_\sigma^t (t-r)^{\vartheta-1} (r-s)^{\gamma_1-1} dr
\end{align*}
If $t-\sigma \le \sigma-s$, then $\sigma-s \le r - s \le (\sigma-s) + (t-\sigma) \le 2(\sigma-s)$. Hence,$$\|J_2\|_{L(X_0)} \le C (t-\sigma)^\vartheta (\sigma-s)^{\gamma_1-1}\le C_T (t-\sigma)^\beta (\sigma-s)^{\gamma_1-\beta-1}.$$
If $t-\sigma > \sigma-s$, then
\begin{align*}
\|J_2\|_{L(X_0)} \le C \int_s^t (t - r)^{\vartheta-1}(r - s)^{\gamma_1-1} \, dr 
\le C (t-\sigma)^{\vartheta+\gamma_1-1}.
\end{align*}
Since $t-\sigma \le T$, $\sigma-s < t-\sigma$, and $\gamma_1 - \beta - 1 < 0$,
\begin{align*}
(t-\sigma)^{\vartheta+\gamma_1-1}
&\le T^\vartheta (t-\sigma)^{\gamma_1-1} = T^\vartheta (t-\sigma)^\beta (t-\sigma)^{\gamma_1-\beta-1} \le T^\vartheta (t-\sigma)^\beta (\sigma-s)^{\gamma_1-\beta-1}.
\end{align*}
Thus, we get
\begin{align}\label{e3.12}
\|J_2\|_{L(X_0)} \le C_{\beta,T} (t-\sigma)^\beta (\sigma-s)^{\gamma_1-\beta-1}.
\end{align}
It follows from Lemma \ref{lem3.4} that
\begin{align}\label{e3.13}
\nonumber\|J_3\|_{L(X_0)} &\le C_\beta (t-\sigma)^\beta \int_s^\sigma (\sigma-r)^{\vartheta-\beta-1} (r-s)^{\gamma_1-1} dr\\
&\le C_{\beta, T} (t-\sigma)^\beta (\sigma-s)^{\gamma_1-\beta-1}.
\end{align}
Combining \eqref{e3.8}, \eqref{e3.12} and \eqref{e3.13}, we deduce \eqref{e3.9}. The proof is complete.
\end{proof}

Define the solution operator families
$$H_{\mu, \nu}(t,s) = \psi_{\nu,A(s)}(t-s) + \int_s^t \psi_{\mu,A(r)}(t-r) \mathcal{Q}_1(r, s) dr$$
$$\mathcal{P}_{\mu}(t,s) = \psi_{\mu,A(s)}(t-s) + \int_s^t \psi_{\mu,A(r)}(t-r) \mathcal{Q}_2(r, s) dr.$$

\begin{lemma}\label{lem3.7}
For $0 \le s < t \le T$, the operator families $H_{\mu,\nu}$ and $\mathcal{P}_\mu$ are strongly continuous. There exists $C > 0$ such that
\begin{align}\label{e3.14}
\|H_{\mu,\nu}(t,s)\|_{L(X_0)} \le C(t-s)^{\nu-1},
\end{align}
\begin{align}\label{e3.15}
\|\mathcal{P}_\mu(t,s)\|_{L(X_0)} \le C(t-s)^{\mu-1}.
\end{align}
Moreover, for every $x \in X_0$, we deduce
\begin{align}\label{e3.16}
\lim_{h \to 0} h^{1-\nu} H_{\mu,\nu}(s+h, s)x = \frac{x}{\Gamma(\nu)},
\end{align}
\begin{align}\label{e3.17}
\lim_{t \to s} I_{s+}^{1-\nu} (H_{\mu,\nu}(\cdot, s)x)(t) = x.
\end{align}
\end{lemma}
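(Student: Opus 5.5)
The plan is to bound each of the two pieces in the definitions of $H_{\mu,\nu}$ and $\mathcal P_\mu$ separately, using Lemma \ref{lem3.1}(ii), Lemma \ref{lem3.2}(i) and the kernel bounds of Lemmas \ref{lem3.4}–\ref{lem3.5}. The beta-integral identity
\[\int_s^t (t-r)^{a-1}(r-s)^{b-1}dr=B(a,b)(t-s)^{a+b-1}\]
will do most of the work.

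\textbf{Step 1: the bounds \eqref{e3.14} and \eqref{e3.15}.} For \eqref{e3.14}, Lemma \ref{lem3.2}(i) gives $\|\psi_{\nu,A(s)}(t-s)\|\le C(t-s)^{\nu-1}$. For the integral term, Lemma \ref{lem3.1}(ii) together with \eqref{e3.7} gives
\[\Big\|\int_s^t\psi_{\mu,A(r)}(t-r)\mathcal Q_1(r,s)dr\Big\|\le C\int_s^t (t-r)^{\mu-1}(r-s)^{\vartheta+\nu-\mu-1}dr=CB(\mu,\gamma_1)(t-s)^{\vartheta+\nu-1}.\]
This is at most $CT^\vartheta(t-s)^{\nu-1}$. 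In the same way, $\mathcal P_\mu$ is controlled by $C(t-s)^{\mu-1}+C(t-s)^{\vartheta+\mu-1}$ using Lemma \ref{lem3.4}.

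\textbf{Step 2: strong continuity.} The term $\psi_{\cdot,A(s)}(t-s)x$ is continuous in $(t,s)$ for $t>s$. Continuity in $t$ comes from the integral representation (Lemma \ref{lem3.1}(i) and $I^{\nu-\mu}_{0+}$). Continuity in $s$ comes from the resolvent identity together with \eqref{e2.4}, which gives the operator-norm bound $\|\psi_{\mu,A(s_1)}(\tau)-\psi_{\mu,A(s_2)}(\tau)\|\le C|s_1-s_2|^\vartheta\tau^{\mu-1}$, obtained exactly as in Lemma \ref{lem3.1}(v).

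For the integral term, substitute $r=s+(t-s)\theta$ to put the integral on the fixed interval $[0,1]$. The integrand then converges pointwise, by Lemma \ref{lem3.4} and the continuity of $\mathcal Q_1$ in $t$ (Lemma \ref{lem3.6}) and in $s$ (from the series representation). It is dominated by $C(1-\theta)^{\mu-1}\theta^{\gamma_1-1}$ times a power of $(t-s)$ that stays bounded locally away from the diagonal. Dominated convergence then gives continuity.

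\textbf{Step 3: the limits \eqref{e3.16} and \eqref{e3.17}.} Write $h^{1-\nu}H_{\mu,\nu}(s+h,s)x=h^{1-\nu}\psi_{\nu,A(s)}(h)x+h^{1-\nu}R(h)x$. By Step 1, $\|h^{1-\nu}R(h)\|\le Ch^\vartheta\to0$. Lemma \ref{lem3.2}(iii) handles the first term, which gives \eqref{e3.16}.

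For \eqref{e3.17}, apply $I^{1-\nu}_{s+}$ termwise. On the leading term, translation invariance of the fractional integral and Lemma \ref{lem3.2}(iii) give $I^{1-\nu}_{0+}(\psi_{\nu,A(s)}(\cdot)x)(t-s)\to x$. On the remainder, $\|R(\tau)\|\le C(\tau-s)^{\vartheta+\nu-1}$, so
\[\|I^{1-\nu}_{s+}R(\cdot)x\|(t)\le C\frac{B(1-\nu,\vartheta+\nu)}{\Gamma(1-\nu)}(t-s)^{\vartheta}\|x\|\to0.\]
When $\nu=1$ (that is, $\lambda=1$), $I^{0}$ is the identity and the remainder bound is simply $C(t-s)^\vartheta$.

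\textbf{Main obstacle.} I expect the strong continuity in $s$ to be the delicate step, because it requires continuity of $\mathcal Q_1(r,s)$ in its second variable; Lemmas \ref{lem3.5}–\ref{lem3.6} only give continuity in the first. I would handle it by proving uniform convergence of the Neumann series $\sum_m\widetilde{\mathcal Q}_{1,m}$ on sets $\{t-s\ge\delta\}$ and continuity of each term in $s$. For $m=1$, this continuity follows from the $s$-continuity of $(A(t)-A(s))\psi_{\nu,A(s)}(t-s)$, which uses \eqref{e2.4} and Lemma \ref{lem3.3}. Near the diagonal, the integrable singularity is absorbed by the same beta-integral domination.
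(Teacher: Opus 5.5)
Your proposal is correct, and for the quantitative claims it follows the paper's argument. It uses the same splitting $H_{\mu,\nu}(t,s)=\psi_{\nu,A(s)}(t-s)+\mathscr W(t,s)$ and the same beta-integral bound $\|\mathscr W(t,s)\|_{L(X_0)}\le C(t-s)^{\vartheta+\nu-1}$. That bound drives both \eqref{e3.14} and the vanishing remainder in \eqref{e3.16}. Lemma~\ref{lem3.2}(iii) handles the leading term, and the same $B(1-\nu,\vartheta+\nu)$ computation gives \eqref{e3.17}. The paper quotes \eqref{e3.15} and the continuity of $\mathcal P_\mu$ from earlier work, whereas you derive \eqref{e3.15} from Lemma~\ref{lem3.4}; that is fine.

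The route differs only in the continuity step. The paper proves continuity in $t$ for fixed $s$ only.
\begin{itemize}
\item Norm-Lipschitz continuity of $t\mapsto\psi_{\nu,A(s)}(t-s)$ comes directly from the second estimate of Lemma~\ref{lem3.3} composed with $A(s)^{-1}$.
\item The difference $\mathscr W(t_2,s)-\mathscr W(t_1,s)$ is split into $\int_{t_1}^{t_2}\psi_{\mu,A(r)}(t_2-r)\mathcal Q_1(r,s)\,dr$ and $\int_s^{t_1}[\psi_{\mu,A(r)}(t_2-r)-\psi_{\mu,A(r)}(t_1-r)]\mathcal Q_1(r,s)\,dr$. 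The index $r$ is frozen in the second piece, so only Lemma~\ref{lem3.1}(vi) and dominated convergence are needed.
\end{itemize}
Your rescaling $r=s+(t-s)\theta$ makes the index move with $t$. It therefore also needs index continuity (Lemma~\ref{lem3.1}(v)) and Lemma~\ref{lem3.6}. It works, but uses more input.

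Your ``main obstacle'' (continuity in $s$, and hence of $\mathcal Q_1(r,\cdot)$) is never addressed by the paper, which treats ``strongly continuous'' as continuity in $t$. If you keep that extra claim, two of your cited ingredients do not give what you say.
\begin{itemize}
\item Lemma~\ref{lem3.1}(v) composed with $A(\tau)^{-1}$ yields $|s_1-s_2|^\vartheta\tau^{-1}$, not $\tau^{\mu-1}$. This is harmless away from the diagonal.
\item Lemma~\ref{lem3.3} gives no continuity of $A(\tau)\psi_{\nu,A(s)}(\rho)$ in the index $s$, yet the $m=1$ term $\widetilde{\mathcal Q}_1$ needs exactly that. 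You cannot get it by integrating Lemma~\ref{lem3.1}(v) against $(\rho-\sigma)^{b-1}$, because the resulting $\sigma^{-1}$ singularity is not integrable. You would have to redo the splitting $\psi_{\nu,A(s)}(\rho)=c_b\rho^{\nu-1}(I+\rho^\mu A(s))^{-1}+\rho^{\nu-1}\mathcal R_b(\rho,s)$ from the proof of Lemma~\ref{lem3.3}, now with a resolvent difference.
\end{itemize}
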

\begin{proof}
According to the results of \cite{He2022}, we obtain \eqref{e3.15} and $\mathcal P_\mu(t,s)$ is strongly continuous in $X_0$.
Combining Lemma \ref{lem3.1}(ii), \ref{lem3.2} (i) and \eqref{e3.7}, we deduce
\begin{align*}
\|H_{\mu,\nu}(t,s)\|_{L(X_0)}
&\leq \|\psi_{\nu,A(s)}(t-s)\|_{L(X_0)} + \int_s^t \|\psi_{\mu,A(r)}(t-r)\|_{L(X_0)} \|\mathcal{Q}_1(r,s)\|_{L(X_0)} dr\\
&\leq C(t-s)^{\nu-1} + C \int_s^t (t-r)^{\mu-1} (r-s)^{\vartheta+\nu-\mu-1} dr\leq C(t-s)^{\nu-1}.
\end{align*}
For any fixed $s \in [0, T)$, let $s < t_1 < t_2 \le T$. Since the inverse operator $A(s)^{-1} \in L(X_0)$ is uniformly bounded, it follows from Lemma \ref{lem3.3} that
\begin{align*}
&\|\psi_{\nu,A(s)}(t_2 - s) - \psi_{\nu,A(s)}(t_1 - s)\|_{L(X_0)}\\
\le& \|A(s)^{-1}\|_{L(X_0)} \|A(s) \left[ \psi_{\nu,A(s)}(t_2 - s) - \psi_{\nu,A(s)}(t_1 - s) \right] \|_{L(X_0)}\le C(t_1 - s)^{\nu-\mu-2}(t_2 - t_1).
\end{align*}
Therefore, as $t_2 \to t_1$,  we deduce that $$\psi_{\nu,A(s)}(t_2 - s) \to \psi_{\nu,A(s)}(t_1 - s)$$
holds in $L(X_0)$. Then $\psi_{\nu,A(s)}(t - s)$ is strongly continuous.

Denote the integral term by $\mathscr{W}(t,s) := \int_s^t \psi_{\mu,A(r)}(t-r)\mathcal{Q}_1(r,s) dr$.
Then we deduce
\begin{align*}
\mathscr{W}(t_2,s) - \mathscr{W}(t_1,s) \
=& \int_{t_1}^{t_2} \psi_{\mu,A(r)}(t_2-r)\mathcal{Q}_1(r,s) dr \\
&+ \int_s^{t_1} \left[ \psi_{\mu,A(r)}(t_2-r) - \psi_{\mu,A(r)}(t_1-r) \right] \mathcal{Q}_1(r,s) dr=:K_1+K_2.
\end{align*}
It follows from Lemma \ref{lem3.1}(ii) and  \eqref{e3.7} that
\begin{align*}
\|K_1\|_{L(X_0)} \le C \int_{t_1}^{t_2} (t_2-r)^{\mu-1}(r-s)^{\vartheta + \nu - \mu-1} dr\le C \int_{t_1}^{t_2} (t_2-r)^{\mu-1} dr \to 0 \quad\text{as}\,\, t_2 \to t_1.
\end{align*}
For fixed $r\in(s,t_1)$, by the uniform boundedness of $A(r)^{-1}$ and Lemma~\ref{lem3.1}(vi), we obtain
\begin{align*}
&\|\psi_{\mu,A(r)}(t_2-r)
-\psi_{\mu,A(r)}(t_1-r)\|_{L(X_0)}\\
\leq&\|A(r)^{-1}\|_{L(X_0)}
\left\|
A(r)\left[
\psi_{\mu,A(r)}(t_2-r)
-\psi_{\mu,A(r)}(t_1-r)
\right]
\right\|_{L(X_0)}\\
\leq& C\left((t_1-r)^{-1}-(t_2-r)^{-1}\right)
\to 0
\quad\text{as}\,\, t_2\to t_1.
\end{align*}
It follows from Lemma \ref{lem3.1}(ii) and \eqref{e3.7} that
\begin{align*}
\|[\psi_{\mu,A(r)}(t_2-r)-\psi_{\mu,A(r)}(t_1-r)]
\mathcal Q_1(r,s)\|_{L(X_0)}\leq C(t_1-r)^{\mu-1}(r-s)^{\vartheta + \nu - \mu-1},
\end{align*}
and
\begin{align*}
\|K_2\|_{L(X_0)} \le C \int^{t_1}_{s} (t_1-r)^{\mu-1} (r-s)^{\vartheta + \nu - \mu-1} dr.
\end{align*}
Then the function $ (t_1-r)^{\mu-1} (r-s)^{\gamma_1-1}$ is independent of $t_2$ and is integrable on $(s, t_1)$. Therefore, we have
$\|K_2\|_{L(X_0)} \to 0$ as $t_2 \to t_1$ by Lebesgue dominated convergence theorem. Finally, the operator $H_{\mu,\nu}$ is strongly continuous.

For any $x \in X_0$, we have
$$h^{1-\nu} H_{\mu,\nu}(s+h, s)x = h^{1-\nu} \psi_{\nu,A(s)}(h)x + h^{1-\nu} \left[ H_{\mu,\nu}(s+h, s) - \psi_{\nu,A(s)}(h) \right]x.$$
By Lemma \ref{lem3.2}(iii), we get
$\lim_{h \to 0} h^{1-\nu} \psi_{\nu,A(s)}(h)x = \frac{x}{\Gamma(\nu)}$. Combining with
\begin{align}\label{e3.18}
\|\mathscr{W}(t,s) \|_{L(X_0)}=\left\|\int_s^t \psi_{\mu,A(r)}(t-r) \mathcal{Q}_1(r, s) dr\right\|_{L(X_0)} \le C(t-s)^{\nu+\vartheta-1},
\end{align}
we deduce
\begin{align*}
\left\| h^{1-\nu} \mathscr{W}(s+h, s) x \right\|_{X_0} &\le h^{1-\nu} \|\mathscr{W}(s+h, s)\|_{L(X_0)} \|x\|_{X_0}\le h^{1-\nu} \cdot C h^{\nu+\vartheta-1} \|x\|_{X_0} = C h^\vartheta \|x\|_{X_0}.
\end{align*}
It follows that $C h^\vartheta \|x\|_{X_0} \to 0$ as $h \to 0$. Hence, \eqref{e3.16} holds.

For $\nu = 1$, by \eqref{e3.16}, we have
\begin{align*}
\lim_{t \to s} I_{s+}^{1-\nu}(H_{\mu,\nu}(\cdot, s)x)(t) = \lim_{t \to s} H_{\mu,1}(t, s)x = x.
\end{align*}
For $0<\nu < 1$, note that
$$I_{s+}^{1-\nu} (H_{\mu,\nu}(\cdot, s)x)(t) = I_{s+}^{1-\nu} (\psi_{\nu,A(s)}(\cdot-s)x)(t) + I_{s+}^{1-\nu} (\mathscr{W}(\cdot, s)x)(t).$$
By \eqref{e3.18}, we have
\begin{align*}
\left\| I_{s+}^{1-\nu} (\mathscr{W}(\cdot, s)x)(t) \right\|_{X_0}
&\le \frac{1}{\Gamma(1-\nu)} \int_s^t (t-r)^{-\nu} \|\mathscr{W}(r, s)x\|_{X_0} dr\\
&\le \frac{C\|x\|_{X_0}}{\Gamma(1-\nu)} \int_s^t (t-r)^{-\nu} (r-s)^{\nu+\vartheta-1} dr\le C (t-s)^\vartheta \|x\|_{X_0} \to 0 \quad\text{as} \,\, t\to s.
\end{align*}
Combining with Lemma \ref{lem3.2}(iii), we have \eqref{e3.17}. This completes the proof.
\end{proof}
\begin{lemma}\label{lem3.8}
Let Assumption \ref{assump2.1} hold. For $a \in [0, T]$ and $\tau > 0$, define
$$K_a(\tau) := A(a)\psi_{\mu,A(a)}(\tau).$$
Then, for $a, c \in [0, T]$,
$$\left\| \frac{d}{d\tau} (K_a(\tau) - K_c(\tau)) \right\|_{L(X_0)} \le C|a - c|^\vartheta \tau^{-2}.$$
Moreover, for every $\eta \in (0, 1)$ and $\tau, h > 0$ with $\tau + h \le T$, we get
\begin{align*}
\|[K_a(\tau + h) - K_c(\tau + h)] - [K_a(\tau) - K_c(\tau)]\|_{L(X_0)} \le C_\eta |a - c|^\vartheta h^\eta \tau^{-1-\eta}.
\end{align*}
\end{lemma}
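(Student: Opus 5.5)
The plan is to obtain the derivative of $K_a$ from a resolvent (Dunford) representation, and then derive the second estimate by integrating the first and interpolating.

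\emph{Representation of $K_a'$.} By Lemma~\ref{lem3.1}(i) and the Dunford integral,
$$K_a(\tau)=A(a)\psi_{\mu,A(a)}(\tau)=\tau^{\mu-1}\frac{1}{2\pi i}\int_{\mathcal C}E_{\mu,\mu}(-z\tau^\mu)\,A(a)(z-A(a))^{-1}\,dz.$$
Using $\frac{d}{d\tau}[\tau^{\mu-1}E_{\mu,\mu}(-z\tau^\mu)]=\tau^{\mu-2}E_{\mu,\mu-1}(-z\tau^\mu)$ (the same identity used in the proof of Lemma~\ref{lem3.3}), we get
$$K_a'(\tau)=\tau^{\mu-2}\frac{1}{2\pi i}\int_{\mathcal C}E_{\mu,\mu-1}(-z\tau^\mu)\,A(a)(z-A(a))^{-1}\,dz.$$
Differentiation under the integral is justified by Lemma~\ref{lem2.2}. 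Since $A(a)(z-A(a))^{-1}=-I+z(z-A(a))^{-1}$, and the constant part $-I$ integrates to zero for $\tau>0$ (the integrand is analytic and decays), we obtain
$$K_a'(\tau)-K_c'(\tau)=\tau^{\mu-2}\frac{1}{2\pi i}\int_{\mathcal C}E_{\mu,\mu-1}(-z\tau^\mu)\,z\bigl[(z-A(a))^{-1}-(z-A(c))^{-1}\bigr]dz.$$

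\emph{Estimate of the difference.} By the resolvent identity,
$$(z-A(a))^{-1}-(z-A(c))^{-1}=(z-A(a))^{-1}(A(a)-A(c))A(c)^{-1}\,A(c)(z-A(c))^{-1}.$$
Combining \eqref{e2.1}, \eqref{e2.4} and the uniform bound on $\|A(c)(z-A(c))^{-1}\|$ along $\mathcal C$, this has norm at most $C|a-c|^\vartheta(1+|z|)^{-1}$. Therefore
$$\|K_a'(\tau)-K_c'(\tau)\|\le C|a-c|^\vartheta\tau^{\mu-2}\int_{\mathcal C}\frac{|z|}{1+|z|}\,\frac{|dz|}{1+|z|\tau^\mu}.$$
This integral diverges logarithmically at infinity, and avoiding that loss is the main obstacle. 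The first thing I would try is to exploit the extra decay of $E_{\mu,\mu-1}$. Its asymptotic expansion has leading coefficient $1/\Gamma(-1)=0$, so $|E_{\mu,\mu-1}(-y)|\le C(1+|y|)^{-2}$ on the relevant sector. With that decay, substituting $\rho=|z|\tau^\mu$ makes the integral bounded by $C\tau^{-\mu}$, which gives the first claim $C|a-c|^\vartheta\tau^{-2}$. If this route failed, I would fall back to the subordination formula of Lemma~\ref{lem3.1}(i). There the difference of analytic semigroups is controlled by the standard estimate $\|A(a)T_{A(a)}(t)-A(c)T_{A(c)}(t)\|\le C|a-c|^\vartheta t^{-1}$ and its time derivative, which is $O(t^{-2})$ and follows from the same resolvent computation with $e^{-tz}$. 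The moments of $M_\mu$ from Lemma~\ref{lem2.3}(iv) are then finite for the exponents involved.

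\emph{Second estimate.} Let $D(\tau)=K_a(\tau)-K_c(\tau)$. Integrating the derivative bound gives
$$\|D(\tau+h)-D(\tau)\|\le\int_\tau^{\tau+h}C|a-c|^\vartheta s^{-2}\,ds\le C|a-c|^\vartheta h\,\tau^{-2}.$$
Lemma~\ref{lem3.2}(ii) gives $\|D(s)\|\le C|a-c|^\vartheta s^{-1}$, so the triangle inequality yields the crude bound $\|D(\tau+h)-D(\tau)\|\le 2C|a-c|^\vartheta\tau^{-1}$. Taking the geometric mean of the two bounds with weights $\eta$ and $1-\eta$ gives $C_\eta|a-c|^\vartheta h^\eta\tau^{-1-\eta}$, which is the second claim.
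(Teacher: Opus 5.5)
Your proposal is correct and is essentially the paper's argument. The paper also uses $A(a)R(z;A(a))=zR(z;A(a))-I$, the bound $C|a-c|^\vartheta|z|^{-1}$ on the resolvent difference, and $O(|\xi|^{-2})$ decay after the scaling $\xi=z\tau^\mu$; its $(\mu-1)E_{\mu,\mu}(-\xi)-\mu\xi E_{\mu,\mu}'(-\xi)$ is exactly your $E_{\mu,\mu-1}(-\xi)$, and it finishes with the same integrate-and-interpolate step using Lemma~\ref{lem3.2}(ii). The one slip is attributing the differentiation under the integral to Lemma~\ref{lem2.2}: its $(1+|z|)^{-1}$ bound, paired with the merely bounded $A(a)(z-A(a))^{-1}$, gives a non-integrable majorant. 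What actually justifies the differentiation (and the absolute convergence of the representation of $K_a$) is the improved $(1+|y|)^{-2}$ decay coming from $1/\Gamma(0)=1/\Gamma(-1)=0$.
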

\begin{proof}
It follows from definition of $\psi_{\mu,A(a)}$ that
$$K_a(\tau) = \frac{1}{2\pi i} \int_{\mathcal{C}} \tau^{\mu-1} E_{\mu,\mu}(-z\tau^\mu) A(a) R(z; A(a)) \, dz,$$
where $R(z; A(a)) = (z - A(a))^{-1}$.
Since $A(a) R(z; A(a)) = z R(z; A(a)) - I$, we get
$$K_a(\tau) - K_c(\tau) = \frac{1}{2\pi i} \int_{\mathcal{C}} z\tau^{\mu-1} E_{\mu,\mu}(-z\tau^\mu) \Delta R_{a,c}(z) \, dz,$$
where $\Delta R_{a,c}(z) = R(z; A(a)) - R(z; A(c))$. Then
\begin{align*}
\Delta R_{a,c}(z) &= R(z; A(a))[A(a) - A(c)]R(z; A(c)) = R(z; A(a))[A(a)A(c)^{-1} - I]A(c)R(z; A(c)).
\end{align*}
For $z \in \mathcal{C}$, it follows from \eqref{e2.1} and \eqref{e2.4} that
\begin{align*}
\|\Delta R_{a,c}(z)\|_{L(X_0)} &\le \|R(z; A(a))\|_{L(X_0)} \cdot \|A(a)A(c)^{-1} - I\|_{L(X_0)} \cdot \|A(c) R(z; A(c))\|_{L(X_0)}\\
&\le  \frac{C}{|z|}  \cdot (C|a - c|^\vartheta) \cdot C\le C|z|^{-1} |a - c|^\vartheta.
\end{align*}
For fixed $z$, set $g(\tau, z) = z\tau^{\mu-1} E_{\mu,\mu}(-z\tau^\mu)$,
then
$$\frac{\partial}{\partial \tau} g(\tau, z) = (\mu-1) z \tau^{\mu-2} E_{\mu,\mu}(-z\tau^\mu) - \mu z^2 \tau^{2\mu-2} E_{\mu,\mu}'(-z\tau^\mu),$$
where $E_{\mu,\mu}'$ denotes the derivative with respect to the complex argument.
By the asymptotic expansion of the Mittag-Leffler function in the sector containing $-\mathcal{C}$, we get $$E_{\mu,\mu}(-\xi) = O(|\xi|^{-2}), \quad E_{\mu,\mu}'(-\xi) = O(|\xi|^{-3})$$
as $|\xi| \to \infty$. Near the origin, both functions are analytic and bounded. Then $$G(\xi) = \frac{|(\mu - 1)\xi E_{\mu,\mu}(-\xi) - \mu\xi^2 E_{\mu,\mu}'(-\xi)|}{|\xi|}$$is bounded near $\xi = 0$ and satisfies $G(\xi) \le C|\xi|^{-2}$for large $|\xi|$. Then
$\sup_{\tau>0}\int_{\tau^\mu\mathcal C}G(\xi)\,|d\xi|<\infty$.
Hence, $G$ is integrable along every scaled contour $\tau^\mu \mathcal{C}$ uniformly for $\tau > 0$.
Note that
\begin{align*}
\frac{d}{d\tau}(K_a(\tau) - K_c(\tau))
=& \frac{1}{2\pi i} \int_{\mathcal{C}} \frac{\partial g}{\partial \tau}(\tau, z) \Delta R_{a,c}(z)dz \\
=&\frac{1}{2\pi i} \int_{\tau^\mu \mathcal{C}} \tau^{-2} \left[ (\mu-1) \xi E_{\mu,\mu}(-\xi) - \mu \xi^2 E_{\mu,\mu}'(-\xi) \right] \Delta R_{a,c}(\xi \tau^{-\mu}) \tau^{-\mu} d\xi.
\end{align*}
Then we have
\begin{align*}
\left\| \frac{d}{d\tau} (K_a(\tau) - K_c(\tau)) \right\|_{L(X_0)} \le& \frac{1}{2\pi} \int_{\tau^\mu \mathcal{C}} \tau^{-2} \left| (\mu-1) \xi E_{\mu,\mu}(-\xi) - \mu \xi^2 E_{\mu,\mu}'(-\xi) \right| \cdot C_1 |a - c|^\vartheta |\xi|^{-1} |d\xi|\\
=& \frac{C}{2\pi} |a - c|^\vartheta \tau^{-2} \int_{\tau^\mu \mathcal{C}} \frac{\left| (\mu-1) \xi E_{\mu,\mu}(-\xi) - \mu \xi^2 E_{\mu,\mu}'(-\xi) \right|}{|\xi|} |d\xi|\\
\le& |a - c|^\vartheta \tau^{-2}.
\end{align*}
It follows from Lemma \ref{lem3.2}(ii) that $\|K_a(\tau) - K_c(\tau)\|_{L(X_0)} \le C|a - c|^\vartheta \tau^{-1}$.
Then we have
\begin{align*}
&\|[K_a(\tau + h) - K_c(\tau + h)] - [K_a(\tau) - K_c(\tau)]\|_{L(X_0)} \\
\le& \int_\tau^{\tau+h} \left\| \frac{d}{dr}(K_a(r) - K_c(r)) \right\|_{L(X_0)} dr \le C|a - c|^\vartheta h \tau^{-2}. \end{align*}
Therefore, we deduce
\begin{align*}
\|[K_a(\tau + h) - K_c(\tau + h)] - [K_a(\tau) - K_c(\tau)]\|_{L(X_0)} \le C|a - c|^\vartheta \min\{\tau^{-1}, h\tau^{-2}\}.
\end{align*}
For every $\eta \in (0, 1)$,
$$\min\{\tau^{-1}, h\tau^{-2}\} \le (\tau^{-1})^{1-\eta}(h\tau^{-2})^\eta = h^\eta \tau^{-1-\eta}.$$
Then we obtain the desired result. This completes the proof.
\end{proof}
\begin{lemma}\label{lem3.9}
Let Assumption \ref{assump2.1} hold and suppose $\vartheta>b=\nu-\mu$. Fix $s\in[0,T)$. Let $\phi:(s,T]\to X_0$ be strongly measurable. Assume that for $s<r\leq T$, there is $\delta > b$ and $\alpha \in (b, 1]$ such that
$$\|\phi(r,s)\|_{X_0}\leq C(r-s)^{\delta-1},$$
and for $\varepsilon > 0$, $t, r \in [s + \varepsilon, T]$,
$$\|\phi(t, s) - \phi(r, s)\|_{X_0} \le C_\varepsilon|t - r|^\alpha.$$
Assume
$$h_s(t):=\int_s^t
\frac{\|\phi(t,s)-\phi(r,s)\|_{X_0}}{(t-r)^{b+1}}\,dr$$
is finite for $t>s$ and  $h_s\in L^1(s,T)$.
Then the integral
$$J_\phi(t, s) := \lim_{\rho \to 0} \int_s^{t-\rho} A(r)\psi_{\mu,A(r)}(t - r)\phi(r, s) \, dr$$
exists in $X_0$ for $t > s$.
Define $q_\phi(t, s) := \phi(t, s) - J_\phi(t, s)$.
Then, for $\varepsilon > 0$ and $b < \eta < \min\{\alpha, \vartheta\}$,
we have
$$q_\phi(\cdot, s) \in C^\eta([s + \varepsilon, T]; X_0).$$
\end{lemma}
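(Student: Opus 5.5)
We would follow the classical Sobolevskii–Tanabe strategy for the parabolic evolution family, adapted to the kernel $K_r(\tau)=A(r)\psi_{\mu,A(r)}(\tau)$. The starting point is the splitting
\begin{align*}
\int_s^{t-\rho} A(r)\psi_{\mu,A(r)}(t-r)\phi(r,s)\,dr
=&\int_s^{t-\rho}\bigl[K_r(t-r)-K_t(t-r)\bigr]\phi(r,s)\,dr\\
&+\int_s^{t-\rho}K_t(t-r)\bigl[\phi(r,s)-\phi(t,s)\bigr]dr
+\Bigl(\int_s^{t-\rho}K_t(t-r)\,dr\Bigr)\phi(t,s).
\end{align*}
For the existence of the limit $J_\phi(t,s)$ we treat each piece separately. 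The first integrand is bounded by Lemma~\ref{lem3.2}(ii) by $C(t-r)^{\vartheta-1}(r-s)^{\delta-1}$, which is integrable. For the third piece, Lemma~\ref{lem3.1}(viii) gives $\int_s^{t-\rho}K_t(t-r)\,dr=S_{\mu,A(t)}(t-s)-S_{\mu,A(t)}(\rho)$, and this tends to $S_{\mu,A(t)}(t-s)-I$ strongly by Lemma~\ref{lem3.1}(vii) together with strong continuity at $0$. For the middle piece, Lemma~\ref{lem3.1}(iii) bounds the integrand by $C(t-r)^{-1}\|\phi(t,s)-\phi(r,s)\|\le C(t-r)^{b}\cdot (t-r)^{-b-1}\|\phi(t,s)-\phi(r,s)\|$, so absolute integrability follows from the finiteness of $h_s(t)$. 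Consequently,
$$q_\phi(t,s)=S_{\mu,A(t)}(t-s)\phi(t,s)-\int_s^t[K_r-K_t](t-r)\phi(r,s)\,dr-\int_s^tK_t(t-r)[\phi(r,s)-\phi(t,s)]dr.$$
Here we used $\phi-(S-I)\phi=\ldots$, with the sign convention adjusted so that the $\phi(t,s)$ terms combine into $S_{\mu,A(t)}(t-s)\phi(t,s)$ up to sign.

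For the Hölder estimate on $[s+\varepsilon,T]$, we fix $s+\varepsilon\le\sigma<t\le T$ with $h=t-\sigma$ small, and estimate the increment of each term. The boundary term $S_{\mu,A(t)}(t-s)\phi(t,s)$ is handled by Lemma~\ref{lem3.1}(ix), which gives $|t-\sigma|^\vartheta$ in the operator index. For the time argument we use $\frac{d}{d\tau}S=-K$, which is bounded by $C\varepsilon^{-1}$ away from $0$, and the assumed $\alpha$-Hölder continuity of $\phi$ away from $s$. This yields order $h^{\min(\alpha,\vartheta)}$.

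For the integral terms we split at $r=\sigma-h$ (or $r=2\sigma-t$), into a near part $r\in(\sigma-h,t)$ and a far part $r\in(s,\sigma-h)$. On the near part we use the crude bounds $|K_r-K_t|(t-r)\le C(t-r)^{\vartheta-1}$ and $\|\phi(r)-\phi(t)\|\le C_\varepsilon(t-r)^\alpha$, which give $h^\vartheta$ and $h^\alpha$ contributions. On the far part, the increment of the kernel in the time variable is controlled by Lemma~\ref{lem3.8}, with the bound $|r-t|^\vartheta h^\eta(\sigma-r)^{-1-\eta}$, and by Lemma~\ref{lem3.1}(vi) for $K_t$ itself. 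The increment in the frozen index $t\to\sigma$ is controlled by Lemma~\ref{lem3.2}(ii), giving $h^\vartheta(\sigma-r)^{-1}$ multiplied by $\|\phi(r)-\phi(\sigma)\|$ or by $(r-s)^{\delta-1}$. All resulting integrals are Beta-type and converge because $\eta<\min\{\alpha,\vartheta\}$. The integral over $r\in(s,s+\varepsilon/2)$, where $\phi$ has no Hölder control, is instead handled using only the bound $(r-s)^{\delta-1}$ and the fact that $t-r\ge\varepsilon/2$ there, so that the kernels are smooth in $t$.

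The main obstacle is the middle term $\int_s^t K_t(t-r)[\phi(r,s)-\phi(t,s)]dr$, near the diagonal and near $r=s$ at once. Its increment produces a cross term $K_t(t-r)[\phi(\sigma)-\phi(t)]$, which after integration gives $(S_{\mu,A(t)}(h')-\ldots)[\phi(\sigma)-\phi(t)]$, bounded by $h^\alpha$. It also produces a term in which the Hölder modulus of $\phi$ must absorb the $(t-r)^{-1}$ singularity. Here we would first try to use $\alpha>\eta$ directly on $[s+\varepsilon/2,T]$, and to use the hypothesis $\delta>b$ together with $h_s\in L^1$ to control the region near $s$, where the regularity of $\phi$ degenerates. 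The role of the assumption $\vartheta>b$ is to guarantee that the interval $(b,\min\{\alpha,\vartheta\})$ is nonempty.
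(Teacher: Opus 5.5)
Your plan is correct and uses the same toolkit as the paper. For the existence of $J_\phi$ you use Lemma~\ref{lem3.2}(ii), Lemma~\ref{lem3.1}(iii) with $h_s(t)<\infty$, and Lemma~\ref{lem3.1}(viii). For the increments you use Lemma~\ref{lem3.1}(vi) interpolated to $C_\eta h^\eta\tau^{-1-\eta}$, Lemma~\ref{lem3.8}, Lemma~\ref{lem3.2}(ii), Lemma~\ref{lem3.1}(vii),(ix), and a cut at $s+\varepsilon/2$ where the kernels are smooth.

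The difference is the decomposition. You freeze the index at $t$ once, obtaining the closed form
\[
q_\phi(t,s)=S_{\mu,A(t)}(t-s)\phi(t,s)-\int_s^t[K_r-K_t](t-r)\phi(r,s)\,dr-\int_s^tK_t(t-r)[\phi(r,s)-\phi(t,s)]\,dr,
\]
and then difference each term separately with a near/far cut at $\sigma-h$. The paper never writes $q_\phi$ in closed form. It differences $\phi-J_\phi$ directly, freezing at $t$ on $[\sigma,t]$ ($I_{11},I_{12}$) and at $\sigma$ on $[s,\sigma]$ ($I_{21},I_{22}$). It pairs the mixed second difference of Lemma~\ref{lem3.8} with the constant vector $\phi(\sigma,s)$ and evaluates the leftover $K$-integrals exactly as $S$-differences ($M_1,M_2$). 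As a result no cut at $\sigma-h$ and no logarithm appear. Your formula has the side benefit of making the boundedness of $q_\phi$ on $[s+\varepsilon,T]$ (needed when $h$ is not small) immediate.

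Three small corrections.
\begin{enumerate}
\item Since $\frac{d}{d\tau}S_{\mu,A(t)}(\tau)=-K_t(\tau)$, one has $\int_s^{t-\rho}K_t(t-r)\,dr=S_{\mu,A(t)}(\rho)-S_{\mu,A(t)}(t-s)\to I-S_{\mu,A(t)}(t-s)$, not $S_{\mu,A(t)}(t-s)-I$. With the correct sign your formula for $q_\phi$ holds exactly, with no ``up to sign''.
\item Not every far-part integral is of Beta type. Take the index increment bounded by $h^\vartheta(\sigma-r)^{-1}$ and multiply it by $\|\phi(r,s)\|\le C(r-s)^{\delta-1}$; this gives $h^\vartheta\int_s^{\sigma-h}(\sigma-r)^{-1}(r-s)^{\delta-1}\,dr\le C_\varepsilon h^\vartheta(1+\log(T/h))$. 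That is harmless because $\eta<\vartheta$ strictly. It is, however, exactly why the cut at $\sigma-h$ is indispensable for this term: with the same bound over all of $(s,\sigma)$ the integral diverges.
\item The ``main obstacle'' in your last paragraph is not one. After your cut at $s+\varepsilon/2$, all kernels near $r=s$ are evaluated at arguments $\ge\varepsilon/2$, so only integrability of $\phi(\cdot,s)$, i.e.\ $\delta>0$, is needed there. Neither $\delta>b$ nor $h_s\in L^1$ enters the H\"older estimate; $h_s(t)<\infty$ is used only for the existence of $J_\phi$. As in the paper's proof, $\eta>b$ is never used either; that restriction matters only later, in Lemma~\ref{lem3.10}.
\end{enumerate}
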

\begin{proof}
Let $K_r(\tau) := A(r)\psi_{\mu,A(r)}(\tau)$. We first prove that $J_\phi(t, s)$ is well defined. For $\rho > 0$,
$$J_{\phi,\rho}(t, s) := \int_s^{t-\rho} K_r(t - r)\phi(r, s) \, dr.$$
Then we obtain
\begin{align*}
J_{\phi,\rho}(t, s) &= \int_s^{t-\rho} K_r(t - r)[\phi(r, s) - \phi(t, s)] \, dr \\ &\quad + \int_s^{t-\rho} [K_r(t - r) - K_t(t - r)]\phi(t, s) \, dr  + \int_s^{t-\rho} K_t(t - r)\phi(t, s)  dr.
\end{align*}
Since $b \ge 0$, we deduce
\begin{align*}
\int_s^t \|K_r(t - r)\| \|\phi(r, s) - \phi(t, s)\|_{X_0} \, dr
\le C \int_s^t \frac{\|\phi(r, s) - \phi(t, s)\|_{X_0}}{t - r} \, dr \le C T^b h_s(t) < \infty.
\end{align*}
By Lemma \ref{lem3.2}(ii), we obtain
$\|K_r(t - r) - K_t(t - r)\|_{L(X_0)} \le  C(t - r)^{\vartheta-1}$.
Hence,
$$\int_s^t \|K_r(t - r) - K_t(t - r)\| \|\phi(t, s)\|_{X_0} \, dr < \infty.$$
Then
$$\int_s^{t-\rho} K_t(t - r) \, dr = \int_\rho^{t-s} K_t(\tau) \, d\tau = S_{\mu,A(t)}(\rho) - S_{\mu,A(t)}(t - s),$$
and
$$\lim_{\rho \to 0} \int_s^{t-\rho} K_t(t - r) \, dr = I - S_{\mu,A(t)}(t - s)$$
in the strong operator topology. It follows that $J_\phi(t, s)$ exists.

We next prove the local H\"{o}lder regularity. Fix $\varepsilon > 0$, let $s + \varepsilon \le \sigma < t \le T$, $ h := t - \sigma$,
and suppose first that $0 < h \le 1$. We have
\begin{align*}
q_\phi(t, s) - q_\phi(\sigma, s)
= &\phi(t, s) - \phi(\sigma, s) - \int_\sigma^t K_r(t - r)\phi(r, s) dr-\int_s^\sigma [K_r(t - r) - K_r(\sigma - r)]\phi(r, s) dr\\
=:&\phi(t, s) - \phi(\sigma, s) - \Delta_1 - \Delta_2.
\end{align*}
By Lemma \ref{lem3.1} (viii), we have
$\int_\sigma^t K_t(t - r) dr = \int_0^h K_t(\tau) d\tau=I - S_{\mu,A(t)}(h)$.
Similarly, we deduce
$$\int_s^\sigma K_\sigma(t - r) dr = \int_{h}^{t - s} K_\sigma(\tau) d\tau = S_{\mu,A(\sigma)}(h) - S_{\mu,A(\sigma)}(t - s)$$
$$\int_s^\sigma K_\sigma(\sigma - r) dr = \int_{0}^{\sigma - s} K_\sigma(\tau) d\tau =I-S_{\mu,A(\sigma)}(\sigma - s).$$
For $\Delta_1$, we obtain
\begin{align*}
\Delta_1= &\int_\sigma^t K_r(t - r)[\phi(r, s) - \phi(t, s)]dr + \int_\sigma^t [K_r(t - r) - K_t(t - r)]\phi(t, s) dr\\
&+\int_\sigma^t K_t(t - r)\phi(t,s) dr
=:I_{11}+I_{12}+\left(I - S_{\mu,A(t)}(h)\right)\phi(t,s).
\end{align*}

For $\Delta_2$, we obtain
\begin{align*}
\Delta_2=&\int_s^\sigma [K_r(t - r) - K_r(\sigma - r)][\phi(r, s) - \phi(\sigma, s)] dr\\
&+ \int_s^\sigma \left( [K_r(t - r) - K_\sigma(t - r)] - [K_r(\sigma - r) - K_\sigma(\sigma - r)] \right) \phi(\sigma, s) dr\\
&+\left( \int_s^\sigma K_\sigma(t - r) dr - \int_s^\sigma K_\sigma(\sigma - r) dr \right) \phi(\sigma, s)\\
=:&I_{21}+I_{22}+[S_{\mu,A(\sigma)}(h) - S_{\mu,A(\sigma)}(t - s) - I + S_{\mu,A(\sigma)}(\sigma - s)]\phi(\sigma, s).
\end{align*}
Hence, it follows that
\begin{align*}
q_\phi(t, s) - q_\phi(\sigma, s) = - I_{11} - I_{12} - I_{21} - I_{22} + M_1 + M_2,
\end{align*}
where $M_1 := S_{\mu,A(t)}(h)\phi(t, s) - S_{\mu,A(\sigma)}(h)\phi(\sigma, s)$ and $M_2 := [S_{\mu,A(\sigma)}(t - s) - S_{\mu,A(\sigma)}(\sigma - s)]\phi(\sigma, s)$.
By Lemma \ref{lem3.1} (iii) and the H\"{o}lder continuity of $\phi(\cdot, s)$ on $[s + \varepsilon, T]$, we deduce
\begin{align*}
\|I_{11}\|_{X_0} \le C \int_\sigma^t (t - r)^{-1} \|\phi(r, s) - \phi(t, s)\|_{X_0} dr\le C_\varepsilon \int_\sigma^t (t - r)^{\alpha - 1} dr \le C_\varepsilon h^\alpha \le C_{\varepsilon, \eta} h^\eta.
\end{align*}
From Lemma \ref{lem3.2} (ii), we obtain
$$\|I_{12}\|_{X_0} \le C\|\phi(t, s)\|_{X_0} \int_\sigma^t (t - r)^{\vartheta - 1} dr \le C_\varepsilon h^\vartheta \le C_{\varepsilon, \eta} h^\eta.$$
Set $a := s + \frac{\varepsilon}{2}$. Since $\sigma\ge s+\varepsilon$, we have $s<a<\sigma$ and
$\sigma-r\ge\varepsilon/2$ for $r\in[s,a]$.
\begin{align*}
I_{21}=& \int_s^a [K_r(t - r) - K_r(\sigma - r)][\phi(r, s) - \phi(\sigma, s)] dr\\
&+ \int_a^\sigma [K_r(t - r) - K_r(\sigma - r)][\phi(r, s) - \phi(\sigma, s)] dr
=:J_1+J_2.
\end{align*}
For $r \in (s, a)$, we have $\sigma - r \ge \frac{\varepsilon}{2}$. Combining with $\|\phi(r,s)\|_{X_0}
\leq C(r-s)^{\delta-1}$, we obtain
$$\|J_1\|_{X_0} \le C_\varepsilon h \int_s^a (1 + (r - s)^{\delta - 1}) dr \le C_{\varepsilon,T} h \le C_{\varepsilon,T, \eta} h^\eta.$$
It follows from Lemma \ref{lem3.1}(vi) that
$\|K_r(\tau + h) - K_r(\tau)\|_{L(X_0)} \le C_\eta h^\eta \tau^{-1-\eta}$
for $0 < \eta < 1$ and $h, \tau > 0$. Then for $\eta < \alpha$, we have
\begin{align*}
\|J_2\|_{X_0} \le C_\eta h^\eta \int_a^\sigma (\sigma - r)^{-1 - \eta} \|\phi(r, s)- \phi(\sigma, s)\|_{X_0} dr\le C_{\varepsilon, \eta} h^\eta \int_a^\sigma (\sigma - r)^{\alpha - \eta - 1} dr \le C_{\varepsilon,T, \eta} h^\eta,
\end{align*}
Therefore, we have $\|I_{21}\|_{X_0} \le C_{\varepsilon, T,\eta} h^\eta$.

For $I_{22}$, it follows from Lemma \ref{lem3.8} that
\begin{align*}
&\|[K_r(t - r) - K_\sigma(t - r)] - [K_r(\sigma - r) - K_\sigma(\sigma - r)]\|_{L(X_0)} \\
 \le& C_\eta h^\eta|\sigma - r|^\vartheta(\sigma - r)^{-1-\eta} = C_\eta h^\eta(\sigma - r)^{\vartheta-\eta-1}.
\end{align*}
Since $\eta < \vartheta$, we obtain
$$\|I_{22}\|_{X_0} \le C_\varepsilon h^\eta \int_s^\sigma (\sigma - r)^{\vartheta - \eta - 1} dr = \frac{C_\varepsilon}{\vartheta - \eta} h^\eta (\sigma - s)^{\vartheta - \eta} \le C_{\varepsilon, T, \eta} h^\eta.$$
Note that
\begin{align*}
M_1 :=& S_{\mu,A(t)}(h)\phi(t, s) - S_{\mu,A(\sigma)}(h)\phi(\sigma, s)\\
=& S_{\mu,A(t)}(h)[\phi(t, s) - \phi(\sigma, s)] + [S_{\mu,A(t)}(h) - S_{\mu,A(\sigma)}(h)]\phi(\sigma, s).
\end{align*}
Combining Lemma \ref{lem3.1} (vii) and (ix), we get
$$\|M_1\|_{X_0} \le C\|\phi(t, s) - \phi(\sigma, s)\|_{X_0} + C|t - \sigma|^\vartheta \|\phi(\sigma, s)\|_{X_0} \le C_\varepsilon(h^\alpha + h^\vartheta) \le C_{\varepsilon, \eta} h^\eta.$$
Since $\sigma - s \ge \varepsilon$ and  Lemma \ref{lem3.1} (viii), we have
$$\|M_2\|_{X_0} \le \|\phi(\sigma, s)\|_{X_0} \int_{\sigma - s}^{t - s} \|K_\sigma(\tau)\|_{L(X_0)} d\tau \le C_\varepsilon \int_{\sigma - s}^{t - s} \tau^{-1} d\tau \le C_\varepsilon h \le C_{\varepsilon, \eta} h^\eta.$$
Then we conclude that
$$\|q_\phi(t, s) - q_\phi(\sigma, s)\|_{X_0} \le C_{\varepsilon, \eta} |t - \sigma|^\eta.$$
The case $h > 1$ follows from the boundedness of $q_\phi$ on $[s + \varepsilon, T]$. Hence,
$$q_\phi(\cdot, s) \in C^\eta([s + \varepsilon, T]; X_0).$$
This completes the proof.
\end{proof}

\begin{lemma}\label{lem3.10}
Under the assumptions of Lemma \ref{lem3.9}, define
$$w_\phi(t,s):=\int_s^t
\psi_{\mu,A(r)}(t-r)\phi(r,s)\,dr,
\quad t\in(s,T].$$
Then $w_\phi(t,s)$ is well defined and $\lim_{t\to s}
I_{s+}^{1-\nu}w_\phi(t,s)=0$. Moreover, for $t>s$, we have
$$I_{s+}^{1-\nu}w_\phi(\cdot,s)
\in W_{\mathrm{loc}}^{1,1}((s,T);X_0)$$
and $D_{s+}^{\lambda,\mu} w_\phi(t, s) = q_\phi(t, s)$, where $q_\phi$ is the function defined in Lemma \ref{lem3.9}, that is,
\begin{align}\label{e3.19}
D_{s+}^{\lambda,\mu}w_\phi(t,s)
=\phi(t,s)-\int_s^t
A(r)\psi_{\mu,A(r)}(t-r)\phi(r,s)\,dr.
\end{align}
\end{lemma}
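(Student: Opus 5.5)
The approach is to compute $I_{s+}^{1-\nu}w_\phi$ explicitly, using Fubini to move the fractional integral onto the kernel, and then differentiate.

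\textbf{Well-definedness and the limit at $s$.} By Lemma \ref{lem3.1}(ii) and $\|\phi(r,s)\|_{X_0}\le C(r-s)^{\delta-1}$ we get
\[
\|w_\phi(t,s)\|_{X_0}\le C\int_s^t(t-r)^{\mu-1}(r-s)^{\delta-1}dr=C(t-s)^{\mu+\delta-1},
\]
so $w_\phi$ is a Bochner integral. Applying $I^{1-\nu}_{s+}$ gives
\[
\|I^{1-\nu}_{s+}w_\phi(t,s)\|_{X_0}\le C(t-s)^{\mu+\delta-\nu}=C(t-s)^{\delta-b},
\]
and this tends to $0$ as $t\to s$ because $\delta>b$.

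\textbf{Representation of $I_{s+}^{1-\nu}w_\phi$.} By Fubini,
\[
I_{s+}^{1-\nu}w_\phi(t,s)=\int_s^t\Big[I^{1-\nu}_{0+}\psi_{\mu,A(r)}\Big](t-r)\phi(r,s)\,dr .
\]
The contour formula gives $I^{1-\nu}_{0+}\big(\tau^{\mu-1}E_{\mu,\mu}(-z\tau^\mu)\big)=\tau^{\nu-1}\cdot\tau^{\mu-\nu}\ldots$, i.e. $I^{1-\nu}_{0+}\psi_{\mu,A(r)}=\psi_{1-b,A(r)}$. This is the kernel with $E_{\mu,1-b}$ and weight $\tau^{-b}$, and for $\lambda=1$ it is simply $S_{\mu,A(r)}$. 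Next, $I^{b}_{0+}$ of this kernel equals $S_{\mu,A(r)}$, and $\frac{d}{d\tau}S_{\mu,A(r)}=-A(r)\psi_{\mu,A(r)}$ by Lemma \ref{lem3.1}(viii).

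It is cleaner to write the Hilfer derivative as $D^{\lambda,\mu}_{s+}=I^{b}_{s+}\frac{d}{dt}I^{1-\nu}_{s+}$. One first shows
\[
\frac{d}{dt}I^{1-\mu}_{s+}w_\phi(t,s)=\phi(t,s)-J_\phi(t,s),
\]
using
\[
I_{s+}^{1-\mu}w_\phi(t,s)=\int_s^t S_{\mu,A(r)}(t-r)\phi(r,s)\,dr .
\]
Formally differentiating gives $\phi(t,s)+\int_s^t \frac{d}{dt}S_{\mu,A(r)}(t-r)\phi(r,s)\,dr=\phi-J_\phi$, with the singular integral interpreted as the principal-value limit from Lemma \ref{lem3.9}. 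To justify this rigorously, I would set $v_\rho(t)=\int_s^{t-\rho}S_{\mu,A(r)}(t-r)\phi(r,s)\,dr$. Its derivative is $S_{\mu,A(t-\rho)}(\rho)\phi(t-\rho,s)-J_{\phi,\rho}(t,s)$. As $\rho\to0$, this converges to $\phi(t,s)-J_\phi(t,s)$ locally uniformly on $[s+\varepsilon,T]$. This uses Lemma \ref{lem3.1}(vii),(ix), the local Hölder continuity of $\phi$, and the uniform convergence estimates from the proof of Lemma \ref{lem3.9} (the $h_s$-bound and the bound of Lemma \ref{lem3.2}(ii)). Hence $I^{1-\mu}_{s+}w_\phi\in W^{1,1}_{\rm loc}$ with derivative $q_\phi$. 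The $L^1$-integrability near $s$ is controlled by $h_s\in L^1$ and $\|\phi\|\le C(r-s)^{\delta-1}$.

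\textbf{Passing to the Hilfer derivative.} We have $I^{1-\mu}_{s+}=I^{b}_{s+}I^{1-\nu}_{s+}$. Moreover $q_\phi\in L^1(s,T)$ and $I^{1-\mu}_{s+}w_\phi(s^+)=0$, since its size is $O((t-s)^{\delta})$. Therefore
\[
I^{1-\mu}_{s+}w_\phi=I^1_{s+}q_\phi=I^{b}_{s+}I^{1-\nu}_{s+}\, I^{\nu-\mu}\ldots
\]
More directly: set $g=\frac{d}{dt}I^{1-\nu}_{s+}w_\phi$. From $I^b g=q_\phi$, up to the vanishing initial term given by the limit in the first step, it follows that $D^{\lambda,\mu}_{s+}w_\phi=I^b g=q_\phi$, which is \eqref{e3.19}. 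The local $W^{1,1}$ property of $I^{1-\nu}_{s+}w_\phi$ is obtained by writing $I^{1-\nu}_{s+}w_\phi=D^{b}_{RL}I^{1-\mu}_{s+}w_\phi=D^b_{RL}I^1q_\phi=I^{1-b}q_\phi$, whose derivative is $D^{b}q_\phi$. This derivative exists locally because $q_\phi$ is locally $C^\eta$ with $\eta>b$ by Lemma \ref{lem3.9}, and its integrability near $s$ is the role of $h_s$.

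\textbf{Main obstacle.} The main difficulty is the rigorous differentiation under the singular kernel $A(r)\psi_{\mu,A(r)}(t-r)\sim(t-r)^{-1}$, i.e. the uniform convergence of the truncated derivatives. Here I rely fully on the decomposition already used for $J_{\phi,\rho}$ in Lemma \ref{lem3.9}.
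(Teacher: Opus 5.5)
Your proposal follows the paper's proof essentially step for step. Both go through the bound $\|w_\phi(t,s)\|_{X_0}\le C(t-s)^{\mu+\delta-1}$, Fubini to get $I^{1-\mu}_{s+}w_\phi(t,s)=\int_s^t S_{\mu,A(r)}(t-r)\phi(r,s)\,dr$, its derivative $q_\phi\in L^1(s,T;X_0)$, hence $I^{1-\nu}_{s+}w_\phi=I^{1-b}_{s+}q_\phi$, and finally Marchaud with the local $C^\eta$ bound ($\eta>b$) from Lemma~\ref{lem3.9}. The only real difference is that you prove $\frac{d}{dt}I^{1-\mu}_{s+}w_\phi=\phi-J_\phi$ yourself, via the truncations $v_\rho$ and locally uniform convergence of $v_\rho'$, which is sound; the paper instead quotes \cite[Lemma~4]{He2022}.

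Both arguments then apply $I^{b}_{s+}$ to $\frac{d}{dt}I^{1-\nu}_{s+}w_\phi$, although the Marchaud bounds only give integrability away from $s$. Your remark that integrability near $s$ is ``the role of $h_s$'' becomes a proof if you rerun your truncation argument with the kernel $I^{1-\nu}_{0+}\psi_{\mu,A(r)}=\psi_{1-b,A(r)}$ from your aside. (That kernel reduces to $S_{\mu,A(r)}$ for $\lambda=0$, not $\lambda=1$.) This yields $\bigl\|\tfrac{d}{dt}I^{1-\nu}_{s+}w_\phi(t,s)\bigr\|_{X_0}\le C\bigl[h_s(t)+(t-s)^{\delta-b-1}\bigr]$, which is integrable on $(s,T)$.
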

\begin{proof}
According to Lemma \ref{lem3.1} (ii), we have
\begin{align}\label{e3.20}
\|w_\phi(t,s)\|_{X_0}
\leq
C\int_s^t
(t-r)^{\mu-1}(r-s)^{\delta-1}\,dr \leq C(t-s)^{\mu+\delta-1}.
\end{align}
Since $\delta>0$, the integral is finite. This proves that $w_\phi(t,s)$ is well defined.

We first consider $0 < \nu < 1$. It follows from \eqref{e3.20} that
\begin{align*}
\left\|
I_{s+}^{1-\nu}w_\phi(t,s)
\right\|_{X_0}
\leq
\frac{C}{\Gamma(1-\nu)}
\int_s^t
(t-\tau)^{-\nu}
(\tau-s)^{\mu+\delta-1}\,d\tau  \leq C(t-s)^{\mu+\delta-\nu}.
\end{align*}
Since $\mu+\delta-\nu>0$, we obtain
$$\lim_{t\to s}
I_{s+}^{1-\nu}w_\phi(t,s)=0.$$
If $\nu = 1$, then $I_{s+}^{1-\nu} = I$. Since $\mu + \delta - 1 = \delta - (1 - \mu) > 0$, combining with \eqref{e3.20}, we have
$$\|w_\phi(t, s)\|_{X_0} \le C(t - s)^{\mu+\delta-1} \to 0 \quad \text{as}\,\, t\to s.$$
Since $I_{0+}^{1-\mu}
\psi_{\mu,A(r)}(\tau)=S_{\mu,A(r)}(\tau)$.
It follows from Fubini's theorem that
\begin{align*}
\nonumber I_{s+}^{1-\mu} w_\phi(t,s)
\nonumber&= \frac{1}{\Gamma(1-\mu)} \int_s^t (t-\tau)^{-\mu} w_\phi(\tau,s) \, d\tau \\
\nonumber&= \int_s^t \left( \frac{1}{\Gamma(1-\mu)} \int_r^t (t-\tau)^{-\mu} \psi_{\mu,A(r)}(\tau-r) \, d\tau \right) \phi(r,s) \, dr \\
&= \int_s^t S_{\mu,A(r)}(t-r)\phi(r,s) \, dr.
\end{align*}
Moreover, we have
$\|I_{s+}^{1-\mu}w_\phi(t, s)\|_{X_0} \le CB(1 - \mu, \mu + \delta)(t - s)^\delta$ and $\lim_{t \to s} I_{s+}^{1-\mu}w_\phi(t, s) = 0$.
Since
$$\int_s^t \frac{\|\phi(t, s) - \phi(r, s)\|_{X_0}}{t - r} \, dr \le T^b h_s(t)$$
and $h_s \in L^1(s, T)$,
according to \cite[Lemma 4]{He2022}, we get
\begin{align}\label{e3.21}
{}^{RL}D_{s+}^{\mu}w_\phi(t,s)&=\frac{d}{dt}I_{s+}^{1-\mu}w_\phi(t,s)=\phi(t,s)-\int_s^t
A(r)\psi_{\mu,A(r)}(t-r)\phi(r,s)\,dr=q_\phi(t, s).
\end{align}
By Lemma \ref{lem3.9}, we have
\begin{align*}
\|q_\phi(t, s)\|_{X_0} &\le \|\phi(t, s)\|_{X_0} + \|J_\phi(t, s)\|_{X_0} \le C \left[ h_s(t) + (1 + (t - s)^\vartheta) \|\phi(t, s)\|_{X_0} \right].
\end{align*}
Since $h_s \in L^1(s, T)$, $\delta > 0$, and $\|\phi(t, s)\|_{X_0} \le C(t - s)^{\delta-1}$, we obtain $q_\phi(\cdot, s) \in L^1(s, T; X_0)$.
Since $\lim_{t \to s} I_{s+}^{1-\mu} w_\phi(t, s) = 0$, we deduce
$w_\phi(\cdot, s) = I_{s+}^\mu q_\phi(\cdot, s)$.
Then we have
$$I_{s+}^{1-\nu} w_\phi = I_{s+}^{1-\nu} I_{s+}^\mu q_\phi = I_{s+}^{1-b} q_\phi.$$
Next, we prove $ I_{s+}^{1-b} q_\phi \in W_{\mathrm{loc}}^{1,1}(s, T; X_0)$. If $b = 0$, then $I_{s+}^{1-b} q_\phi = I_{s+}^1 q_\phi \in W^{1,1}(s, T; X_0)$.
For $0 < b < 1$, fix $\varepsilon > 0$, according to Lemma \ref{lem3.9}, for $b < \eta < \min\{\alpha, \vartheta\}$, we have
$$q_\phi(\cdot, s) \in C^\eta([s + \varepsilon, T]; X_0).$$
Fix $\varepsilon>0$ and let $t\in[s+2\varepsilon,T]$. Since $q_\phi(\cdot,s)\in L^1(s,T;X_0)$ and $q_\phi(\cdot,s)\in C^\eta([s+\varepsilon,T];X_0)$ for some $\eta>b$, it follows from the Marchaud representation of the left-sided Riemann–Liouville derivative that
\begin{align*}
\frac{d}{dt} I_{s+}^{1-b} q_\phi(t, s) = \frac{q_\phi(t, s)}{\Gamma(1 - b)(t - s)^b}  + \frac{b}{\Gamma(1 - b)} \int_s^t \frac{q_\phi(t, s) - q_\phi(r, s)}{(t - r)^{b+1}} \, dr.
\end{align*}
Let $q(t) := q_\phi(t, s)$. For $t \in [s + 2\varepsilon, T]$, since $(t - s)^{-b} \le (2\varepsilon)^{-b}$ and  $q_\phi(t, s)$ is continuous and bounded, we have $\left\| \frac{q(t)}{\Gamma(1-b)(t - s)^b} \right\|_{X_0} \le C_\varepsilon$.
Therefore, this term belongs to $L^1$ on any interval separated from $s$. Let
$$\int_s^t \frac{\|q(t) - q(r)\|_{X_0}}{(t - r)^{b+1}} dr = \int_s^{s+\varepsilon} \frac{\|q(t) - q(r)\|_{X_0}}{(t - r)^{b+1}} dr + \int_{s+\varepsilon}^t \frac{\|q(t) - q(r)\|_{X_0}}{(t - r)^{b+1}} dr =: J_{1}(t) + J_{2}(t).$$

For the part away from $r=t$, when $r \in (s, s+\varepsilon)$ and $t \ge s+2\varepsilon$, we have $t - r \ge \varepsilon$. Thus,
$$J_{1}(t) \le \varepsilon^{-b-1} \int_s^{s+\varepsilon} (\|q(t)\|_{X_0} + \|q(r)\|_{X_0}) dr \le \varepsilon^{-b} \|q(t)\|_{X_0} + \varepsilon^{-b-1} \int_s^{s+\varepsilon} \|q(r)\|_{X_0} dr.$$
Since $q \in L^1(s, T; X_0)$ and $q(t)$ is H\"{o}lder continuous on $[s + 2\varepsilon, T]$, $J_{1}(t)$ is bounded.

For the part near $r=t$, when $r \in [s+\varepsilon, t]$, both $r$ and $t$ are bounded away from the initial point $s$. From Lemma \ref{lem3.9}, we obtain
$$J_{2}(t) \le C_\varepsilon \int_{s+\varepsilon}^t \frac{(t - r)^\eta}{(t - r)^{b+1}} dr = C_\varepsilon \int_{s+\varepsilon}^t (t - r)^{\eta - b - 1} dr = \frac{C_\varepsilon}{\eta - b} (t - s - \varepsilon)^{\eta - b}.$$
Since $\eta > b$, $J_{2}(t)$ is bounded.
Combining the above estimates, $\frac{d}{dt} I_{s+}^{1-b} q(t)$ exists and belongs to $L^1$ on $[s + 2\varepsilon, T]$. Since $\varepsilon > 0$ is arbitrary, we have
$$I_{s+}^{1-b} q_\phi(\cdot, s) \in W_{\text{loc}}^{1,1}((s, T]; X_0).$$
Finally, since
$\lim_{t \to s} I_{s+}^{1-b} q_\phi(t, s) = \lim_{t \to s} I_{s+}^{1-\nu} w_\phi(t, s) = 0$. For $b > 0$, we get
$$I_{s+}^b \frac{d}{dt} I_{s+}^{1-b} q_\phi(t, s) = q_\phi(t, s).$$
Hence, we obtin
\begin{align*}
D_{s+}^{\lambda,\mu} w_\phi(t, s) = I_{s+}^b \frac{d}{dt} I_{s+}^{1-\nu} w_\phi(t, s) = I_{s+}^b \frac{d}{dt} I_{s+}^{1-b} q_\phi(t, s) = q_\phi(t, s).
\end{align*}
For $b = 0$, we have the same conclusion. It follows from \eqref{e3.21} that
\begin{align*}
D_{s+}^{\lambda,\mu} w_\phi(t, s)= \phi(t, s) - \int_s^t A(r)\psi_{\mu,A(r)}(t - r)\phi(r, s) \, dr.
\end{align*}
This completes the proof.
\end{proof}

\begin{lemma}\label{lem3.11}
Let Assumption \ref{assump2.1} hold, let $b := \nu - \mu = \lambda(1 - \mu)$, and assume that $b < \min\{\rho, \vartheta\}$, $f \in C^\rho([0, T]; X_0)$, $u_0 \in X_0$ with $\rho \in (0, 1]$. Define
$$\phi_1(r) := \mathcal{Q}_1(r, 0)u_0, \quad \phi_2(r) := f(r), \quad \phi_3(r) := g(r) := \int_0^r \mathcal{Q}_2(r, \xi)f(\xi) \, d\xi.$$
Then each function $\phi_i(i = 1, 2, 3)$ satisfies the hypotheses of Lemma \ref{lem3.9} with $s = 0$. In particular, for each $i$ and $0 < r \le T$, there exist $\delta_i > b$ and $\alpha_i > b$ such that$$\|\phi_i(r)\|_{X_0} \le Cr^{\delta_i-1} ,$$
$\phi_i$ is locally $\alpha_i$-H\"{o}lder continuous on $(0, T]$ and
$h_i(t) := \int_0^t \frac{\|\phi_i(t) - \phi_i(r)\|_{X_0}}{(t - r)^{b+1}} \, dr$ is finite for every $t > 0$ and belongs to $L^1(0, T)$.
\end{lemma}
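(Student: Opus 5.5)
We treat the three functions separately. For each we verify three things: a power bound near $0$, local Hölder continuity on $[\varepsilon,T]$, and integrability of $h_i$. For the last one we split
$$h_i(t)=\int_0^{t/2}\frac{\|\phi_i(t)-\phi_i(r)\|_{X_0}}{(t-r)^{b+1}}dr+\int_{t/2}^t\frac{\|\phi_i(t)-\phi_i(r)\|_{X_0}}{(t-r)^{b+1}}dr=:h_i^{(1)}(t)+h_i^{(2)}(t).$$
On $(0,t/2)$ we have $t-r\ge t/2$, so $h_i^{(1)}(t)\le Ct^{-b-1}\big(t\|\phi_i(t)\|_{X_0}+\int_0^{t/2}\|\phi_i(r)\|_{X_0}dr\big)\le Ct^{\delta_i-b-1}$. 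This is integrable on $(0,T)$ because $\delta_i>b$.

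For the second piece we use a \emph{scaled} Hölder estimate of the form
$$\|\phi_i(t)-\phi_i(r)\|_{X_0}\le C(t-r)^{\beta}r^{\delta_i-\beta-1},\qquad t/2\le r<t,$$
with some $\beta>b$. This gives $h_i^{(2)}(t)\le Ct^{\delta_i-\beta-1}\int_{t/2}^t(t-r)^{\beta-b-1}dr\le Ct^{\delta_i-b-1}$, which is again integrable. The same scaled estimate yields local $\beta$-Hölder continuity on $[\varepsilon,T]$, so we may take $\alpha_i=\beta$.

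\emph{Case $\phi_2=f$.} Take $\delta_2=1$ and $\alpha_2=\rho>b$. Hölder continuity on $[0,T]$ gives $h_2(t)\le C\int_0^t(t-r)^{\rho-b-1}dr\le Ct^{\rho-b}$, with no splitting needed.

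\emph{Case $\phi_1=\mathcal Q_1(\cdot,0)u_0$.} By \eqref{e3.7} we get $\delta_1=\gamma_1=\vartheta+b$, and $\delta_1>b$ since $\vartheta>0$. The scaled estimate is exactly \eqref{e3.9} with $s=0$, for any $\beta\in(\max\{0,\gamma_1-1\},\vartheta)$. We must also have $\beta>b$. This is possible because $b<\vartheta$ by hypothesis, and $\gamma_1-1<\vartheta$ since $b<1$. So we take $\beta=\alpha_1\in(\max\{b,\gamma_1-1\},\vartheta)$.

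\emph{Case $\phi_3=g$.} Lemma \ref{lem3.4} gives $\|g(r)\|_{X_0}\le C\|f\|_\infty r^{\vartheta}$, so $\delta_3=1+\vartheta$. For the Hölder estimate we write
$$g(t)-g(\sigma)=\int_\sigma^t\mathcal Q_2(t,\xi)f(\xi)d\xi+\int_0^\sigma[\mathcal Q_2(t,\xi)-\mathcal Q_2(\sigma,\xi)]f(\xi)d\xi.$$
The first integral is bounded by $C(t-\sigma)^\vartheta$. For the second, Lemma \ref{lem3.4} with $\beta<\vartheta$ gives the bound $C(t-\sigma)^\beta\int_0^\sigma(\sigma-\xi)^{\vartheta-\beta-1}d\xi\le C(t-\sigma)^\beta$. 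Hence $g\in C^\beta([0,T];X_0)$ globally, and choosing $\beta=\alpha_3\in(b,\vartheta)$ gives $h_3(t)\le Ct^{\beta-b}$.

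The step needing most care is $\phi_1$. It is the only function that is singular at $0$ when $\gamma_1<1$, so it is the only place where the $t/2$ splitting and the precise weight in \eqref{e3.9} are essential. Checking that the admissible window $(\max\{b,\gamma_1-1\},\vartheta)$ for $\beta$ is nonempty is where the hypothesis $b<\vartheta$ enters. Strong measurability of each $\phi_i$ follows from the continuity in $r>0$ established above.
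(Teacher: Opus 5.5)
Your proposal is correct and follows essentially the same route as the paper. It treats the same three cases with the same tools: \eqref{e3.7} and \eqref{e3.9} with $\max\{b,\gamma_1-1\}<\beta<\vartheta$ for $\phi_1$, the $C^\rho$ bound for $f$, and Lemma \ref{lem3.4} for $g$. The one cosmetic difference is your splitting of $h_i$ at $t/2$, which is unnecessary: \eqref{e3.9} holds for all $0<r<t$, so the paper integrates it over all of $(0,t)$ and gets $h_1(t)\le C_\beta B(\beta-b,\gamma_1-\beta)\,t^{\vartheta-1}\|u_0\|_{X_0}$ directly.
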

\begin{proof}
Set $\gamma_1 := \vartheta + \nu - \mu = \vartheta + b$.

\textbf{Step 1: } Consider the function $\phi_1(r) = \mathcal{Q}_1(r, 0)u_0$. By Lemma \ref{lem3.5},
$$\|\mathcal{Q}_1(r, 0)\|_{L(X_0)} \le Cr^{\gamma_1-1}.$$
Then we get
$\|\phi_1(r)\|_{X_0} \le Cr^{\gamma_1-1}\|u_0\|_{X_0}.$
For $\delta_1 = \gamma_1 = \vartheta + b > b$, then $b < \vartheta$ and $\gamma_1 - 1 = \vartheta + b - 1 < \vartheta$. Choose $\beta$ such that $\max\{b, \gamma_1 - 1, 0\} < \beta < \vartheta$. By Lemma \ref{lem3.6}, for $0 < r < t \le T$,
\begin{align*}
\|\phi_1(t) - \phi_1(r)\|_{X_0} &\le \|\mathcal{Q}_1(t, 0) - \mathcal{Q}_1(r, 0)\|_{L(X_0)}\|u_0\|_{X_0} \le C_\beta(t - r)^\beta r^{\gamma_1-\beta-1}\|u_0\|_{X_0}.
\end{align*}
For every $\varepsilon > 0$, $r^{\gamma_1-\beta-1}$ is bounded on $[\varepsilon, T]$. Hence, $\phi_1 \in C^\beta([\varepsilon, T]; X_0)$.
Moreover,
\begin{align*}
h_1(t) &\le C_\beta\|u_0\|_{X_0} \int_0^t (t - r)^{\beta-b-1} r^{\gamma_1-\beta-1} \, dr \\ &= C_\beta B(\beta - b, \gamma_1 - \beta) t^{\gamma_1-b-1} \|u_0\|_{X_0}\le C_\beta t^{\vartheta-1} \|u_0\|_{X_0}.
\end{align*}
Since $\vartheta > 0$, we deduce $h_1 \in L^1(0, T)$. Thus, $\phi_1$ satisfies all the hypotheses of Lemma \ref{lem3.9}.

\textbf{Step 2:} Consider the function $\phi_2 = f$.

Since $f \in C^\rho([0, T]; X_0)$, we have $\|\phi_2(r)\|_{X_0} = \|f(r)\|_{X_0} \le \|f\|_{C([0, T]; X_0)}$ and $\delta_2 = 1 > b$. Combining with
$$\|f(t) - f(r)\|_{X_0} \le [f]_{C^\rho([0, T]; X_0)} |t - r|^\rho,$$
we deduce
$$h_2(t) \le [f]_{C^\rho} \int_0^t (t - r)^{\rho - b - 1} dr = \frac{[f]_{C^\rho}}{\rho - b} t^{\rho - b}.$$
Since $\rho > b$, $h_2 \in L^1(0, T)$.
Then $\phi_2$ satisfies the hypotheses of Lemma \ref{lem3.9} with $\alpha_2 = \rho$ and $\delta_2 = 1$.

\textbf{Step 3:} Consider the function $\phi_3(r)$.

By Lemma \ref{lem3.4}, we deduce
$\|\mathcal{Q}_2(r, \xi)\|_{L(X_0)} \le C(r - \xi)^{\vartheta - 1}$.
Hence, we obtain
$$\|g(r)\|_{X_0} \le C\|f\|_{C([0, T]; X_0)} \int_0^r (r - \xi)^{\vartheta - 1} d\xi \le C r^\vartheta \|f\|_{C([0, T]; X_0)}$$
and $\delta_3 = 1 + \vartheta > b$.
Choose any $b < \beta < \vartheta$, for $0 < r < t \le T$, let
$$g(t) - g(r) = \int_r^t \mathcal{Q}_2(t, \xi)f(\xi) d\xi + \int_0^r [\mathcal{Q}_2(t, \xi) - \mathcal{Q}_2(r, \xi)]f(\xi) d\xi.$$
It follows from Lemma \ref{lem3.4} that
$$\left\| \int_r^t \mathcal{Q}_2(t, \xi)f(\xi) d\xi \right\|_{X_0} \le C\|f\|_{C([0, T]; X_0)} \int_r^t (t - \xi)^{\vartheta - 1} d\xi \le C(t - r)^\vartheta \|f\|_{C([0, T]; X_0)}$$
and
\begin{align*}
\left\| \int_0^r [\mathcal{Q}_2(t, \xi) - \mathcal{Q}_2(r, \xi)]f(\xi) d\xi \right\|_{X_0} &\le C_\beta (t - r)^\beta \|f\|_{C([0, T]; X_0)} \int_0^r (r - \xi)^{\vartheta - \beta - 1} d\xi\\
&\le C_\beta (t - r)^\beta r^{\vartheta - \beta} \|f\|_{C([0, T]; X_0)}.
\end{align*}
Hence, $$\|g(t) - g(r)\|_{X_0} \le C\|f\|_{C([0, T]; X_0)} \left[ (t - r)^\vartheta + (t - r)^\beta r^{\vartheta - \beta} \right].$$
In particular, $g$ is locally $\beta$-H\"{o}lder continuous on $(0, T]$.
Then we deduce
\begin{align*}
h_3(t) &\le C\|f\|_{C([0, T]; X_0)}\left( \int_0^t (t - r)^{\vartheta - b - 1} dr +  \int_0^t (t - r)^{\beta - b - 1} r^{\vartheta - \beta} dr\right)
\le C_\beta t^{\vartheta - b} \|f\|_{C([0, T]; X_0)}.
\end{align*}
Since $\vartheta > b$, $h_3 \in L^1(0, T)$. Then $\phi_3$ satisfies the hypotheses of Lemma \ref{lem3.9}. The proof is complete.
\end{proof}

\begin{lemma}\label{lem3.12}
Let Assumption \ref{assump2.1} hold. Fix $s \in [0, T)$, and let $\phi : (s, T] \to X_0$ satisfy the hypotheses of Lemma \ref{lem3.9}. For $t \in (s, T]$, define
$$w_\phi(t, s) := \int_s^t \psi_{\mu, A(r)}(t - r)\phi(r, s) dr.$$
Then for $t > s$, we get $w_\phi(t, s) \in D(A(t))$ and $$ A(t)w_\phi(t, s) =B_\phi(t, s) + J_\phi(t, s),$$
where $B_\phi(t, s) := \int_s^t [A(t) - A(r)]\psi_{\mu,A(r)}(t - r)\phi(r, s) dr$ is absolutely convergent and  $J_\phi(t, s)$ is defined in Lemma \ref{lem3.9}.
Furthermore,
$$A(\cdot)w_\phi(\cdot, s) \in C((s, T]; X_0).$$
\end{lemma}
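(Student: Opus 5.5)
The plan is to approximate $w_\phi(t,s)$ by truncated integrals and use closedness of $A(t)$. For $\rho>0$ small, set
$$w_{\phi,\rho}(t,s):=\int_s^{t-\rho}\psi_{\mu,A(r)}(t-r)\phi(r,s)\,dr.$$
For $r\le t-\rho$, Lemma \ref{lem3.1}(iii) and the uniform equivalence of graph norms in $(A1)$ give that $\psi_{\mu,A(r)}(t-r)$ maps $X_0$ into $X_1=D(A(t))$. The bound is $\|A(t)\psi_{\mu,A(r)}(t-r)\|\le \|A(t)A(r)^{-1}\|\,\|A(r)\psi_{\mu,A(r)}(t-r)\|\le C(t-r)^{-1}\le C\rho^{-1}$. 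Since $\|\phi(r,s)\|\le C(r-s)^{\delta-1}$ is integrable, the truncated integral is a Bochner integral in $X_1$. Hence $w_{\phi,\rho}(t,s)\in D(A(t))$ and
$$A(t)w_{\phi,\rho}(t,s)=\int_s^{t-\rho}[A(t)-A(r)]\psi_{\mu,A(r)}(t-r)\phi(r,s)\,dr+J_{\phi,\rho}(t,s),$$
where $J_{\phi,\rho}$ is the truncation from the proof of Lemma \ref{lem3.9}.

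Next I let $\rho\to0$. By \eqref{e3.20}, $w_{\phi,\rho}(t,s)\to w_\phi(t,s)$ in $X_0$. For the first term, Lemma \ref{lem3.1}(iv) (with $t_1=t$, $t_2=r$, $\tau=r$) gives
$$\|[A(t)-A(r)]\psi_{\mu,A(r)}(t-r)\|\le C(t-r)^{\vartheta-1},$$
so the integrand is dominated by $C(t-r)^{\vartheta-1}(r-s)^{\delta-1}$. This is integrable on $(s,t)$, so $B_\phi(t,s)$ converges absolutely. By Lemma \ref{lem3.9}, $J_{\phi,\rho}(t,s)\to J_\phi(t,s)$. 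Closedness of $A(t)$ then yields $w_\phi(t,s)\in D(A(t))$ and $A(t)w_\phi=B_\phi+J_\phi$.

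For continuity on $(s,T]$, I write $A(\cdot)w_\phi(\cdot,s)=\phi(\cdot,s)-q_\phi(\cdot,s)+B_\phi(\cdot,s)$, using $J_\phi=\phi-q_\phi$.

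\begin{itemize}
\item $\phi(\cdot,s)$ is locally Hölder continuous on $(s,T]$ by hypothesis.
\item $q_\phi(\cdot,s)\in C^\eta([s+\varepsilon,T];X_0)$ for every $\varepsilon>0$ by Lemma \ref{lem3.9}.
\end{itemize}

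So only $B_\phi$ needs work, and this is the main obstacle. Fix $s+\varepsilon\le\sigma<t$ and write
$$B_\phi(t,s)-B_\phi(\sigma,s)=\int_\sigma^t[A(t)-A(r)]\psi_{\mu,A(r)}(t-r)\phi\,dr+\int_s^\sigma[A(t)-A(\sigma)]\psi_{\mu,A(r)}(t-r)\phi\,dr+\int_s^\sigma[A(\sigma)-A(r)]\bigl(\psi_{\mu,A(r)}(t-r)-\psi_{\mu,A(r)}(\sigma-r)\bigr)\phi\,dr.$$
I bound the three terms as follows.

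\begin{itemize}
\item The first term is at most $C_\varepsilon(t-\sigma)^\vartheta$, since $\phi$ is bounded on $[s+\varepsilon,T]$.
\item The second term is at most $C|t-\sigma|^\vartheta\int_s^\sigma(t-r)^{-1}(r-s)^{\delta-1}dr$. The integral is $O(|\log(t-\sigma)|+1)$, so the term tends to $0$.
\item For the third term, I factor through $A(r)^{-1}$ and use \eqref{e2.4}. Lemma \ref{lem3.1}(vi) interpolated with (iii) gives the bound $C_\eta(t-\sigma)^\eta(\sigma-r)^{\vartheta-1-\eta}$ for $\eta<\vartheta$, uniformly on $\sigma\ge s+\varepsilon$. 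Splitting at $r=s+\varepsilon/2$ handles the singularity of $\phi$ at $s$: away from $\sigma$ the kernels are bounded.
\end{itemize}

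For $\sigma>t$ the argument is symmetric. Hence $B_\phi(\cdot,s)$ is continuous on $(s,T]$, and so is $A(\cdot)w_\phi(\cdot,s)$.
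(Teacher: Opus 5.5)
Your proof is correct. The first half is the paper's argument: truncate at $t-\rho$, bound the kernel by $\|[A(t)-A(r)]\psi_{\mu,A(r)}(t-r)\|_{L(X_0)}\le C(t-r)^{\vartheta-1}$, get convergence of the truncated $J$ from Lemma~\ref{lem3.9}, and conclude by closedness of $A(t)$. Treating the truncated integral directly as an $X_1$-valued Bochner integral is only a cosmetic variant. You also cite the right item, Lemma~\ref{lem3.1}(iv), for the kernel bound; the paper points to (vi) there.

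Where you genuinely differ is the continuity of $B_\phi(\cdot,s)$. The paper keeps $L(t,r)=[A(t)-A(r)]\psi_{\mu,A(r)}(t-r)$ as a single kernel and cuts $\int_s^\sigma[L(t,r)-L(\sigma,r)]\phi\,dr$ at $\sigma-d$. On $[s,\sigma-d]$ it uses strong continuity of $t\mapsto L(t,r)$ and dominated convergence, and it bounds the remainder by $Cd^\vartheta$. It then lets $t\to\sigma$ and $d\to0$, treating left limits ``similarly''. You instead split algebraically,
\[
L(t,r)-L(\sigma,r)=[A(t)-A(\sigma)]\psi_{\mu,A(r)}(t-r)+[A(\sigma)-A(r)]\bigl(\psi_{\mu,A(r)}(t-r)-\psi_{\mu,A(r)}(\sigma-r)\bigr),
\]
and estimate each piece with Lemma~\ref{lem3.1}(iii), (iv), (vi) and \eqref{e2.4}. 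This is quantitative: for every $\eta<\vartheta$ you get $\|B_\phi(t,s)-B_\phi(\sigma,s)\|_{X_0}\le C_{\varepsilon,\eta}|t-\sigma|^\eta$, uniformly for $s+\varepsilon\le\sigma<t\le T$ (the logarithm from the second term is absorbed). So two-sided continuity is automatic, and together with Lemma~\ref{lem3.9} you obtain local H\"older continuity of $A(\cdot)w_\phi(\cdot,s)$, which is more than the lemma claims.

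In your third term the split at $s+\varepsilon/2$ is unnecessary, since $\int_s^\sigma(\sigma-r)^{\vartheta-\eta-1}(r-s)^{\delta-1}\,dr$ is a finite Beta integral. The paper's route is shorter to write but purely qualitative.
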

\begin{proof}
For $\rho > 0$, define
$$w_{\phi, \rho}(t, s) := \int_s^{t - \rho} \psi_{\mu, A(r)}(t - r)\phi(r, s) dr.$$
For any $x \in X_0$ and $r \le t - \rho$, we get $\psi_{\mu,A(r)}(t - r)x \in D(A(r))$. Since the domains are independent of time, $D(A(r)) = D(A(t))$.
Note that
$$A(t)\psi_{\mu, A(r)}(t - r)= [A(t) - A(r)]\psi_{\mu, A(r)}(t - r) + A(r)\psi_{\mu, A(r)}(t - r).$$
Since $A(t)$ is closed and both terms are Bochner integrable on $[s,t-\rho]$, it follows from the closed-operator theorem that $w_{\phi,\rho}(t, s) \in D(A(t))$ and
\begin{align*}
A(t)w_{\phi,\rho}(t, s) = \int_s^{t-\rho} [A(t) - A(r)]\psi_{\mu,A(r)}(t - r)\phi(r, s) \, dr  + \int_s^{t-\rho} A(r)\psi_{\mu,A(r)}(t - r)\phi(r, s) \, dr.
\end{align*}
It follows from Lemma \ref{lem3.1}(vi) that
$$\|[A(t) - A(r)]\psi_{\mu,A(r)}(t - r)\|_{L(X_0)}\le C(t - r)^{\vartheta-1}.$$
Together with $\|\phi(r, s)\|_{X_0} \le C(r - s)^{\delta-1}$ for $\delta > b \ge 0$, we obtain
\begin{align*}
\|B_\phi(t, s)\|_{X_0}  \le C \int_s^t (t - r)^{\vartheta-1} (r - s)^{\delta-1} dr = CB(\vartheta, \delta)(t - s)^{\vartheta+\delta-1} < \infty. \end{align*}
Thus, $B_\phi(t, s)$ is an absolutely convergent Bochner integral. By Lemma \ref{lem3.9}, $$\lim_{\rho \to 0} \int_s^{t-\rho} A(r)\psi_{\mu,A(r)}(t - r)\phi(r, s) \, dr = J_\phi(t, s)$$
exists in $X_0$. Since $$\|\psi_{\mu,A(r)}(t - r)\phi(r, s)\|_{X_0} \le C(t - r)^{\mu-1}(r - s)^{\delta-1}$$ is integrable, $w_{\phi,\rho}(t, s) \to w_\phi(t, s)$ in $X_0$.
Thus, we have $A(t)w_{\phi,\rho}(t, s) \to B_\phi(t, s) + J_\phi(t, s)$ in $X_0$. Since $A(t)$ is closed, it follows that $w_\phi(t, s) \in D(A(t))$ and $$A(t)w_\phi(t, s) = B_\phi(t, s) + J_\phi(t, s).$$

It remains to prove continuity. It follows from Lemma \ref{lem3.9} that $q_\phi(t, s) = \phi(t, s) - J_\phi(t, s)$ is locally H\"{o}lder continuous on $(s, T]$. Since $\phi(\cdot, s)$ is also locally H\"{o}lder continuous, $$J_\phi(t, s) = \phi(t, s) - q_\phi(t, s) \in C((s, T]; X_0).$$

Next, we prove that $B_\phi(\cdot, s)$ is continuous. For $s < r < t$, set$$L(t, r) := [A(t) - A(r)]\psi_{\mu,A(r)}(t - r).$$
Then $\|L(t, r)\|_{L(X_0)} \le C(t - r)^{\vartheta-1}$. Let $s < \sigma < t \le T$, then
$$ B_\phi(t, s) - B_\phi(\sigma, s) = \int_\sigma^t L(t, r)\phi(r, s) dr + \int_s^\sigma [L(t, r) - L(\sigma, r)]\phi(r, s) \, dr.$$
Since $\phi$ is bounded on $[\sigma,T]$,
$$ \left\| \int_\sigma^t L(t, r)\phi(r, s) dr \right\|_{X_0} \le C_\sigma \int_\sigma^t (t - r)^{\vartheta-1} dr \le C_\sigma(t - \sigma)^\vartheta \to 0, $$
as $t \to \sigma$. Fix $d > 0$ sufficiently small, then
\begin{align*}
&\left\| \int_s^\sigma [L(t,r) - L(\sigma,r)]\phi(r,s) dr \right\|_{X_0}\\
\le& \int_s^{\sigma-d} \|[L(t,r) - L(\sigma,r)]\phi(r,s)\|_{X_0} dr + \int_{\sigma-d}^\sigma \|[L(t,r) - L(\sigma,r)]\phi(r,s)\|_{X_0} dr
=: I_1(t) + I_2(t).
\end{align*}
On $[s, \sigma - d]$, for $t \ge \sigma$, $t - r \ge \sigma - r \ge d > 0$. By the strong continuity of  $L(t,r)$, we deduce
$$L(t, r)\phi(r, s) \to L(\sigma, r)\phi(r, s)$$
as $t \to \sigma$. According to the Lebesgue dominated convergence theorem, we obtain
$$\lim_{t \to \sigma} I_1(t) = \lim_{t \to \sigma} \int_s^{\sigma-d} \| [L(t,r) - L(\sigma,r)]\phi(r,s) \|_{X_0} dr = 0.$$
On $[\sigma - d, \sigma]$, since $\sup_{r\in[\sigma-d,\sigma]}
(r-s)^{\delta-1}<\infty$, we have
$\|\phi(r, s)\|_{X_0} \le C(r - s)^{\delta - 1} \le C$. By triangle inequality, we deduce
\begin{align*}
I_2(t) &\le \int_{\sigma-d}^\sigma \|L(t,r)\|_{L(X_0)}\|\phi(r,s)\|_{X_0} dr + \int_{\sigma-d}^\sigma \|L(\sigma,r)\|_{L(X_0)}\|\phi(r,s)\|_{X_0} dr\\
&\le C \int_{\sigma-d}^\sigma (t - r)^{\vartheta - 1} dr+ C \int_{\sigma-d}^\sigma (\sigma - r)^{\vartheta - 1} dr\le C\int_{\sigma-d}^\sigma (\sigma - r)^{\vartheta - 1} dr \leq Cd^\vartheta.
\end{align*}
Combining the estimates for $I_1(t)$ and $I_2(t)$, we get
$$\limsup_{t \to \sigma} \left\| \int_s^\sigma [L(t,r) - L(\sigma,r)]\phi(r,s) dr \right\|_{X_0} \le \lim_{t \to \sigma} I_1(t) + \limsup_{t \to \sigma} I_2(t) \le C d^\vartheta.$$

To prove the left continuity, we consider $s < t < \sigma \le T$. By the similar method, we have the same result. Letting $t\to\sigma$ and then letting $d \to 0$, we obtain $B_\phi(t, s) \to B_\phi(\sigma, s)$.
Hence, we have
$$B_\phi(\cdot, s) \in C((s, T]; X_0).$$
Since both $B_\phi(\cdot, s)$ and $J_\phi(\cdot, s)$ are continuous,
$$A(\cdot)w_\phi(\cdot, s) = B_\phi(\cdot, s) + J_\phi(\cdot, s) \in C((s, T]; X_0).$$
This completes the proof.
\end{proof}

\begin{definition}\label{def3.1}
Let $u_0 \in X_0$ and $f \in C([0, T]; X_0)$. A function $u : (0, T] \to X_0$ is called a classical solution of problem \eqref{e3.1} if the following conditions hold.
\begin{itemize}
\item[{\rm (i)}]  $u \in C((0, T]; X_0)$, $u(t) \in D(A(t))$ for $t \in (0, T]$ and $A(\cdot)u(\cdot) \in C((0, T]; X_0) \cap L^1(0, t; X_0)$.
\item [{\rm (ii)}]  The generalized derivative $ D_{0+}^{\lambda,\mu}u \in C((0, T]; X_0)$.
\item [{\rm (iii)}]  $I_{0+}^{1-\nu}u \in C([0, T]; X_0) \cap W_{\text{loc}}^{1,1}((0, T); X_0)$ and $\lim_{t \to 0} I_{0+}^{1-\nu}u(t) = u_0$. For $\nu = 1$, then $\lambda=1$ and $u(0) = u_0$.
\end{itemize}
\end{definition}

According to Lemma \ref{lem3.12}, we deduce $A(t)w_\phi(t, s) = B_\phi(t, s) + J_\phi(t, s)$ and
$$\|B_\phi(t, s)\|_{X_0} \le C \int_s^t (t - r)^{\vartheta - 1}(r - s)^{\delta - 1} dr \le C(t - s)^{\vartheta + \delta - 1}.$$
It follows from Lemma \ref{lem3.9} that
$$\|J_\phi(t, s)\|_{X_0} \le C \left[ h_s(t) + (1 + (t - s)^\vartheta)\|\phi(t, s)\|_{X_0} \right].$$
Since $h_s \in L^1(s, T)$, $\delta > 0$, and $\|\phi(t, s)\|_{X_0} \le C(t - s)^{\delta - 1}$, $B_\phi(\cdot, s)$ and $J_\phi(\cdot, s)$ belong to $L^1(s, T; X_0)$. Hence, we have
$$A(\cdot)w_\phi(\cdot, s) \in C((s, T]; X_0) \cap L^1(s, T; X_0).$$

\begin{theorem}\label{the3.1}
Let Assumption \ref{assump2.1} hold and set $b := \nu - \mu = \lambda(1 - \mu)$. Suppose that
$0 \le b < \min\{\rho, \vartheta\}$, $u_0 \in D(A(0))$, $f \in C^\rho([0, T]; X_0)$ with $\rho \in (0, 1]$. Then for $t \in (0, T]$, problem \eqref{e3.1} admits a unique classical solution given by
\begin{align}
u(t) = H_{\mu, \nu}(t, 0)u_0 + \int_0^t \mathcal{P}_\mu(t, s)f(s) ds.
\end{align}
\end{theorem}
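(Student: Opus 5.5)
We decompose the candidate solution into three pieces that Lemmas \ref{lem3.9}--\ref{lem3.12} control, then check the items of Definition \ref{def3.1} one at a time. First, write $H_{\mu,\nu}(t,0)u_0=\psi_{\nu,A(0)}(t)u_0+w_{\phi_1}(t,0)$ with $\phi_1=\mathcal Q_1(\cdot,0)u_0$. Next, exchanging the order of integration (Fubini, justified by the bounds in Lemma \ref{lem3.1}(ii) and Lemma \ref{lem3.4}) gives
$$\int_0^t\mathcal P_\mu(t,s)f(s)\,ds=w_{\phi_2}(t,0)+w_{\phi_3}(t,0),$$
with $\phi_2=f$ and $\phi_3=g=\int_0^r\mathcal Q_2(r,\xi)f(\xi)d\xi$. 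By Lemma \ref{lem3.11}, each $\phi_i$ satisfies the hypotheses of Lemma \ref{lem3.9}. Therefore Lemmas \ref{lem3.10} and \ref{lem3.12} apply to each $w_{\phi_i}$ and give:
\begin{itemize}
\item continuity on $(0,T]$;
\item $w_{\phi_i}(t)\in D(A(t))$, with $A(\cdot)w_{\phi_i}\in C((0,T];X_0)\cap L^1$ (by the remark following Lemma \ref{lem3.12});
\item $I^{1-\nu}_{0+}w_{\phi_i}\to0$ and $I^{1-\nu}_{0+}w_{\phi_i}\in W^{1,1}_{loc}$;
\item $D^{\lambda,\mu}_{0+}w_{\phi_i}=\phi_i-J_{\phi_i}$.
\end{itemize}
The free term $\psi_{\nu,A(0)}(t)u_0$ is handled directly. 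Its Dunford representation shows $D^{\lambda,\mu}_{0+}\psi_{\nu,A(0)}u_0=-A(0)\psi_{\mu,A(0)}(\cdot)u_0$, which uses $I^{1-\nu}\psi_\nu=S_{\mu}$, Lemma \ref{lem3.1}(viii), and $I^b\psi_\mu=\psi_\nu$. Lemma \ref{lem3.2}(iii) gives the initial trace $u_0$. Since $u_0\in D(A(0))$, the derivative term equals $-\psi_{\mu,A(0)}(t)A(0)u_0$, which is $O(t^{\mu-1})$ and hence integrable.

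The core step is verifying the equation. Sum $D^{\lambda,\mu}_{0+}u+A(t)u$ using $A(t)w_\phi=B_\phi+J_\phi$. The $J$ terms cancel, leaving
$$-A(0)\psi_{\mu,A(0)}(t)u_0+A(t)\psi_{\nu,A(0)}(t)u_0+\sum_i\bigl(\phi_i(t)+B_{\phi_i}(t)\bigr).$$
Observe that $A(t)\psi_{\nu,A(0)}(t)u_0$ splits into $-\widetilde{\mathcal Q}_1(t,0)u_0$ plus $A(0)\psi_{\nu,A(0)}(t)u_0$. Similarly, $B_\phi(t)=-\int_0^t\widetilde{\mathcal Q}_2(t,r)\phi(r)dr$. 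The Volterra equation \eqref{e3.5} then gives
$$\phi_1-\widetilde{\mathcal Q}_1u_0-\int\widetilde{\mathcal Q}_2\phi_1=0 \quad\text{and}\quad \phi_3-\int\widetilde{\mathcal Q}_2f-\int\widetilde{\mathcal Q}_2\phi_3=0,$$
the latter after Fubini applied to $\mathcal Q_2$. Only $f(t)$ survives, together with the residual $A(0)[\psi_{\nu,A(0)}(t)-\psi_{\mu,A(0)}(t)]u_0$. This residual vanishes when $\lambda=0$. For $\lambda>0$ I expect it to cancel against the true derivative of the free term: one must recompute $D^{\lambda,\mu}\psi_\nu$ carefully, and the correct identity should be $D^{\lambda,\mu}_{0+}\psi_{\nu,A(0)}u_0=-A(0)\psi_{\nu,A(0)}u_0$. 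This follows from the scalar identity $D^{\lambda,\mu}[t^{\nu-1}E_{\mu,\nu}(-zt^\mu)]=-zt^{\nu-1}E_{\mu,\nu}(-zt^\mu)$, which holds because the Hilfer derivative kills the $t^{\nu-1}/\Gamma(\nu)$ term. Getting this bookkeeping consistent, and making the decomposition of $A(t)u$ rigorous (closedness of $A(t)$, strong vs.\ norm limits in $J_\phi$), is the main obstacle.

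Uniqueness is the remaining step. If $v$ is the difference of two classical solutions, it solves the homogeneous problem with zero trace. Freezing the operator at $A(t)$ and convolving with $\psi_{\mu,A(\cdot)}$, as in \cite{He2022}, shows that $v$ satisfies a homogeneous Volterra equation
$$v(t)=\int_0^t\psi_{\mu,A(r)}(t-r)\,\widetilde{\mathcal Q}\text{-type kernel}\;v\,dr$$
whose kernel is weakly singular, of order $(t-r)^{\vartheta-1}$. Iterating as in the uniqueness part of Lemma \ref{lem3.5} then forces $v\equiv0$.
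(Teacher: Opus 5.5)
Your proposal follows the paper's proof essentially step for step. It uses the same split into $\psi_{\nu,A(0)}(t)u_0$ plus $w_{\phi_i}$ with $\phi_1=\mathcal Q_1(\cdot,0)u_0$, $\phi_2=f$, $\phi_3=g$, the same use of Lemmas \ref{lem3.10}--\ref{lem3.12} with $A(t)w_\phi=B_\phi+J_\phi$, and the same Volterra cancellation via \eqref{e3.5}. The corrected identity you settle on, $D_{0+}^{\lambda,\mu}\psi_{\nu,A(0)}(\cdot)u_0=-A(0)\psi_{\nu,A(0)}(\cdot)u_0$, is exactly the one the paper uses, so discard your first version with $\psi_{\mu,A(0)}$; then no residual term appears.

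For uniqueness, the paper also freezes at $A(t)$. The representation should read $v(t)=\int_0^t\psi_{\mu,A(t)}(t-r)[A(t)-A(r)]v(r)\,dr$, not with $\psi_{\mu,A(r)}$. The Volterra inequality closes in $K(t)=\|A(t)v(t)\|_{X_0}$, not in $\|v(t)\|_{X_0}$; $K\in L^1$ by Definition \ref{def3.1}(i). This gives $K(t)\le C\int_0^t(t-r)^{\vartheta-1}K(r)\,dr$, and iteration yields $K\equiv0$.
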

\begin{proof}
Let $$u(t) =u_{I}(t) + u_f(t),$$
where $u_{I}(t) = H_{\mu, \nu}(t, 0)u_0$ and $u_f(t) = \int_0^t \mathcal{P}_\mu(t, s)f(s) ds$. We first consider the initial value component. By the definition of $H_{\mu, \nu}$,$$u_{I}(t) = \psi_{\nu, A(0)}(t)u_0 + \int_0^t \psi_{\mu, A(r)}(t - r)\mathcal{Q}_1(r, 0)u_0 dr.$$
Set $\phi_1(r) := \mathcal{Q}_1(r, 0)u_0$.
It follows from Lemma \ref{lem3.11} that $\phi_1$ satisfies the hypotheses required in Lemmas \ref{lem3.9}, \ref{lem3.10}, and \ref{lem3.12}. Therefore, by Lemma \ref{lem3.10}, we have
\begin{align*}
D_{0+}^{\lambda,\mu} \int_0^t \psi_{\mu, A(r)}(t - r)\phi_1(r) dr = \mathcal{Q}_1(t, 0)u_0 - \int_0^t A(r)\psi_{\mu, A(r)}(t - r)\mathcal{Q}_1(r, 0)u_0 dr.
\end{align*}
Since $u_0 \in D(A(0))$,
\begin{align*}
D_{0+}^{\lambda,\mu} (\psi_{\nu,A(0)}(t)u_0) &= I_{0+}^{\lambda(1-\mu)} \frac{d}{dt} I_{0+}^{1-\nu} (\psi_{\nu,A(0)}(t)u_0)\\
&=I_{0+}^{\lambda(1-\mu)}\frac{d}{dt} \left( S_{\mu,A(0)}(t) u_0 \right) = -A(0)\psi_{\nu,A(0)}(t)u_0.
\end{align*}
According to Lemma \ref{lem3.12}, we deduce
$\int_0^t \psi_{\mu, A(r)}(t - r)\mathcal{Q}_1(r, 0)u_0 dr \in D(A(t))$.
Then we obtain $u_{I}(t) \in D(A(t))$ and
\begin{align*}
&D_{0+}^{\lambda,\mu}u_{I}(t) + A(t)u_{I}(t) \\
={}&\mathcal Q_1(t,0)u_0+
\bigl(A(t)-A(0)\bigr)
\psi_{\nu,A(0)}(t)u_0+
\int_0^t
\bigl(A(t)-A(r)\bigr)
\psi_{\mu,A(r)}(t-r)
\mathcal Q_1(r,0)u_0\,dr\\
=&
\left(
\mathcal Q_1(t,0)
-\widetilde{\mathcal Q}_1(t,0)
-\int_0^t
\widetilde{\mathcal Q}_2(t,r)
\mathcal Q_1(r,0)\,dr
\right)u_0.
\end{align*}
It follows from \eqref{e3.5} that
$$D_{0+}^{\lambda,\mu}u_{I}(t) + A(t)u_{I}(t) = 0.$$

By Lemmas \ref{lem3.1} (ii), \ref{lem3.4}, and the definition of $\mathcal{P}_\mu$, we deduce $\mathcal{P}_\mu$ is absolutely integrable. Combining with Fubini's theorem, we obtain
$$u_f(t) = \int_0^t \psi_{\mu, A(s)}(t - s)f(s) ds + \int_0^t \psi_{\mu, A(r)}(t - r)g(r) dr,$$
where $g(r) := \int_0^r \mathcal{Q}_2(r, s)f(s) ds$. Similarly, by Lemma \ref{lem3.11}, both $f$ and $g$ satisfy the hypotheses required in Lemmas \ref{lem3.9}, \ref{lem3.10}, and \ref{lem3.12}. It follows from Lemma \ref{lem3.10} that
$$D_{0+}^{\lambda,\mu} \int_0^t \psi_{\mu, A(s)}(t - s)f(s) ds = f(t) - \int_0^t A(s)\psi_{\mu, A(s)}(t - s)f(s) ds,$$
$$D_{0+}^{\lambda,\mu} \int_0^t \psi_{\mu, A(r)}(t - r)g(r) dr = g(t) - \int_0^t A(r)\psi_{\mu, A(r)}(t - r)g(r) dr.$$
Moreover, by Lemma \ref{lem3.12}, we have $u_f(t) \in D(A(t))$. Then by Fubini's theorem, we deduce
\begin{align*}
&D_{0+}^{\lambda,\mu}u_f(t)+A(t)u_f(t)\\
=&f(t)
+\int_0^t\mathcal Q_2(t,s)f(s)\,ds-
\int_0^t
\widetilde{\mathcal Q}_2(t,s)f(s)\,ds-
\int_0^t
\widetilde{\mathcal Q}_2(t,r)
\int_0^r
\mathcal Q_2(r,s)f(s)\,ds\,dr\\
=&f(t)
+\int_0^t\mathcal Q_2(t,s)f(s)\,ds-
\int_0^t
\widetilde{\mathcal Q}_2(t,s)f(s)\,ds-\int_0^t
\int_s^t
\widetilde{\mathcal Q}_2(t,r)
\mathcal Q_2(r,s)\,drf(s)\,ds\\
=&f(t)-\int_0^t
\left(
\widetilde{\mathcal Q}_2(t,s)
-\mathcal Q_2(t,s)+
\int_s^t
\widetilde{\mathcal Q}_2(t,r)
\mathcal Q_2(r,s)\,dr
\right)f(s)\,ds.
\end{align*}
From \eqref{e3.5}, we obtain
$$D_{0+}^{\lambda,\mu} u_f(t) + A(t)u_f(t) = f(t).$$
Therefore,
$$D_{0+}^{\lambda,\mu} u(t) + A(t)u(t) = f(t), \qquad t \in (0, T].$$
It remains to verify the initial condition. By Lemma \ref{lem3.7},$$\lim_{t \to 0} I_{0+}^{1-\nu} (H_{\mu, \nu}(\cdot, 0)u_0)(t) = u_0.$$
Then we obtain
$$\|u_f(t)\|_{X_0} \le C \|f\|_{C([0, T]; X_0)} \int_0^t (t - s)^{\mu - 1} ds \le C t^\mu \|f\|_{C([0, T]; X_0)}.$$
If $0 < \nu < 1$, then
\begin{align*}
\|I_{0+}^{1-\nu} u_f(t)\|_{X_0} &\le \frac{C}{\Gamma(1-\nu)} \int_0^t (t - r)^{-\nu} r^\mu dr \, \|f\|_{C([0, T]; X_0)} \\ &\le C t^{\mu + 1 - \nu} \|f\|_{C([0, T]; X_0)} = C t^{1 - b} \|f\|_{C([0, T]; X_0)} \to 0.
\end{align*}
If $\nu = 1$, then $I_{0+}^{1-\nu}$ is the identity operator and
$\|u_f(t)\|_{X_0} \le C t^\mu \|f\|_{C([0, T]; X_0)} \to 0$.
Hence, we have
$$\lim_{t \to 0} I_{0+}^{1-\nu} u(t) = u_0.$$
It follows from Lemmas \ref{lem3.7} and \ref{lem3.9}-\ref{lem3.12} that $u$ satisfies the spatial regularity in Definition \ref{def3.1}.
For $x\in D(A(0))$ and $z\in\rho(A(0))$, we have
$A(0)(z-A(0))^{-1}x=(z-A(0))^{-1}A(0)x$.
By the definition of
$\psi_{\nu,A(0)}(t)$ and the closedness of $A(0)$, we have $\psi_{\nu,A(0)}(t)x\in D(A(0))$,
\begin{align}\label{e3.23}
A(0)\psi_{\nu,A(0)}(t)x
=\psi_{\nu,A(0)}(t)A(0)x.
\end{align}
Since $u_0\in D(A(0))$, it follows that
\begin{align*}
\|A(t)\psi_{\nu,A(0)}(t)u_0\|_{X_0}
&=\|A(t)A(0)^{-1}A(0)\psi_{\nu,A(0)}(t)u_0\|_{X_0}=\|A(t)A(0)^{-1}\psi_{\nu,A(0)}(t)A(0)u_0\|_{X_0}\\
&\leq \|A(t)A(0)^{-1}\|_{L(X_0)}
\|\psi_{\nu,A(0)}(t)A(0)u_0\|_{X_0}\leq Ct^{\nu-1}\|A(0)u_0\|_{X_0}.
\end{align*}
Hence, we deduce $A(\cdot)\psi_{\nu, A(0)}(\cdot)u_0 \in L^1(0, t; X_0)$. Furthermore, by Lemma \ref{lem3.12}, the integral terms corresponding to $\phi_1$, $f$, and $g$ in the solution $u(t)$ are $L^1$-integrable. Then
$$A(\cdot)u(\cdot) \in C((0, T]; X_0) \cap L^1(0, t; X_0).$$ Therefore, $u$ is a classical solution.

We finally prove uniqueness. Let $u_1$ and $u_2$ be two classical solutions with the same initial data, and put $z := u_1 - u_2$.
Then$$D_{0+}^{\lambda,\mu} z(r) + A(r)z(r) = 0, \quad  I_{0+}^{1-\nu} z(r) = 0.$$
Fix $t \in (0, T]$. On $(0, t]$, we get
$$D_{0+}^{\lambda,\mu} z(r) + A(t)z(r) = [A(t) - A(r)]z(r).$$
Let $F_t(r) := [A(t) - A(r)]z(r)$. It follows from \eqref{e2.4} that
$$\|F_t(r)\|_{X_0} \le C |t - r|^\vartheta \|A(r)z(r)\|_{X_0}.$$
Since  $(t - r)^\vartheta$ is uniformly bounded on $(0, t]$ and $A(\cdot)z(\cdot) \in L^1(0, t; X_0)$, $F_t \in L^1(0, t; X_0)$. By the method of variation of constant to autonomous generalized fractional equations, we have
$$z(t) = \int_0^t \psi_{\mu, A(t)}(t - r)[A(t) - A(r)]z(r) dr.$$
Let $K(t) := \|A(t)z(t)\|_{X_0}$. Combining Lemma \ref{lem3.1}(iii) and \eqref{e2.4}, we get
$$K(t) \le \int_0^t C(t - r)^{-1} (t - r)^\vartheta \|A(r)z(r)\|_{X_0} dr=C \int_0^t (t - r)^{\vartheta - 1} K(r) dr.$$
Since $K \in L^1(0, T)$, by the method of successive iterations, we deduce the $n$-th iteration
$$K(t) \le \frac{(C \Gamma(\vartheta))^n}{\Gamma(n\vartheta)} \int_0^t (t - r)^{n\vartheta - 1} K(r) dr.$$
Choose an integer $n$ sufficiently large such that $n\vartheta \ge 1$, we have
$$K(t) \le \frac{(C \Gamma(\vartheta))^n T^{n\vartheta - 1}}{\Gamma(n\vartheta)} \int_0^t K(r) dr \le \frac{(C \Gamma(\vartheta))^n T^{n\vartheta - 1}}{\Gamma(n\vartheta)} \|K\|_{L^1(0, T)} \to 0 \quad \text{as}\,\,n \to \infty.$$
Hence, we have $K(t) = 0$ for $t \in (0, T]$. Then we deduce $A(t)z(t) = 0$ and $z(t) = 0$ for $t \in (0, T]$.
Thus, $u_1(t) = u_2(t)$ on $(0, T]$ and we establish the uniqueness of the classical solution. The proof is complete.
\end{proof}

\section{Stochastic fractional evolution equations}\label{sec4}
In this section, we establish the existence and uniqueness of mild solutions for the non-autonomous fractional stochastic evolution equation \eqref{e1.1}.
Consider
\begin{equation}\label{e4.1}
\left\{
\begin{aligned}
&D_{0+}^{\lambda,\mu}u(t)+A(t)u(t)=F(t,u(t))+ G(t, u(t))dW_H(t), & t \in (0, T],  \\
&I_{0+}^{1-\nu}u(0)=u_0,
\end{aligned}\right.
\end{equation}
where $\nu=\mu+\lambda(1-\mu)$, $W_H$ is a cylindrical Brownian motion on a filtered probability space $(\Omega,\mathcal F,(\mathcal F_t)_{t\in[0,T]},\mathbb P)$, and $H$ is a separable Hilbert space.

The following estimates are the fractional-domain versions of Lemmas~\ref{lem3.4} and~\ref{lem3.7}.

\begin{lemma}\label{lem4.1}
Let Assumptions~\ref{assump2.1} and~\ref{assump2.2} hold, and let
$\alpha\in[0,1)$. Then for all $0\leq s<t\leq T$, we have
\begin{align}\label{e4.2}
\|\mathcal P_\mu(t,s)\|_{L(X_0,X_\alpha)}\leq C(t-s)^{\mu(1-\alpha)-1}.
\end{align}
Furthermore, for $\mu\alpha\leq\vartheta$, we have
\begin{align}\label{e4.3}
\|H_{\mu,\nu}(t,s)\|_{L(X_\alpha)}
\leq C(t-s)^{\nu-1}.
\end{align}
\end{lemma}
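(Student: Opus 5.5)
Both estimates come from the representation formulas for $\mathcal P_\mu$ and $H_{\mu,\nu}$. Via Lemma~\ref{lem2.4}, $\|x\|_{X_\alpha}$ is replaced by $\|A(\tau)^\alpha x\|_{X_0}$ for a conveniently chosen $\tau$, uniformly in $\tau\in[0,T]$.

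\textbf{Basic estimate for $\psi_{\mu,A(r)}$.} Using Lemma~\ref{lem3.1}(i) together with \eqref{e2.3} and Lemma~\ref{lem2.3}(iv) (with $\delta=1-\alpha>-1$),
\begin{align*}
\|A(r)^\alpha\psi_{\mu,A(r)}(t)\|_{L(X_0)}
&\le t^{\mu-1}\int_0^\infty \mu\upsilon M_\mu(\upsilon)\,C(t^\mu\upsilon)^{-\alpha}\,d\upsilon\\
&= C t^{\mu(1-\alpha)-1}\frac{\Gamma(2-\alpha)}{\Gamma(1+\mu(1-\alpha))}.
\end{align*}
Hence $\|\psi_{\mu,A(r)}(t)\|_{L(X_0,X_\alpha)}\le C t^{\mu(1-\alpha)-1}$, uniformly in $r$.

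\textbf{Proof of \eqref{e4.2}.} In
\[
\mathcal P_\mu(t,s)=\psi_{\mu,A(s)}(t-s)+\int_s^t\psi_{\mu,A(r)}(t-r)\mathcal Q_2(r,s)\,dr,
\]
the first term is bounded by the basic estimate. For the integral we use Lemma~\ref{lem3.4}, $\|\mathcal Q_2(r,s)\|\le C(r-s)^{\vartheta-1}$, and get
\[
C\int_s^t(t-r)^{\mu(1-\alpha)-1}(r-s)^{\vartheta-1}\,dr=CB(\mu(1-\alpha),\vartheta)(t-s)^{\mu(1-\alpha)+\vartheta-1},
\]
which is at most $C_T(t-s)^{\mu(1-\alpha)-1}$. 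Since $\alpha<1$, both exponents are positive and the integral converges.

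\textbf{Proof of \eqref{e4.3}.} Take $x\in X_\alpha$.

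For the first term of $H_{\mu,\nu}$, the resolvents of $A(s)$ commute with $A(s)^\alpha$ on $D(A(s)^\alpha)$, as in \eqref{e3.23}. This gives
\[
A(s)^\alpha\psi_{\nu,A(s)}(t-s)x=\psi_{\nu,A(s)}(t-s)A(s)^\alpha x.
\]
Lemma~\ref{lem3.2}(i) then yields the bound $C(t-s)^{\nu-1}\|x\|_{X_\alpha}$.

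For the integral term we need a better bound on $\mathcal Q_1(r,s)x$ that uses the $X_\alpha$-regularity of $x$. I would write
\[
\widetilde{\mathcal Q}_1(r,s)x=-(A(r)-A(s))A(s)^{-1}\,A(s)^{1-\alpha}\psi_{\nu,A(s)}(r-s)\,A(s)^\alpha x .
\]
The key is an analogue of \eqref{e3.2} for the fractional power:
\[
\|A(s)^{1-\alpha}\psi_{\nu,A(s)}(t)\|_{L(X_0)}\le C t^{\nu-1-\mu(1-\alpha)}.
\]
This follows either by interpolating (the moment inequality) between Lemma~\ref{lem3.2}(i) and \eqref{e3.2}, or, when $\lambda>0$, by writing $\psi_\nu=I^{b}_{0+}\psi_\mu$ and using the basic estimate with exponent $1-\alpha$. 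Either way,
\[
\|\widetilde{\mathcal Q}_1(r,s)x\|\le C(r-s)^{\vartheta+\nu-\mu(1-\alpha)-1}\|x\|_{X_\alpha}.
\]
Running the Volterra iteration of Lemma~\ref{lem3.5} with $\gamma_1$ replaced by $\gamma_\alpha:=\vartheta+\nu-\mu(1-\alpha)>0$ gives the same power bound for $\mathcal Q_1(r,s)x$. Combining with the basic estimate, the integral term is bounded by
\[
C\int_s^t(t-r)^{\mu(1-\alpha)-1}(r-s)^{\gamma_\alpha-1}\,dr\,\|x\|_{X_\alpha}=C(t-s)^{\vartheta+\nu-1}\|x\|_{X_\alpha}\le C(t-s)^{\nu-1}\|x\|_{X_\alpha}.
\]

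\textbf{Main obstacle.} The delicate point is the fractional version of \eqref{e3.2} and where the hypothesis $\mu\alpha\le\vartheta$ enters. A cruder route avoids the $A(s)^\alpha x$ trick and uses only $\|\mathcal Q_1(r,s)\|_{L(X_0)}\le C(r-s)^{\gamma_1-1}$. Then the integral term is of order $(t-s)^{\mu(1-\alpha)+\gamma_1-1}=(t-s)^{\vartheta-\mu\alpha+\nu-1}$, measured against $\|x\|_{X_0}\le C\|x\|_{X_\alpha}$. This is $\le C(t-s)^{\nu-1}$ precisely when $\mu\alpha\le\vartheta$, and it is the estimate I would actually use, since it needs no new operator bound. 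The only remaining care is to check that the Beta integral converges, i.e.\ $\mu(1-\alpha)>0$ and $\gamma_1>0$, which both hold.
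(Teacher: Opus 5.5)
\textbf{Verdict.} Your proposal is correct. The route you say you would actually use for the integral term of $H_{\mu,\nu}$ is exactly the paper's argument. That route bounds $\mathcal Q_1(r,s)$ only in $L(X_0)$ by $(r-s)^{\gamma_1-1}$, pays $(t-r)^{\mu(1-\alpha)-1}$ for $\psi_{\mu,A(r)}(t-r)\colon X_0\to X_\alpha$, and absorbs $(t-s)^{\vartheta-\mu\alpha}$ using $\mu\alpha\le\vartheta$. Your proof of \eqref{e4.2} and the commutation step for $\psi_{\nu,A(s)}(t-s)$ on $X_\alpha$ also match the paper.

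There is one minor difference. You pair $A(r)^\alpha$ with $\psi_{\mu,A(r)}$ and apply \eqref{e2.3} directly. The paper instead bounds $A(r)^\alpha T_{A(s)}(\tau)$ for mismatched $r,s$, via the moment inequality and $\|A(r)A(s)^{-1}\|_{L(X_0)}\le 1+CT^\vartheta$. This lets it keep one fixed $A(t)^\alpha$ in front of the $\mathcal Q_2$ integral. Given the uniform norm equivalence of Lemma~\ref{lem2.4}, your version is simpler.

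\textbf{Your refined route is different and stronger.} The refined argument in your main text is also correct. Use $\|A(s)^{1-\alpha}\psi_{\nu,A(s)}(\tau)\|_{L(X_0)}\le C\tau^{\nu-1-\mu(1-\alpha)}$, obtained from the moment inequality between Lemma~\ref{lem3.2}(i) and \eqref{e3.2}. Combined with the Volterra iteration with $\gamma_\alpha=\gamma_1+\mu\alpha$, this gives $\|\mathcal Q_1(r,s)x\|_{X_0}\le C(r-s)^{\gamma_\alpha-1}\|x\|_{X_\alpha}$. The integral term is then $O\bigl((t-s)^{\vartheta+\nu-1}\bigr)\|x\|_{X_\alpha}$, with no condition on $\alpha$.

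So this route proves \eqref{e4.3} for every $\alpha\in[0,1)$. The hypothesis $\mu\alpha\le\vartheta$ comes only from the cruder estimate; the refined one never uses it. The same bound would also give $t^{1-\nu}\bigl\|\int_0^t\psi_{\mu,A(r)}(t-r)\mathcal Q_1(r,0)x\,dr\bigr\|_{X_\kappa}\le Ct^{\vartheta}\|x\|_{X_\kappa}$. That would make $\mu\kappa<\vartheta$ unnecessary for \eqref{e4.21}.

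\textbf{A small slip.} Your alternative justification via $\psi_{\nu}=I_{0+}^{b}\psi_{\mu}$ needs $\alpha>0$, not just $\lambda>0$. At $\alpha=0$ the bound $C\tau^{\mu\alpha-1}$ becomes $C\tau^{-1}$, which is not integrable at $0$. This does not matter in the end: at $\alpha=0$ the claim is just \eqref{e3.2}, and the interpolation argument covers all cases.
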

\begin{proof}
Let $r,s\in[0,T]$ and $\tau>0$. By \eqref{e2.4}, we obtain
$$\|A(r)A(s)^{-1}\|_{L(X_0)}
\leq 1+\|(A(r)-A(s))A(s)^{-1}\|_{L(X_0)}
\leq 1+CT^\vartheta.$$
For $x\in X_0$, we have
$T_{A(s)}(\tau)x\in D(A(s))=D(A(r))$.
For $0<\alpha<1$, it follows from the moment inequality and \eqref{e2.2} that
\begin{align*}
\|A(r)^\alpha T_{A(s)}(\tau)x\|_{X_0}
&\leq C\|A(r)T_{A(s)}(\tau)x\|_{X_0}^\alpha
\|T_{A(s)}(\tau)x\|_{X_0}^{1-\alpha}\\
&\leq C\|A(r)A(s)^{-1}\|_{L(X_0)}^\alpha
\|A(s)T_{A(s)}(\tau)x\|_{X_0}^\alpha
\|T_{A(s)}(\tau)x\|_{X_0}^{1-\alpha}\\
&\leq C\tau^{-\alpha}\|x\|_{X_0}.
\end{align*}
For $\alpha=0$, the same result holds by \eqref{e2.2}. From Lemmas \ref{lem2.3} (iv), \ref{lem2.4}, and \ref{lem3.1}(i), we get
\begin{align}\label{e4.4}
\|\psi_{\mu,A(s)}(t)\|_{L(X_0,X_\alpha)}
\leq C\|A(r)^{\alpha}\psi_{\mu,A(s)}(t)\|_{L(X_0)}
\leq Ct^{\mu(1-\alpha)-1}.
\end{align}
 By Lemma \ref{lem3.4}, we obtain
\begin{align*}
\|\int_s^t A(t)^{\alpha}\psi_{\mu,A(\tau)}(t-\tau)\mathcal{Q}_2 (\tau,s) d\tau \|_{L(X_0)}&\leq C \int_s^t (t -\tau)^{\mu(1-\alpha)-1} (\tau - s)^{\vartheta-1} d\tau\\
&\leq C (t - s)^{\mu(1-\alpha) + \vartheta - 1}.
\end{align*}
Hence, we obtain
\begin{align*}
\|\mathcal P_\mu(t,s)\|_{L(X_0,X_\alpha)}\leq C(t-s)^{\mu(1-\alpha)-1}.
\end{align*}
This proves \eqref{e4.2}.
Let $x\in X_\alpha$. By Lemma \ref{lem2.4}, we obtain $X_\alpha=D(A(s)^\alpha)$.  For
$z\in\rho(A(s))$, together with the  definition of $A(s)^{-\alpha}$, we have
$$(z-A(s))^{-1}A(s)^{-\alpha}
=A(s)^{-\alpha}(z-A(s))^{-1}.$$
For $x\in D(A(s)^\alpha)$, put $y=A(s)^\alpha x$, then $x=A(s)^{-\alpha}y$ and
$$(z-A(s))^{-1}x
=A(s)^{-\alpha}(z-A(s))^{-1}y\in D(A(s)^\alpha).$$
Hence, we deduce
\begin{equation*}
A(s)^\alpha(z-A(s))^{-1}x
=(z-A(s))^{-1}A(s)^\alpha x.
\end{equation*}
Combining with
$$\psi_{\nu,A(s)}(t-s)x
=\frac{(t-s)^{\nu-1}}{2\pi i}
\int_{\mathcal C}E_{\mu,\nu}(-z(t-s)^\mu)
(z-A(s))^{-1}x\,dz,$$
and the closedness of $A(s)^\alpha$, we get
$\psi_{\nu,A(s)}(t-s)x\in D(A(s)^\alpha)$
and
$$A(s)^\alpha\psi_{\nu,A(s)}(t-s)x
=\psi_{\nu,A(s)}(t-s)A(s)^\alpha x.$$
Combining with Lemma \ref{lem2.4} and Lemma \ref{lem3.2}(i), we obtain
\begin{align*}
\|\psi_{\nu,A(s)}(t-s)x\|_{X_\alpha}
&\leq
C\|A(s)^\alpha\psi_{\nu,A(s)}(t-s)x\|_{X_0}=
C\|\psi_{\nu,A(s)}(t-s)A(s)^\alpha x\|_{X_0}\\
&\leq
C\|\psi_{\nu,A(s)}(t-s)\|_{ L(X_0)}
\|A(s)^\alpha x\|_{X_0}\leq
C(t-s)^{\nu-1}\|x\|_{X_\alpha}.
\end{align*}
Hence, it yields
$$\|\psi_{\nu,A(s)}(t-s)\|_{L(X_\alpha)}
\leq
C(t-s)^{\nu-1}.$$
Since $X_\alpha\hookrightarrow X_0$, it follows from \eqref{e3.7} and \eqref{e4.4} that
\begin{align*}
\left\|
\int_s^t
\psi_{\mu,A(r)}(t-r)\mathcal Q_1(r,s)x\,dr
\right\|_{X_\alpha}
\leq&
\int_s^t
\|\psi_{\mu,A(r)}(t-r)\|_{L(X_0,X_\alpha)}
\|\mathcal Q_1(r,s)x\|_{X_0}\,dr\\
\leq&
C\int_s^t
(t-r)^{\mu(1-\alpha)-1}
(r-s)^{\vartheta+\nu-\mu-1}
\|x\|_{X_0}\,dr\\
\leq&
C\|x\|_{X_\alpha}
\int_s^t
(t-r)^{\mu(1-\alpha)-1}
(r-s)^{\vartheta+\nu-\mu-1}\,dr\\
\leq&C(t-s)^{\vartheta+\nu-\mu\alpha-1}\|x\|_{X_\alpha}.
\end{align*}
Since $\mu\alpha\leq\vartheta$, we deduce
$$(t-s)^{\vartheta+\nu-\mu\alpha-1}
=(t-s)^{\nu-1}
(t-s)^{\vartheta-\mu\alpha}
\leq T^{\vartheta-\mu\alpha}
(t-s)^{\nu-1}.$$
Then $\left\|\int_s^t
\psi_{\mu,A(r)}(t-r)\mathcal Q_1(r,s)\,dr
\right\|_{ L(X_\alpha)}
\leq
C_T(t-s)^{\nu-1}$.
Hence,
$$\|H_{\mu,\nu}(t,s)\|_{ L(X_\alpha)}
\leq C(t-s)^{\nu-1}.$$
Then \eqref{e4.3} holds. The proof is complete.
\end{proof}

\begin{lemma}\label{lem4.2}
Let Assumptions~\ref{assump2.1} and~\ref{assump2.2} hold, and let
$\alpha\in[0,1)$. Then, for $r\in[0,T]$ and $t>0$,
\begin{equation}\label{e4.5}
\|\partial_t\psi_{\mu,A(r)}(t)\|_{L(X_0,X_\alpha)}
\leq Ct^{\mu(1-\alpha)-2}.
\end{equation}
Moreover, for every $\eta\in[0,1]$ and $0<t_1<t_2$,
\begin{equation}\label{e4.6}
\|\psi_{\mu,A(r)}(t_2)-\psi_{\mu,A(r)}(t_1)\|_{L(X_0,X_\alpha)}
\leq C_\eta (t_2-t_1)^\eta t_1^{\mu(1-\alpha)-\eta-1}.
\end{equation}
The constants are independent of $r$, $t$, $t_1$, and $t_2$.
Moreover, if
$0<\eta<\min\{\mu(1-\alpha),\vartheta\}$,
then for $0\leq s<t_1<t_2\leq T$, we get
\begin{align}\label{e4.7}
\|\mathcal P_\mu(t_2,s)-\mathcal P_\mu(t_1,s)\|_{L(X_0,X_\alpha)}
\leq C_\eta (t_2-t_1)^\eta(t_1-s)^{\mu(1-\alpha)-\eta-1}.
\end{align}
\end{lemma}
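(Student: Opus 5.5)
Three steps: a subordination bound for $\partial_t\psi_{\mu,A(r)}$, interpolation to get \eqref{e4.6}, and a term-by-term splitting of $\mathcal P_\mu$ for \eqref{e4.7}.

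\textbf{Step 1: the derivative bound \eqref{e4.5}.} I start from the subordination formula of Lemma \ref{lem3.1}(i),
$$\psi_{\mu,A(r)}(t)=\int_0^\infty \mu\upsilon M_\mu(\upsilon)\,t^{\mu-1}T_{A(r)}(t^\mu\upsilon)\,d\upsilon .$$
Differentiating under the integral gives two terms:
$$(\mu-1)t^{\mu-2}T_{A(r)}(t^\mu\upsilon)\quad\text{and}\quad -\mu t^{2\mu-2}\upsilon\, A(r)T_{A(r)}(t^\mu\upsilon).$$
I bound them in $L(X_0,X_\alpha)$ using Lemma \ref{lem2.4} and the analytic-semigroup estimates $\|A(r)^{\alpha}T_{A(r)}(\tau)\|\le C\tau^{-\alpha}$ and $\|A(r)^{1+\alpha}T_{A(r)}(\tau)\|\le C\tau^{-1-\alpha}$. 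The second estimate follows by splitting $T(\tau)=T(\tau/2)T(\tau/2)$ and using \eqref{e2.2} and \eqref{e2.3}. This yields the two contributions
$$t^{\mu-2}t^{-\mu\alpha}\upsilon^{-\alpha}\quad\text{and}\quad t^{2\mu-2}\upsilon\,(t^\mu\upsilon)^{-1-\alpha}=t^{\mu(1-\alpha)-2}\upsilon^{-\alpha}.$$
Since $\alpha<1$, the moment $\int_0^\infty \upsilon^{1-\alpha}M_\mu(\upsilon)\,d\upsilon$ is finite by Lemma \ref{lem2.3}(iv). This gives \eqref{e4.5}. Differentiation under the integral is justified by dominated convergence on compact $t$-intervals away from $0$.

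\textbf{Step 2: the increment bound \eqref{e4.6}.} Integrating \eqref{e4.5} over $[t_1,t_2]$ gives the bound $C(t_2-t_1)t_1^{\mu(1-\alpha)-2}$. Separately, \eqref{e4.4} gives the bound $2Ct_1^{\mu(1-\alpha)-1}$, since $t_2^{\mu(1-\alpha)-1}\le t_1^{\mu(1-\alpha)-1}$. Taking the geometric interpolation $\min\{a,b\}\le a^\eta b^{1-\eta}$ of these two bounds, as in Lemma \ref{lem3.8}, gives \eqref{e4.6}.

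\textbf{Step 3: the H\"older bound \eqref{e4.7}.} I write the difference as $P_0+P_1+P_2$, where
$$P_0=\psi_{\mu,A(s)}(t_2-s)-\psi_{\mu,A(s)}(t_1-s),\qquad P_1=\int_{t_1}^{t_2}\psi_{\mu,A(\tau)}(t_2-\tau)\mathcal Q_2(\tau,s)\,d\tau,$$
$$P_2=\int_s^{t_1}\bigl[\psi_{\mu,A(\tau)}(t_2-\tau)-\psi_{\mu,A(\tau)}(t_1-\tau)\bigr]\mathcal Q_2(\tau,s)\,d\tau .$$

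\emph{Term $P_0$.} This follows directly from \eqref{e4.6}.

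\emph{Term $P_1$.} Using \eqref{e4.4} and Lemma \ref{lem3.4}, the norm is at most $C\int_{t_1}^{t_2}(t_2-\tau)^{\mu(1-\alpha)-1}(\tau-s)^{\vartheta-1}d\tau$. I split into two cases.
\begin{itemize}
\item If $t_2-t_1\le t_1-s$, then $(\tau-s)\ge t_1-s$ on the interval. The bound becomes $(t_2-t_1)^{\mu(1-\alpha)}(t_1-s)^{\vartheta-1}$, which is at most $(t_2-t_1)^{\eta}(t_1-s)^{\mu(1-\alpha)-\eta-1}$ times $T^\vartheta$, because $\eta<\mu(1-\alpha)$.
\item Otherwise, the whole integral is at most $C(t_2-s)^{\mu(1-\alpha)+\vartheta-1}\le C(t_2-t_1)^{\mu(1-\alpha)+\vartheta-1}$. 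Absorbing as in Lemma \ref{lem3.6} gives the claim, since the exponent $\mu(1-\alpha)-\eta-1$ is negative.
\end{itemize}

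\emph{Term $P_2$.} Applying \eqref{e4.6} inside the integral gives $C(t_2-t_1)^\eta\int_s^{t_1}(t_1-\tau)^{\mu(1-\alpha)-\eta-1}(\tau-s)^{\vartheta-1}d\tau$. This integral equals a Beta function times $(t_1-s)^{\mu(1-\alpha)-\eta+\vartheta-1}$. It converges because $\eta<\mu(1-\alpha)$, and the extra factor $(t_1-s)^\vartheta$ is bounded by $T^\vartheta$.

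The condition $\eta<\vartheta$ is not needed in this splitting; I keep it only for consistency. The main obstacle is Step 1, specifically getting a uniform-in-$r$ bound for $A(r)^{1+\alpha}T_{A(r)}$ in $X_\alpha$. I would handle it by writing $A(r)^\alpha A(r)T=A(r)^{\alpha}T(\tau/2)\,A(r)T(\tau/2)$ and using \eqref{e2.3} together with the uniform constants from Lemma \ref{lem2.4}.
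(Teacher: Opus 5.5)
Your proposal is correct and follows the paper's proof essentially step by step: the same differentiation under the subordination integral with $A(r)^{1+\alpha}T_{A(r)}(\tau)=A(r)^{\alpha}T_{A(r)}(\tau/2)\,A(r)T_{A(r)}(\tau/2)$, the same $\min\{a,b\}\le a^{1-\eta}b^{\eta}$ interpolation between \eqref{e4.4} and the integrated \eqref{e4.5}, and the same three-term splitting of $\mathcal P_\mu(t_2,s)-\mathcal P_\mu(t_1,s)$. The one small difference is in the regime $t_2-t_1>t_1-s$: the paper bounds the whole difference crudely by \eqref{e4.2}, whereas you keep the term-by-term estimates and case-split only the $[t_1,t_2]$ integral. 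Both routes work, and your remark that $\eta<\vartheta$ is never used holds for the paper's argument as well.
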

\begin{proof}
Let $t_0>0$ be arbitrary and
$I_{t_0}:=[t_0/2,3t_0/2]$. Since
$\frac{d}{dy}T_{A(r)}(y)=-A(r)T_{A(r)}(y)$, we get
$$\frac{d}{dt}T_{A(r)}(t^\mu\xi)
 =-\mu t^{\mu-1}\xi A(r)T_{A(r)}(t^\mu\xi).$$
It follows from Lemma \ref{lem2.4} that
\begin{align*}
\left\|\mu\xi M_\mu(\xi)
 \frac{d}{dt}T_{A(r)}(t^\mu\xi)\right\|_{L(X_0,X_\alpha)}&\leq Ct^{\mu-1}\xi^2M_\mu(\xi)
 \|A(r)^{\alpha+1}T_{A(r)}(t^\mu\xi)\|_{L(X_0)}\leq C_{t_0}\xi^{1-\alpha}M_\mu(\xi),
\end{align*}
where in the last inequality we used \eqref{e2.3} and
$$\|A(r)^{\alpha+1}T_{A(r)}(y)\|_{L(X_0)}
\leq \|A(r)^\alpha T_{A(r)}(y/2)\|_{L(X_0)}
       \|A(r)T_{A(r)}(y/2)\|_{L(X_0)}
\leq Cy^{-\alpha-1}.$$
By Lemma \ref{lem2.3}(iv), we have $\int_0^\infty\xi^{1-\alpha}M_\mu(\xi)\,d\xi<\infty.$ Hence, from  the Lebesgue dominated convergence theorem, we get
$$\frac{d}{dt}
\int_0^\infty
\mu\xi M_\mu(\xi)
T_{A(r)}(t^\mu\xi)\,d\xi
=\int_0^\infty
\mu\xi M_\mu(\xi)
\frac{d}{dt}T_{A(r)}(t^\mu\xi)\,d\xi.$$
Since $t_0>0$ is arbitrary, for every $t>0$, we deduce
\begin{align*}
\partial_t\psi_{\mu,A(r)}(t)
={}&(\mu-1)t^{\mu-2}\int_0^\infty
 \mu\xi M_\mu(\xi)T_{A(r)}(t^\mu\xi)\,d\xi-\mu t^{2\mu-2}\int_0^\infty
 \mu\xi^2M_\mu(\xi)A(r)T_{A(r)}(t^\mu\xi)\,d\xi.
\end{align*}
Therefore, we obtain
\begin{align*}
\|\partial_t\psi_{\mu,A(r)}(t)\|_{L(X_0,X_\alpha)}
\leq& Ct^{\mu-2}\int_0^\infty
 \xi M_\mu(\xi)(t^\mu\xi)^{-\alpha}\,d\xi+Ct^{2\mu-2}\int_0^\infty
 \xi^2M_\mu(\xi)(t^\mu\xi)^{-\alpha-1}\,d\xi\\
=&Ct^{\mu(1-\alpha)-2}
 \int_0^\infty\xi^{1-\alpha}M_\mu(\xi)\,d\xi\leq Ct^{\mu(1-\alpha)-2}.
\end{align*}
Then we prove \eqref{e4.5}. By
\eqref{e4.4} and the triangle inequality, we obtain
\begin{align*}
\|\psi_{\mu,A(r)}(t_2)-\psi_{\mu,A(r)}(t_1)\|_{L(X_0,X_\alpha)}\leq C\bigl(t_2^{\mu(1-\alpha)-1}+t_1^{\mu(1-\alpha)-1}\bigr)
\leq Ct_1^{\mu(1-\alpha)-1}.                  \end{align*}
Combining with \eqref{e4.5}, we have
\begin{align*}
\|\psi_{\mu,A(r)}(t_2)-\psi_{\mu,A(r)}(t_1)\|_{L(X_0,X_\alpha)}
\leq\int_{t_1}^{t_2}
\|\partial_\tau\psi_{\mu,A(r)}(\tau)\|_{L(X_0,X_\alpha)} \,d\tau
\leq C(t_2-t_1)t_1^{\mu(1-\alpha)-2}.              \end{align*}
According to the inequality $\min\{a,b\}\leq a^{1-\eta}b^\eta$, we obtain
\begin{align*}
\|\psi_{\mu,A(r)}(t_2)-\psi_{\mu,A(r)}(t_1)\|_{L(X_0,X_\alpha)}&\leq C_\eta
\bigl(t_1^{\mu(1-\alpha)-1}\bigr)^{1-\eta}
\bigl((t_2-t_1)t_1^{\mu(1-\alpha)-2}\bigr)^\eta\\
&=C_\eta(t_2-t_1)^\eta t_1^{\mu(1-\alpha)-\eta-1}.
\end{align*}
Then \eqref{e4.6} holds.

Next, we prove \eqref{e4.7}. Since
\begin{align*}
&\mathcal P_\mu(t_2,s)-\mathcal P_\mu(t_1,s)\\
={}&\psi_{\mu,A(s)}(t_2-s)-\psi_{\mu,A(s)}(t_1-s)+\int_s^{t_1}[\psi_{\mu,A(r)}(t_2-r)
-\psi_{\mu,A(r)}(t_1-r)]\mathcal Q_2(r,s)\,dr\\
&+\int_{t_1}^{t_2}\psi_{\mu,A(r)}(t_2-r)\mathcal Q_2(r,s)\,dr
=:{}I_1+I_2+I_3.
\end{align*}
By \eqref{e4.6}, we have
$$\|I_1\|_{L(X_0,X_\alpha)}
\leq C_\eta(t_2-t_1)^\eta(t_1-s)^{\mu(1-\alpha)-\eta-1}.$$
For $t_2-t_1\leq t_1-s$, then
\begin{align*}
\|I_2\|
&\leq C_\eta(t_2-t_1)^\eta\int_s^{t_1}
(t_1-r)^{\mu(1-\alpha)-\eta-1}(r-s)^{\vartheta-1}\,dr\\
&=C_\eta B(\mu(1-\alpha)-\eta,\vartheta)
(t_2-t_1)^\eta(t_1-s)^{\mu(1-\alpha)+\vartheta-\eta-1}\leq C_{\eta,T}(t_2-t_1)^\eta(t_1-s)^{\mu(1-\alpha)-\eta-1},
\end{align*}
and
\begin{align*}
\|I_3\|
&\leq C\int_{t_1}^{t_2}(t_2-r)^{\mu(1-\alpha)-1}(r-s)^{\vartheta-1}\,dr
\leq C(t_2-t_1)^{\mu(1-\alpha)}(t_1-s)^{\vartheta-1}\\
&\leq C_T(t_2-t_1)^\eta(t_1-s)^{\mu(1-\alpha)-\eta-1}.
\end{align*}
If $t_2-t_1>t_1-s$, it follows from \eqref{e4.2} that
\begin{align*}
\|\mathcal P_\mu(t_2,s)-\mathcal P_\mu(t_1,s)\|
&\leq C\bigl((t_2-s)^{\mu(1-\alpha)-1}+(t_1-s)^{\mu(1-\alpha)-1}\bigr)\\
&\leq C(t_1-s)^{\mu(1-\alpha)-1}\leq C(t_2-t_1)^\eta(t_1-s)^{\mu(1-\alpha)-\eta-1}.
\end{align*}
Thus, \eqref{e4.7} holds. The proof is complete.
\end{proof}

Fix $\kappa\in[0,1)$, recall that $X_\kappa=[X_0,X_1]_\kappa=D(A(t)^\kappa)$ for $t\in[0,T]$. Thus, $X_\kappa$ is the
fixed space in which the spatial regularity of the solution will be measured. And
we have the following assumptions about the nonlinear terms $F$ and $G$.
\begin{assumption}\label{assump4.1}
The maps
$$F:[0,T]\times\Omega\times X_\kappa\to X_0,
\quad
G:[0,T]\times\Omega\times X_\kappa\to\gamma(H,X_0)$$
are measurable with respect to the progressive $\sigma$-field on
$[0,T]\times\Omega$ and the Borel $\sigma$-field of $X_\kappa$. There exist
constants $L_F,L_G,C_F,C_G\geq0$ such that for all $x,y\in X_\kappa$ and
almost all $(t,\omega)$,
\begin{align}\label{e4.8}
\|F(t,\omega,x)-F(t,\omega,y)\|_{X_0}
\leq L_F\|x-y\|_{X_\kappa},
\end{align}
\begin{align}\label{e4.9}
\|G(t,\omega,x)-G(t,\omega,y)\|_{\gamma(H,X_0)}
\leq L_G\|x-y\|_{X_\kappa},
\end{align}
\begin{align}\label{e4.10}
\|F(t,\omega,x)\|_{X_0}
\leq C_F(1+\|x\|_{X_\kappa}),
\end{align}
\begin{align}\label{e4.11}
\|G(t,\omega,x)\|_{\gamma(H,X_0)}
\leq C_G(1+\|x\|_{X_\kappa}).
\end{align}
\end{assumption}
Let $p\in[2,\infty)$. We denote by $\mathcal H_{p,\kappa}^\nu(T)$ the
space of all progressively measurable processes
$$u:(0,T]\times\Omega\to X_\kappa$$
for which the map $t\mapsto t^{1-\nu}u(t)$
admits an extension belonging to
$C([0,T];L^p(\Omega;X_\kappa))$. This is a Banach space equipped with the norm
\begin{equation} \label{e4.12}
\|u\|_{\mathcal H_{p,\kappa}^\nu}
:=\sup_{0<t\leq T}t^{1-\nu}
\|u(t)\|_{L^p(\Omega;X_\kappa)}.
\end{equation}

\begin{definition}\label{def4.1}
Let $u_0\in L^p(\Omega,\mathcal F_0;X_\kappa)$. A process $u\in\mathcal H_{p,\kappa}^\nu(T)$ is called a mild solution of \eqref{e4.1} if for every
$t\in(0,T]$, almost surely,
\begin{equation}\label{e4.13}
u(t)={}H_{\mu,\nu}(t,0)u_0
+\int_0^t\mathcal P_\mu(t,s)F(s,u(s))\,ds+\int_0^t\mathcal P_\mu(t,s)G(s,u(s))\,dW_H(s),
\end{equation}
where $$H_{\mu, \nu}(t,0) = \psi_{\nu,A(0)}(t-0) + \int_0^t \psi_{\mu,A(r)}(t-r) \mathcal{Q}_1(r, 0) dr$$
$$\mathcal{P}_{\mu}(t,s) = \psi_{\mu,A(s)}(t-s) + \int_s^t \psi_{\mu,A(r)}(t-r) \mathcal{Q}_2(r, s) dr.$$
\end{definition}
For any $u \in {\mathcal{H}^\nu_{p, \kappa}}$, we define the operator $\mathcal{T}$ by
\begin{align*}
\mathcal{T}(u)(t)= {}H_{\mu,\nu}(t,0)u_0+\int_0^t\mathcal P_\mu(t,s)F(s,u(s))\,ds+\int_0^t\mathcal P_\mu(t,s)G(s,u(s))\,dW_H(s).
\end{align*}

\begin{lemma}\label{lem4.3}
Let Assumptions~\ref{assump2.1}, \ref{assump2.2}, and~\ref{assump4.1} hold. Let $u_0\in L^p(\Omega,\mathcal F_0;X_\kappa)$. Let $p\in[2,\infty)$ and
\begin{equation}\label{e4.14}
a:=\mu(1-\kappa)>\frac12,\quad \mu\kappa<\vartheta.
\end{equation}
Then for $u\in\mathcal H_{p,\kappa}^\nu(T)$, $\mathcal T$ is well defined and $\mathcal T$ maps
$\mathcal H_{p,\kappa}^\nu(T)$ into itself.
\end{lemma}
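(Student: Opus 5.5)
To prove Lemma \ref{lem4.3}, I split $\mathcal T(u)=\mathcal T_1+\mathcal T_2+\mathcal T_3$ into the initial term $H_{\mu,\nu}(t,0)u_0$, the deterministic convolution with $F$, and the stochastic convolution with $G$. For each piece I will show three things: progressive measurability, the weighted bound $\sup_t t^{1-\nu}\|\mathcal T_i(t)\|_{L^p(\Omega;X_\kappa)}<\infty$, and continuity of $t\mapsto t^{1-\nu}\mathcal T_i(t)$ into $L^p(\Omega;X_\kappa)$ on $(0,T]$, extendable to $t=0$.

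\textbf{Step 1 (initial term).} Since $\mu\kappa<\vartheta$, \eqref{e4.3} gives
$$t^{1-\nu}\|H_{\mu,\nu}(t,0)u_0\|_{L^p(\Omega;X_\kappa)}\le C\|u_0\|_{L^p(\Omega;X_\kappa)}.$$
Adaptedness is clear because $u_0$ is $\mathcal F_0$-measurable. Continuity on $(0,T]$ follows from the strong continuity in Lemma \ref{lem3.7}, transported to $X_\kappa$ by commuting $A(0)^\kappa$ with $\psi_{\nu,A(0)}$, as in the proof of Lemma \ref{lem4.1}. For the limit at $0$, I split $H_{\mu,\nu}(t,0)$ as in its definition. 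The integral part contributes $O(t^{\vartheta-\mu\kappa})$ after multiplication by $t^{1-\nu}$, so it vanishes as $t\to0$. For the main part, Lemma \ref{lem3.2}(iii) applied to $A(0)^\kappa u_0$ gives $t^{1-\nu}\psi_{\nu,A(0)}(t)u_0\to u_0/\Gamma(\nu)$ pointwise in $\omega$. Dominated convergence in $L^p(\Omega)$ then upgrades this, since the integrand is bounded by $C\|u_0\|_{X_\kappa}$.

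\textbf{Step 2 (drift).} By \eqref{e4.10} and the definition of the norm,
$$\|F(s,u(s))\|_{L^p(\Omega;X_0)}\le C_F\bigl(1+s^{\nu-1}\|u\|_{\mathcal H^\nu_{p,\kappa}}\bigr).$$
Using \eqref{e4.2} with $\alpha=\kappa$ and Minkowski's inequality,
$$\|\mathcal T_2(t)\|_{L^p(\Omega;X_\kappa)}\le C\int_0^t(t-s)^{a-1}\bigl(1+s^{\nu-1}\bigr)\,ds\le C\bigl(t^{a}+t^{a+\nu-1}\bigr).$$
Multiplying by $t^{1-\nu}$ leaves a bounded quantity that tends to $0$ as $t\to0$. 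Continuity in $t$ follows from \eqref{e4.7} with $\alpha=\kappa$ and $\eta<\min\{a,\vartheta\}$, using the usual splitting into $\int_{t_1}^{t_2}$ and $\int_0^{t_1}$. Progressive measurability holds because $\mathcal P_\mu$ is deterministic and strongly continuous.

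\textbf{Step 3 (stochastic term, the main obstacle).} The difficulty is that the integrand $s\mapsto\mathcal P_\mu(t,s)G(s,u(s))$ depends on the upper limit $t$, so Lemma \ref{lem2.1} must be applied for each fixed $t$ on $[0,t]$. By \eqref{e4.2}, the ideal property and Corollary \ref{cor2.1}, the integrand is progressive in $s$ with values in $\gamma(H,X_\kappa)$, and $X_\kappa$ is UMD of type $2$ by Lemma \ref{lem2.4}. Lemma \ref{lem2.1} together with Minkowski's inequality in $L^{p/2}$ (valid since $p\ge2$) gives
$$\|\mathcal T_3(t)\|_{L^p(\Omega;X_\kappa)}\le C\Bigl(\int_0^t(t-s)^{2a-2}\bigl(1+s^{\nu-1}\|u\|_{\mathcal H^\nu_{p,\kappa}}\bigr)^2ds\Bigr)^{1/2}.$$
The kernel is integrable exactly because $a>1/2$. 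The singular factor $s^{2\nu-2}$ requires $\nu>1/2$, which holds since $\nu\ge\mu\ge a>1/2$. The bound is then $C(t^{a-1/2}+t^{a+\nu-3/2})$, and after multiplying by $t^{1-\nu}$ it becomes $C(t^{a+1/2-\nu}+t^{a-1/2})$. This is bounded and tends to $0$ as $t\to0$, because $a-1/2>0$ and $a+1/2-\nu>a-1/2$.

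For continuity on $(0,T]$ I write, for $t_1<t_2$,
$$\mathcal T_3(t_2)-\mathcal T_3(t_1)=\int_{t_1}^{t_2}\mathcal P_\mu(t_2,s)G\,dW_H+\int_0^{t_1}\bigl[\mathcal P_\mu(t_2,s)-\mathcal P_\mu(t_1,s)\bigr]G\,dW_H.$$
The first integral is handled by the same estimate as above. For the second I use \eqref{e4.7} with $\eta<\min\{a-\tfrac12,\vartheta\}$, so that $(t_1-s)^{2(a-\eta-1)}$ remains integrable; this gives a bound $C(t_2-t_1)^\eta$ on compact subsets of $(0,T]$. Progressive measurability of $\mathcal T_3$ follows from this $L^p$-continuity together with the adaptedness of each stochastic integral, by passing to a progressive modification.

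Combining Steps 1–3 shows that $\mathcal T(u)\in\mathcal H^\nu_{p,\kappa}(T)$, which proves Lemma \ref{lem4.3}.
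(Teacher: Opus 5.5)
Your proposal is correct and follows the paper's proof essentially step by step: the same three-term split, the same bounds from \eqref{e4.2}, \eqref{e4.3} and \eqref{e4.7}, the same choice $\eta<\min\{a-\tfrac12,\vartheta\}$ for the stochastic increment, the same dominated-convergence argument at $t=0$, and the same passage to a progressive version via adaptedness and $L^p$-continuity. One point needs tightening: commuting $A(0)^\kappa$ only gives $X_\kappa$-continuity of $\psi_{\nu,A(0)}(t)u_0$, whereas the part $\int_0^t\psi_{\mu,A(r)}(t-r)\mathcal Q_1(r,0)u_0\,dr$ needs the same $\int_0^{t_1}$/$\int_{t_1}^{t_2}$ splitting with \eqref{e4.6}, \eqref{e4.4} and \eqref{e3.7}, as in the paper; also note that $a+\tfrac12-\nu\ge a-\tfrac12$ holds only with equality allowed, since $\nu=1$ is possible, which does not affect your conclusion.
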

\begin{proof}
Let $u\in\mathcal H_{p,\kappa}^\nu(T)$ and put
$$M=\|u\|_{\mathcal H_{p,\kappa}^\nu}=
\sup_{0<t\leq T}
t^{1-\nu}
\|u(t)\|_{L^p(\Omega;X_\kappa)}.$$
Then for $s\in(0,T]$, we have $s^{1-\nu}
\|u(s)\|_{L^p(\Omega;X_\kappa)}
\leq M$. Set
$$(\mathcal Tu)(t)=I_1(t)+I_2(t)+I_3(t),$$
where $I_1(t)=H_{\mu,\nu}(t,0)u_0$ and
$$I_2(t)=\int_0^t\mathcal P_\mu(t,s)F(s,u(s))\,ds,\quad I_3(t)=\int_0^t\mathcal P_\mu(t,s)G(s,u(s))\,dW_H(s).$$
We first prove that these terms are well defined. By \eqref{e4.3},
$I_1(t)\in L^p(\Omega;X_\kappa)$ and
\begin{equation}\label{e4.15}
t^{1-\nu}\|I_1(t)\|_{L^p(\Omega;X_\kappa)}
\leq C\|u_0\|_{L^p(\Omega;X_\kappa)}.
\end{equation}
Since $u\in\mathcal H_{p,\kappa}^\nu(T)$,
\begin{equation}\label{e4.16}
\|u(s)\|_{L^p(\Omega;X_\kappa)}\leq Ms^{\nu-1}, \qquad 0<s\leq T.
\end{equation}
It follows from the measurability assumptions on $F$, \eqref{e4.2},
\eqref{e4.10}, and \eqref{e4.16} that
\begin{align*}
\int_0^t
\|\mathcal P_\mu(t,s)F(s,u(s))\|_{L^p(\Omega;X_\kappa)}\,ds\leq C\int_0^t(t-s)^{a-1}(1+Ms^{\nu-1})\,ds<\infty.
\end{align*}
Thus $I_2(t)$ is a well-defined $L^p(\Omega;X_\kappa)$-valued Bochner
integral.
Moreover, by Minkowski’s integral inequality, we have
\begin{equation}\label{e4.17}
\begin{aligned}
t^{1-\nu}\|I_2(t)\|_{L^p(\Omega;X_\kappa)}
&=t^{1-\nu}\left\|\int_0^t\mathcal P_\mu(t,s)F(s,u(s))\,ds\right\|_{L^p(\Omega;X_\kappa)}\\
&\leq t^{1-\nu}\int_0^t\|\mathcal P_\mu(t,s)F(s,u(s))\|_{L^p(\Omega;X_\kappa)}
\,ds\\
&\leq Ct^{1-\nu}\int_0^t
(t-s)^{a-1}(1+Ms^{\nu-1})\,ds\\
&\leq C\left(t^{a+1-\nu}+Mt^a\right).
\end{aligned}
\end{equation}

For $0<s<t$, put
$$\Phi_t(s):=\mathcal P_\mu(t,s)G(s,u(s)) .$$
For fixed $t\in(0,T]$, extend $\Phi_t$ by zero to the whole interval
$[0,T]$ and set
$$\widetilde\Phi_t(s):=\mathbf 1_{[0,t]}(s)\Phi_t(s),
\quad 0\leq s\leq T.$$
By the property of $\gamma$-radonifying operators, together with \eqref{e4.2} and \eqref{e4.11}, we obtain
\begin{align*}
\|\Phi_t(s)\|
 _{L^p(\Omega;\gamma(H,X_\kappa))}
\le&\|\mathcal P_\mu(t,s)\|_{L(X_0,X_\kappa)}
\|G(s,u(s))\|_{L^p(\Omega;\gamma(H,X_0))}\\
\le&C(t-s)^{a-1}
\left(1+\|u(s)\|_{L^p(\Omega;X_\kappa)}\right)\\
\le&C(t-s)^{a-1}\left(1+Ms^{\nu-1}\right).
\end{align*}

Since $\mathcal P_\mu(t,s)$ is deterministic and strongly measurable in
$s$, both $\Phi_t$ on $[0,t]$ and its zero extension $\widetilde\Phi_t$ on $[0,T]$ are progressively measurable. From $(1+x)^2\leq2(1+x^2)$, we deduce
\begin{align*}
\int_0^t\|\Phi_t(s)\|_{L^p(\Omega;\gamma(H,X_\kappa))}^2\,ds&\leq C\int_0^t(t-s)^{2a-2}(1+Ms^{\nu-1})^2\,ds\\
&\leq 2C\int_0^t(t-s)^{2a-2}\,ds
+2CM^2\int_0^t(t-s)^{2a-2}s^{2\nu-2}\,ds\\
&\leq \frac{2Ct^{2a-1}}{2a-1}
+2CM^2B(2a-1,2\nu-1)t^{2a+2\nu-3}<\infty.
\end{align*}
The integrals are finite since $2a-1>0$ by \eqref{e4.14} and $2\nu-1>0$ since $\nu\geq\mu>1/2$.

By Lemma~\ref{lem2.4}, $X_\kappa$ is a UMD space with type $2$. Since $p\geq2$, it follows from Minkowski's integral inequality that
\begin{align*}
\|\widetilde\Phi_t\|_{L^p(\Omega;L^2(0,T;\gamma(H,X_\kappa)))}&=\|\Phi_t\|_{L^p(\Omega;L^2(0,t;\gamma(H,X_\kappa)))}=\left\|\left(\int_0^t
\|\Phi_t(s)\|_{\gamma(H,X_\kappa)}^2\,ds\right)^{1/2}
\right\|_{L^p(\Omega)}\\
&\leq\left(\int_0^t
\|\Phi_t(s)\|_{L^p(\Omega;\gamma(H,X_\kappa))}^2\,ds
\right)^{1/2}<\infty.
\end{align*}
Then $\widetilde\Phi_t$ satisfies all conditions of
Lemma~\ref{lem2.1} on $[0,T]$. Since
$$\int_0^T\widetilde\Phi_t(s)\,dW_H(s)
=\int_0^t\Phi_t(s)\,dW_H(s)=I_3(t),$$
we have
\begin{align*}
\|I_3(t)\|_{L^p(\Omega;X_\kappa)}
&\leq\left(\mathbb E\sup_{0\leq\tau\leq T}
\left\|\int_0^\tau\widetilde\Phi_t(s)\,dW_H(s)\right\|_{X_\kappa}^p
\right)^{1/p}\leq C\|\widetilde\Phi_t\|_%
{L^p(\Omega;L^2(0,T;\gamma(H,X_\kappa)))}\\
&\leq C\left(\int_0^t
\|\Phi_t(s)\|_{L^p(\Omega;\gamma(H,X_\kappa))}^2\,ds\right)^{1/2}.
\end{align*}
Thus, $I_3(t)$ is a well-defined $X_\kappa$-valued stochastic integral.
Moreover,
\begin{align}\label{e4.18}
t^{1-\nu}\|I_3(t)\|_{L^p(\Omega;X_\kappa)}
leq Ct^{1-\nu}
\left(t^{2a-1}+M^2t^{2a+2\nu-3}\right)^{1/2}
\leq C\bigl(t^{a+1/2-\nu}+Mt^{a-1/2}\bigr).
\end{align}
Hence, $I_1(t), I_2(t)$ and $I_3(t)$ are $\mathcal F_t$-measurable. Thus, $\mathcal Tu$ is adapted.

It follows from \eqref{e4.15}, \eqref{e4.17} and \eqref{e4.18} that
$$\sup_{0<t\leq T}t^{1-\nu}
\|(\mathcal Tu)(t)\|_{L^p(\Omega;X_\kappa)}<\infty.$$

It remains to prove weighted continuity. Fix
$0<\varepsilon\leq t_1<t_2\leq T$, then
\begin{align*}
I_2(t_2)-I_2(t_1)
=&\int_{t_1}^{t_2}\mathcal P_\mu(t_2,s)F(s,u(s))\,ds+\int_0^{t_1}[\mathcal P_\mu(t_2,s)-\mathcal P_\mu(t_1,s)]
F(s,u(s))\,ds\\
=&J_{21}+J_{22}.
\end{align*}
Choose $0<\eta<\min\{a,\vartheta\}$. By \eqref{e4.2}, \eqref{e4.7},
and \eqref{e4.16}, we obtain
\begin{align*}
\|J_{21}\|_{L^p(\Omega;X_\kappa)}
&\leq C\int_{t_1}^{t_2}(t_2-s)^{a-1}
(1+Ms^{\nu-1})\,ds\\
&\leq C_{\varepsilon,M}\int_{t_1}^{t_2}(t_2-s)^{a-1}\,ds
\leq C_{\varepsilon,M}(t_2-t_1)^a,
\end{align*}
where we used $s^{\nu-1}\leq\varepsilon^{\nu-1}$ for
$s\in[t_1,t_2]$. For $a-\eta>0$ and $\nu>0$, it follows from \eqref{e4.7} that
\begin{align*}
\|J_{22}\|_{L^p(\Omega;X_\kappa)}
&\leq C(t_2-t_1)^\eta
\int_0^{t_1}(t_1-s)^{a-\eta-1}(1+Ms^{\nu-1})\,ds\\
&\leq C_{\varepsilon,T,M}(t_2-t_1)^\eta.
\end{align*}
Therefore, $I_2$ is continuous from $(0,T]$ into
$L^p(\Omega;X_\kappa)$. Similarly,
\begin{align*}
I_3(t_2)-I_3(t_1)
=&\int_{t_1}^{t_2}\mathcal P_\mu(t_2,s)G(s,u(s))\,dW_H(s)+\int_0^{t_1}[\mathcal P_\mu(t_2,s)-\mathcal P_\mu(t_1,s)]
G(s,u(s))\,dW_H(s)\\
=&J_{31}+J_{32}.
\end{align*}
Choose $0<\eta<\min\{a-\frac12,\vartheta\}$.
According to Lemma~\ref{lem2.1} and Minkowski's integral inequality and \eqref{e4.2}, we obtain
\begin{align*}
\|J_{31}\|_{L^p(\Omega;X_\kappa)}
&\leq C\left(\int_{t_1}^{t_2}(t_2-s)^{2a-2}
(1+Ms^{\nu-1})^2\,ds\right)^{1/2}\\
&\leq C_{\varepsilon,M}
\left(\int_{t_1}^{t_2}(t_2-s)^{2a-2}\,ds\right)^{1/2}\\
&\leq C_{\varepsilon,M}(t_2-t_1)^{a-1/2},
\end{align*}
where the integral is finite since $a>1/2$. Similarly, by \eqref{e4.7} and $(1+x)^2\leq2(1+x^2)$,
\begin{align*}
\|J_{32}\|_{L^p(\Omega;X_\kappa)}
&\leq C(t_2-t_1)^\eta
\left(\int_0^{t_1}(t_1-s)^{2a-2\eta-2}
(1+Ms^{\nu-1})^2\,ds\right)^{1/2}\\
&\leq C(t_2-t_1)^\eta
\left[\frac{2t_1^{2a-2\eta-1}}{2a-2\eta-1}+2M^2B(2a-2\eta-1,2\nu-1)
t_1^{2a-2\eta+2\nu-3}\right]^{1/2}\\
&\leq C_{\varepsilon,T,M}(t_2-t_1)^\eta,
\end{align*}
The beta integral is finite since $\eta<a-1/2$ and $\nu>1/2$, and it is uniformly bounded
for $t_1\in[\varepsilon,T]$.
Hence, $I_3$ is continuous from $(0,T]$ into $L^p(\Omega;X_\kappa)$.

Next, we prove the continuity of $I_1$ in $L^p(\Omega;X_\kappa)$. Fix $x\in X_\kappa$ and $t_0\in(0,T]$. By Lemma~\ref{lem2.4}, we deduce
$X_\kappa=D(A(0)^\kappa)$ and
$$
\|y\|_{X_\kappa}\leq C\|A(0)^\kappa y\|_{X_0},
$$
for $y\in X_\kappa$. According to the proof of Lemma~\ref{lem4.1}, we have
$A(0)^\kappa\psi_{\nu,A(0)}(t)x
=\psi_{\nu,A(0)}(t)A(0)^\kappa x$.
It yields
\begin{align*}
\|[\psi_{\nu,A(0)}(t)-\psi_{\nu,A(0)}(t_0)]x\|_{X_\kappa}
&\leq C\left\|A(0)^\kappa
[\psi_{\nu,A(0)}(t)-\psi_{\nu,A(0)}(t_0)]x\right\|_{X_0}\\
&=C\left\|[\psi_{\nu,A(0)}(t)-\psi_{\nu,A(0)}(t_0)]
A(0)^\kappa x\right\|_{X_0}.
\end{align*}
Since $A(0)^\kappa x\in X_0$, we deduce $$\left\|[\psi_{\nu,A(0)}(t)-\psi_{\nu,A(0)}(t_0)]
A(0)^\kappa x\right\|_{X_0} \to 0$$ as $t\to t_0$ for $(0,T]$ by the strong continuity of
$\psi_{\nu,A(0)}$ in $X_0$. Hence, $t\mapsto\psi_{\nu,A(0)}(t)x$ is continuous in $X_\kappa$ on $(0,T]$.
Let
$$H(t)x:=\int_0^t\psi_{\mu,A(r)}(t-r)\mathcal Q_1(r,0)x\,dr,$$
and $\gamma_1:=\vartheta+\nu-\mu>0$.
Fix $\varepsilon\leq t_1<t_2\leq T$, set $h=t_2-t_1$ and $0<\eta<a$. Then
\begin{align*}
H(t_2)x-H(t_1)x
={}&\int_0^{t_1}
\bigl[\psi_{\mu,A(r)}(t_2-r)-\psi_{\mu,A(r)}(t_1-r)\bigr]
\mathcal Q_1(r,0)x\,dr\\
&+\int_{t_1}^{t_2}
\psi_{\mu,A(r)}(t_2-r)\mathcal Q_1(r,0)x\,dr
=: H_1+H_2.
\end{align*}
It follows from \eqref{e3.7} and \eqref{e4.6} that
\begin{align*}
\|H_1\|_{X_\kappa}
\leq C h^\eta\|x\|_{X_0}
\int_0^{t_1}(t_1-r)^{a-\eta-1}r^{\gamma_1-1}\,dr\leq C_{\varepsilon,T}h^\eta\|x\|_{X_\kappa},
\end{align*}
By \eqref{e3.7} and \eqref{e4.4}, we get
\begin{align*}
\|H_2\|_{X_\kappa}
\leq C\|x\|_{X_0}
\int_{t_1}^{t_2}(t_2-r)^{a-1}r^{\gamma_1-1}\,dr\leq C_{\varepsilon,T}\|x\|_{X_0}
\int_{t_1}^{t_2}(t_2-r)^{a-1}\,dr\leq C_{\varepsilon,T}h^a\|x\|_{X_\kappa},
\end{align*}
where we use the continuous embedding $X_\kappa\hookrightarrow
X_0$.
Hence, we obtain $$
\|H(t_2)x-H(t_1)x\|_{X_\kappa}
\leq C_{\varepsilon,T}(h^\eta+h^a)\|x\|_{X_\kappa}
\to 0
$$
as $t_2\to t_1$. Then $t\mapsto H_{\mu,\nu}(t,0)x$ is continuous
in $X_\kappa$ for $x\in X_\kappa$.
Moreover, by \eqref{e4.3}, we get
$$\sup_{t\in[\varepsilon,T]}\|H_{\mu,\nu}(t,0)\|_{L(X_\kappa)}<\infty.$$
For $u_0\in L^p(\Omega;X_\kappa)$, let $x=u_0(\omega)$. By the Lebesgue dominated convergence theorem, we obtain
$$\mathbb E
\|H_{\mu,\nu}(t,0)u_0
-H_{\mu,\nu}(t_0,0)u_0\|_{X_\kappa}^{p}
\to 0$$
as $t \to t_0$. Hence, we have
$$I_1\in C((0,T];L^p(\Omega;X_\kappa)).$$

It remains to show that the weighted process $t^{1-\nu}(\mathcal Tu)(t)$ admits a continuous extension to $t=0$. Since $\nu=\mu+\lambda(1-\mu)\leq1$ and \eqref{e4.14}, we have
$a>1/2$, $a+1/2-\nu\geq a-1/2>0$, and $a+1-\nu\geq a>0$. According to \eqref{e4.17} and \eqref{e4.18}, we deduce
\begin{align*}
t^{1-\nu}\|I_2(t)\|_{L^p(\Omega;X_\kappa)}
&\leq C\left(t^{a+1-\nu}+Mt^a\right)
\to 0,
\end{align*}
\begin{align*}
t^{1-\nu}\|I_3(t)\|_{L^p(\Omega;X_\kappa)}
&\leq C\left(t^{a+1/2-\nu}+Mt^{a-1/2}\right)
\to 0.
\end{align*}
Hence, it yields
\begin{equation}\label{e4.19}
t^{1-\nu}I_2(t)\to 0,
\quad
t^{1-\nu}I_3(t)\to 0
\quad\text{in }L^p(\Omega;X_\kappa).
\end{equation}
For the initial term, for every
$x\in X_\kappa$, we have
\begin{align*}
&t^{1-\nu}H_{\mu,\nu}(t,0)x-\frac{x}{\Gamma(\nu)}\\
=&t^{1-\nu}\psi_{\nu,A(0)}(t)x-\frac{x}{\Gamma(\nu)}+t^{1-\nu}\int_0^t
\psi_{\mu,A(r)}(t-r)\mathcal Q_1(r,0)x\,dr.
\end{align*}
By Lemma~\ref{lem2.4}, Lemma~\ref{lem3.2}(iii), and $A(0)^\kappa\psi_{\nu,A(0)}(t)x=\psi_{\nu,A(0)}(t)A(0)^\kappa x$, we deduce
\begin{align*}
\left\|t^{1-\nu}\psi_{\nu,A(0)}(t)x
-\frac{x}{\Gamma(\nu)}\right\|_{X_\kappa}\leq& C\left\|A(0)^\kappa
\left(t^{1-\nu}\psi_{\nu,A(0)}(t)x
-\frac{x}{\Gamma(\nu)}\right)\right\|_{X_0}\\
=&C\left\|t^{1-\nu}\psi_{\nu,A(0)}(t)A(0)^\kappa x
-\frac{A(0)^\kappa x}{\Gamma(\nu)}\right\|_{X_0}
\to 0.
\end{align*}
Hence, we deduce
\begin{equation}\label{e4.20}
t^{1-\nu}\psi_{\nu,A(0)}(t)x
\to \frac{x}{\Gamma(\nu)}
\quad\text{in }X_\kappa.
\end{equation}
According to the proof of Lemma~\ref{lem4.1},  we have
$$\left\|
\int_0^t
\psi_{\mu,A(r)}(t-r)\mathcal Q_1(r,0)x\,dr
\right\|_{X_\kappa}
\le
Ct^{\vartheta+\nu-\mu\kappa-1}
\|x\|_{X_\kappa}.$$
Then
\begin{align*}
\left\|t^{1-\nu}\int_0^t
\psi_{\mu,A(r)}(t-r)\mathcal Q_1(r,0)x\,dr
\right\|_{X_\kappa}\leq Ct^{\vartheta-\mu\kappa}\|x\|_{X_\kappa}
\to 0,
\end{align*}
where the last limit uses $\mu\kappa<\vartheta$ in \eqref{e4.14}. Therefore, we obtain
\begin{align}\label{e4.21}
t^{1-\nu}H_{\mu,\nu}(t,0)x
\to \frac{x}{\Gamma(\nu)}
\quad\text{in }X_\kappa.
\end{align}
Together with \eqref{e4.3}, for $0<t\leq T$, we deduce
$$\|t^{1-\nu}H_{\mu,\nu}(t,0)x\|_{X_\kappa}
\leq C\|x\|_{X_\kappa}.$$
Since $u_0\in L^p(\Omega;X_\kappa)$,
$u_0(\omega)\in X_\kappa$ for almost every $\omega\in\Omega$. Then by \eqref{e4.21} with $x=u_0(\omega)$, we get
$$t^{1-\nu}H_{\mu,\nu}(t,0)u_0(\omega)
\to \frac{u_0(\omega)}{\Gamma(\nu)}
\quad\text{in }X_\kappa$$
for almost every $\omega\in\Omega$. From the triangle inequality, we have
\begin{align*}
\left\|t^{1-\nu}H_{\mu,\nu}(t,0)u_0(\omega)
-\frac{u_0(\omega)}{\Gamma(\nu)}\right\|_{X_\kappa}\leq
\left(C+\frac{1}{\Gamma(\nu)}\right)
\|u_0(\omega)\|_{X_\kappa},
\end{align*}
\begin{align*}
\left\|t^{1-\nu}H_{\mu,\nu}(t,0)u_0(\omega)
-\frac{u_0(\omega)}{\Gamma(\nu)}\right\|_{X_\kappa}^p\leq
\left(C+\frac{1}{\Gamma(\nu)}\right)^p
\|u_0(\omega)\|_{X_\kappa}^p.
\end{align*}
The function on the right hand side is integrable over $\Omega$ since
$u_0\in L^p(\Omega;X_\kappa)$. Hence, it follows from the Lebesgue dominated convergence theorem that
$$\mathbb E\left\|t^{1-\nu}H_{\mu,\nu}(t,0)u_0
-\frac{u_0}{\Gamma(\nu)}\right\|_{X_\kappa}^p
\to 0.$$
Then
\begin{equation}\label{e4.22}
t^{1-\nu}I_1(t)
=t^{1-\nu}H_{\mu,\nu}(t,0)u_0
\to \frac{u_0}{\Gamma(\nu)}
\quad\text{in }L^p(\Omega;X_\kappa).
\end{equation}
Since
$$t^{1-\nu}(\mathcal Tu)(t)
=t^{1-\nu}I_1(t)+t^{1-\nu}I_2(t)+t^{1-\nu}I_3(t),$$
it follows from the triangle inequality, \eqref{e4.19}, and \eqref{e4.22} that
\begin{align*}
&\left\|t^{1-\nu}(\mathcal Tu)(t)
-\frac{u_0}{\Gamma(\nu)}\right\|_{L^p(\Omega;X_\kappa)}\\
\leq&\left\|t^{1-\nu}I_1(t)
-\frac{u_0}{\Gamma(\nu)}\right\|_{L^p(\Omega;X_\kappa)}+t^{1-\nu}\|I_2(t)\|_{L^p(\Omega;X_\kappa)}
+t^{1-\nu}\|I_3(t)\|_{L^p(\Omega;X_\kappa)}
\to 0.
\end{align*}
For $0<t\leq T$, define the weighted process
$v(t):=t^{1-\nu}(\mathcal Tu)(t),$
and set $v(0):=\frac{u_0}{\Gamma(\nu)}$. Together with the continuity on
$(0,T]$, we deduce
$$v\in C([0,T];L^p(\Omega;X_\kappa)).$$

Next, we verify the initial condition in \eqref{e4.1}.  If $0<\nu<1$, we deduce
\begin{align*}
\bigl[I_{0+}^{1-\nu}(\mathcal Tu)\bigr](t)
&=\frac{1}{\Gamma(1-\nu)}
\int_0^t(t-s)^{-\nu}(\mathcal Tu)(s)\,ds=\frac{1}{\Gamma(1-\nu)}
\int_0^1(1-r)^{-\nu}r^{\nu-1}v(tr)\,dr.
\end{align*}
For $r\in(0,1]$, by the continuity of $v$ at zero, we have
$v(tr)\to v(0)=\frac{u_0}{\Gamma(\nu)}$ as $t\to 0$ in $L^p(\Omega;X_\kappa)$. Since $v$ is bounded on $[0,T]$ and $(1-r)^{-\nu}r^{\nu-1}$ belongs to $L^1(0,1)$, we
get $I_{0+}^{1-\nu}(\mathcal Tu)(0)
=\frac{B(\nu,1-\nu)}
{\Gamma(1-\nu)\Gamma(\nu)}u_0=u_0$ by
the Lebesgue dominated convergence theorem. If $\nu=1$, then $I_{0+}^{1-\nu}=I_{0+}^0$ is the identity operator,
and $I_{0+}^{0}(\mathcal Tu)(0)=v(0)=u_0$ in $L^p(\Omega;X_\kappa)$. Hence, it follows that $\mathcal Tu$ satisfies
$I_{0+}^{1-\nu}(\mathcal Tu)(0)=u_0$.

Moreover, $\mathcal Tu$ is adapted and $L^p(\Omega;X_\kappa)$-continuous
on $(0,T]$. Therefore, it admits a progressively measurable version. Together with the weighted continuity, we deduce
$\mathcal Tu\in\mathcal H_{p,\kappa}^{\nu}(T).$
The proof is complete.
\end{proof}

\begin{theorem}\label{the4.1}
Let Assumptions~\ref{assump2.1}, \ref{assump2.2}, ~\ref{assump4.1}, and \eqref{e4.14} hold. For $p\in[2,\infty)$, let $u_0\in L^p(\Omega,\mathcal F_0;X_\kappa)$. Then equation \eqref{e4.1} has a unique mild solution $u\in\mathcal H_{p,\kappa}^\nu(T)$. Moreover,
\begin{equation}\label{e4.23}
\|u\|_{\mathcal H_{p,\kappa}^\nu}
\leq C_T\left(1+\|u_0\|_{L^p(\Omega;X_\kappa)}\right),
\end{equation}
and
\begin{equation}\label{e4.24}
\lim_{t\to 0}t^{1-\nu}u(t)
=\frac{u_0}{\Gamma(\nu)}
\quad\text{in }L^p(\Omega;X_\kappa).
\end{equation}
Moreover, let $\delta\in[0,1-\kappa)$ satisfy
$\mu(\kappa+\delta)\leq\vartheta$ and put
$a_\delta=\mu(1-\kappa-\delta)=a-\mu\delta$. For every $\varepsilon\in(0,T)$ and
\begin{equation}\label{e4.25}
0<k^\ast<\min\left\{a_\delta-\frac12,\vartheta\right\},
\end{equation}
the regular part
$v(t):=u(t)-H_{\mu,\nu}(t,0)u_0,$
has the local regularity
\begin{equation}\label{e4.26}
v\in C^{k^\ast}([\varepsilon,T];L^p(\Omega;X_{\kappa+\delta})).
\end{equation}
In addition, if
\begin{equation}\label{e4.27}
0<k^\ast<\min\left\{a_\delta+\nu-\frac32,\vartheta\right\},
\end{equation}
then $v$ extends by $v(0)=0$ and belongs to
$C^{k^\ast}([0,T];L^p(\Omega;X_{\kappa+\delta}))$, and
\begin{equation}\label{e4.28}
\|v\|_{C^{k^\ast}([0,T];L^p(\Omega;X_{\kappa+\delta}))}
\leq C_T\left(1+\|u_0\|_{L^p(\Omega;X_\kappa)}\right).
\end{equation}
\end{theorem}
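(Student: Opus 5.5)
The plan is to apply the Banach fixed point theorem to $\mathcal T$ on $\mathcal H_{p,\kappa}^\nu(T)$, equipped with an equivalent weighted norm. By Lemma~\ref{lem4.3}, $\mathcal T$ maps $\mathcal H_{p,\kappa}^\nu(T)$ into itself, so only the contraction estimate is needed. For $u,w\in\mathcal H_{p,\kappa}^\nu(T)$ the initial term cancels in $\mathcal Tu-\mathcal Tw$.

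\textbf{Contraction estimate.} Using \eqref{e4.2} together with \eqref{e4.8}, the drift difference is bounded by $L_F\int_0^t(t-s)^{a-1}\|u(s)-w(s)\|\,ds$. Using Lemma~\ref{lem2.1} on $X_\kappa$ (a UMD space of type 2 by Lemma~\ref{lem2.4}), Minkowski's inequality, \eqref{e4.2} and \eqref{e4.9}, the stochastic difference is bounded by $C L_G\bigl(\int_0^t(t-s)^{2a-2}\|u(s)-w(s)\|^2ds\bigr)^{1/2}$. To avoid a smallness condition on $T$, I will use the norm $\sup_t e^{-\beta t}t^{1-\nu}\|u(t)\|_{L^p(\Omega;X_\kappa)}$. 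After inserting $s^{\nu-1}e^{\beta s}$, the kernels become $t^{1-\nu}\int_0^t(t-s)^{a-1}s^{\nu-1}e^{-\beta(t-s)}ds$ and the analogous square version. These are bounded by constants that tend to $0$ as $\beta\to\infty$: split the integral at $t/2$, use $2a-1>0$ and $2\nu-1>0$, and apply dominated convergence. Hence $\mathcal T$ is a strict contraction for large $\beta$, which gives existence and uniqueness. For \eqref{e4.23}, I apply the same bounds with \eqref{e4.10}, \eqref{e4.11} and \eqref{e4.15} to $\|u\|=\|\mathcal Tu\|$ and absorb the $u$-dependent term. \eqref{e4.24} is then exactly \eqref{e4.19}--\eqref{e4.22} applied to the fixed point.

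\textbf{Regularity of $v$.} The regular part is $v=I_2+I_3$ with $u$ the solution. I will repeat the increment splitting $J_{21},J_{22},J_{31},J_{32}$ from the proof of Lemma~\ref{lem4.3}, now with target space $X_{\kappa+\delta}$. Lemmas~\ref{lem4.1} and \ref{lem4.2} with $\alpha=\kappa+\delta<1$ replace the exponent $a$ by $a_\delta$; the side condition $\mu(\kappa+\delta)\le\vartheta$ keeps the constants uniform. The nonlinearities are still controlled through $\|u(s)\|_{L^p(\Omega;X_\kappa)}\le Cs^{\nu-1}$. On $[\varepsilon,T]$ the factor $s^{\nu-1}$ is bounded near the upper limit, so $J_{31}\lesssim h^{a_\delta-1/2}$ and $J_{32}\lesssim h^{\eta}$ for $\eta<\min\{a_\delta-1/2,\vartheta\}$; the drift terms are better. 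This yields \eqref{e4.26}.

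\textbf{Global H\"older bound (main obstacle).} For \eqref{e4.27}--\eqref{e4.28} the singular weight $s^{\nu-1}$ is no longer harmless near $0$, and this is the step I expect to be hardest. I would redo $J_{32}$ without the $\varepsilon$ cutoff using \eqref{e4.7}. This gives
\[
h^{\eta}\Bigl(\int_0^{t_1}(t_1-s)^{2a_\delta-2\eta-2}s^{2\nu-2}ds\Bigr)^{1/2}\lesssim h^{\eta}t_1^{a_\delta-\eta+\nu-3/2},
\]
which is bounded provided $\eta\le a_\delta+\nu-3/2$ and $\eta<a_\delta-1/2$. In the regime $h>t_1$, I would instead bound $J_{31}$ crudely via
\[
\int_{t_1}^{t_2}(t_2-s)^{2a_\delta-2}s^{2\nu-2}ds\le \int_0^{t_2}\cdots\lesssim t_2^{2a_\delta+2\nu-3}\lesssim h^{2a_\delta+2\nu-3},
\]
since $t_2\le 2h$. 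When $h\le t_1$, I would use $s\ge t_1\ge h$, so $s^{\nu-1}\le h^{\nu-1}$ whenever $\nu<1$, giving $h^{a_\delta+\nu-3/2}$. Both cases are consistent with \eqref{e4.27}. The drift terms are handled analogously, with exponents larger by $1/2$. Setting $t_1=0$ shows $v(t)\to0$, which justifies the extension $v(0)=0$. Finally, all constants are linear in $M=\|u\|_{\mathcal H^\nu_{p,\kappa}}$, so \eqref{e4.23} gives \eqref{e4.28}.
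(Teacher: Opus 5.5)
Your proposal is correct and follows the paper's proof essentially step by step: the same $e^{-\zeta t}t^{1-\nu}$-weighted contraction (the split at $t/2$ yields the factors $\zeta^{-a}$ and $\zeta^{-(a-1/2)}$), the same absorption argument for \eqref{e4.23}, and the same $J_{21},J_{22},J_{31},J_{32}$ increment splitting via Lemmas~\ref{lem4.1}--\ref{lem4.2} with $\alpha=\kappa+\delta$. The only difference is bookkeeping in the global H\"older step: you take $\eta\le a_\delta+\nu-\frac32$ in \eqref{e4.7}, so the $J_{32}$-type terms are bounded uniformly in $t_1$ and only $J_{31}$ needs a case split, whereas the paper takes $\eta>k^\ast$ and handles the regime $h\ge t_1$ through the pointwise bound \eqref{e4.32}. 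Both routes work.
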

\begin{proof}
Let $U,V\in\mathcal H_{p,\kappa}^\nu(T)$. Then
\begin{align*}
(\mathcal TU)(t)-(\mathcal TV)(t)
={}&\int_0^t\mathcal P_\mu(t,s)
[F(s,U(s))-F(s,V(s))]ds\\
&+\int_0^t\mathcal P_\mu(t,s)
[G(s,U(s))-G(s,V(s))]dW_H(s)
=: H_2(t)+H_3(t).
\end{align*}
For $\zeta>0$, \eqref{e4.12} is equivalent to the following norm
\begin{equation*}
\|u\|_\zeta
:=\sup_{0<t\leq T}e^{-\zeta t}t^{1-\nu}
\|u(t)\|_{L^p(\Omega;X_\kappa)}.
\end{equation*}
It follows that
$\|\mathcal TU-\mathcal TV\|_\zeta
\leq\|H_2\|_\zeta+\|H_3\|_\zeta.$
Set $D=\|U-V\|_\zeta$, then we have
$$\|U(s)-V(s)\|_{L^p(\Omega;X_\kappa)}
\leq e^{\zeta s}s^{\nu-1}D.$$
Using \eqref{e4.2} and \eqref{e4.8}, for $0<t\leq T$, we obtain
\begin{align}\label{e4.29}
&e^{-\zeta t}t^{1-\nu}\|H_2(t)\|_{L^p(\Omega;X_\kappa)}\nonumber\\
\leq& CL_FD\,t^{1-\nu}
\int_0^t(t-s)^{a-1}e^{-\zeta(t-s)}s^{\nu-1}\,ds\nonumber\\
=&CL_FD \left(t^a\int_0^{1/2}
(1-r)^{a-1}e^{-\zeta t(1-r)}r^{\nu-1}\,dr+
t^a\int_{1/2}^1
(1-r)^{a-1}e^{-\zeta t(1-r)}r^{\nu-1}\,dr\right)\nonumber\\
\leq &CL_FD\left(
Ct^ae^{-\zeta t/2}
+2^{1-\nu}\int_0^{t/2}k^{a-1}e^{-\zeta k}\,dk\right)\leq  CL_F\zeta^{-a}D,
\end{align}
where $k=t(1-r)$. Hence, we get
$\|H_2\|_\zeta\leq C L_F\zeta^{-a}D.$
Similarly, for the stochastic term and for $0<t\leq T$, it follows from the BDG inequality that
\begin{align}\label{e4.30}
e^{-2\zeta t}t^{2-2\nu}
\|H_3(t)\|_{L^p(\Omega;X_\kappa)}^2\nonumber
\leq& CL_G^2D^2\,t^{2-2\nu}
\int_0^t(t-s)^{2a-2}e^{-2\zeta(t-s)}s^{2\nu-2}\,ds\nonumber\\
\leq & CL_G^2D^2\left(
Ct^{2a-1}e^{-\zeta t}
+2^{2-2\nu}\int_0^{t/2}k^{2a-2}e^{-2\zeta k}\,dk\right)\nonumber\\
\leq &CL_G^2\zeta^{-(2a-1)}D^2.
\end{align}
Then we deduce
$\|H_3\|_\zeta
\leq CL_G\zeta^{-(a-1/2)}D.$
It yields
$$\|\mathcal TU-\mathcal TV\|_\zeta
\leq C\bigl(\zeta^{-a}+\zeta^{-(a-1/2)}\bigr)\|U-V\|_\zeta.$$
Since $a>1/2$, we choose $\zeta$ so large that
$$q_\zeta:=C\bigl(\zeta^{-a}+\zeta^{-(a-1/2)}\bigr)<1.$$
Thus, $\mathcal T$ is a contraction on
$\mathcal H_{p,\kappa}^\nu(T)$ and it follows  from the Banach fixed point theorem that problem \eqref{e4.1} has a unique mild solution $u$.

According to $u=\mathcal Tu$,
\begin{align*}
\|u\|_\zeta\leq \|\mathcal Tu-\mathcal T0\|_\zeta+\|\mathcal T0\|_\zeta\leq q_\zeta\|u\|_\zeta
+C_T\left(1+\|u_0\|_{L^p(\Omega;X_\kappa)}\right).
\end{align*}
Hence, we deduce
$$\|u\|_\zeta
\leq \frac{C_T}{1-q_\zeta}
\left(1+\|u_0\|_{L^p(\Omega;X_\kappa)}\right).$$
Fix $\zeta>0$, we have $
\|u\|_\zeta
\leq \|u\|_{\mathcal H_{p,\kappa}^{\nu}}
\leq e^{\zeta T}\|u\|_\zeta.
$ for $0\leq t\leq T$.
Then we prove \eqref{e4.23}. It follows from \eqref{e4.19} and \eqref{e4.22} that \eqref{e4.24} holds.

It remains to prove the regularity results. Let $M:=\|u\|_{\mathcal H_{p,\kappa}^\nu}$. Choose $k^\ast<\eta<\min\{a_\delta-\tfrac12,\vartheta\}$.
For the deterministic and stochastic convolutions of \eqref{e4.13}, set $v=v_F+v_G$, where
$$v_F(t)
:=
\int_0^t
\mathcal P_\mu(t,s)F(s,u(s))\,ds,\quad v_G(t)
:=\int_0^t
\mathcal P_\mu(t,s)G(s,u(s))\,dW_H(s).$$
Let $\varepsilon\leq t_1<t_2\leq T$ and $h=t_2-t_1$. It follows from \eqref{e4.2}, \eqref{e4.7} and \eqref{e4.10} that
\begin{align*}
&\|v_F(t_2)-v_F(t_1)\|_{L^p(X_{\Omega;\kappa+\delta})}\\
\leq&\int_{t_1}^{t_2}
\| \mathcal P_\mu(t_2,s)F(s,u(s))\|
_{L^p(\Omega;X_{\kappa+\delta})}\,ds+
\int_0^{t_1}\|
[\mathcal P_\mu(t_2,s)-\mathcal P_\mu(t_1,s)]
F(s,u(s))
\|_{L^p(\Omega;X_{\kappa+\delta})}\,ds\\
\leq& C\int_{t_1}^{t_2}(t_2-s)^{a_\delta-1}
(1+s^{\nu-1}M)\,ds+C_\varepsilon h^\eta
\int_0^{t_1}(t_1-s)^{a_\delta-\eta-1}
(1+s^{\nu-1}M)\,ds
\leq  C_{\varepsilon,T}h^\eta(1+M).
\end{align*}
For $v_G$, by the BDG inequality, \eqref{e4.2}, \eqref{e4.7} and \eqref{e4.11}, we have
\begin{align*}
&\|v_G(t_2)-v_G(t_1)\|_{L^p(\Omega;X_{\kappa+\delta})}\\
\leq&C\left(
\int_{t_1}^{t_2}\|\mathcal P_\mu(t_2,s)G(s,u(s))\|_{L^p(\Omega;\gamma(H,X_{\kappa+\delta}))}^{2}\,ds
\right)^{1/2}\\
&+C\left(\int_0^{t_1}\|[\mathcal P_\mu(t_2,s)-\mathcal P_\mu(t_1,s)]G(s,u(s))\|_{L^p(\Omega;\gamma(H,X_{\kappa+\delta}))}^{2}\,ds
\right)^{1/2}\\
\leq&C\left(
\int_{t_1}^{t_2}
(t_2-s)^{2a_\delta-2}
(1+Ms^{\nu-1})^{2}\,ds
\right)^{1/2}+C_\varepsilon h^\eta
\left(
\int_0^{t_1}
(t_1-s)^{2a_\delta-2\eta-2}
(1+Ms^{\nu-1})^{2}\,ds
\right)^{1/2}\\
\leq&
C_{\varepsilon,T}h^\eta(1+M),
\end{align*}
where $a_\delta-\eta>1/2$. Hence, we deduce \eqref{e4.26}.

Now, we show the regularity on $[0,T]$.
By \eqref{e4.27}, for $0<k^\ast<\min\left\{a_\delta+\nu-\frac32,\vartheta\right\}$.
Set $ \rho:=a_\delta+\nu-\frac32$, then
 $k^\ast<\rho$. Since $\nu\leq1$,
$k^\ast<a_\delta-1/2$. Hence, choose
$k^\ast<\eta<\min\{a_\delta-\tfrac12,\vartheta\}$.
By \eqref{e4.2}, let $\alpha=\kappa+\delta$, for $a_\delta=\mu(1-\kappa-\delta)=a-\mu\delta$, we obtain
\begin{align}\label{e4.31}
\|\mathcal P_\mu(t,s)\|_{L(X_0,X_{\kappa+\delta})}
\le C(t-s)^{a_\delta-1}.
\end{align}
It follows from \eqref{e4.10}, \eqref{e4.11} and \eqref{e4.31} that
\begin{align*}
\|v_F(t)\|_{L^p(\Omega;X_{\kappa+\delta})}
&\le
\int_0^t
\|\mathcal P_\mu(t,s)F(s,u(s))\|
 _{L^p(\Omega;X_{\kappa+\delta})}\,ds\\
&\le C\int_0^t(t-s)^{a_\delta-1}
(1+Ms^{\nu-1})\,ds
\le C\left(t^{a_\delta}+Mt^{a_\delta+\nu-1}
\right)
\end{align*}
and
\begin{align*}
\|v_G(t)\|_{L^p(\Omega;X_{\kappa+\delta})}
&=\left\|\int_0^t\mathcal P_\mu(t,s)G(s,u(s))\,dW_H(s)
\right\|_{L^p(\Omega;X_{\kappa+\delta})}\\
&\le C\left(\int_0^t\|\mathcal P_\mu(t,s)G(s,u(s))\|
_{L^p(\Omega;\gamma(H,X_{\kappa+\delta}))}^{2}
\,ds\right)^{1/2}\\
&\le C\left(\int_0^t(t-s)^{2a_\delta-2}
(1+Ms^{\nu-1})^2\,ds\right)^{1/2}
\le C\left(t^{a_\delta-\frac12}
+Mt^{a_\delta+\nu-\frac32}\right).
\end{align*}
Since $\rho\leq a_\delta-1/2$ and
$\rho<a_\delta+\nu-1$, for $0<t\leq T$, it follows that
\begin{equation}\label{e4.32}
\|v(t)\|_{L^p(X_{\Omega;\kappa+\delta})}
\leq C_T(1+M)t^\rho.
\end{equation}
In particular, $v$ extends continuously to $t=0$ by  $v(0)=0$.

We now prove the H\"{o}lder estimate on $[0,T]$. Let $0\le t_1<t_2\le T$ and set $h=t_2-t_1$. We distinguish two cases. If $t_1=0$ or $h\geq t_1$, then
$t_2\leq2h$, and \eqref{e4.32} yields
\begin{align}\label{e4.33}
\|v(t_2)-v(t_1)\|_{L^p(X_{\Omega;\kappa+\delta})}
\nonumber\le&\|v(t_2)\|_{L^p(\Omega;X_{\kappa+\delta})}
+\|v(t_1)\|_{L^p(\Omega;X_{\kappa+\delta})}\\
\leq& C_T(1+M)(t_2^\rho+t_1^\rho)
\leq C_T(1+M)h^\rho\leq C_T(1+M)h^{k^\ast}.
\end{align}
For $0<h<t_1$, we have
\begin{align*}
v_F(t_2)-v_F(t_1)
={}&
\int_{t_1}^{t_2}
\mathcal P_\mu(t_2,s)F(s,u(s))\,ds+
\int_0^{t_1}
[\mathcal P_\mu(t_2,s)-\mathcal P_\mu(t_1,s)]
F(s,u(s))\,ds\\
=:{}&H_1+H_2.
\end{align*}
From \eqref{e4.10}, we have
\begin{align*}
\|F(s,\cdot,u(s,\cdot))\|_{L^p(\Omega;X_0)}
\le C_F\left(1+\|u(s)\|_{L^p(\Omega;X_\kappa)}\right)\le C_F\left(1+Ms^{\nu-1}\right).
\end{align*}
By \eqref{e4.31}, we get
\begin{align*}
\|J_1\|_{L^p(\Omega;X_{\kappa+\delta})}
\le
C(1+Mt_1^{\nu-1})
\int_{t_1}^{t_2}(t_2-s)^{a_\delta-1}\,ds\le
Ch^{a_\delta}(1+Mt_1^{\nu-1}).
\end{align*}
According to \eqref{e4.7}, then
\begin{align*}
\|\mathcal P_\mu(t_2,s)-\mathcal P_\mu(t_1,s)\|
 _{L(X_0,X_{\kappa+\delta})}\le
C_\eta h^\eta
(t_1-s)^{a_\delta-\eta-1}.
\end{align*}
Hence, we deduce
\begin{align*}
\|J_2\|_{L^p(\Omega;X_{\kappa+\delta})}
&\le
C_\eta h^\eta
\int_0^{t_1}
(t_1-s)^{a_\delta-\eta-1}
(1+Ms^{\nu-1})\,ds\\
&=
C_\eta h^\eta
\int_0^{t_1}
(t_1-s)^{a_\delta-\eta-1}\,ds+
C_\eta Mh^\eta
\int_0^{t_1}
(t_1-s)^{a_\delta-\eta-1}s^{\nu-1}\,ds\\
&\le
C_\eta h^\eta
\left(
t_1^{a_\delta-\eta}+
Mt_1^{a_\delta-\eta+\nu-1}
\right).
\end{align*}
Therefore, we deduce
\begin{align}\label{e4.34}
\|v_F(t_2)-v_F(t_1)\|_{L^p(\Omega;X_{\kappa+\delta})}
\le Ch^{a_\delta}(1+Mt_1^{\nu-1})+
C_\eta h^\eta\left(
t_1^{a_\delta-\eta}+
Mt_1^{a_\delta-\eta+\nu-1}
\right).
\end{align}
Since $h<t_1$, $\eta>k^\ast$, and
$a_\delta+\nu-1>k^\ast$, combining with \eqref{e4.34}, we deduce
\begin{align}\label{e4.35}
\nonumber\|v_F(t_2)-v_F(t_1)\|_{L^p(\Omega;X_{\kappa+\delta})}\le &C h^{a_\delta}+CMh^{a_\delta}t_1^{\nu-1}
+C_\eta h^\eta t_1^{a_\delta-\eta}
+C_\eta Mh^\eta t_1^{a_\delta-\eta+\nu-1}\\
\le& C_T(1+M)h^{k^\ast}.
\end{align}
Similarly, we have
\begin{align*}
v_G(t_2)-v_G(t_1)
=&\int_{t_1}^{t_2}
\mathcal P_\mu(t_2,s)G(s,u(s))\,dW_H(s)+\int_0^{t_1}\bigl[\mathcal P_\mu(t_2,s)-\mathcal P_\mu(t_1,s)
\bigr]G(s,u(s))\,dW_H(s)\\
=:&K_1+K_2.
\end{align*}
From \eqref{e4.11}, we get
\begin{align*}
\|G(s,u(s))\|_{L^p(\Omega;\gamma(H,X_0))}
\le C_G\left(
1+\|u(s)\|_{L^p(\Omega;X_\kappa)}
\right)\le C_G\left(1+Ms^{\nu-1}\right).
\end{align*}
Then we have
\begin{align*}
\|K_1\|_{L^p(\Omega;X_{\kappa+\delta})}
&\le C\left(\int_{t_1}^{t_2}
\|\mathcal P_\mu(t_2,s)G(s,u(s))\|
_{L^p(\Omega;\gamma(H,X_{\kappa+\delta}))}^{2}\,ds
\right)^{1/2}\\
&\le C\left(\int_{t_1}^{t_2}(t_2-s)^{2a_\delta-2}
(1+Ms^{\nu-1})^2\,ds\right)^{1/2}\\
&\le C(1+Mt_1^{\nu-1})\left(
\int_{t_1}^{t_2}(t_2-s)^{2a_\delta-2}\,ds
\right)^{1/2}\le
C h^{a_\delta-\frac12}
(1+Mt_1^{\nu-1}),
\end{align*}
and
\begin{align*}
\|K_2\|_{L^p(\Omega;X_{\kappa+\delta})}
\le& C\left(\int_0^{t_1}\left\|
[\mathcal P_\mu(t_2,s)-\mathcal P_\mu(t_1,s)]
G(s,u(s))\right\|_{L^p(\Omega;\gamma(H,X_{\kappa+\delta}))}^{2}\,ds\right)^{1/2}\\
\le& C_\eta h^\eta\left(\int_0^{t_1}
(t_1-s)^{2a_\delta-2\eta-2}
(1+Ms^{\nu-1})^2\,ds\right)^{1/2}\\
\le& C_\eta h^\eta\left(
\int_0^{t_1}(t_1-s)^{2a_\delta-2\eta-2}\,ds
\right)^{1/2}+C_\eta Mh^\eta
\left(\int_0^{t_1}
(t_1-s)^{2a_\delta-2\eta-2}s^{2\nu-2}\,ds
\right)^{1/2}\\
\le& C_\eta h^\eta
\left(t_1^{a_\delta-\eta-\frac12}
+Mt_1^{a_\delta-\eta+\nu-\frac32}
\right).
\end{align*}
Hence, for $h<t_1$, $\eta>k^\ast$, $a_\delta-1/2>k^\ast$ and $\rho>k^\ast$, we obtain
\begin{align}\label{e4.36}
\nonumber\|v_G(t_2)-v_G(t_1)\|_{L^p(\Omega;X_{\kappa+\delta})}\le& Ch^{a_\delta-\frac12}
+CMh^{a_\delta-\frac12}t_1^{\nu-1}
+C_\eta h^\eta t_1^{a_\delta-\eta-\frac12}
+C_\eta Mh^\eta t_1^{a_\delta-\eta+\nu-\frac32}\\
\le&C_T h^{k^\ast}+C_TMh^{k^\ast}+
C_T h^{k^\ast}+C_TMh^{k^\ast}
\le C_T(1+M)h^{k^\ast}.
\end{align}
It follows from \eqref{e4.33}, \eqref{e4.35} and \eqref{e4.36} that
$$v\in C^{k^\ast}([0,T];L^p(\Omega;X_{\kappa+\delta})).$$
Hence, according to \eqref{e4.23}, we deduce \eqref{e4.28}. The proof is complete.
\end{proof}

\begin{remark}
The decomposition
$
u(t)=H_{\mu,\nu}(t,0)u_0+v(t)
$
separates the initial singularity from the regular part of the solution. For $\nu<1$, the initial component is singular as $t\to 0$. However, the regular part satisfies $t^{1-\nu}v(t)\to 0$ in $L^p(\Omega;X_\kappa)$. Moreover, by the additional condition in \eqref{e4.27}, $v$ extends
H\"{o}lder continuously to $t=0$ with $v(0)=0$.
\end{remark}

\begin{lemma}\label{lem4.4}
Let Assumptions~\ref{assump2.1}, \ref{assump2.2}, ~\ref{assump4.1}, and \eqref{e4.14} hold. For $p\in[2,\infty)$,
let $u_0,v_0:\Omega\to X_\kappa$ be strongly
$\mathcal F_0$-measurable. Fix $\Lambda\in\mathcal F_0$, let $u$ and $v$ be progressively measurable processes satisfying \eqref{e4.13} almost surely on $\Lambda$. If $\mathbf1_\Lambda u,\ \mathbf1_\Lambda v
\in\mathcal H_{p,\kappa}^{\nu}(T)$ and $u_0=v_0$ almost surely on $\Lambda$, then for every $t\in(0,T]$,
$$u(t)=v(t)$$
almost surely on $\Lambda$.
\end{lemma}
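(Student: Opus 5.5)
The plan is to run the contraction argument of Theorem~\ref{the4.1} on the difference $w:=\mathbf 1_\Lambda(u-v)$, using that $\Lambda\in\mathcal F_0$ lets the indicator pass through the stochastic integral. Since $\mathbf 1_\Lambda u,\mathbf 1_\Lambda v\in\mathcal H^\nu_{p,\kappa}(T)$, also $w\in\mathcal H^\nu_{p,\kappa}(T)$. The aim is to show $\|w\|_\zeta=0$ in the equivalent weighted norm $\|\cdot\|_\zeta$ from that proof. Once that holds, for each $t\in(0,T]$ we get $\mathbb E\|\mathbf 1_\Lambda(u(t)-v(t))\|_{X_\kappa}^p=0$, i.e.\ $u(t)=v(t)$ a.s.\ on $\Lambda$.

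\textbf{Step 1 (localized identity).} Multiply \eqref{e4.13} for $u$ and for $v$ by $\mathbf 1_\Lambda$ and subtract. The initial terms cancel: $H_{\mu,\nu}(t,0)$ is a deterministic bounded operator, so $\mathbf 1_\Lambda H_{\mu,\nu}(t,0)u_0=H_{\mu,\nu}(t,0)\mathbf 1_\Lambda u_0=H_{\mu,\nu}(t,0)\mathbf 1_\Lambda v_0$ a.s.

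The drift terms are handled by pointwise multiplication, since $\mathbf 1_\Lambda$ is constant in $s$:
\[
\mathbf 1_\Lambda\int_0^t\mathcal P_\mu(t,s)[F(s,u(s))-F(s,v(s))]\,ds=\int_0^t\mathcal P_\mu(t,s)\mathbf 1_\Lambda[F(s,u(s))-F(s,v(s))]\,ds .
\]

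For the stochastic term, $\Lambda\in\mathcal F_0$ gives that $\mathbf 1_\Lambda$ commutes with the stochastic integral:
\[
\mathbf 1_\Lambda\int_0^t\Phi\,dW_H=\int_0^t\mathbf 1_\Lambda\Phi\,dW_H
\]
for progressively measurable $\Phi$. This holds first for elementary adapted integrands, then by approximation, using localization/stopping if integrability of $\Phi$ itself is unknown. Moreover, because $\mathbf 1_\Lambda\in\{0,1\}$,
\[
\mathbf 1_\Lambda[G(s,u(s))-G(s,v(s))]=G(s,\mathbf 1_\Lambda u(s))-G(s,\mathbf 1_\Lambda v(s))
\]
up to a term that vanishes. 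The same identity holds for $F$. The resulting integrands are therefore Lipschitz in $w$ by \eqref{e4.8}--\eqref{e4.9}.

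\textbf{Step 2 (estimate).} By \eqref{e4.8}--\eqref{e4.9},
\[
\|\mathbf 1_\Lambda[F(s,u(s))-F(s,v(s))]\|_{L^p(\Omega;X_0)}\le L_F\|w(s)\|_{L^p(\Omega;X_\kappa)},
\]
and the same bound with $L_G$ holds for $G$ in $\gamma(H,X_0)$. These are finite since $w\in\mathcal H^\nu_{p,\kappa}$. This finiteness is also what makes the stochastic integral of $\mathbf 1_\Lambda[\cdots]$ fall under Lemma~\ref{lem2.1} and justifies Step 1 at the $L^p$ level. Repeating the computations \eqref{e4.29}--\eqref{e4.30} verbatim with $D=\|w\|_\zeta$ gives
\[
\|w\|_\zeta\le C\bigl(\zeta^{-a}+\zeta^{-(a-1/2)}\bigr)\|w\|_\zeta .
\]
Choosing $\zeta$ large so that the constant is $<1$ forces $\|w\|_\zeta=0$.

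\textbf{Main obstacle.} The delicate point is Step 1's stochastic part. The processes $u,v$ themselves need not be in $\mathcal H^\nu_{p,\kappa}$ off $\Lambda$, so $\int\mathcal P_\mu G(s,u)\,dW_H$ is a priori only defined in the localized (a.s.) sense. I would handle this by working with stopping times $\tau_n$ that make $\int_0^t\|\mathcal P_\mu(t,s)G(s,u(s))\|^2_{\gamma}ds$ bounded. On each such stopping interval the indicator can be moved inside, and letting $n\to\infty$ in probability recovers the identity on $\Lambda$.
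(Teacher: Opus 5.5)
Your proposal is correct and follows the paper's proof essentially step for step. The paper also sets $z=\mathbf 1_\Lambda(u-v)$, uses $\Lambda\in\mathcal F_0$ to cancel the initial term and move the indicator inside both convolutions, bounds the integrands by $L_F\|z(s)\|_{L^p(\Omega;X_\kappa)}$ and $L_G\|z(s)\|_{L^p(\Omega;X_\kappa)}$, and reuses \eqref{e4.29}--\eqref{e4.30} to get $\|z\|_\zeta\le C\bigl(L_F\zeta^{-a}+L_G\zeta^{-(a-1/2)}\bigr)\|z\|_\zeta$ with $\zeta$ large. The only difference is your stopping-time justification for commuting $\mathbf 1_\Lambda$ with the stochastic integral when $u,v$ are only controlled on $\Lambda$; the paper simply asserts this, so your version adds rigor without changing the argument.
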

\begin{proof}
According to the  method in \cite[Lemma~6.4]{Veraar}, let $z:=\mathbf1_\Lambda(u-v)$, where
$$\mathbf1_\Lambda(\omega)
=
\begin{cases}
1, & \omega\in\Lambda,\\
0, & \omega\notin\Lambda.
\end{cases}$$
Since $\Lambda\in\mathcal F_0$ and $u_0=v_0$ almost surely on $\Lambda$, we get $\mathbf1_\Lambda H_{\mu,\nu}(t,0)(u_0-v_0)=0$. According to \eqref{e4.13}, for $\Lambda\in\mathcal F_0$, we have
\begin{align*}
z(t)=&\int_0^t\mathcal P_\mu(t,s)\mathbf1_\Lambda
[F(s,u(s))-F(s,v(s))]ds+\int_0^t\mathcal P_\mu(t,s)\mathbf1_\Lambda
[G(s,u(s))-G(s,v(s))]dW_H(s)\\
=:&z_F(t)+z_G(t).
\end{align*}
By \eqref{e4.8}, we obtain
\begin{align*}
\left\|\mathbf1_\Lambda
[F(s,u(s))-F(s,v(s))]\right\|_{L^p(\Omega;X_0)}\leq L_F
\left\|\mathbf1_\Lambda(u(s)-v(s))
\right\|_{L^p(\Omega;X_\kappa)}\leq L_F\|z(s)\|_{L^p(\Omega;X_\kappa)}.
\end{align*}
Similarly, it follows from \eqref{e4.9} that
$$\|\mathbf1_\Lambda[G(s,u(s))-G(s,v(s))]\|_
{L^p(\Omega;\gamma(H,X_0))}
\leq L_G\|z(s)\|_{L^p(\Omega;X_\kappa)}.$$
Combining \eqref{e4.29} and \eqref{e4.30}, we deduce
\begin{align*}
\|z_F\|_\zeta \leq  e^{-\zeta t}t^{1-\nu}
\|z_F(t)\|_{L^p(\Omega;X_\kappa)}\le
CL_F\zeta^{-a}\|z\|_\zeta,
\end{align*}
\begin{align*}
\|z_G\|_\zeta \leq  e^{-\zeta t}t^{1-\nu}
\|z_G(t)\|_{L^p(\Omega;X_\kappa)}
\le CL_G\zeta^{-(a-\frac12)}
\|z\|_\zeta.
\end{align*}
Hence, we have
$$\|z\|_\zeta
\leq C\left(L_F\zeta^{-a}
+L_G\zeta^{-(a-1/2)}\right)\|z\|_\zeta.$$
Choose $\zeta>0$ sufficiently large that
$C_\zeta:=C\left(L_F\zeta^{-a}
+L_G\zeta^{-(a-1/2)}\right)<1$. It follows that
$$\|z\|_\zeta\leq C_\zeta\|z\|_\zeta,$$
then $\|z\|_\zeta=0$. Thus, we have
$\mathbf1_\Lambda u=\mathbf1_\Lambda v$
in $\mathcal H_{p,\kappa}^{\nu}(T)$. Then for fixed $t\in(0,T]$,
$u(t)=v(t)$ almost surely on $\Lambda$. The proof is complete.
\end{proof}

For a strongly $\mathcal F_0$-measurable initial value $u_0$, set
$\Omega_n:=\{\|u_0\|_{X_\kappa}\leq n\}$. In the following theorem, a
mild solution means a progressively measurable process $u$ such that
$\mathbf1_{\Omega_n}u\in\mathcal H_{q,\kappa}^{\nu}(T)$ for $2\leq q<\infty$,
and the localized form of \eqref{e4.13} holds almost surely on
$\Omega_n$ for $n\in\mathbb N$ and $t\in(0,T]$.

\begin{theorem}\label{the4.2}
Let Assumptions~\ref{assump2.1}, \ref{assump2.2}, ~\ref{assump4.1}, and \eqref{e4.14} hold. Let $u_0:\Omega\to X_\kappa$ be strongly $\mathcal F_0$-measurable.
Then the following results hold.
\begin{itemize}
\item[\rm(i)] Problem \eqref{e4.1} admits  a unique progressively measurable mild solution $u$ on $(0,T]$.
\item[\rm(ii)] Let $\delta\in[0,1-\kappa)$ satisfy
$\mu(\kappa+\delta)\leq\vartheta$, and set $v(t):=u(t)-H_{\mu,\nu}(t,0)u_0$. For
$$0<k^\ast<\min\left\{\mu(1-\kappa-\delta)-\frac12,\vartheta\right\},$$
$v$ has a version such that almost surely,
$v\in C_{\mathrm{loc}}^{k^\ast}((0,T];X_{\kappa+\delta})$.
Moreover, for
$$0<k^\ast<\min\left\{\mu(1-\kappa-\delta)+\nu-\frac32,
\vartheta\right\},$$
$v$ admits a version such that almost surely,
$v\in C^{k^\ast}([0,T];X_{\kappa+\delta})$
and $v(0)=0$.
\end{itemize}
\end{theorem}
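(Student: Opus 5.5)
The proof will be a localization argument built on Theorem~\ref{the4.1} and Lemma~\ref{lem4.4}, in the style of \cite[Theorem~6.2]{Veraar}. For $n\in\mathbb N$ put $\Omega_n=\{\|u_0\|_{X_\kappa}\le n\}\in\mathcal F_0$ and $u_{0,n}:=\mathbf 1_{\Omega_n}u_0$. Then $u_{0,n}\in L^q(\Omega,\mathcal F_0;X_\kappa)$ for every $q\in[2,\infty)$. Theorem~\ref{the4.1} gives a mild solution $u_n\in\mathcal H^\nu_{q,\kappa}(T)$ for each $q$. By uniqueness in $\mathcal H^\nu_{2,\kappa}(T)\supset\mathcal H^\nu_{q,\kappa}(T)$, the solutions for different $q$ coincide, so $u_n$ does not depend on $q$.

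Since $u_{0,n}=u_{0,m}$ on $\Omega_n$ for $m\ge n$, and $\mathbf 1_{\Omega_n}u_n,\mathbf 1_{\Omega_n}u_m\in\mathcal H^\nu_{q,\kappa}(T)$, Lemma~\ref{lem4.4} yields $u_n(t)=u_m(t)$ a.s.\ on $\Omega_n$ for every $t$. Because $\Omega_n\uparrow\Omega$ a.s., I define $u(t):=u_n(t)$ on $\Omega_n\setminus\Omega_{n-1}$. This is progressively measurable, being a countable patching of progressively measurable processes along $\mathcal F_0$-sets. Next, $\mathbf 1_{\Omega_n}u=\mathbf 1_{\Omega_n}u_n$ lies in $\mathcal H^\nu_{q,\kappa}(T)$. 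To check that the localized form of \eqref{e4.13} holds on $\Omega_n$, I use the fact that $\mathbf 1_{\Omega_n}$ is $\mathcal F_0$-measurable and commutes with both the Bochner integral and the stochastic integral: $\mathbf 1_{\Omega_n}\int\Phi\,dW_H=\int\mathbf 1_{\Omega_n}\Phi\,dW_H$. Together with $\mathbf 1_{\Omega_n}F(s,u)=\mathbf 1_{\Omega_n}F(s,u_n)$, and the same identity for $G$, this gives the equation on $\Omega_n$. Uniqueness then follows from Lemma~\ref{lem4.4} applied on each $\Omega_n$ to two localized solutions, and letting $n\to\infty$ proves (i).

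For (ii), on $\Omega_n$ we have $v=v_n:=u_n-H_{\mu,\nu}(\cdot,0)u_{0,n}$. Theorem~\ref{the4.1} gives $v_n\in C^{k'}([\varepsilon,T];L^q(\Omega;X_{\kappa+\delta}))$ for any $k'<\min\{a_\delta-\frac12,\vartheta\}$, that is,
\[
\|v_n(t)-v_n(s)\|_{L^q(\Omega;X_{\kappa+\delta})}\le C|t-s|^{k'}.
\]
Kolmogorov's continuity criterion then provides a version that is a.s.\ $\gamma$-Hölder on $[\varepsilon,T]$ for every $\gamma<k'-1/q$. Given $k^\ast$ below the stated bound, I choose $k'\in(k^\ast,\min\{a_\delta-\frac12,\vartheta\})$ and $q$ large enough that $k'-1/q>k^\ast$; this is possible precisely because $q$ is arbitrary. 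Taking $\varepsilon=1/j$ and a countable union over $j$ gives $v_n\in C^{k^\ast}_{\mathrm{loc}}((0,T];X_{\kappa+\delta})$ a.s. Since the versions of $v_n$ and $v_m$ agree on $\Omega_n$ at rational times and are continuous, they agree on $\Omega_n$ for all times, so the patched $v$ has the claimed regularity a.s. The global statement is proved the same way from \eqref{e4.28}. Here $v_n(0)=0$, and Kolmogorov applies on $[0,T]$ to give $v\in C^{k^\ast}([0,T];X_{\kappa+\delta})$ with $v(0)=0$.

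The step I expect to be the main obstacle is the passage from $L^q$-Hölder continuity to pathwise Hölder continuity. This requires choosing $q$ large depending on $k^\ast$. It also requires using that the constants in Theorem~\ref{the4.1} may depend on $q$ and $n$ but not on the time points, and it requires a careful consistency argument so that the versions patched across different $n$ and $j$ are well defined.
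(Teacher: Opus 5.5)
Your proposal is correct and follows essentially the same route as the paper. Both truncate the initial datum on $\Omega_n=\{\|u_0\|_{X_\kappa}\le n\}$, use uniqueness in $\mathcal H^\nu_{2,\kappa}(T)$ to make $u_n$ independent of $q$, patch along the disjoint $\mathcal F_0$-sets $\Omega_n\setminus\Omega_{n-1}$ with Lemma~\ref{lem4.4} giving consistency and uniqueness, and turn the $L^q$-H\"older bounds of Theorem~\ref{the4.1} into pathwise H\"older continuity via Kolmogorov's criterion with $q$ large enough that $k^\ast<k'-1/q$ (the paper's condition $q(\eta-k^\ast)>1$); your explicit justifications that $\mathbf 1_{\Omega_n}$ commutes with the stochastic integral and that the continuous versions glue via agreement at rational times are only slightly more careful than the paper's one-line assertions.
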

\begin{proof}
Set $\Omega_n:=\{\|u_0\|_{X_\kappa}\leq n\}$ for $n\in\mathbb N$. Let
$$\Omega_0:=\varnothing,\quad D_n:=\Omega_n\setminus\Omega_{n-1},\quad n\geq1,$$
and define the initial values $u_{0,n}:=\mathbf1_{\Omega_n}u_0$.
For $\omega\in\Omega$,
$\|u_{0,n}(\omega)\|_{X_\kappa}
=\mathbf1_{\Omega_n}(\omega)
\|u_0(\omega)\|_{X_\kappa}
\leq n$.
Then for finite $q\geq2$,
\begin{align*}
\|u_{0,n}\|_{L^q(\Omega;X_\kappa)}^q
=\int_{\Omega_n}\|u_0(\omega)\|_{X_\kappa}^q
\,d\mathbb P(\omega)\leq n^q\mathbb P(\Omega_n)
\leq n^q.
\end{align*}
Thus, we obtain
$$u_{0,n}\in L^q(\Omega;X_\kappa),
\quad
\|u_{0,n}\|_{L^q(\Omega;X_\kappa)}\leq n.$$
For each $q$, it follows from Theorem~\ref{the4.1} that there exists a mild solution
$u_n^{(q)}\in\mathcal H_{q,\kappa}^{\nu}(T)$.
Since $(\Omega,\mathcal F,\mathbb P)$ is a probability space, for every finite $q\ge2$,
we get
\begin{align*}
\|x\|_{L^2(\Omega;X_\kappa)}^2
=\int_\Omega\|x(\omega)\|_{X_\kappa}^2\,d\mathbb P(\omega)\le
\left(
\int_\Omega\|x(\omega)\|_{X_\kappa}^q\,d\mathbb P(\omega)
\right)^{2/q}
\mathbb P(\Omega)^{1-2/q}=\|x\|_{L^q(\Omega;X_\kappa)}^2.
\end{align*}
Hence,$$\|x\|_{L^2(\Omega;X_\kappa)}
\le
\|x\|_{L^q(\Omega;X_\kappa)}.$$
For $u\in\mathcal H_{q,\kappa}^{\nu}(T)$, we obtain
\begin{align*}
\|u\|_{\mathcal H_{2,\kappa}^{\nu}(T)}
=\sup_{0<t\le T}
t^{1-\nu}
\|u(t)\|_{L^2(\Omega;X_\kappa)}
\le\sup_{0<t\le T}
t^{1-\nu}
\|u(t)\|_{L^q(\Omega;X_\kappa)}=
\|u\|_{\mathcal H_{q,\kappa}^{\nu}(T)}.
\end{align*}
It yields
$$\mathcal H_{q,\kappa}^{\nu}(T)
\hookrightarrow\mathcal H_{2,\kappa}^{\nu}(T),
\quad
\|w\|_{\mathcal H_{2,\kappa}^{\nu}(T)}
\leq\|w\|_{\mathcal H_{q,\kappa}^{\nu}(T)}.$$
Thus $u_n^{(q)}$ and $u_n^{(2)}$ both belong to
$\mathcal H_{2,\kappa}^{\nu}(T)$ and have the same initial value
$u_{0,n}$. By the uniqueness of Theorem~\ref{the4.1}, for $p=2$, we deduce
$u_n^{(q)}=u_n^{(2)}$.
We denote this common mild solution by $u_n$. Then
$u_n\in\mathcal H_{q,\kappa}^{\nu}(T)$ for every finite $q\geq2$.
If $m\leq n$, then $u_{0,m}=u_{0,n}=u_0$ on $\Omega_m$. For $\Lambda=\Omega_m$ and for every $t\in(0,T]$, it follows from
Lemma~\ref{lem4.4} that
$$u_m(t)=u_n(t)$$
almost surely on $\Omega_m$.
Since the sets $D_n$ are disjoint and belong to $\mathcal F_0$, we define
$$u(t,\omega):=\sum_{n=1}^{\infty}
\mathbf1_{D_n}(\omega)u_n(t,\omega).$$
The sets $D_n$ are pairwise disjoint and their union equals $\Omega$ up to a null set. Hence, for almost every $\omega$, there is a unique
$n(\omega)\in\mathbb N$ such that $\omega\in D_{n(\omega)}$, and $u(t,\omega)=u_{n(\omega)}(t,\omega)$. Moreover, since $D_n\in\mathcal F_0$ and $u_n$ is progressively measurable, each process $\mathbf1_{D_n}u_n$ is progressively measurable. Hence, $u$ is progressively measurable. For every $n$, we also have $u=u_n$ almost surely on
$\Omega_n$. Since $\Omega_n\in\mathcal F_0$ and $u=u_n$ almost surely on
$\Omega_n$, the deterministic and stochastic integrals in
\eqref{e4.13} are the same for $u$ and $u_n$ on $\Omega_n$. Thus, $\mathbf1_{\Omega_n}u\in\mathcal H_{q,\kappa}^{\nu}(T)$ for $q\geq2$ and the localized form of \eqref{e4.13} holds almost surely on $\Omega_n$.

Then we prove uniqueness. Let $u$ and $\widetilde u$ be two progressively measurable mild solutions satisfying the localized formulation. For
each $n\in\mathbb N$ and any finite $q\geq2$, we have
$$\mathbf1_{\Omega_n}u,\ \mathbf1_{\Omega_n}\widetilde u
\in\mathcal H_{q,\kappa}^{\nu}(T),$$
and both mild solutions satisfy \eqref{e4.13} almost surely on $\Omega_n$ with the same initial value.
Combining with Lemma~\ref{lem4.4} for
$\Lambda=\Omega_n$, we obtain
$$\mathbf1_{\Omega_n}u=\mathbf1_{\Omega_n}\widetilde u
\quad\text{in }\mathcal H_{q,\kappa}^{\nu}(T).$$
Since $\Omega_n\to\Omega$, for every
$t\in(0,T]$, it follows that $u(t)=\widetilde u(t)$ almost surely.

We next prove the path continuity. For $\varepsilon\in(0,T)$, fix
$0<k^\ast<\eta<\min\left\{\mu(1-\kappa-\delta)-\frac12,
\vartheta\right\}$.
For finite $q\geq2$, let $v_n:=u_n-H_{\mu,\nu}(\cdot,0)u_{0,n}$. By applying
Theorem~\ref{the4.1} to the regular part, we get
$$v_n\in C^\eta\bigl([\varepsilon,T];
L^q(\Omega;X_{\kappa+\delta})\bigr).$$
Then there is a constant $C_{n,q,\varepsilon,\eta}>0$ such that
$$\|v_n(t_2)-v_n(t_1)\|_{L^q(\Omega;X_{\kappa+\delta})}
\leq C_{n,q,\varepsilon,\eta}|t_2-t_1|^\eta.$$
for $\varepsilon\leq t_1<t_2\leq T$. Then we get
\begin{align*}
\mathbb E\|v_n(t_2)-v_n(t_1)\|_{X_{\kappa+\delta}}^q
=\|v_n(t_2)-v_n(t_1)\|_{L^q(\Omega;X_{\kappa+\delta})}^q\leq C_{n,q,\varepsilon,\eta}^q
|t_2-t_1|^{q\eta}.
\end{align*}
Choose $q$ sufficiently large that $q(\eta-k^\ast)>1$. It follows from Kolmogorov's
continuity theorem that $v_n$ has a $k^\ast$-H\"older continuous version on $[\varepsilon,T]$. Let $\varepsilon_j=T/(j+1)$, then $0<\varepsilon_j<T$ as $\varepsilon_j\to 0$ and $\bigcup_{j=1}^{\infty}
[\varepsilon_j,T]=(0,T]$. By choosing the continuous versions consistently on the intervals $[\varepsilon_j,T]$, we obtain a locally $k^\ast$-H\"{o}lder continuous version of $v_n$ on $(0,T]$. Therefore, defining this version on the disjoint sets $D_n$, we deduce $v\in C_{\mathrm{loc}}^{k^\ast}((0,T];X_{\kappa+\delta})$ almost surely.

Finally, fix
$0<k^\ast<\eta<\min\left\{
\mu(1-\kappa-\delta)+\nu-\frac32,\vartheta\right\}$.
For each $n$, choose a finite $q\geq2$ sufficiently large that $q(\eta-k^\ast)>1$. Since $u_{0,n}\in L^q(\Omega;X_\kappa)$, for $ 0\leq t_1<t_2\leq T$, it follows from Theorem~\ref{the4.1} that
$$\mathbb E\|v_n(t_2)-v_n(t_1)\|_{X_{\kappa+\delta}}^q
\leq C_{n,q}|t_2-t_1|^{q\eta}.$$
Hence, by Kolmogorov's continuity theorem, $v_n$ has a version in $C^{k^\ast}([0,T];X_{\kappa+\delta})$ almost surely. By
Lemma~\ref{lem4.4}, these versions are compatible on the sets $\Omega_n$ for $m\leq n$. On the disjoint sets $D_n$, we obtain $v\in C^{k^\ast}([0,T];X_{\kappa+\delta})$ almost surely. The proof is complete.
\end{proof}

\section{Stochastic maximal regularity}\label{sec5}
In this section, we consider stochastic maximal $L^p$-regularity for problem \eqref{e1.1}. Let $H$ be a separable Hilbert space and $X_0$ be a UMD
space with type $2$. We assume that
$\mu\in(1/2,1)$ and $p>2$.  The restriction $\mu>1/2$ is forced by the local square integrability of the kernel of the stochastic convolution. Let $A_0:=A(0)$.

\begin{assumption}\label{assump5.1}
Let Assumptions~\ref{assump2.1} and~\ref{assump2.2} hold. The family $A:[0,T]\to L(X_1,X_0)$ is strongly  measurable, and
$$ \sup_{t\in[0,T]}
 \|A(t)\|_{L(X_1,X_0)}<\infty.$$
\end{assumption}

\subsection{The deterministic case}
For a fixed $\omega\in\Omega$, we consider the deterministic problem
\begin{equation}\label{e5.1}
 \left\{
\begin{aligned}
&D_{0+}^{\lambda,\mu}u(t)+A(t)u(t)=f(t), &&  t \in (0, T), \\
&I_{0+}^{(1-\lambda)(1-\mu)}u(0)=u_0.
\end{aligned}\right.
\end{equation}
Let $f\in L^1(0,T;X_0)$ and $u_0\in X_0$. A strongly measurable function
$u:(0,T)\to X_0$ is called an integrated strong solution of \eqref{e5.1} if
$u \in L^1(0,T; X_1)$, $A(\cdot)u(\cdot)\in L^1(0,T;X_0)$ and
\begin{align}\label{e5.2}
u(t)-g_\nu(t)u_0
+\int_0^t g_\mu(t-s)A(s)u(s)\,ds
=
\int_0^t g_\mu(t-s)f(s)\,ds,
\end{align}
holds for almost every $t\in(0,T)$, where $g_\mu(t) = t^{\mu-1}/\Gamma(\mu)$ and $1-\nu = (1-\lambda)(1-\mu)$.

\begin{definition}\label{def5.1}
We say that the non-autonomous operator $A(\cdot) \in \text{DMR}(p, \mu, T)$ if for $f \in L^p(0,T; X_0)$, the problem
\begin{equation}\label{e5.3}
 \left\{
\begin{aligned}
&D_{0+}^{\lambda,\mu}u(t)+A(t)u(t)=f(t), &&  t \in (0, T), \\
&I_{0+}^{(1-\lambda)(1-\mu)}u(0)=0.
\end{aligned}\right.
\end{equation}
admits a unique integrated strong solution satisfying $u\in L^p(0,T; X_1)$, $D_{0+}^{\lambda,\mu}u \in L^p(0,T; X_0)$
and
\begin{align}\label{e5.4}
\|D_{0+}^{\lambda,\mu}u\|_{L^p(0,T;X_0)}
+\|u\|_{L^p(0,T;X_1)}
\leq C
\|f\|_{L^p(0,T;X_0)},
\end{align}
where the constant $C > 0$ is independent of $f$.
\end{definition}

\subsection{Deterministic mixed derivative regularity}
Let $H^{\mu,p}(0,T;X_0)$ denote the $X_0$-valued Bessel potential space. Since $p>2$ and $\mu>1/2$, one has $\mu>1/p$, and the trace operator
$$\text{tr}_0:H^{\mu,p}(0,T;X_0)\to X_0$$
is well defined and bounded. We set
$${}_0H^{\mu,p}(0,T;X_0)
=
\left\{
v\in H^{\mu,p}(0,T;X_0):
\text{tr}_0 v=0
\right\}.$$

\begin{lemma}\label{lem5.1}
Let $v$ have zero  initial value $I_{0+}^{(1-\lambda)(1-\mu)}v(0)=0$ and suppose $D_{0+}^{\lambda,\mu}v \in L^p(0,T;X_0)$ and $v \in L^p(0,T;X_1)$.
Then for $\theta \in [0, \mu]$, we have $v \in H^{\theta,p}(0,T;X_{1-\theta/\mu})$ and there exists a constant $C_\theta>0$ such that
\begin{align}\label{e5.5}
\|v\|_{H^{\theta,p}(0,T;X_{1-\theta/\mu})}
\leq
C_\theta
\left(
\|D_{0+}^{\lambda,\mu}v\|_{L^p(0,T;X_0)}
+
\|v\|_{L^p(0,T;X_1)}
\right),
\end{align}
where the constant $C_\theta$ is independent of $v$.
Moreover, if $v$ satisfies problem \eqref{e5.3}, then
\begin{align}\label{e5.6}
\|v\|_{H^{\theta,p}(0,T;X_{1-\theta/\mu})}
\leq
C_{\theta}\|f\|_{L^p(0,T;X_0)}.
\end{align}
\end{lemma}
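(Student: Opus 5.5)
Our plan is to deduce a mixed-derivative estimate from the two endpoints $\theta=0$ and $\theta=\mu$ by complex interpolation, using the sectorial operator $B=\partial_t$ with zero initial value from the proof of Lemma~\ref{lem2.5}.

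\textbf{Step 1: the endpoint $\theta=\mu$.} With $b=\lambda(1-\mu)$, the zero initial condition $I_{0+}^{1-\nu}v(0)=0$ together with the definition of the Hilfer derivative gives
\[
I_{0+}^{1-\nu}v=I_{0+}^{1}\,\tfrac{d}{dt}I_{0+}^{1-\nu}v .
\]
Applying $I_{0+}^{b}$ to $D_{0+}^{\lambda,\mu}v=I^{b}_{0+}\tfrac{d}{dt}I^{1-\nu}_{0+}v$ and using the semigroup law $I^{b}I^{1-\nu}=I^{1-\mu}$, we obtain
\[
I_{0+}^{\mu}D_{0+}^{\lambda,\mu}v=v \quad\text{a.e.}
\]
To justify this rigorously, we first use $I^{1-\mu}v=I^{b}I^{1-\nu}v$, whose value at $0$ vanishes. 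Then Lemma~\ref{lem2.5} yields $v\in{}_0H^{\mu,p}(0,T;X_0)$ with
\[
\|v\|_{H^{\mu,p}(0,T;X_0)}\le C\|D_{0+}^{\lambda,\mu}v\|_{L^p(0,T;X_0)} .
\]
In operator language, $v\in D(B^\mu)$ and $B^\mu v=D_{0+}^{\lambda,\mu}v$.

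\textbf{Step 2: the endpoint $\theta=0$.} This case is simply $\|v\|_{L^p(0,T;X_1)}$.

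\textbf{Step 3: intermediate $\theta$.} We realize $A_0:=A(0)$ on $L^p(0,T;X_0)$ pointwise, with domain $L^p(0,T;X_1)$. By Assumption~\ref{assump2.2}, this operator has a bounded $H^\infty$-calculus, and $B$ has a bounded $H^\infty$-calculus of angle $\pi/2$ on the UMD space $L^p(0,T;X_0)$. The two operators are resolvent commuting, and their angles sum to less than $\pi$, since $\phi<\pi/2$. Applying the mixed derivative theorem (Sobolevskii / Kalton--Weis) to the pair $B^\mu$ (angle $\mu\pi/2$) and $A_0$ gives
\[
\|B^{\mu s}A_0^{1-s}v\|_{L^p}\le C\left(\|B^\mu v\|_{L^p}+\|A_0 v\|_{L^p}\right),\qquad s\in[0,1].
\]
We take $s=\theta/\mu$. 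Equivalently, we use the identification
\[
D(B^{\theta})\cap D(A_0^{1-\theta/\mu})=\big[D(B^\mu),D(A_0)\big]\text{-type spaces},
\]
through $[L^p(X_1),H^{\mu,p}(X_0)]_{\theta/\mu}\hookrightarrow H^{\theta,p}(X_{1-\theta/\mu})$, which is the vector-valued interpolation result valid in UMD spaces. We then identify $D(B^\theta)$ with ${}_0H^{\theta,p}$ (or with $H^{\theta,p}$ when $\theta<1/p$) by \cite[Theorem~6.8]{Lindemulder}, and $D(A_0^{1-\theta/\mu})$ with $X_{1-\theta/\mu}$ by Lemma~\ref{lem2.4}. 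Together with $\|v\|_{L^p(X_0)}\le C\|v\|_{L^p(X_1)}$, this yields \eqref{e5.5}.

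\textbf{Step 4: the solution estimate.} Estimate \eqref{e5.6} then follows immediately from \eqref{e5.5} and \eqref{e5.4}, since $A(\cdot)\in\mathrm{DMR}(p,\mu,T)$ is implicit in the phrase ``satisfies problem \eqref{e5.3}''.

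\textbf{Main obstacle.} We expect the interpolation step to be hardest: verifying the commuting/angle hypotheses on the finite interval, where $B$ is invertible, and passing from $D(B^{\mu s}A_0^{1-s})$ to the Bessel-potential norm $H^{\theta,p}(0,T;X_{1-\theta/\mu})$. The latter requires the isomorphism $A_0^{1-\theta/\mu}:X_{1-\theta/\mu}\to X_0$ to commute with $B^\theta$, which holds because $A_0$ acts only in space. We would first try it via the extension-by-zero to $\mathbb R_+$ and the half-line result of \cite{Lindemulder}.
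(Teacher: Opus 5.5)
Your proposal is correct and follows the paper's proof. The endpoint $\theta=\mu$ comes from $v=I_{0+}^{\mu}D_{0+}^{\lambda,\mu}v$ and Lemma~\ref{lem2.5}, exactly as in the paper; the intermediate range rests on the embedding $[L^p(0,T;X_1),H^{\mu,p}(0,T;X_0)]_{\theta/\mu}\hookrightarrow H^{\theta,p}(0,T;X_{1-\theta/\mu})$, which the paper quotes from Portal--Veraar (Proposition~2.4); and \eqref{e5.6} follows from \eqref{e5.4}. Your mixed-derivative-theorem argument for $B^{\mu}$ and $A(0)$ is a self-contained justification of that quoted embedding. At $\theta=1/p$, which does lie in $(0,\mu)$, you only have $D(B^{1/p})\hookrightarrow H^{1/p,p}(0,T;X_0)$ rather than an identification of the two spaces, and that embedding is all the argument needs.
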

\begin{proof}
Noting that
$$I_{0+}^{\mu}D_{0+}^{\lambda,\mu}v(t)
=v(t)-g_\nu(t)I_{0+}^{(1-\lambda)(1-\mu)}v(0).$$
It follows from $I_{0+}^{(1-\lambda)(1-\mu)}v(0)=0$ that $v(t) = I_{0+}^\mu D_{0+}^{\lambda,\mu}v(t)$. By Lemma \ref{lem2.5}, the operator $I_{0+}^{\mu}: L^p(0,T;X_0) \to {}_0H^{\mu,p}(0,T;X_0)$ is bounded. Then $v \in {}_0H^{\mu,p}(0,T;X_0)$ and
\begin{align}\label{e5.7}
\|v\|_{H^{\mu,p}(0,T;X_0)} \leq C \|D_{0+}^{\lambda,\mu}v\|_{L^p(0,T;X_0)}.
\end{align}
For $0 < \theta < \mu$, set $\eta = \theta/\mu$. It follows from \cite[Proposition 2.4]{Portal} that
$$\left[
L^p(0,T;X_1),
H^{\mu,p}(0,T;X_0)
\right]_{\eta}
=
H^{\theta,p}
\left(
0,T;[X_1,X_0]_{\eta}
\right),$$
where $L^p(0,T;X_1)=H^{0,p}(0,T;X_1)$.
By the symmetry of complex interpolation,
$$[X_1,X_0]_{\eta}
=[X_0,X_1]_{1-\eta}
=X_{1-\theta/\mu},$$
we have
$$H^{\mu,p}(0,T;X_0)
\cap L^p(0,T;X_1)
\hookrightarrow
H^{\theta,p}
\left(
0,T;X_{1-\theta/\mu}
\right).$$
Then $v \in H^{\theta,p}\left(0,T; X_{1-\theta/\mu}\right)$ and
$$\|v\|_{H^{\theta,p}(0,T;X_{1-\theta/\mu})} \leq C_\theta \left( \|v\|_{H^{\mu,p}(0,T;X_0)} + \|v\|_{L^p(0,T;X_1)} \right).$$
Combining with \eqref{e5.7}, we deduce \eqref{e5.5}. For $\theta=0$ and $\theta=\mu$, we get the same result. It follows from \eqref{e5.4} that \eqref{e5.6} holds. The proof is complete.
\end{proof}
\begin{lemma}\label{lem5.2}
Assume $A(\cdot) \in \mathrm{DMR}(p, \mu, T)$. For progressively measurable $h \in L^p(\Omega; L^p(0, T; X_0))$, the zero initial value problem
$${}D_{0+}^{\lambda,\mu} v + A(\cdot)v = h$$
admits a unique progressively measurable integrated strong solution.

Moreover, for $\theta \in [0, \mu]$, we have
\begin{align*}
\|v\|_{L^p(\Omega; H^{\theta,p}(0,T; X_{1-\theta/\mu}))} \leq C_\theta \|h\|_{L^p(\Omega; L^p(0,T; X_0))}.
\end{align*}
\end{lemma}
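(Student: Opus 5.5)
The plan is to treat $\omega$ as a parameter: solve the deterministic problem \eqref{e5.3} pathwise with $f=h(\cdot,\omega)$, then check measurability and integrate the deterministic estimate over $\Omega$. Since $h\in L^p(\Omega;L^p(0,T;X_0))$, Fubini gives a null set $N$ such that $h(\cdot,\omega)\in L^p(0,T;X_0)$ for $\omega\notin N$. For these $\omega$, the assumption $A(\cdot)\in\mathrm{DMR}(p,\mu,T)$ yields a unique integrated strong solution $v(\cdot,\omega)$. On $N$ we set $v(\cdot,\omega)=0$. Applying \eqref{e5.6} of Lemma~\ref{lem5.1} pathwise gives
$$\|v(\cdot,\omega)\|_{H^{\theta,p}(0,T;X_{1-\theta/\mu})}\le C_\theta\|h(\cdot,\omega)\|_{L^p(0,T;X_0)},$$
with $C_\theta$ independent of $\omega$. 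Raising this to the $p$-th power and taking expectations gives the stated $L^p(\Omega;H^{\theta,p})$ bound, once we know $\omega\mapsto v(\cdot,\omega)$ is strongly measurable.

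The main obstacle is measurability. We need strong measurability of $\omega\mapsto v(\cdot,\omega)$, and more importantly progressive measurability (adaptedness) of $v$.

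\textbf{Measurability.} The DMR property defines a solution map $\mathcal S:L^p(0,T;X_0)\to L^p(0,T;X_1)$. By uniqueness it is linear, and by \eqref{e5.4} it is bounded. Since $v=\mathcal S h(\cdot,\omega)$ with $\omega\mapsto h(\cdot,\omega)$ strongly measurable into $L^p(0,T;X_0)$, the composition is strongly $\mathcal F_T$-measurable. The same reasoning applies with values in $H^{\theta,p}(0,T;X_{1-\theta/\mu})$, because $\mathcal S$ is bounded into that space by Lemma~\ref{lem5.1}.

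\textbf{Causality.} For adaptedness we show that $\mathcal S$ is causal: if $f_1=f_2$ on $(0,t)$, then $\mathcal Sf_1=\mathcal Sf_2$ on $(0,t)$. Set $w=\mathcal S(f_1-f_2)$. Equation \eqref{e5.2} involves only the Volterra kernel $g_\mu$, so $w$ restricted to $(0,t)$ solves the homogeneous integrated equation on $(0,t)$. To conclude $w=0$ there, I would extend $w|_{(0,t)}$ by the solution of a problem on $(t,T)$, or more simply compare with $\mathcal S(\mathbf 1_{(t,T)}(f_1-f_2))$. Uniqueness in $\mathrm{DMR}(p,\mu,T)$ then forces the two to agree, since both solve \eqref{e5.2} on $[0,T]$ with the same right-hand side. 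Hence $\mathcal S(\mathbf 1_{(0,t)}(f_1-f_2))=0$ on $(0,t)$ as required. The delicate point is verifying that the concatenated function is again an integrated strong solution on $[0,T]$. If that fails, I would instead use uniqueness for the homogeneous Volterra equation on $(0,t)$ via a Gronwall-type iteration, as in the uniqueness part of Theorem~\ref{the3.1}.

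With causality established, $v|_{[0,t]}$ depends only on $h|_{[0,t]}$, which is $\mathcal B([0,t])\otimes\mathcal F_t$-measurable. So $v|_{[0,t]\times\Omega}$ is $\mathcal B([0,t])\otimes\mathcal F_t$-measurable as an $L^p(0,t;X_1)$-valued map. Passing from this to a progressively measurable version of the process is standard, for instance via Steklov averages $\frac1\epsilon\int_{t-\epsilon}^t v\,ds$, which are continuous and adapted, followed by an a.e. limit. Uniqueness of the progressively measurable solution follows from pathwise uniqueness in DMR.
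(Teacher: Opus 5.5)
Your proposal follows essentially the paper's route: solve pathwise with the bounded linear DMR solution operator, integrate the bound \eqref{e5.6} over $\Omega$, use causality $(\mathcal R_T U)|_{(0,t)}=\mathcal R_t(U|_{(0,t)})$ for adaptedness, and then pass to a progressively measurable version. The paper does the last step by approximating $h$ with simple progressive $h_n$, using that $v_n=\mathcal R_D h_n\in H^{\mu,p}(0,T;X_0)$ is continuous and adapted, and closing in $L^p(\Omega;L^p(0,T;X_1))$; your Steklov averages do the same job more directly.

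One caution on causality, which the paper only asserts from uniqueness. Your ``more simply'' comparison is vacuous: $\mathbf 1_{(t,T)}(f_1-f_2)=f_1-f_2$, and $\mathcal S(\mathbf 1_{(0,t)}(f_1-f_2))=\mathcal S(0)=0$ holds trivially. Moreover, uniqueness on $[0,T]$ alone does not give solvability of the memory-dependent continuation problem on $(t,T)$. So it is your Gronwall fallback that actually proves uniqueness on $(0,t)$, and hence causality. That fallback freezes $A(s)$, uses the autonomous variation-of-constants formula, and relies on the H\"older bound \eqref{e2.4}.
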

\begin{proof}
Let $\mathcal R_D$ denote the solution operator associated with \eqref{e5.3}. For $U\in L^p(0,T;X_0)$, set $y=\mathcal R_DU$. By \eqref{e5.4} and Lemma~\ref{lem5.1}, we deduce
\begin{align*}
\|\mathcal R_DU\|_{H^{\theta,p}(0,T;X_{1-\theta/\mu})}
&\leq C_\theta\left(
\|D_{0+}^{\lambda,\mu}y\|_{L^p(0,T;X_0)}
+\|y\|_{L^p(0,T;X_1)}
\right)\leq C_\theta\|U\|_{L^p(0,T;X_0)}.
\end{align*}
Thus, $\mathcal R_D$ is a bounded linear operator from
$L^p(0,T;X_0)$ into $H^{\theta,p}(0,T;X_{1-\theta/\mu})$.

For almost every $\omega\in\Omega$, define $v(\cdot,\omega):=\mathcal R_D(h(\cdot,\omega))$. Then
$$D_{0+}^{\lambda,\mu}v(\cdot,\omega)
+A(\cdot)v(\cdot,\omega)=h(\cdot,\omega),
\quad I_{0+}^{1-\nu}v(0,\omega)=0.$$
Moreover, we deduce
\begin{align*}
\|v\|_{L^p(\Omega;
H^{\theta,p}(0,T;X_{1-\theta/\mu}))}^p&=\int_\Omega
\|\mathcal R_D(h(\cdot,\omega))\|_{H^{\theta,p}
(0,T;X_{1-\theta/\mu})}^p\,d\mathbb P(\omega)\\
&\leq C_\theta^p\int_\Omega
\|h(\cdot,\omega)\|_{L^p(0,T;X_0)}^p\,d\mathbb P(\omega)=C_\theta^p
\|h\|_{L^p(\Omega;L^p(0,T;X_0))}^p.
\end{align*}
It follows that
$$\|v\|_{L^p(\Omega; H^{\theta,p}(0,T; X_{1-\theta/\mu}))} \leq C_\theta \|h\|_{L^p(\Omega; L^p(0,T; X_0))}.$$

Next, we prove the progressive measurability of $v$. According to the uniqueness of problem \eqref{e5.3}, the solution on $(0,t)$ depends on the restriction of the forcing term to $(0,t)$. Hence, for the solution operator $\mathcal R_t$ on $(0,t)$, we deduce
$$(\mathcal R_TU)|_{(0,t)}
=\mathcal R_t(U|_{(0,t)}).$$
Since $h$ is progressively measurable and $h \in L^p(\Omega;L^p(0,T;X_0))$, there is a sequence of $X_0$-valued progressively measurable simple processes $(h_n)_{n\geq1}$ such that $h_n\to h$ in $L^p(\Omega;L^p(0,T;X_0))$. Set $v_n:=\mathcal R_Dh_n$, for $t\in(0,T]$, the restriction
$h_n|_{(0,t)}$ is $\mathcal F_t$-measurable. Combining with the boundedness of $\mathcal R_t$, we get
$$
v_n|_{(0,t)}
=\mathcal R_t(h_n|_{(0,t)}),
$$
is $\mathcal F_t$-measurable. Moreover, since
$v_n\in H^{\mu,p}(0,T;X_0)$ and $\mu>1/p$, $v_n$ has an $X_0$-valued continuous version and is progressively measurable. Then the version is adapted and progressively measurable. Furthermore, by \eqref{e5.4}, we deduce
$$\|v_n-v\|_{L^p(\Omega;L^p(0,T;X_1))}
\leq C\|h_n-h\|_{L^p(\Omega;L^p(0,T;X_0))}
\to 0.$$
Since the space of progressively measurable processes is closed in
$L^p(\Omega;L^p(0,T;X_1))$, the process $v$ admits a progressively measurable version.

Finally, if $v_1$ and $v_2$ are two such solutions, then $z:=v_1-v_2$
satisfies
$$D_{0+}^{\lambda,\mu}z+A(\cdot)z=0,
\quad I_{0+}^{1-\nu}z(0)=0.$$
According to the uniqueness in $A(\cdot)\in\mathrm{DMR}(p,\mu,T)$, we deduce $z=0$. The proof is complete.
\end{proof}

\subsection{Regularity of the autonomous stochastic convolution}
Let $(\Omega,\mathcal F,\{\mathcal F_t\}_{t\in[0,T]},\mathbb P)$ be a
complete filtered probability space and $W_H$ be a cylindrical Brownian
motion on a separable Hilbert space $H$. Denote by
$L^p(\Omega;L^p_{\mathcal F}(0,T;X_0))$ the space of all progressively
measurable processes in $L^p(\Omega;L^p(0,T;X_0))$.

For $\alpha \in (1/(2\mu), 1]$, we write $A_0^\alpha$ for $(A_0)^\alpha$. For a progressively measurable process $G: (0,T) \times \Omega \to \gamma(H, X_\alpha)$, we define the space $\mathbb{G}_\alpha$ as
$$\mathbb{G}_\alpha := L^p\Big(\Omega; L^p_{\mathcal{F}}\big(0,T; \gamma(H, X_\alpha)\big)\Big),$$
with the norm $\|G\|_{\mathbb G_\alpha}:=
\left(
\mathbb E\int_0^T
\|G(t)\|_{\gamma(H,X_\alpha)}^p\,dt
\right)^{1/p}$.
Applying Corollary~\ref{cor2.1} pointwise to $G(t,\omega)$ and
then integrating over $(0,T)\times\Omega$, we obtain
\begin{align*}
C_\alpha^{-1}\|G\|_{\mathbb G_\alpha}
\leq\left(
\mathbb E\int_0^T
\|A_0^\alpha G(t)\|_{\gamma(H,X_0)}^p\,dt
\right)^{1/p}
\leq C_\alpha\|G\|_{\mathbb G_\alpha}.
\end{align*}
Thus, the two norms  are equivalent, and the equivalence constant $C_\alpha$ is independent of $G$.

Let $-A_0$ generate the uniformly bounded analytic semigroup $Q(t) = e^{-tA_0}$ in $X_0$. For $G\in \mathbb{G}_\alpha$, then the autonomous stochastic equation  with the generalized fractional derivative
\begin{equation*}
\left\{
\begin{aligned}
&D_{0+}^{\lambda,\mu}u(t) + A_0u(t) = G(t)dW_H(t), && t \in (0, T],  \\
&I_{0+}^{(1-\lambda)(1-\mu)}u(0) = 0,
\end{aligned}\right.
\end{equation*}
admits a mild solution
\begin{equation}\label{e5.8}
u(t) = \int_0^t \psi_\mu(t-s)G(s)d W_H(s),
\end{equation}
where $M_\mu(r)$ denotes the Wright-type function and
$$\psi_\mu(t):=\psi_{\mu,A_0}(t)
=t^{\mu-1}\int_0^\infty\mu rM_\mu(r)Q(t^\mu r)\,dr.$$

\begin{lemma}\label{lem5.3}
Let Assumption \ref{assump5.1} hold. For any $\beta\in[0,1]$, $\zeta\in[0,1]$, and $t, h > 0$, the following uniform estimates hold
\begin{align}\label{e5.9}
\|A_0^\beta\psi_\mu(t)\|_{L(X_0)}\leq C_\beta t^{\mu(1-\beta)-1},
\end{align}
\begin{align}\label{e5.10}
 \|A_0^\beta\psi_\mu'(t)\|_{L(X_0)}\leq C_\beta t^{\mu(1-\beta)-2},
\end{align}
\begin{align}\label{e5.11}
\|A_0^\beta(\psi_\mu(t+h)-\psi_\mu(t))\|_{L(X_0)}
\leq C_{\beta,\zeta} h^\zeta t^{\mu(1-\beta)-1-\zeta}.
\end{align}
\end{lemma}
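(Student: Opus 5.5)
The plan is to work from the subordination formula $\psi_\mu(t)=t^{\mu-1}\int_0^\infty\mu rM_\mu(r)Q(t^\mu r)\,dr$ (Lemma \ref{lem3.1}(i) with $\tau=0$), together with the analytic semigroup estimate $\|A_0^\beta Q(s)\|_{L(X_0)}\le C_\beta s^{-\beta}$. For $\beta\in[0,1)$ this is \eqref{e2.3}. For $\beta=1$ it is \eqref{e2.2}. For $\beta=0$ it is just the uniform boundedness of $Q$.

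\emph{Estimate \eqref{e5.9}.} Since $A_0^\beta$ is closed and the integrand is Bochner integrable in $L(X_0)$, we may pass $A_0^\beta$ under the integral:
\begin{align*}
\|A_0^\beta\psi_\mu(t)\|_{L(X_0)}\le Ct^{\mu-1}\int_0^\infty \mu rM_\mu(r)(t^\mu r)^{-\beta}\,dr=Ct^{\mu(1-\beta)-1}\int_0^\infty r^{1-\beta}M_\mu(r)\,dr .
\end{align*}
The last integral is finite by Lemma \ref{lem2.3}(iv), with $\delta=1-\beta>-1$. Note that $\delta=0$ is allowed, so $\beta=1$ is fine. This argument is the same as \eqref{e4.4}, but now with the single operator $A_0$.

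\emph{Estimate \eqref{e5.10}.} First I would differentiate under the integral sign, exactly as in the proof of Lemma \ref{lem4.2}:
\begin{align*}
\psi_\mu'(t)=(\mu-1)t^{\mu-2}\int_0^\infty\mu rM_\mu(r)Q(t^\mu r)\,dr-\mu t^{2\mu-2}\int_0^\infty\mu r^2M_\mu(r)A_0Q(t^\mu r)\,dr .
\end{align*}
Domination on compact $t$-intervals follows from the estimates below. Next apply $A_0^\beta$. The first term is handled as in \eqref{e5.9}. For the second, split $A_0^{1+\beta}Q(s)=A_0^\beta Q(s/2)\,A_0Q(s/2)$, which gives the bound $Cs^{-1-\beta}$. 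The second term is then bounded by $Ct^{2\mu-2}\int r^2M_\mu(r)(t^\mu r)^{-1-\beta}\,dr=Ct^{\mu(1-\beta)-2}\int r^{1-\beta}M_\mu(r)\,dr$. Both terms therefore give $t^{\mu(1-\beta)-2}$, with the same finite moment as before.

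\emph{Estimate \eqref{e5.11}.} From \eqref{e5.10},
\begin{align*}
\|A_0^\beta(\psi_\mu(t+h)-\psi_\mu(t))\|\le\int_t^{t+h}C\tau^{\mu(1-\beta)-2}\,d\tau\le Cht^{\mu(1-\beta)-2}.
\end{align*}
From \eqref{e5.9}, since $\mu(1-\beta)-1\le0$ and so $(t+h)^{\mu(1-\beta)-1}\le t^{\mu(1-\beta)-1}$, the same difference is at most $2Ct^{\mu(1-\beta)-1}$. Interpolating with $\min\{x,y\}\le x^{1-\zeta}y^\zeta$ gives $C_{\beta,\zeta}h^\zeta t^{\mu(1-\beta)-1-\zeta}$ for every $\zeta\in[0,1]$.

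The only delicate step is justifying the differentiation under the integral together with the closedness interchange for $A_0^{1+\beta}$ when $\beta=1$, i.e. the operator $A_0^2$. I would handle this by the factorization $A_0^2Q(s)=(A_0Q(s/2))^2$. This factorization makes every integrand a bounded operator that is dominated locally uniformly in $t$ by $C_{t_0}r^{1-\beta}M_\mu(r)$, so dominated convergence applies as in Lemma \ref{lem4.2}.
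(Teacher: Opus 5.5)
Your proof is correct. For \eqref{e5.9} and \eqref{e5.11} it coincides with the paper's argument: the subordination formula, the bound $\|A_0^\beta Q(\tau)\|_{L(X_0)}\le C_\beta\tau^{-\beta}$ and the moment identity of Lemma~\ref{lem2.3}(iv), followed by interpolating the $\zeta=0$ and $\zeta=1$ bounds via $\min\{a,b\}\le a^{1-\zeta}b^{\zeta}$.

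The difference is in \eqref{e5.10}. There the paper does not differentiate the subordination integral. It uses the Laplace-inversion representation $\psi_\mu(t)=\frac{1}{2\pi i}\int_\Gamma e^{zt}(z^\mu+A_0)^{-1}\,dz$, so that differentiating in $t$ only produces a factor $z$. It then combines the resolvent bound $\|A_0^\beta(z^\mu+A_0)^{-1}\|_{L(X_0)}\le C|z|^{\mu(\beta-1)}$ with the rescaling $\xi=zt$.

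Your argument is instead the one the paper uses for \eqref{e4.5} in Lemma~\ref{lem4.2}. You specialize it to the single operator $A_0$ and include the endpoint $\beta=1$, where the same factorization becomes $A_0^2Q(s)=(A_0Q(s/2))^2$.

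What each route buys:
\begin{itemize}
\item Yours stays inside the semigroup/Wright-function calculus the paper has already set up. Every interchange of $A_0^\beta$ or $d/dt$ with the integral is justified by an explicit dominating function $C_{t_0}r^{1-\beta}M_\mu(r)$ with finite moment.
\item The contour route is shorter and handles all $\beta\in[0,1]$ through a single resolvent estimate, and it would give higher derivatives just as easily. However, it relies on the identity $\widehat{\psi_\mu}(z)=(z^\mu+A_0)^{-1}$ and on a contour deformation after rescaling, both of which the paper leaves implicit.
\end{itemize}
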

\begin{proof}
Applying the fractional power $A_0^\beta$ to $\psi_\mu(t)$, we obtain
$$
A_0^\beta\psi_\mu(t) = t^{\mu-1} \int_0^\infty \mu r M_\mu(r) A_0^\beta Q(t^\mu r) \, dr.
$$
According to the estimate $\|A_0^\beta Q(\tau)\|_{L(X_0)} \leq C_\beta \tau^{-\beta}$ for $\tau > 0$, we deduce
\begin{align*}
\|A_0^\beta\psi_\mu(t)\|_{L(X_0)}
&\leq
t^{\mu-1} \int_0^\infty \mu r M_\mu(r) \|A_0^\beta Q(t^\mu r)\|_{L(X_0)} \, dr \\
&\leq
t^{\mu-1} \int_0^\infty \mu r M_\mu(r) C_\beta (t^\mu r)^{-\beta} \, dr \leq
C_\beta \mu t^{\mu(1-\beta)-1}.
\end{align*}
Hence, we have \eqref{e5.9}. By the Dunford integral representation, it follows from
$$
\psi_\mu(t) = \frac{1}{2\pi i} \int_\Gamma e^{zt} (z^\mu + A_0)^{-1} \, dz
$$
that
$$
\psi_\mu'(t) = \frac{1}{2\pi i} \int_\Gamma z e^{zt} (z^\mu + A_0)^{-1} \, dz.
$$
Note that $\|A_0^\beta (z^\mu + A_0)^{-1}\|_{L(X_0)} \leq C |z|^{\mu(\beta-1)}$. Let $\xi = zt$, then $dz = t^{-1}d\xi$ and
\begin{align*}
\|A_0^\beta \psi_\mu'(t)\|_{L(X_0)}
&\leq
\frac{1}{2\pi i} \int_{\Gamma'} \left| \frac{\xi}{t} \right| |e^\xi| \|A_0^\beta \big( (\xi/t)^\mu + A_0 \big)^{-1}\|_{L(X_0)} \, \frac{|d\xi|}{t} \\
&\leq
C \int_{\Gamma'} \frac{|\xi|}{t} |e^\xi| \left| \frac{\xi}{t} \right|^{\mu(\beta-1)} \frac{|d\xi|}{t}
\leq C t^{\mu(1-\beta)-2}.
\end{align*}
Then \eqref{e5.10} holds. For $\zeta = 0$, it follows from the triangle inequality and \eqref{e5.9} that
\begin{align*}
\|A_0^\beta(\psi_\mu(t+h)-\psi_\mu(t))\|_{L(X_0)}
&\leq
\|A_0^\beta\psi_\mu(t+h)\|_{L(X_0)} + \|A_0^\beta\psi_\mu(t)\|_{L(X_0)} \\
&\leq
C_\beta (t+h)^{\mu(1-\beta)-1} + C_\beta t^{\mu(1-\beta)-1} \leq
2C_\beta t^{\mu(1-\beta)-1}.
\end{align*}
For $\zeta = 1$, by \eqref{e5.10}, we obtain
\begin{align*}
\|A_0^\beta(\psi_\mu(t+h)-\psi_\mu(t))\|_{L(X_0)}
&=
\left\| \int_t^{t+h} A_0^\beta\psi_\mu'(s) \, ds \right\|_{L(X_0)} 
\leq
C_\beta h t^{\mu(1-\beta)-2}.
\end{align*}
For the case $0 < \zeta < 1$, we employ the inequality $\min\{a, b\} \leq a^{1-\zeta}b^\zeta$ to interpolate between the bounds for $\zeta=0$ and $\zeta=1$
\begin{align*}
\|A_0^\beta(\psi_\mu(t+h)-\psi_\mu(t))\|_{L(X_0)}
&\leq
\big( 2C_\beta t^{\mu(1-\beta)-1} \big)^{1-\zeta} \big( C_\beta h t^{\mu(1-\beta)-2} \big)^\zeta
=
\widetilde{C}_{\beta,\zeta} \, h^\zeta \, t^{\mu(1-\beta)-1-\zeta}.
\end{align*}
Therefore, we deduce \eqref{e5.11}. The proof is complete.
\end{proof}

\begin{lemma}\label{lem5.4}
Let $\alpha\in(1/(2\mu),1]$ and $G\in\mathbb G_\alpha$.  Then the stochastic
convolution
$$u(t)=\int_0^t\psi_\mu(t-s)G(s)\,dW_H(s)$$
is well defined and, for every $\theta\in[0,\mu-1/2)$,
\begin{equation}\label{e5.12}
 \|u\|_{L^p(\Omega;
 H^{\theta,p}(0,T;X_{1-\theta/\mu}))}
 \leq C_\theta\|G\|_{\mathbb G_\alpha}.
\end{equation}
\end{lemma}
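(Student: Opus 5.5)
The plan is to prove the spatial bound and the temporal bound separately, both by one mechanism: factor $G(s)=A_0^{-\alpha}\bigl(A_0^{\alpha}G(s)\bigr)$, move the fractional power onto the kernel $\psi_\mu$, and apply the Burkholder--Davis--Gundy estimate of Lemma~\ref{lem2.1} in the UMD type~$2$ space $X_{1-\theta/\mu}$ (Lemma~\ref{lem2.4}). By the norm equivalence stated just before the lemma, $\|G\|_{\mathbb G_\alpha}$ is comparable to $\bigl(\mathbb E\int_0^T\|A_0^\alpha G\|_{\gamma(H,X_0)}^p\,dt\bigr)^{1/p}$. Put $\beta:=1-\theta/\mu-\alpha$. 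If $\beta\ge 0$, then \eqref{e5.9} gives
\begin{align*}
\|A_0^{1-\theta/\mu}\psi_\mu(\tau)A_0^{-\alpha}\|_{L(X_0)}\le C\tau^{\mu\alpha+\theta-1}=:Ck(\tau).
\end{align*}
If $\beta<0$, I will use $\beta=0$ together with the boundedness of $A_0^{\beta}$, which only improves the exponent. Since $\mu\alpha>1/2$, the kernel $k$ lies in $L^2(0,T)$. So $u$ is well defined, and for each $t$,
\begin{align*}
\mathbb E\|u(t)\|_{X_{1-\theta/\mu}}^p\le C\,\mathbb E\Bigl(\int_0^t k(t-s)^2\|A_0^\alpha G(s)\|_{\gamma(H,X_0)}^2ds\Bigr)^{p/2}.
\end{align*}
Applying H\"older's inequality with the measure $k(t-s)^2ds$ bounds this by $\|k\|_{L^2}^{p-2}\,\mathbb E\int_0^t k(t-s)^2\|A_0^\alpha G(s)\|^p\,ds$. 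Integrating in $t$ and using Fubini/Young then yields $\|u\|_{L^p(\Omega;L^p(0,T;X_{1-\theta/\mu}))}\le C\|G\|_{\mathbb G_\alpha}$.

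For the temporal regularity, fix $\theta'\in(\theta,\mu\alpha+\theta-1/2)$; this interval is nonempty because $\mu\alpha>1/2$. I will use that the Sobolev--Slobodeckij space $W^{\theta',p}(0,T;Y)$ embeds continuously into $H^{\theta,p}(0,T;Y)$ for $\theta'>\theta$, which holds in any Banach space, and in particular for $Y=X_{1-\theta/\mu}$ (via extension to $\mathbb R$). It therefore suffices to bound
\begin{align*}
\mathbb E\int_0^T\!\!\int_0^{T-t}h^{-1-\theta'p}\|u(t+h)-u(t)\|_{X_{1-\theta/\mu}}^p\,dh\,dt.
\end{align*}
Split $u(t+h)-u(t)=\int_t^{t+h}\psi_\mu(t+h-s)G\,dW_H+\int_0^t[\psi_\mu(t+h-s)-\psi_\mu(t-s)]G\,dW_H$. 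For the first piece, BDG and the same weighted H\"older step bound the kernel contribution by $(\int_0^h k^2)^{p/2-1}$ times an integral of $k^2\|A_0^\alpha G\|^p$. This produces a factor $h^{p(\mu\alpha+\theta-1/2)}$ against $h^{-1-\theta'p}$, and that is integrable near $0$ by the choice of $\theta'$. For the second piece, I use \eqref{e5.11} with exponent $\zeta\in(\theta',\mu\alpha+\theta-1/2)$, which gives the kernel $h^{\zeta}(t-s)^{\mu\alpha+\theta-1-\zeta}$. This kernel is square integrable in $s$ because $\zeta<\mu\alpha+\theta-1/2$, and after the same H\"older/Fubini step the $h$-integral $\int_0^T h^{-1+(\zeta-\theta')p}\,dh$ converges. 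Combining this Gagliardo seminorm bound with the $L^p$ bound proves \eqref{e5.12}.

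The main obstacle I expect is bookkeeping rather than a new idea. The constraints have to be reconciled: $\theta<\mu-1/2$ together with $\alpha\le1$ keeps $1-\theta/\mu$ and $\beta$ in the ranges where Lemma~\ref{lem5.3} and Lemma~\ref{lem2.4} apply. The weighted H\"older step must be arranged so that, after Fubini, the $s$-integral of $\|A_0^\alpha G(s)\|^p$ appears with an integrable weight uniformly in $t$ and $h$. The Slobodeckij-to-Bessel embedding on the finite interval should be quoted carefully, via an extension operator. The progressive measurability and continuity needed to apply Lemma~\ref{lem2.1} pointwise in $t$ follow as in the proof of Lemma~\ref{lem4.3}.
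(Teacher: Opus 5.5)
Your architecture is the paper's: set $F=A_0^\alpha G$, get the $L^p(0,T;X_{1-\theta/\mu})$ bound from BDG plus Young (your weighted H\"older step is the same thing), split $u(t+h)-u(t)$ into two pieces, bound the Slobodeckij seminorm via \eqref{e5.11}, and embed into $H^{\theta,p}$. The gap is the case $\beta=1-\theta/\mu-\alpha<0$, which you handle incorrectly. There $A_0^{1-\theta/\mu}\psi_\mu(\tau)A_0^{-\alpha}=A_0^{\beta}\psi_\mu(\tau)$, and the available bound is $C\tau^{\mu-1}$. Since $(\mu\alpha+\theta-1)-(\mu-1)=-\mu\beta>0$, this is strictly \emph{worse} near $\tau=0$ than $\tau^{\mu\alpha+\theta-1}$, not better. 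It is also sharp: by Lemma~\ref{lem3.2}(iii), $\tau^{1-\mu}A_0^{\beta}\psi_\mu(\tau)x\to A_0^{\beta}x/\Gamma(\mu)\neq0$ for $x\neq0$. So in this case $k(\tau)=\tau^{\mu\alpha+\theta-1}$ is not an upper bound for the kernel, and your choices $\theta<\theta'<\zeta<\mu\alpha+\theta-\tfrac12$ are unjustified. This is not an edge case: for $\alpha=1$ it covers every $\theta>0$.

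The correct kernel is $\tau^{q_\theta}$ with $q_\theta=\min\{\mu\alpha+\theta-1,\mu-1\}$, which is exactly the paper's two-case exponent. You then need $\theta<\theta'<\zeta<q_\theta+\tfrac12$. When $\beta<0$ this reads $\zeta<\mu-\tfrac12$, and the interval is nonempty precisely because $\theta<\mu-\tfrac12$. That is the one place where this hypothesis is genuinely needed. Your write-up never uses it except to keep exponents in range, which should have been a warning sign: as written, your argument would give \eqref{e5.12} for every $\theta<\mu$, and that is false. Already for $X_0=\mathbb R$, $A_0=1$, $G\equiv1$, the two pieces of your splitting are independent. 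Hence $\mathbb E|u(t+h)-u(t)|^2\ge\int_0^h\psi_\mu(r)^2\,dr\ge ch^{2\mu-1}$, so $u$ cannot have time regularity beyond $\mu-\tfrac12$. If you replace $\mu\alpha+\theta-1$ by $q_\theta$ throughout, the rest of your estimates go through unchanged.
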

\begin{proof}
Fix $\theta<\mu-1/2$, let $\rho=1-\theta/\mu$ and $F=A_0^\alpha G$. By Corollary~\ref{cor2.1},
\begin{equation*}
\|F\|_{L^p(\Omega;L^p(0,T;\gamma(H,X_0)))}
\leq C_\alpha\|G\|_{\mathbb G_\alpha}.
\end{equation*}
If $\rho\geq\alpha$, we have $A_0^\rho \psi_\mu(r) G(s) = A_0^{\rho-\alpha} \psi_\mu(r) F(s)$.
It follows from Lemma \ref{lem5.3} that
\begin{equation}\label{e5.13}
\|A_0^\rho\psi_\mu(r)G(s)\|_{\gamma(H,X_0)}
\leq Cr^{q_\theta}\|F(s)\|_{\gamma(H,X_0)},
\end{equation}
with $q_\theta = \mu(1-\rho+\alpha)-1$.
If $\rho<\alpha$, by the boundedness of negative power $A_0^{\rho-\alpha}$, we get \eqref{e5.13} for $q_\theta = \mu-1$. Thus,  \eqref{e5.13} holds with
$$q_\theta=
\begin{cases}
\mu(1-\rho+\alpha)-1, & \rho\geq\alpha,\\[1mm]
\mu-1, & \rho<\alpha.
\end{cases}$$

We claim $q_\theta+1/2>\theta$. If $\rho\geq\alpha$, then, since
$\rho=1-\theta/\mu$ and $\alpha>1/(2\mu)$, we have
$q_\theta+1/2-\theta
=\mu\alpha-1/2>0$.
If $\rho<\alpha$, then
$q_\theta+1/2-\theta
=\mu-1/2-\theta>0$.
Thus, we have
$q_\theta+1/2>\theta$.

Choose numbers $s$ and $\zeta$ such that
\begin{equation}\label{e5.14}
 \theta<s<\zeta<
 \min\left\{1,q_\theta+\frac12\right\}.
\end{equation}
According to the BDG inequality and Young's
inequality, we obtain
\begin{equation}\label{e5.15}
\|u\|_{L^p(\Omega;L^p(0,T;X_\rho))}
\leq C\|r^{q_\theta}\|_{L^2(0,T)}
\|F\|_{L^p(\Omega;L^p(0,T;\gamma(H,X_0)))}.
\end{equation}
Since $\theta\geq0$ and $q_\theta+1/2>\theta$, we have
$q_\theta>-1/2$. Hence, the $L^2$ norm in \eqref{e5.15} is finite.

For $h\in(0,T)$, we obtain
\begin{align*}
 u(t+h)-u(t)
 =&\int_0^t
 [\psi_\mu(t+h-r)-\psi_\mu(t-r)]G(r)\,dW_H(r)+\int_t^{t+h}\psi_\mu(t+h-r)G(r)\,dW_H(r)\\
 =:&J_1(t,h)+J_2(t,h).
\end{align*}
From \eqref{e5.11} and \eqref{e5.14}, we have
$$\|A_0^\rho[\psi_\mu(r+h)-\psi_\mu(r)]G(s)\|_{\gamma(H,X_0)}
 \leq Ch^\zeta r^{q_\theta-\zeta}
 \|F(s)\|_{\gamma(H,X_0)}.$$
Since $q_\theta-\zeta>-1/2$, combining the BDG inequality and Young's
inequality, we also deduce
\begin{equation}\label{e5.16}
 \|J_1(\cdot,h)\|_{L^p(\Omega;
 L^p(0,T-h;X_\rho))}
 \leq Ch^\zeta\|G\|_{\mathbb G_\alpha}.
\end{equation}
It follows from \eqref{e5.13} that
\begin{equation}\label{e5.17}
 \|J_2(\cdot,h)\|_{L^p(\Omega;
 L^p(0,T-h;X_\rho))}
 \leq Ch^{q_\theta+1/2}\|G\|_{\mathbb G_\alpha}.
\end{equation}
Then by inequality $(a+b)^p \leq 2^{p-1}(a^p + b^p)$, we deduce
\begin{align*}
&\int_0^T \frac{\|u(\cdot+h) - u(\cdot)\|_{L^p(\Omega;
L^p(0,T-h;X_\rho))}^p}{h^{sp+1}} \, dh \\
= &\int_0^T h^{-sp-1}
\|u(\cdot+h)-u(\cdot)\|_{L^p(\Omega;
L^p(0,T-h;X_\rho))}^p\,dh\\
\leq& C\|G\|_{\mathbb G_\alpha}^p
\int_0^T
\left(h^{(\zeta-s)p-1}
+h^{(q_\theta+1/2-s)p-1}\right)\,dh<\infty.
\end{align*}
Together with \eqref{e5.15}, we have
$u\in L^p(\Omega;W^{s,p}(0,T;X_\rho))$.
For $0\leq\theta<s<1$, we deduce the embedding
$$ W^{s,p}(0,T;X_\rho)
 \hookrightarrow H^{\theta,p}(0,T;X_\rho).$$
Hence, it follows that \eqref{e5.12} holds. The proof is complete.
\end{proof}
\begin{remark}\label{rem5.1}
The conditions on $\alpha$ and $\theta$ ensure the integrability of the kernels in the proof. When $\theta=0$, the kernel is square-integrable near $r=0$ only if $\alpha>1/(2\mu)$. The proof of the time regularity estimate also requires $\theta<\mu-1/2$. Therefore, the endpoint cases are not covered by Lemma \ref{lem5.4}.
\end{remark}

\subsection{Stochastic maximal regularity}
First, we consider the following stochastic equation
\begin{equation}\label{e5.18}
 \left\{
\begin{aligned}
&D_{0+}^{\lambda,\mu}u(t)+A(t)u(t)=f(t)+ G(t)dW_H(t), && t \in (0, T],  \\
&I_{0+}^{(1-\lambda)(1-\mu)}u(0)=0.
\end{aligned}\right.
\end{equation}
Let $f\in L^p(\Omega;L^p(0,T;X_0))$ be progressively measurable and let $G\in\mathbb G_\alpha$. A $X_0$-valued process $U:(0,T)\times\Omega\to X_0$ is called an integrated strong solution if $U$ is strongly progressively measurable,
$$
U\in L^p\bigl(\Omega;L^1(0,T;X_1)\bigr),\quad
A(\cdot)U(\cdot)
\in L^p\bigl(\Omega;L^1(0,T;X_0)\bigr),
$$
and satisfies
\begin{align}\label{e5.19}
U(t)+\int_0^tg_\mu(t-s)A(s)U(s)\,ds=\int_0^t g_\mu(t-s)f(s)\,ds+
\int_0^t g_\mu(t-s)G(s)\,dW_H(s)
\end{align}
for almost every $t\in(0,T)$.
The process $s \mapsto g_\mu(t-s)G(s)$ is stochastically integrable on $(0,t)$ for almost every $t$ in $X_0$.

\begin{definition}\label{def5.2}
Let $\mu\in(1/2,1)$ and $\alpha\in(1/(2\mu),1]$. We say that $A(\cdot)$ has stochastic maximal $L^p$-regularity if for every progressively measurable $f\in L^p(\Omega;L^p(0,T;X_0))$ and $G\in\mathbb G_\alpha$, the problem \eqref{e5.18} admits a unique integrated strong solution  $U\in L^p(\Omega;L^p(0,T;X_1))$ satisfying \eqref{e5.19}.
Furthermore, for $0\leq\theta<\mu-1/2$,
the solution satisfies
$U\in L^p(\Omega;H^{\theta,p}(0,T;X_{1-\theta/\mu}))$ and
\begin{equation}\label{e5.20}
 \|U\|_{L^p(\Omega;
 H^{\theta,p}(0,T;X_{1-\theta/\mu}))}
 \leq C_\theta\left(
 \|f\|_{L^p(\Omega;L^p(0,T;X_0))}
 +\|G\|_{\mathbb G_\alpha}
 \right).
\end{equation}
\end{definition}

According to the result of Section \ref{sec3}, we have that $U_{I}(t)
 :=H_{\mu,\nu}(t,0)u_0$ is the solution of the homogeneous
non-autonomous equation
\begin{equation*}
 \left\{
\begin{aligned}
&D_{0+}^{\lambda,\mu}u(t)+A(t)u(t)=0, && t \in (0, T],  \\
&I_{0+}^{(1-\lambda)(1-\mu)}u(0)=u_0.
\end{aligned}\right.
\end{equation*}
We now consider the following stochastic problem
\begin{equation}\label{e5.21}
\left\{
\begin{aligned}
&D_{0+}^{\lambda,\mu}U(t)+A(t)U(t)
=f(t)+G(t)\,dW_H(t), && t\in(0,T],\\
&I_{0+}^{1-\nu}U(0)=u_0.
\end{aligned}
\right.
\end{equation}
Then we have the integrated equation
\begin{align}\label{e5.22}
 &\nonumber U(t)-g_\nu(t)u_0
 +\int_0^t g_\mu(t-s)A(s)U(s)\,ds\\
 =&\int_0^t g_\mu(t-s)f(s)\,ds
 +\int_0^t g_\mu(t-s)G(s)\,dW_H(s).
\end{align}

A strongly progressively measurable process $U:(0,T)\times\Omega\to X_0$
is called an integrated strong solution of  problem \eqref{e5.21} if
$$U\in L^p\bigl(\Omega;L^1(0,T;X_1)\bigr),
\quad
A(\cdot)U(\cdot)
\in L^p\bigl(\Omega;L^1(0,T;X_0)\bigr),$$
and \eqref{e5.22} holds for almost every $t\in(0,T)$.  For such a
solution, set $U_{R}:=U-U_{I}$. We call $U$ a regularity integrated strong solution if its regular part satisfies
$U_{R}
\in L^p\bigl(\Omega;L^p(0,T;X_1)\bigr)$.

\begin{theorem}\label{the5.1}
Suppose Assumption \ref{assump5.1} holds, operator $A(\cdot) \in \text{DMR}(p, \mu, T)$ and $\nu-\mu<\vartheta$.  Let $\alpha\in(1/(2\mu),1]$, $f\in L^p(\Omega;L^p(0,T;X_0))$ be
progressively measurable, $G\in\mathbb G_\alpha$ and let $u_0$ be strongly $\mathcal F_0$-measurable with
$u_0\in L^p(\Omega;X_1)$. Then
\eqref{e5.21} has a unique regularity integrated strong solution
$$ U=U_{I}+U_{R}.$$
For $\theta\in[0,\mu-1/2)$,
\begin{equation}\label{e5.23}
 \|U_{R}\|_{L^p(\Omega;
 H^{\theta,p}(0,T;X_{1-\theta/\mu}))}
 \leq C_\theta\left(
 \|f\|_{L^p(\Omega;L^p(0,T;X_0))}
 +\|G\|_{\mathbb G_\alpha}
 \right).
\end{equation}
Moreover, if $\nu>1-1/p$ and $\vartheta>1-1/p$,
then $$U_{I}\in L^p\bigl(\Omega;L^p(0,T;X_1)\bigr).$$
Then the solution $U$ satisfies $U\in L^p\bigl(\Omega;L^p(0,T;X_1)\bigr)$ and
$$\|U\|_{L^p(\Omega;L^p(0,T;X_1))}
\leq C\left(
\|u_0\|_{L^p(\Omega;X_1)}
+\|f\|_{L^p(\Omega;L^p(0,T;X_0))}
+\|G\|_{\mathbb G_\alpha}
\right).$$
\end{theorem}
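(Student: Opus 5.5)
We split $U=U_I+U_R$ and treat the two parts separately. For the initial part, $U_I(t)=H_{\mu,\nu}(t,0)u_0$ is handled pathwise. Since $u_0(\omega)\in X_1=D(A(0))$ and $\nu-\mu<\vartheta$, Theorem~\ref{the3.1} (with $f=0$) shows that $U_I(\cdot,\omega)$ is the classical solution of the homogeneous problem. Integrating the equation by $I_{0+}^{\mu}$ and using $I_{0+}^{\mu}D_{0+}^{\lambda,\mu}w=w-g_\nu(\cdot)I_{0+}^{1-\nu}w(0)$ gives
\[
U_I(t)-g_\nu(t)u_0+\int_0^tg_\mu(t-s)A(s)U_I(s)\,ds=0 .
\]
Progressive measurability holds because $U_I$ is a deterministic strongly continuous operator family applied to an $\mathcal F_0$-measurable variable. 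Integrability comes from the bound in the proof of Theorem~\ref{the3.1},
\[
\|A(t)\psi_{\nu,A(0)}(t)u_0\|_{X_0}\le Ct^{\nu-1}\|A(0)u_0\|_{X_0},
\]
together with the $L^1$ bounds of Lemma~\ref{lem3.12} for the $\mathcal Q_1$ part. These give $U_I\in L^p(\Omega;L^1(0,T;X_1))$.

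For the regular part, $U_R$ must solve \eqref{e5.18} with zero initial value. We use the standard decomposition $U_R=v+w+z$.
\begin{itemize}
\item $w$ is the autonomous stochastic convolution \eqref{e5.8} with $A_0$. Lemma~\ref{lem5.4} gives \eqref{e5.12} for $w$; with $\theta=0$ this yields $w\in L^p(\Omega;L^p(0,T;X_1))$.
\item $v$ solves the deterministic problem with forcing $f$; it is handled by Lemma~\ref{lem5.2}.
\item $z$ solves the zero-initial-value problem $D^{\lambda,\mu}_{0+}z+A(\cdot)z=(A_0-A(\cdot))w$.
\end{itemize}
The forcing for $z$ is progressively measurable and lies in $L^p(\Omega;L^p(0,T;X_0))$, with
\[
\|(A_0-A(\cdot))w\|_{L^p(\Omega;L^p(0,T;X_0))}\le C\|G\|_{\mathbb G_\alpha},
\]
by Assumption~\ref{assump5.1}. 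Lemma~\ref{lem5.2} therefore applies to $z$ as well. Summing the three integrated identities shows that $U_R$ satisfies \eqref{e5.19}: the terms $\int g_\mu A_0w$ cancel, and the stochastic term comes from $w$. This uses the stochastic Fubini theorem together with $I^{\mu}_{0+}\psi_\mu=g_\mu*(\text{resolvent})$ to confirm that $w$ is an integrated strong solution of the autonomous equation. Estimate \eqref{e5.23} then follows from Lemma~\ref{lem5.2} applied to $v$ and $z$, plus \eqref{e5.12} for $w$. The restriction $\theta<\mu-1/2$ is exactly the range allowed by Lemma~\ref{lem5.4}.

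For uniqueness, take two regularity integrated strong solutions. Their difference $d$ lies in $L^p(\Omega;L^p(0,T;X_1))$ and satisfies \eqref{e5.2} pathwise with $u_0=0$, $f=0$. The uniqueness part of $\mathrm{DMR}(p,\mu,T)$ then forces $d=0$ for a.e.\ $\omega$.

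For the final claim, we show $U_I\in L^p(\Omega;L^p(0,T;X_1))$. Using \eqref{e3.23} and the uniform equivalence of graph norms,
\[
\|A(0)\psi_{\nu,A(0)}(t)u_0\|_{X_0}\le Ct^{\nu-1}\|u_0\|_{X_1}.
\]
This is in $L^p(0,T)$ precisely when $p(\nu-1)>-1$, i.e.\ $\nu>1-1/p$. For the correction term $\int_0^t\psi_{\mu,A(r)}(t-r)\mathcal Q_1(r,0)u_0\,dr$, the $X_1$-norm splits as $B_\phi+J_\phi$ by Lemma~\ref{lem3.12}. First, $\|B_\phi\|\le Ct^{\vartheta+\gamma_1-1}$. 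Second, $J_\phi$ is bounded by $h_1(t)+\|\phi_1(t)\|$, which by Lemma~\ref{lem3.11} is $\lesssim t^{\vartheta-1}\|u_0\|$. These are $p$-integrable when $\vartheta>1-1/p$.

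\textbf{Main obstacle.} The delicate step is this last one. The bound on $h_1$ in Lemma~\ref{lem3.11} uses only $\|u_0\|_{X_0}$, and it must be checked that $J_\phi$ is genuinely dominated pointwise by $Ch_1(t)+C\|\phi_1(t)\|_{X_0}$ (from the decomposition in the proof of Lemma~\ref{lem3.9}), rather than only in the limiting sense used there. We would redo that three-term splitting to get
\[
\|J_\phi(t)\|_{X_0}\le C\bigl(h_1(t)+\|\phi_1(t)\|_{X_0}\bigr)\le Ct^{\vartheta-1}\|u_0\|_{X_0},
\]
then take $L^p$ in $t$ and $\omega$, and combine with \eqref{e5.23} at $\theta=0$ to obtain the final estimate.
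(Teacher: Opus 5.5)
Your proposal is correct and follows essentially the same route as the paper. The paper likewise splits off the autonomous stochastic convolution $U_1$ and writes $U_R=U_1+U_2$ (your $v+z$ is the paper's $U_2$ by linearity of the DMR solution operator), proves uniqueness from the uniqueness part of $\mathrm{DMR}(p,\mu,T)$, and bounds $U_I$ in $L^p(\Omega;L^p(0,T;X_1))$ via \eqref{e3.23}, Lemma~\ref{lem3.12} and the $h_1$ estimate of Lemma~\ref{lem3.11}. The pointwise domination of $J_\phi$ you flag is exactly the three-term splitting in the proof of Lemma~\ref{lem3.9}; it gives $\|J_\phi(t,0)\|_{X_0}\le C\bigl[h_1(t)+(1+t^\vartheta)\|\phi_1(t)\|_{X_0}\bigr]$, which is the same bound the paper uses.
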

\begin{proof}
We first separate out the autonomous stochastic part. Let
\begin{equation*}
 U_1(t)=\int_0^t\psi_\mu(t-s)G(s)\,dW_H(s).
\end{equation*}
By Lemma \ref{lem5.4}, we deduce
\begin{equation}\label{e5.24}
 \|U_1\|_{L^p(\Omega;
 H^{\theta,p}(0,T;X_{1-\theta/\mu}))}
 \leq C_\theta\|G\|_{\mathbb G_\alpha}
\end{equation}
for $\theta<\mu-1/2$. In particular, it follows from the case $\theta=0$ that
\begin{equation}\label{e5.25}
 \|U_1\|_{L^p(\Omega;L^p(0,T;X_1))}
 \leq C\|G\|_{\mathbb G_\alpha}.
\end{equation}
Combining with Assumption~\ref{assump5.1} and \eqref{e5.24}, we have
\begin{align*}
\|(A_0-A(\cdot))U_1\|_{L^p(\Omega;L^p(0,T;X_0))}\leq
\sup_{t\in[0,T]}\|A_0-A(t)\|_{L(X_1,X_0)}
\|U_1\|_{L^p(\Omega;L^p(0,T;X_1))}\leq C\|G\|_{\mathbb G_\alpha}.
\end{align*}
Set $h(t):=f(t)+(A_0-A(t))U_1(t)$. Then $h$ is progressively measurable and
\begin{equation}\label{e5.26}
\|h\|_{L^p(\Omega;L^p(0,T;X_0))}
\leq C\left(\|f\|_{L^p(\Omega;L^p(0,T;X_0))}
+\|G\|_{\mathbb G_\alpha}\right).
\end{equation}
By Lemma~\ref{lem5.2}, the equation
$$D_{0+}^{\lambda,\mu}U_2(t)+A(t)U_2(t)=h(t)$$
has a unique progressively measurable integrated strong solution. Moreover, together with \eqref{e5.26}, we deduce
\begin{align}\label{e5.27}
 \|U_2\|_{L^p(\Omega;
 H^{\theta,p}(0,T;X_{1-\theta/\mu}))}
 \leq C_\theta\left(
 \|f\|_{L^p(\Omega;L^p(0,T;X_0))}
 +\|G\|_{\mathbb G_\alpha}
 \right).
\end{align}
Define
$$U:=U_{I}+U_1+U_2,
\quad U_{R}:=U_1+U_2.$$
Then we verify the integrability of an
integrated strong solution. Since $u_0\in L^p(\Omega;X_1)$ and
$\nu-\mu<\vartheta$, it follows from Theorem~\ref{the3.1} that
\begin{align}\label{e5.28}
\|U_{I}\|_{L^p(\Omega;L^1(0,T;X_1))}
+\|A(\cdot)U_{I}(\cdot)\|_{L^p(\Omega;L^1(0,T;X_0))}
\leq C\|u_0\|_{L^p(\Omega;X_1)}.
\end{align}
Moreover, combining with \eqref{e5.25}, the case $\theta=0$ of \eqref{e5.27},
Assumption~\ref{assump5.1}, and the embedding
$L^p(0,T)\hookrightarrow L^1(0,T)$, we deduce
$$U\in L^p\bigl(\Omega;L^1(0,T;X_1)\bigr),
\quad A(\cdot)U(\cdot)
\in L^p\bigl(\Omega;L^1(0,T;X_0)\bigr).$$
Thus, $U$ satisfies the integrability requirements for an integrated
strong solution.

The autonomous stochastic convolution $U_1$ satisfies
$$U_1(t)+\int_0^tg_\mu(t-s)A_0U_1(s)\,ds
=\int_0^tg_\mu(t-s)G(s)\,dW_H(s),$$
and $U_2$ satisfies
$$U_2(t)+\int_0^tg_\mu(t-s)A(s)U_2(s)\,ds
=\int_0^tg_\mu(t-s)h(s)\,ds.$$
Note that
$$U_{I}(t)-g_\nu(t)u_0
+\int_0^t g_\mu(t-s)A(s)U_{I}(s)\,ds
=0.$$
Hence, we deduce
\begin{align*}
&U(t)-g_\nu(t)u_0
+\int_0^t g_\mu(t-s)
   \bigl[A(s)U_{I}(s)+A_0U_1(s)+A(s)U_2(s)\bigr]ds\\
=&\int_0^t g_\mu(t-s)f(s)\,ds
+\int_0^t g_\mu(t-s)(A_0-A(s))U_1(s)\,ds+
\int_0^t g_\mu(t-s)G(s)\,dW_H(s).
\end{align*}
Since $A_0U_1(s)=A(s)U_1(s)+(A_0-A(s))U_1(s)$, it is \eqref{e5.22}.
Furthermore, by \eqref{e5.24}, \eqref{e5.27}, and the triangle inequality,
\begin{align*}
\|U_{R}\|_{L^p(\Omega;
H^{\theta,p}(0,T;X_{1-\theta/\mu}))}
&\leq
\|U_1\|_{L^p(\Omega;
H^{\theta,p}(0,T;X_{1-\theta/\mu}))}
+\|U_2\|_{L^p(\Omega;
H^{\theta,p}(0,T;X_{1-\theta/\mu}))}\\
&\leq C_\theta\left(
\|f\|_{L^p(\Omega;L^p(0,T;X_0))}
+\|G\|_{\mathbb G_\alpha}
\right),
\end{align*}
then \eqref{e5.23} holds.

We next prove the uniqueness.  Let $U$ and $\widetilde U$ be two regularity integrated strong solutions with the same initial value and set $Z:=U-\widetilde U$. Then $Z$ satisfies
$$Z(t)+\int_0^t g_\mu(t-s)A(s)Z(s)\,ds=0$$
with zero initial value. Moreover, since
$$U-U_{I},
\,\, \widetilde U-U_{I}
\in L^p\bigl(\Omega;L^p(0,T;X_1)\bigr),$$
we get $Z\in L^p\bigl(\Omega;L^p(0,T;X_1)\bigr)$. For almost every $\omega\in\Omega$,
$D_{0+}^{\lambda,\mu}Z(\cdot,\omega)
+A(\cdot)Z(\cdot,\omega)=0$.
It follows from Assumption \ref{assump5.1} that
$$D_{0+}^{\lambda,\mu}Z(\cdot,\omega)
=-A(\cdot)Z(\cdot,\omega)
\in L^p(0,T;X_0).$$
By the uniqueness of $A(\cdot)\in\mathrm{DMR}(p,\mu,T)$, we have $Z(\cdot,\omega)=0$ for almost every $\omega$.  Hence, we deduce $U=\widetilde U$.

It remains to prove the result for the full solution. According to  \eqref{e5.23}, for the case
$\theta=0$, we have $U_{R}\in L^p\bigl(\Omega;L^p(0,T;X_1)\bigr)$.
Therefore, it suffices to prove the  regularity for $U_{I}$.
By Theorem \ref{the3.1}, we obtain
$$U_{I}(t)=\psi_{\nu,A(0)}(t)u_0+W(t),$$
where $$\phi_1(r):=\mathcal Q_1(r,0)u_0,
\quad
W(t):=\int_0^t\psi_{\mu,A(r)}(t-r)\phi_1(r)\,dr.$$
By Lemma~\ref{lem2.4} with
$\alpha=1$, the $X_1$-norm is equivalent to
$\|A(0)\,\cdot\|_{X_0}$.  Hence, it follows from Lemma~\ref{lem3.2}(i) and \eqref{e3.23} that
\begin{align*}
\|\psi_{\nu,A(0)}(t)u_0\|_{L^p(\Omega;X_1)}
&\leq C\|A(0)\psi_{\nu,A(0)}(t)u_0\|_{L^p(\Omega;X_0)}\\
&=C\|\psi_{\nu,A(0)}(t)A(0)u_0\|_{L^p(\Omega;X_0)}\leq Ct^{\nu-1}\|u_0\|_{L^p(\Omega;X_1)}.
\end{align*}
Since $\nu>1-1/p$, we obtain the function $t^{\nu-1} \in L^p(0,T)$.

According to Lemma~\ref{lem3.12}, $W(t)\in D(A(t))=X_1$. By Lemma~\ref{lem2.4} with $\alpha=1$, we get
$$\|W(t)\|_{X_1}\leq C\|A(t)W(t)\|_{X_0},$$
where $C$ is independent of $t$. It follows from Lemmas~\ref{lem3.11} and \ref{lem3.12} that
$A(t)W(t)=B_{\phi_1}(t,0)+J_{\phi_1}(t,0)$, $\|B_{\phi_1}(t,0)\|_{L^p(\Omega;X_0)}
\leq Ct^{2\vartheta+\nu-\mu-1}
   \|u_0\|_{L^p(\Omega;X_1)}$ and
$$\|J_{\phi_1}(t,0)\|_{L^p(\Omega;X_0)}
\leq C\left(t^{\vartheta-1}
   +t^{\vartheta+\nu-\mu-1}
   +t^{2\vartheta+\nu-\mu-1}\right)
   \|u_0\|_{L^p(\Omega;X_1)}.$$
Finally, for $0<t\leq T$, it follows that
\begin{align*}
\|U_{I}(t)\|_{L^p(\Omega;X_1)}
\leq C_T\left(t^{\nu-1}+t^{\vartheta-1}
+t^{\vartheta+\nu-\mu-1}\right)
\|u_0\|_{L^p(\Omega;X_1)}.
\end{align*}
Since $\nu\geq\mu$, $\nu>1-1/p$ and
$\vartheta>1-1/p$, we obtain
\begin{align}\label{e5.29}
\|U_{I}\|_{L^p(\Omega;L^p(0,T;X_1))}
\leq C_T\|u_0\|_{L^p(\Omega;X_1)}.
\end{align}
Together with \eqref{e5.23} for
$\theta=0$, we get
\begin{align*}
\|U\|_{L^p(\Omega;L^p(0,T;X_1))}
&\leq
\|U_{I}\|_{L^p(\Omega;L^p(0,T;X_1))}
+\|U_{R}\|_{L^p(\Omega;L^p(0,T;X_1))}\\
&\leq C\left(
\|u_0\|_{L^p(\Omega;X_1)}
+\|f\|_{L^p(\Omega;L^p(0,T;X_0))}
+\|G\|_{\mathbb G_\alpha}
\right).
\end{align*}
The proof is complete.
\end{proof}

\begin{remark}\label{rem5.2}
Theorem \ref{the5.1} does not assert the fractional derivative $D_{0+}^{\lambda,\mu} U_1 in L^p(\Omega; L^p(0,T; X_0))$. Since $U_1$ is driven by temporal white noise, it does not belong to $L^p(0,T;X_0)$. Thus, the fractional derivative of $U_1$ generally does not have temporal $L^p$-regularity. However,  \eqref{e5.23} provides the space-time regularity of $U_1$ in the subcritical range established in Lemma \ref{lem5.4}.
\end{remark}

\begin{corollary}\label{cor5.1}
Under the hypotheses of Theorem \ref{the5.1}, let $\mu-1/2>1/p$. Then for $1/p<\theta<\mu-1/2$,
the regular part $U_{R}$ admits an
$X_{1-\theta/\mu}$-valued H\"{o}lder continuous version and
\begin{equation}\label{e5.30}
 U_{R}\in L^p(\Omega;
 C^{\theta-1/p}([0,T];X_{1-\theta/\mu})),
\end{equation}
\begin{align}\label{e5.31}
\|U_R\|_{L^p(\Omega;
C^{\theta-1/p}([0,T];X_{1-\theta/\mu}))}
\leq C_\theta\left(
\|f\|_{L^p(\Omega;L^p(0,T;X_0))}
+\|G\|_{\mathbb G_\alpha}
\right).
\end{align}
\end{corollary}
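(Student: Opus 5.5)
The plan is to derive Corollary~\ref{cor5.1} from \eqref{e5.23} by a vector-valued Sobolev embedding in time. Fix $\theta$ with $1/p<\theta<\mu-1/2$; this interval is nonempty by the hypothesis $\mu-1/2>1/p$. Set $\rho:=1-\theta/\mu$.

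First, Theorem~\ref{the5.1} gives $U_R\in L^p(\Omega;H^{\theta,p}(0,T;X_\rho))$ with the bound \eqref{e5.23}. By Lemma~\ref{lem2.4}, $X_\rho$ is a Banach space (indeed UMD), so the standard embedding
$$H^{\theta,p}(0,T;X_\rho)\hookrightarrow C^{\theta-1/p}([0,T];X_\rho),\qquad \theta>1/p,$$
is available. To justify it, I would argue as follows. For the whole line, $H^{\theta,p}(\mathbb R;X_\rho)\hookrightarrow B^{\theta}_{p,\infty}(\mathbb R;X_\rho)\hookrightarrow B^{\theta-1/p}_{\infty,\infty}(\mathbb R;X_\rho)$, and for $0<\theta-1/p<1$ the last space coincides with $C^{\theta-1/p}$. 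Alternatively, one can use $H^{\theta,p}\hookrightarrow W^{s,p}$ for $1/p<s<\theta$ together with the fractional Morrey/Garsia--Rodemich--Rumsey inequality; in that route the exponent loses an arbitrarily small $\epsilon$, so the Besov route is preferable for obtaining exactly $\theta-1/p$. The statement on $(0,T)$ then follows by taking extensions and the infimum in the restriction norm. Note that $\theta-1/p<\mu-1/2-1/p<1$, so the Hölder exponent lies in $(0,1)$.

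Second, apply this embedding pathwise. For a.e.\ $\omega$, $U_R(\cdot,\omega)\in H^{\theta,p}(0,T;X_\rho)$ has a continuous representative, unique up to a null set in time. Defining $\widetilde U_R(\cdot,\omega)$ as that representative gives a version of $U_R$, equal to it a.e.\ in $(t,\omega)$. Measurability of $\omega\mapsto\widetilde U_R(\cdot,\omega)\in C^{\theta-1/p}([0,T];X_\rho)$ follows because the embedding is a bounded linear, hence continuous, map composed with the strongly measurable $\omega\mapsto U_R(\cdot,\omega)$. Progressive measurability is retained, since a continuous adapted modification of a progressive process is progressive; adaptedness holds because $U_1$ and $U_2$ are built causally. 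Taking $L^p(\Omega)$ norms of the pathwise inequality $\|\widetilde U_R(\omega)\|_{C^{\theta-1/p}}\le C\|U_R(\omega)\|_{H^{\theta,p}}$ and inserting \eqref{e5.23} yields \eqref{e5.30} and \eqref{e5.31}.

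The main obstacle is the clean justification of the embedding with the sharp exponent $\theta-1/p$ for Bessel potential spaces on a bounded interval with values in a Banach space. I would rely on the extension-operator definition of $H^{\theta,p}(I;X)$ given in Section~\ref{sec2} together with the whole-line Besov embedding, which holds for arbitrary Banach spaces $X$ because $H^{\theta,p}\hookrightarrow B^\theta_{p,\infty}$ follows from Littlewood--Paley estimates and does not require the UMD property. Measurability of the version is the only other point needing care.
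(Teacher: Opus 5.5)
Your proposal is correct and takes the same approach as the paper, which also combines \eqref{e5.23} with the time embedding $H^{\theta,p}(0,T;X)\hookrightarrow C^{\theta-1/p}([0,T];X)$ for $\theta>1/p$; the paper simply cites this embedding from Portal and Veraar (Proposition 2.3). Your Besov-space justification of the sharp exponent, and your treatment of the measurability and progressive measurability of the continuous version, are extra details the paper omits.
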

\begin{proof}
According to \cite[Proposition 2.3]{Portal}, we obtain the embedding
$$ H^{\theta,p}(0,T;X)
\hookrightarrow C^{\theta-1/p}([0,T];X)$$
for $\theta>\frac1p$. By Theorem \ref{the5.1}, we deduce \eqref{e5.30} and \eqref{e5.31}. The proof is complete.
\end{proof}

\begin{remark}\label{rem5.3}
Theorem~\ref{the5.1} concerns the regular part
$U_{R}=U-H_{\mu,\nu}(\cdot,0)u_0$. The full solution generally has the
initial singularity $t^{\nu-1}u_0$. It is  natural to study the full solution in a weighted space with time weight $w_a(t)=t^a$. It is necessary to establish an appropriate weighted Bessel potential restriction space, the corresponding extension theorem, and the restriction of the initial value $u_0$. Therefore, this paper proves the maximal regularity on $U_{R}$, while the weighted regularity of the full solution is reserved as a possible extension problem.
\end{remark}

\subsection{Semilinear problem}
We finally consider the semilinear equation
\begin{equation}\label{e5.32}
\left\{
\begin{aligned}
&D_{0+}^{\lambda,\mu}U(t)+A(t)U(t)
=F(t,U(t))+G(t,U(t))\,dW_H(t), &&t\in(0,T],\\
&I_{0+}^{1-\nu}U(0)=u_0.
\end{aligned}
\right.
\end{equation}
Then we have the integrated equation
\begin{align}\label{e5.33}
&\nonumber U(t)-g_\nu(t)u_0
+\int_0^t g_\mu(t-s)A(s)U(s)\,ds\\
=&\int_0^t g_\mu(t-s)F(s,U(s))\,ds
+\int_0^t g_\mu(t-s)G(s,U(s))\,dW_H(s).
\end{align}

\begin{assumption}\label{assump5.2}
Let $\kappa\in[0,1]$ and $\alpha\in(1/(2\mu),1]$. The maps $F:[0,T]\times\Omega\times X_\kappa\to X_0$, $G:[0,T]\times\Omega\times X_\kappa
\to\gamma(H,X_\alpha)$ are measurable with respect to the product of the progressive $\sigma$-field on $[0,T]\times\Omega$ and the Borel $\sigma$-field of $X_\kappa$. Let
$F(\cdot,\cdot,0)\in L^p(\Omega;L^p(0,T;X_0))$, $G(\cdot,\cdot,0)\in\mathbb G_\alpha$. Suppose that there exist constants $L_F,L_G\geq0$ such that for all $x,y\in X_\kappa$ and almost all $(t,\omega)$,
\begin{align}\label{e5.34}
\|F(t,\omega,x)-F(t,\omega,y)\|_{X_0}
\leq L_F\|x-y\|_{X_\kappa},
\end{align}
\begin{align}\label{e5.35}
\|G(t,\omega,x)-G(t,\omega,y)\|_{\gamma(H,X_\alpha)}
\leq L_G\|x-y\|_{X_\kappa}.
\end{align}
Taking $y=0$ in \eqref{e5.34}, \eqref{e5.35} and applying the triangle inequality, we obtain
\begin{align}\label{e5.36}
\|F(t,\omega,x)\|_{X_0}
\leq\|F(t,\omega,0)\|_{X_0}+L_F\|x\|_{X_\kappa},
\end{align}
\begin{align}\label{e5.37}
\|G(t,\omega,x)\|_{\gamma(H,X_\alpha)}
\leq \|G(t,\omega,0)\|_{\gamma(H,X_\alpha)}
+L_G\|x\|_{X_\kappa}.
\end{align}
\end{assumption}
A strongly progressively measurable process $U$ is called an integrated strong solution of \eqref{e5.32} if for $F(\cdot,\cdot,U(\cdot,\cdot))
\in L^p\bigl(\Omega;L^p(0,T;X_0)\bigr)$,
$G(\cdot,\cdot,U(\cdot,\cdot))
\in\mathbb G_\alpha$, it satisfies
$$U\in L^p\bigl(\Omega;L^1(0,T;X_1)\bigr), \quad A(\cdot)U(\cdot)\in L^p\bigl(\Omega;L^1(0,T;X_0)\bigr),$$  \eqref{e5.33} holds and $U-U_{I}\in L^p\bigl(\Omega;L^p(0,T;X_1)\bigr)$.

For $x\in X_1$, let $C_\kappa>0$ be the embedding $X_1\hookrightarrow X_\kappa$ constant  with
\begin{align}\label{e5.38}
\|x\|_{X_\kappa}\le C_\kappa\|x\|_{X_1}.
\end{align}
Set $M:=C_\kappa(L_F+L_G)$. Let $U_R$ denote the regular part of the solution to \eqref{e5.21}. If $h\in L^p(\Omega;L^p(0,T;X_0))$ is progressively measurable and $K\in\mathbb G_\alpha$, it follows from the case $\theta=0$ of Theorem~\ref{the5.1} that there exists a constant $C_{L}>0$ such that
\begin{equation}\label{e5.39}
\|U_{R}\|_{L^p(\Omega;L^p(0,T;X_1))}
\leq C_{L}
\left(
\|h\|_{L^p(\Omega;L^p(0,T;X_0))}
+\|K\|_{\mathbb G_\alpha}
\right),
\end{equation}
where $C_{L}$ is independent of
$h,K,L_F$, and $L_G$.
\begin{theorem}\label{the5.2}
Suppose that Assumptions~\ref{assump5.1} and \ref{assump5.2} hold, $A(\cdot)\in\mathrm{DMR}(p,\mu,T)$, and $\nu-\mu<\vartheta$, $\nu>1-1/p$, $\vartheta>1-1/p$. Let $u_0\in L^p(\Omega,\mathcal F_0;X_1)$. If $C_{L}M<1$, then problem \eqref{e5.32} admits a unique regularity integrated strong solution $U=U_{I}+U_{R}$.
For every $\theta\in[0,\mu-1/2)$,
\begin{align}\label{e5.40}
&\nonumber
\|U_{R}\|_{L^p(\Omega;
H^{\theta,p}(0,T;X_{1-\theta/\mu}))}\\
\leq& C_{\theta,L_F,L_G}\left(
\|F(\cdot,\cdot,0)\|_{L^p(\Omega;L^p(0,T;X_0))}
+\|u_0\|_{L^p(\Omega;X_1)}
+\|G(\cdot,\cdot,0)\|_{\mathbb G_\alpha}
\right).
\end{align}
Moreover, $U\in L^p\bigl(\Omega;L^p(0,T;X_1)\bigr)$.
\end{theorem}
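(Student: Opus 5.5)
The plan is a Banach fixed point argument on the regular part. Fix $u_0$ and set $U_I(t)=H_{\mu,\nu}(t,0)u_0$. Let $\mathcal Z:=L^p(\Omega;L^p_{\mathcal F}(0,T;X_1))$, the closed subspace of progressively measurable processes. For $W\in\mathcal Z$ put $U:=U_I+W$ and
$$h_W(t):=F(t,U_I(t)+W(t)),\qquad K_W(t):=G(t,U_I(t)+W(t)).$$
Define $\Phi(W)$ as the regular part $U_R$ of the solution to the linear problem \eqref{e5.21} with data $(u_0,h_W,K_W)$ given by Theorem~\ref{the5.1}; that is, $\Phi(W)=U_1+U_2$ in the notation of its proof. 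A fixed point $W=\Phi(W)$ gives an integrated strong solution $U=U_I+W$ of \eqref{e5.33}, and \eqref{e5.33} is checked by adding the integrated identities for $U_I$, $U_1$ and $U_2$ as in the proof of Theorem~\ref{the5.1}.

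\textbf{Step 1: $\Phi$ is well defined.} Since $\nu>1-1/p$ and $\vartheta>1-1/p$, the proof of Theorem~\ref{the5.1} (estimate \eqref{e5.29}) gives $U_I\in L^p(\Omega;L^p(0,T;X_1))$ with norm at most $C_T\|u_0\|_{L^p(\Omega;X_1)}$. Combining \eqref{e5.36}--\eqref{e5.38}, we get
$$\|h_W\|_{L^p(L^p(X_0))}\le \|F(\cdot,\cdot,0)\|+L_FC_\kappa\bigl(C_T\|u_0\|_{L^p(\Omega;X_1)}+\|W\|_{\mathcal Z}\bigr),$$
and the analogous bound holds for $\|K_W\|_{\mathbb G_\alpha}$. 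Progressive measurability of $h_W$ and $K_W$ follows from the measurability hypotheses in Assumption~\ref{assump5.2} together with the progressive measurability of $U_I$ (which is $\mathcal F_0$-measurable data propagated by deterministic operators) and of $W$. Theorem~\ref{the5.1} then gives $\Phi(W)\in\mathcal Z$.

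\textbf{Step 2: contraction.} For $W,\widetilde W\in\mathcal Z$, the difference $\Phi(W)-\Phi(\widetilde W)$ is, by linearity and uniqueness in Theorem~\ref{the5.1}, the regular solution of the linear problem with zero initial value and data $(h_W-h_{\widetilde W},K_W-K_{\widetilde W})$. The $U_I$ terms cancel inside the Lipschitz differences. By \eqref{e5.39}, \eqref{e5.34}, \eqref{e5.35} and \eqref{e5.38},
$$\|\Phi(W)-\Phi(\widetilde W)\|_{\mathcal Z}\le C_L(L_F+L_G)C_\kappa\|W-\widetilde W\|_{\mathcal Z}=C_LM\|W-\widetilde W\|_{\mathcal Z}.$$
Since $C_LM<1$, $\Phi$ is a contraction and has a unique fixed point $W^\ast$. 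Uniqueness of the regularity integrated strong solution follows as well: for any such solution $\widetilde U$, the process $\widetilde U-U_I$ lies in $\mathcal Z$, and by the uniqueness part of Theorem~\ref{the5.1} applied to the frozen data it is a fixed point of $\Phi$.

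\textbf{Step 3: estimates.} From the fixed point relation, $\|W^\ast\|_{\mathcal Z}\le (1-C_LM)^{-1}\|\Phi(0)\|_{\mathcal Z}$, and $\|\Phi(0)\|_{\mathcal Z}$ is bounded by $\|F(\cdot,\cdot,0)\|+\|G(\cdot,\cdot,0)\|_{\mathbb G_\alpha}+\|u_0\|_{L^p(\Omega;X_1)}$. Applying \eqref{e5.23} with data $h_{W^\ast},K_{W^\ast}$, and bounding these data via Step 1 with the estimate on $\|W^\ast\|_{\mathcal Z}$, yields \eqref{e5.40} for every $\theta\in[0,\mu-1/2)$. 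Finally, $U=U_I+W^\ast\in L^p(\Omega;L^p(0,T;X_1))$ by \eqref{e5.29}.

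The main obstacle is making $\Phi$ precise. Theorem~\ref{the5.1} produces the regular part only through the decomposition $U_1+U_2$, and linearity and uniqueness must be confirmed so that the difference estimate \eqref{e5.39} applies. One also has to verify that the constant $C_L$ does not depend on $u_0$, which holds because the initial part cancels in differences.
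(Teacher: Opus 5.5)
Your proposal is correct and is essentially the paper's proof. Both run a Banach fixed point on the progressively measurable subspace of $L^p(\Omega;L^p(0,T;X_1))$ for the map sending $V$ to the regular part of the linear problem \eqref{e5.21} with frozen data $F(\cdot,\cdot,U_I+V)$, $G(\cdot,\cdot,U_I+V)$. The contraction constant is $C_LM$ from \eqref{e5.39}, the $H^{\theta,p}$ bound comes from \eqref{e5.23} applied at the fixed point, and uniqueness follows because any other regularity integrated strong solution yields a second fixed point.
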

\begin{proof}
Let
$$\mathbb E_{\mathcal F}
:=\left\{V\in L^p\bigl(\Omega;L^p(0,T;X_1)\bigr):
V\text{ is progressively measurable}\right\},$$
with the norm $\|V\|_{\mathbb E_{\mathcal F}}:=\left(\mathbb E\int_0^T
\|V(t)\|_{X_1}^p\,dt\right)^{1/p}$.
Then the space $\mathbb E_{\mathcal F}$ is a linear subspace. Moreover, it
is closed in $L^p(\Omega;L^p(0,T;X_1))$, since an $L^p$-convergent
sequence of progressively measurable processes has a subsequence
converging almost everywhere. Therefore,
$\mathbb E_{\mathcal F}$ is a Banach space.
By \eqref{e5.29} and the definition of the initial component,
$U_{I}\in\mathbb E_{\mathcal F}$. Hence, for $V\in\mathbb E_{\mathcal F}$, we deduce $U_{I}+V\in\mathbb E_{\mathcal F}$.
For $V\in\mathbb E_{\mathcal F}$, define
$$H_V(t,\omega):=F\bigl(t,\omega,
U_{I}(t,\omega)+V(t,\omega)\bigr),\quad K_V(t,\omega):=G\bigl(t,\omega,
U_{I}(t,\omega)+V(t,\omega)\bigr).$$
Since $U_{I}+V$ is progressively measurable and $F,G$ are measurable with respect to the product of the progressive $\sigma$-field, the processes $H_V$ and $K_V$ are
progressively measurable.
Applying \eqref{e5.36} pointwise with
$x=U_{I}(t,\omega)+V(t,\omega)$,
combining with \eqref{e5.38}, we obtain
\begin{align*}
\|H_V\|_{X_0}\le
\|F(t,\omega,0)\|_{X_0}
+C_\kappa L_F
\left(
\|U_{I}(t,\omega)\|_{X_1}
+\|V(t,\omega)\|_{X_1}
\right).
\end{align*}
Therefore, it follows from Minkowski's inequality that
\begin{align*}
\|H_V\|_{L^p(\Omega;L^p(0,T;X_0))}
\le
\|F(\cdot,\cdot,0)\|
_{L^p(\Omega;L^p(0,T;X_0))}
+C_\kappa L_F
\left(
\|U_{I}\|_{{\mathbb E_{\mathcal F}}}
+\|V\|_{\mathbb E_{\mathcal F}}
\right)<\infty.
\end{align*}
Similarly, by \eqref{e5.37} pointwise, we deduce
\begin{align*}
\|K_V\|_{\gamma(H,X_\alpha)}
\le
\|G(t,\omega,0)\|_{\gamma(H,X_\alpha)}
+C_\kappa L_G
\left(
\|U_{I}(t,\omega)\|_{X_1}
+\|V(t,\omega)\|_{X_1}
\right).
\end{align*}
Then we deduce $\|K_V\|_{\mathbb G_\alpha}\le\|G(\cdot,\cdot,0)\|_{\mathbb G_\alpha}+C_\kappa L_G\left(
\|U_{I}\|_{\mathbb E_{\mathcal F}}
+\|V\|_{\mathbb E_{\mathcal F}}\right)
<\infty$. Hence, we deduce $H_V
\in L^p(\Omega;L^p(0,T;X_0))$ and $K_V\in\mathbb G_\alpha$.

According to Theorem~\ref{the5.1}, the following linear problem
\begin{equation}\label{e5.41}
\left\{
\begin{aligned}
&D_{0+}^{\lambda,\mu}Y_V(t)+A(t)Y_V(t)
=H_V(t)+K_V(t)\,dW_H(t),\\
&I_{0+}^{1-\nu}Y_V(0)=u_0
\end{aligned}
\right.
\end{equation}
has a unique integrated strong solution
$Y_V=U_{I}+Y_{V,R}$ with $Y_{V,R}\in \mathbb E_{\mathcal F}$. Define the map $\mathbf T:\mathbb E_{\mathcal F} \to \mathbb E_{\mathcal F}$ by $\mathbf T V:=Y_{V,R}$. Let $V,W\in\mathbb E_{\mathcal F}$. Define $Y_V$ and $Y_W$ with the same initial value $u_0$, then
$$Y_V-Y_W
=Y_{V,R}-Y_{W,{\rm reg}}
=\mathbf T V-\mathbf T W.$$
Moreover, $Z:=\mathbf TV-\mathbf TW$ is the regular part of the
zero initial value linear problem
$$\left\{
\begin{aligned}
&D_{0+}^{\lambda,\mu}Z(t)+A(t)Z(t)
=H_V(t)-H_W(t)
+\bigl(K_V(t)-K_W(t)\bigr)\,dW_H(t),\\
&I_{0+}^{1-\nu}Z(0)=0.
\end{aligned}
\right.$$
For almost every $(t,\omega)\in(0,T)\times\Omega$, by \eqref{e5.34} and the embedding
$X_1\hookrightarrow X_\kappa$, we have
\begin{align*}
\|H_V(t,\omega)-H_W(t,\omega)\|_{X_0}
\leq& L_F\bigl\|\bigl(U_{I}(t,\omega)+V(t,\omega)\bigr)-\bigl(U_{I}(t,\omega)+W(t,\omega)\bigr)\bigr\|_{X_\kappa}\\
\leq& C_\kappa L_F\|V(t,\omega)-W(t,\omega)\|_{X_1}.
\end{align*}
Then we deduce
\begin{align}\label{e5.42}
\|H_V-H_W\|_{L^p(\Omega;L^p(0,T;X_0))}
\leq C_\kappa L_F\|V-W\|_{\mathbb E_{\mathcal F}}.
\end{align}
Similarly, by \eqref{e5.35}, we deduce
\begin{align*}
\|K_V(t,\omega)-K_W(t,\omega)\|_{\gamma(H,X_\alpha)}
\leq L_G\|V(t,\omega)-W(t,\omega)\|_{X_\kappa}\leq C_\kappa L_G\|V(t,\omega)-W(t,\omega)\|_{X_1}.
\end{align*}
It follows that
\begin{align}\label{e5.43}
\|K_V-K_W\|_{\mathbb G_\alpha}
&\leq C_\kappa L_G\|V-W\|_{\mathbb E_{\mathcal F}}.
\end{align}
Combining with \eqref{e5.39}, \eqref{e5.42} and \eqref{e5.43}, we obtain
\begin{align*}
\|\mathbf T V-\mathbf T W\|_{\mathbb E_{\mathcal F}}
&\leq C_L
\left(
\|H_V-H_W\|_{L^p(\Omega;L^p(0,T;X_0))}
+\|K_V-K_W\|_{\mathbb G_\alpha}
\right)\\
&\leq C_LC_\kappa(L_F+L_G)\|V-W\|_{\mathbb E_{\mathcal F}}=C_L M\|V-W\|_{\mathbb E_{\mathcal F}}.
\end{align*}
Since $C_L M<1$, $\mathbf T$ is a contraction. Hence, it follows from the Banach fixed point theorem that there exists a unique
$U_{R}\in\mathbb E_{\mathcal F}$ such that
$\mathbf T U_{R}=U_{R}$.
Thus $U=U_{I}+U_{R}$ satisfies
\eqref{e5.33}. Moreover,
\begin{align*}
\|U_{R}\|_{\mathbb E_{\mathcal F}}
&\leq
\|\mathbf T U_{R}-\mathbf T0\|_{\mathbb E_{\mathcal F}}
+\|\mathbf T0\|_{\mathbb E_{\mathcal F}}\\
&\leq C_L M
\|U_{R}\|_{\mathbb E_{\mathcal F}}+C\left(
\|F(\cdot,\cdot,0)\|_{L^p(\Omega;L^p(0,T;X_0))}
+M\|U_{I}\|_{\mathbb E_{\mathcal F}}
+\|G(\cdot,\cdot,0)\|_{\mathbb G_\alpha}
\right).
\end{align*}
It follows that
\begin{align}\label{e5.44}
\|U_{R}\|_{\mathbb E_{\mathcal F}}
\leq  C_{L_F,L_G}\left(
\|F(\cdot,\cdot,0)\|_{L^p(\Omega;L^p(0,T;X_0))}
+\|u_0\|_{L^p(\Omega;X_1)}
+\|G(\cdot,\cdot,0)\|_{\mathbb G_\alpha}
\right).
\end{align}
According to \eqref{e5.23} of Theorem~\ref{the5.1} with the $F(\cdot,\cdot,U)$ and
$G(\cdot,\cdot,U)$, we obtain
\begin{align}\label{e5.45}
\nonumber\|U_{R}\|_{L^p(\Omega;
H^{\theta,p}(0,T;X_{1-\theta/\mu}))}
\leq & C_\theta\left(
\|F(\cdot,\cdot,U)\|_{L^p(\Omega;L^p(0,T;X_0))}
+\|G(\cdot,\cdot,U)\|_{\mathbb G_\alpha}
\right)\\
\leq &C_\theta\left(
\|F(\cdot,\cdot,0)\|_{L^p(\Omega;L^p(0,T;X_0))}
+\|G(\cdot,\cdot,0)\|_{\mathbb G_\alpha}
+M\|U\|_{\mathbb E_{\mathcal F}}
\right).
\end{align}
It follows from \eqref{e5.44} that
\begin{align*}
\|U\|_{\mathbb E_{\mathcal F}}
\leq\|U_{I}\|_{\mathbb E_{\mathcal F}}+\|U_{R}\|_{\mathbb E_{\mathcal F}}\leq C_{L_F,L_G}
\left(
\|F(\cdot,\cdot,0)\|_{L^p(\Omega;L^p(0,T;X_0))}
+\|U_{I}\|_{\mathbb E_{\mathcal F}}
+\|G(\cdot,\cdot,0)\|_{\mathbb G_\alpha}
\right).
\end{align*}
Combining \eqref{e5.29}, \eqref{e5.44} and \eqref{e5.45}, we obtain \eqref{e5.40}.

Finally, since both $U_{I}$ and $U_{R}$ belong to $\mathbb E_{\mathcal F}$, we have
$U=U_{I}+U_{R}
\in L^p\bigl(\Omega;L^p(0,T;X_1)\bigr)$.

To prove uniqueness, let $\widetilde U$ be another regularity integrated strong solution and set
$$
\widetilde U_{R}:=\widetilde U-U_{I}.
$$
Then $\widetilde U_{R}\in\mathbb E_{\mathcal F}$. By the
definition of $\mathbf T$ and the integral equation satisfied by
$\widetilde U$, then $\widetilde U_{R}=\mathbf T\widetilde U_{R}$.
Hence, $\widetilde U_{R}$ and $U_{R}$ are both fixed points
of $\mathbf T$. It follows from the uniqueness of the fixed point that $\widetilde U_{R}=U_{R}$ and  $\widetilde U=U$. The proof is complete.
\end{proof}

\section{Applications}\label{sec6}
In this section, we provide some examples to illustrate the effectiveness of our main results.
\begin{example}[A non-autonomous stochastic fractional diffusion equation]\label{exam6.1}
Let $X_0=L^2(0,\pi)$ and $X_1=H^2(0,\pi)\cap H_0^1(0,\pi)$. Let $W$ be a real-valued Brownian motion adapted to $(\mathcal F_t)_{t\in[0,T]}$. We consider the following non-autonomous stochastic  diffusion equation with the generalized fractional derivative
\begin{equation}\label{e6.1}
\left\{
\begin{aligned}
&D_{0+}^{\lambda,\mu}u(t)+A(t)u(t)
=F(t,\omega,u(t))+G(t,\omega,u(t))\,dW(t),
&&t\in(0,T),\\
&u(t,0)=u(t,\pi)=0,
&&t\in(0,T),\\
&I_{0+}^{(1-\lambda)(1-\mu)}u(0)=u_0,
\end{aligned}
\right.
\end{equation}
where $D(A(t))=X_1$ and
\begin{align}\label{e6.2}
A(t)v=-\partial_x\bigl(a(t,\cdot)\partial_xv\bigr)
+c(t,\cdot)v.
\end{align}
Fix $p>2$, $\mu\in(1/2,1)$ and $\lambda\in[0,1]$, and set $1-\nu=(1-\lambda)(1-\mu)$.
Choose $\vartheta_0\in(0,1)$ such that
$\nu-\mu<\vartheta_0$, $\nu>1-1/p$ and $\vartheta_0>1-1/p$.
Assume that coefficients $a$ and $c$ are real-valued and
\begin{align}\label{e6.3}
a\in C^{\vartheta_0}\bigl([0,T];W^{1,\infty}(0,\pi)\bigr),
\quad
c\in C^{\vartheta_0}\bigl([0,T];L^\infty(0,\pi)\bigr).
\end{align}
Let the coefficient $a$ be uniformly strongly elliptic, that is, there exists a constant $a_0>0$ such that
\begin{equation}\label{e6.4}
a(t,x)\xi^2\geq a_0|\xi|^2,
\qquad
(t,x,\xi)\in[0,T]\times(0,\pi)\times\mathbb R.
\end{equation}
Assume there is a constant $c_0>0$ such that $c(t,x)\geq c_0$ for
$(t,x)\in[0,T]\times(0,\pi)$. Then we have
\begin{align}\label{e6.5}
a(t,x)\geq a_0,
\quad
c(t,x)\geq c_0.
\end{align}
It follows from \eqref{e6.2}-\eqref{e6.5} that $A(t)$ is positive and
self-adjoint in $X_0$. Hence, $A(t)$ is an invertible sectorial operator in $X_0$ with common domain $X_1$ and uniformly equivalent graph norms. Furthermore, by \eqref{e6.3}, we deduce
\begin{align*}
\|a(t,\cdot)-a(s,\cdot)\|_{W^{1,\infty}}
\leq C|t-s|^{\vartheta_0},\quad
\|c(t,\cdot)-c(s,\cdot)\|_{L^\infty}
\leq C|t-s|^{\vartheta_0}.
\end{align*}
It follows that $\|A(t)-A(s)\|_{L(X_1,X_0)}
\leq C|t-s|^{\vartheta_0}$.
Since $A(\tau)^{-1}:X_0\to X_1$ is bounded uniformly in $\tau$, we obtain
$$\|(A(t)-A(s))A(\tau)^{-1}\|_{L(X_0)}
\leq C|t-s|^{\vartheta_0}.$$
Hence, Assumption~\ref{assump2.1} holds with
$\vartheta=\vartheta_0$.

Since $X_0=L^2(0,\pi)$ is a Hilbert space, it is a UMD space with type $2$. Since $A(t)$ is positive and
self-adjoint in $X_0$, it has a uniformly bounded $H^\infty$-calculus of angle less than $\pi/2$. Hence, Assumption~\ref{assump2.2}
holds. For $v\in X_1$, it follows that
\begin{align}\label{e6.6}
\|A(t)v-A(s)v\|_{X_0}
\leq\|A(t)-A(s)\|_{L(X_1,X_0)}\|v\|_{X_1}
\leq C|t-s|^{\vartheta_0}\|v\|_{X_1}.
\end{align}
From \eqref{e6.6}, $t\mapsto A(t)v$ is continuous and strongly measurable. For $v\in X_1$, we deduce
\begin{align*}
\|A(t)v\|_{X_0}
\leq C\left(
\|a(t,\cdot)\|_{W^{1,\infty}}
+\|c(t,\cdot)\|_{L^\infty}
\right)\|v\|_{X_1}.
\end{align*}
Then $\sup_{t\in[0,T]}\|A(t)\|_{L(X_1,X_0)}<\infty$ and
Assumption~\ref{assump5.1} holds.

Next, we prove the deterministic maximal regularity of $A(t)$. Set $A_0:=A(0)$. The zero initial value autonomous equation
$$D_{0+}^{\lambda,\mu}u+A_0u=f$$
is equivalent to the Volterra equation
$$u+g_\mu*(A_0u)=g_\mu*f, $$
where the kernel $g_\mu(t)=t^{\mu-1}/\Gamma(\mu)$ has Laplace transform
$\widehat g_\mu(z)=z^{-\mu}$. Since
$A_0$ is positive and self-adjoint on the Hilbert space $X_0$, it is
$\mathcal R$-sectorial with angle zero.
According to \cite[Theorem~3.4]{Zacher}, we deduce $A_0\in\mathrm{DMR}(p,\mu,T)$. Let $\mathcal S_0$ denote the solution operator for the autonomous problem with $A_0$. Thus, for each $g\in L^p(0,T;X_0)$, $w=\mathcal S_0g$ is the unique solution of
$$D_{0+}^{\lambda,\mu}w+A_0w=g$$
with zero initial value. Moreover, there exists $M>0$ such that $\|\mathcal S_0g\|_{L^p(0,T;X_1)}
\le M\|g\|_{L^p(0,T;X_0)}$.
Assume that
\begin{align}\label{e6.7}
M_0:=M\sup_{t\in[0,T]}
\|A(t)-A_0\|_{L(X_1,X_0)}<1.
\end{align}
By \eqref{e6.2}, we have
$$\|A(t)-A_0\|_{L(X_1,X_0)}
\leq C\left(
\|a(t,\cdot)-a(0,\cdot)\|_{W^{1,\infty}}
+\|c(t,\cdot)-c(0,\cdot)\|_{L^\infty}
\right).$$
Thus, condition \eqref{e6.7} is satisfied if $a(t,\cdot)$ remains sufficiently close to $a(0,\cdot)$ in $W^{1,\infty}(0,\pi)$ and $c(t,\cdot)$ remains sufficiently close to $c(0,\cdot)$ in $L^\infty(0,\pi)$ uniformly for $t\in[0,T]$.

For $f\in L^p(0,T;X_0)$, define
$$\Phi_f(u):=\mathcal S_0
\bigl(f-(A(\cdot)-A_0)u\bigr).$$
For $u,v\in L^p(0,T;X_1)$, we get
$\|\Phi_f(u)-\Phi_f(v)\|_{L^p(0,T;X_1)}
\leq M_0\|u-v\|_{L^p(0,T;X_1)}$.
Since $M_0<1$, it follows from the Banach fixed point theorem that there exists a unique solution $u\in L^p(0,T;X_1)$ such that
$\|u\|_{L^p(0,T;X_1)}
\le\frac{M}{1-M_0}
\|f\|_{L^p(0,T;X_0)}$. Hence, $D_{0+}^{\lambda,\mu}u=f-A(\cdot)u \in L^p(0,T;X_0)$. Since
$f\in L^p(0,T;X_0)$, $u\in L^p(0,T;X_1)$, and $A(\cdot)$ is uniformly bounded in $L(X_1,X_0)$, we obtain $ A(\cdot)\in\mathrm{DMR}(p,\mu,T)$.

Choose $\alpha\in\left(1/(2\mu),1\right]$, $X_\alpha=[X_0,X_1]_\alpha$ and $\kappa=1$.
Let $f_0$ and $g_0$ be progressively measurable processes belonging to
$L^p(\Omega;L^p(0,T;X_0))$ and
$L^p(\Omega;L^p(0,T;X_\alpha))$, respectively. Let
$k$ and $\sigma$ be bounded progressively measurable real-valued processes. Since $H=\mathbb R$, define a $\gamma$-radonifying operator from
$\mathbb R$ to $X_\alpha$ by the isometric isomorphism $J_\alpha:X_\alpha\longrightarrow\gamma(\mathbb R,X_\alpha)$ with
\begin{align}\label{e6.8}
(J_\alpha z)h=hz
\end{align}
for $h\in\mathbb R$ and $z\in X_\alpha$. For $v\in X_1$, define
\begin{align}\label{e6.9}
F(t,\omega,v):=f_0(t,\omega)
+\varepsilon k(t,\omega)\arctan(v),  \end{align}
\begin{align}\label{e6.10}
G(t,\omega,v):=J_\alpha\bigl(g_0(t,\omega)
+\varepsilon\sigma(t,\omega)v\bigr).
\end{align}
Then it follows from \eqref{e6.8}-\eqref{e6.10} that
$F(\cdot,\cdot,0)=f_0 \in L^p\bigl(\Omega;L^p(0,T;X_0)\bigr)$ and
$G(\cdot,\cdot,0)=J_\alpha g_0 \in\mathbb G_\alpha$.
For $v,w\in X_1$, by
$|\arctan r-\arctan q|\leq|r-q|$ and the embedding
$X_1\hookrightarrow X_0$, we have
\begin{align*}
\|F(t,\omega,v)-F(t,\omega,w)\|_{X_0}
\leq \varepsilon |k(t,\omega)|\|v-w\|_{X_0}\leq C\varepsilon\|k\|_\infty\|v-w\|_{X_1}.
\end{align*}
Similarly, by $X_1\hookrightarrow X_\alpha$ and the isometry of
$J_\alpha$, we obtain
\begin{align*}
\|G(t,\omega,v)-G(t,\omega,w)\|_{\gamma(\mathbb R,X_\alpha)}\leq
C\varepsilon\|\sigma\|_\infty\|v-w\|_{X_1}.
\end{align*}
Thus, Assumption~\ref{assump5.2} holds with
$L_F\leq C\varepsilon\|k\|_\infty$ and $L_G\leq C\varepsilon\|\sigma\|_\infty.$
In addition, by choosing $\varepsilon>0$ sufficiently small, we get $C_{L}C_\kappa(L_F+L_G)<1$  with $\kappa=1$.

For $u_0\in L^p(\Omega,\mathcal F_0;X_1)$, it follows from Theorem~\ref{the5.2} that problem~\eqref{e6.1} admits a unique regularity integrated strong
solution $u=u_{I}+u_{R}$. Moreover, for
$\theta\in[0,\mu-1/2)$,
$$u_{R}\in
L^p\bigl(\Omega;H^{\theta,p}
(0,T;X_{1-\theta/\mu})\bigr),
\quad
u\in L^p\bigl(\Omega;L^p(0,T;X_1)\bigr).$$
\end{example}

\begin{example}[A stochastic fractional SIR type reaction diffusion system]
Let $\mathcal O\subset\mathbb R^d$, $d\leq3$, be a bounded domain with $C^2$ boundary. Let $S=S(t,x)$, $I=I(t,x)$, and $R=R(t,x)$ denote the susceptible, infectious, and removed components of the system, respectively and set $U=(S,I,R)^\top$. We consider the following non-autonomous stochastic Caputo fractional reaction diffusion system
\begin{equation}\label{e6.11}
\left\{
\begin{aligned}
&{}^{C}\partial_t^\mu S-
\operatorname{div}\bigl(d_S(t,x)\nabla S\bigr)+m_SS
=\mathbf{K}(t,\omega,x)-\mathbf{L}(t,\omega,S,I)+[G(t,\omega,U)\,dW_H(t)]_S,\\
&{}^{C}\partial_t^\mu I-
\operatorname{div}\bigl(d_I(t,x)\nabla I\bigr)+m_II
=\mathbf{L}(t,\omega,S,I)-\gamma I+[G(t,\omega,U)\,dW_H(t)]_I,\\
&{}^{C}\partial_t^\mu R-\operatorname{div}\bigl(d_R(t,x)\nabla R\bigr)+m_RR
=\gamma I+[G(t,\omega,U)\,dW_H(t)]_R,\\
&\partial_{n}S=\partial_{n}I=\partial_{n}R=0,
\quad (t,x)\in(0,T]\times\partial\mathcal O,\\
&U(0)=U_0,
\end{aligned}
\right.
\end{equation}
where ${}^{C}\partial_t^\mu$ denotes the Caputo fractional derivative of order $\mu\in(0,1)$, $\mathbf{K}$ is the recruitment rate, $\gamma>0$ is the recovery
rate, $m_S,m_I,m_R>0$ are the natural removal rates, and $d_S,d_I,d_R$ are the diffusion coefficients. The problem \eqref{e6.11} is based on the classical SIR model introduced by Kermack and McKendrick \cite{Kermack}. Fractional SIR and SIS models have subsequently been developed to incorporate memory and non-Markovian effects in disease transmission, see \cite{Angstmann, Wu}.

For $ r\in\mathbb R$, define the function $\mathbf{Q}:\mathbb R\to[0,1)$ by $\mathbf{Q}(r):=\frac{r_+}{1+r_+}$ with $r_+:=\max\{r,0\}$. For $r,s\in\mathbb R$, we have $0\leq \mathbf{Q}(r)\leq1$ and $|\mathbf{Q}(r)-\mathbf{Q}(s)|\leq|r-s|$.
For $S,I\in L^2(\mathcal O)$ and $\chi\geq0$, define the $L^2(\mathcal O)$-valued saturated incidence mapping by
\begin{equation}\label{e6.12}
[\mathbf{L}(t,\omega,S,I)](x)
:=\beta(t,\omega,x)
\frac{\mathbf{Q}(S(x))\mathbf{Q}(I(x))}{1+\chi \mathbf{Q}(I(x))}.
\end{equation}
If $\beta$ is uniformly bounded,  then the incidence mapping $(S,I)\mapsto\mathbf{L}(t,\omega,S,I)$ is globally Lipschitz from
$L^2(\mathcal O)\times L^2(\mathcal O)$ into $L^2(\mathcal O)$ uniformly in $(t,\omega)$.

Set $X_0=L^2(\mathcal O)^3$, $X_1=\left\{v\in H^2(\mathcal O)^3:\partial_{\boldsymbol n}v=0\text{ on }\partial\mathcal O\right\}$. For $v=(v_S,v_I,v_R)^\top\in X_1$, define
$$A(t)v=
\begin{pmatrix}
-\operatorname{div}\bigl(d_S(t,\cdot)\nabla v_S\bigr)+m_Sv_S\\
-\operatorname{div}\bigl(d_I(t,\cdot)\nabla v_I\bigr)+m_Iv_I\\
-\operatorname{div}\bigl(d_R(t,\cdot)\nabla v_R\bigr)+m_Rv_R
\end{pmatrix}.$$
For $\vartheta\in(0,1)$ and $j\in\{S,I,R\}$, assume
\begin{equation}\label{e6.13}
d_j\in C^\vartheta\bigl([0,T];W^{1,\infty}(\mathcal O)\bigr),
\end{equation}
and for all $j\in\{S,I,R\}$ and
$(t,x,\xi)\in[0,T]\times\mathcal O\times\mathbb R^d$, there exists $d_0>0$ such that
\begin{equation}\label{e6.14}
d_j(t,x)|\xi|^2\geq d_0|\xi|^2.
\end{equation}

For $m_j>0$, it follows from \eqref{e6.13} and \eqref{e6.14} that $A(t)$ is an invertible sectorial operator in $X_0$ with the domain $D(A(t))=X_1$ and uniformly equivalent graph norms.
In addition, we obtain
$$\|A(t)-A(s)\|_{L(X_1,X_0)}
\leq C|t-s|^\vartheta.$$
Since $A(\tau)^{-1}:X_0\to X_1$ is bounded uniformly in $\tau$, we get
\begin{equation}\label{e6.15}
\|(A(t)-A(s))A(\tau)^{-1}\|_{L(X_0)}
\leq C|t-s|^\vartheta.
\end{equation}
Hence, by \eqref{e6.13}-\eqref{e6.15}, Assumption~\ref{assump2.1} holds.
Since $X_0$ is a Hilbert space and the operators $A(t)$ are positive
self-adjoint operators with uniformly bounded $H^\infty$-calculus, Assumption~\ref{assump2.2} holds.

For $U=(S,I,R)^\top\in X_0$, define
\begin{equation}\label{e6.16}
[F(t,\omega,U)](x)
:=
\begin{pmatrix}
\mathbf K(t,\omega,x)-[\mathbf L(t,\omega,S,I)](x)\\
[\mathbf L(t,\omega,S,I)](x)-\gamma I(x)\\
\gamma I(x)
\end{pmatrix},
\quad x\in\mathcal O.
\end{equation}
Let $\mathbf{K},\beta:[0,T]\times\Omega\times\mathcal O\to\mathbb R$ be progressively measurable. Assume $\mathbf{K}\ge0$ and $\beta\ge0$ and
\begin{equation}\label{e6.17}
\operatorname*{ess\,sup}_{(t,\omega)}
\|\mathbf{K}(t,\omega,\cdot)\|_{L^2(\mathcal O)}<\infty,
\quad
\operatorname*{ess\,sup}_{(t,\omega,x)}
|\beta(t,\omega,x)|<\infty.
\end{equation}
It follows from \eqref{e6.12}, \eqref{e6.16}, and \eqref{e6.17} that
there exist constants $L_F,C_F>0$ such that
\begin{align}\label{e6.18}
\|F(t,\omega,U)-F(t,\omega,V)\|_{X_0}
\leq L_F\|U-V\|_{X_0},
\end{align}
\begin{align}\label{e6.19}
\|F(t,\omega,U)\|_{X_0}
\leq C_F\bigl(1+\|U\|_{X_0}\bigr).
\end{align}

Let $G_0:[0,T]\times\Omega\to\gamma(H,X_0)$ be progressively measurable and let $\mathbf B:[0,T]\times\Omega
\to L\bigl(X_0,\gamma(H,X_0)\bigr)$
be progressively measurable. Assume that
\begin{equation}\label{e6.20}
\operatorname*{ess\,sup}_{(t,\omega)}
\left(
\|G_0(t,\omega)\|_{\gamma(H,X_0)}
+\|\mathbf B(t,\omega)\|_{L(X_0,\gamma(H,X_0))}
\right)<\infty.
\end{equation}
Define $G(t,\omega,U):=G_0(t,\omega)+\mathbf B(t,\omega)$. By \eqref{e6.20}, there exist constants
$L_G,C_G>0$ such that
\begin{align}\label{e6.21}
\|G(t,\omega,U)-G(t,\omega,V)\|_{\gamma(H,X_0)}
\leq L_G\|U-V\|_{X_0},
\end{align}
\begin{align}\label{e6.22}
\|G(t,\omega,U)\|_{\gamma(H,X_0)}
\leq C_G\bigl(1+\|U\|_{X_0}\bigr).
\end{align}
It follows from \eqref{e6.18}, \eqref{e6.19}, \eqref{e6.21} and \eqref{e6.22} that Assumption~\ref{assump4.1} holds.

Let $p\in[2,\infty)$, $\mu\in(1/2,1)$. Since $\kappa=0$, we have $\mu(1-\kappa)=\mu>1/2$ and $\mu\kappa=0<\vartheta$.
For $U_0\in L^p(\Omega,\mathcal F_0;X_0)$ with $U_0\geq0$ almost
everywhere, and $\mathbf{K}=1$, it follows from Theorem~\ref{the4.1} that problem \eqref{e6.11} admits a unique mild solution $U\in C\bigl([0,T];L^p(\Omega;X_0)\bigr)$.
Here, $U_0=(S_0,I_0,R_0)^\top$ represents the initial spatial distributions of the susceptible, infectious, and
removed populations.

Set $V(t):=U(t)-H_{\mu,1}(t,0)U_0$. The initial component $H_{\mu,1}(t,0)U_0$ describes the evolution of the initial population distribution under diffusion and natural removal. The regular component $V$ represents the population changes induced by recruitment, disease transmission, recovery, and random perturbations. Let $\delta\in[0,1)$ satisfy $\mu\delta\leq\vartheta$, and let
$0<k^\ast<\min\left\{\mu(1-\delta)-\frac12,\vartheta\right\}$.
Since $\nu=1$ and $\kappa=0$, it follows from Theorem~\ref{the4.1} that
$V$ extends to $t=0$ by $V(0)=0$ and $V\in C^{k^\ast}\bigl(
[0,T];L^p(\Omega;X_\delta)\bigr).$
In particular, if $\delta>0$, the regular part has improved spatial regularity and is H\"older continuous up to the initial time.
\end{example}

\vskip0.5cm
\noindent
{\bf Funding declaration:} This paper was supported by   the Guangxi Natural Science Foundation (2026GXNSFAA00640023).\\[2mm]
{\bf Author contributions:} All authors read and approved the final manuscript.\\[2mm]
{\bf Conflict of interest:} The authors declare that they have no competing interest.\\[2mm]
{\bf Data availability statement:} Data sharing is not applicable to this article as no datasets were generated or analyzed during this study.

\end{document}